\newtheorem{theorem}{Theorem}[section]
\newtheorem{corollary}{Corollary}[section]
\newtheorem{proposition}{Proposition}[section]
\newtheorem{lemma}{Lemma}[section]
\newtheorem{remark}{Remark}[section]
\newtheorem{Assumption}{Assumption}[section]
\newtheorem{Condition}{Condition}[section]
\newcommand{\Second}{\textup{I}\!\textup{I}}
\numberwithin{equation}{section}
\title[Asymptotic Analysis of LLE]{Think globally, fit locally under the Manifold Setup: Asymptotic Analysis of Locally Linear Embedding}
\author{Hau-Tieng~Wu}
\address{Hau-Tieng Wu\\
Department of Mathematics and Department of Statistical Science\\
Duke University}
\email{hauwu@math.duke.edu}
\author{Nan~WU}
\address{Nan Wu\\
Department of Mathematics\\
University of Toronto}
\email{n.wu@mail.utoronto.ca}
\begin{document}

%
%
%
%
%
%

\maketitle

\begin{abstract}
Since its introduction in 2000, locally linear embedding (LLE) has been widely applied in data science. We provide an asymptotical analysis of the LLE under the manifold setup. We show that for a general manifold, asymptotically we may not obtain the Laplace-Beltrami operator, and the result may depend on the non-uniform sampling, unless a correct regularization is chosen. We also derive the corresponding kernel function, which indicates that the LLE is not a Markov process. A comparison with the other commonly applied nonlinear algorithms, particularly the diffusion map, is provided, and its relationship with the locally linear regression is also discussed.
\end{abstract}   

%
%

\section{Introduction}\label{Section:Introduction}

Dimension reduction is a fundamental step in data analysis. In past decades, due to the demanding need for analyzing 
the large scale, massive and complicated datasets accompanying technological advances, there have been many efforts to solve this problem from different angles. The resulting algorithms could be roughly classified into two types, linear and nonlinear. Linear methods include principal component analysis (PCA), multidimensional scaling, and others. Nonlinear methods include ISOMAP \cite{Tenenbaum_deSilva_Langford:2000}, locally linear embedding (LLE) \cite{Roweis_Saul:2000} and its variations like Hessian LLE \cite{Donoho_Grimes:2003} and modified LLE \cite{Zang_Wang:2006}, eigenmap \cite{Belkin_Niyogi:2003}, diffusion map (DM) \cite{Coifman_Lafon:2006}, local tangent space alignment \cite{Zhang_Zha:2004}, vector diffusion map \cite{Singer_Wu:2012,Singer_Wu:2016}, horizontal diffusion map \cite{Gao:2016}, maximal variance unfolding \cite{Weinberger_Saul:2006}, and t-distributed stochastic neighbor embedding \cite{VanderMaaten_Hinton:2008}, to name a few. 

The subject of this paper, the LLE, was published in Science in 2000 \cite{Roweis_Saul:2000}. It has been cited almost 10,000 times, according to the Google Scholar as of mid-January, 2017. The algorithm is designed to be intuitive and simple. It has also been found to be efficient and practical. It contains two main parts. First, for each data point, determine its nearest neighbors, and catch the local geometric structure of the dataset through finding the barycenter coordinate for those neighboring points by a regularization. {This is the ``fit locally'' part of the LLE. Second,} by viewing the barycenter coordinates as the ``weights'' for the neighboring points, the eigenvectors and eigenvalues of the associated ``affinity matrix'' are evaluated to organize the data points. {This is the ``think globally'' part of the LLE.}
However, unlike the fruitful theoretical results from discussing the diffusion-based approach like DM \cite{Belkin_Niyogi:2005,Hein_Audibert_Luxburg:2005,Singer:2006,Gine_Koltchinskii:2006,Coifman_Lafon:2006,Smolyanov:2007,Belkin_Niyogi:2007,VonLuxburg_Belkin_Bousquet:2008,Wang:2015,GarciaTrillos_Slepcev:2015,Singer_Wu:2016,ElKaroui_Wu:2016b}, to the best of our knowledge, a systematic analysis of the LLE algorithm has not been undertaken, except an ad hoc argument shown in \cite{Belkin_Niyogi:2003} based on some conditions. %

{The main contribution of this paper is analyzing the ``fit locally'' part of the LLE. Based on a careful analysis of the barycentric coordinate by the covariance matrix analysis, we provide an asymptotic {\em pointwise convergence} analysis of the LLE under the manifold setup.\footnote{While it is not explored in this paper, we mention that based on the established pointwise convergence, we could further understand the ``think globally'' part of the LLE algorithm from the spectral geometry viewpoint \cite{Berard:1986,Berard_Besson_Gallot:1994}.} }
Although it is widely believed that under the manifold setup, asymptotically the LLE should lead to the Laplace-Beltrami operator, in this paper, we show that it might not always be the case. It fundamentally depends on the geometric structure of the data set. Specifically, under the assumption that the point cloud is (non-) uniformly sampled from a low dimensional manifold isometrically embedded in the Euclidean space, we show that the asymptotical behavior of the LLE depends on the regularization. If the regularization is chosen properly, we obtain the Laplace-Baltrami operator, even if the sampling is non-uniform. If the regularization is not chosen properly, the acquired information will be contaminated by the extrinsic information (the second fundamental form), and we even obtain the fourth order differential operator in some extreme cases. To catch the dependence on the extrinsic information, we carefully analyze the ``local covariance structure'' of the dataset up to the higher order.
{One key step toward the analysis is establishing the kernel function associated with the LLE that comes from the barycentric coordinate estimation. Via the established kernel function, we have a direct comparison of the LLE and other relevant nonlinear machine learning algorithms, for example, the eigenmap and DM.
Unlike the eigenmap or DM, the LLE in general is not a diffusion process on the dataset, and the convergence rate might be different, depending on the regularization. }
In the end, we link the LLE back to the widely applied kernel regression technique, the locally linear regression (LLR), and the error in variable problem.

The paper is organized as follows. In Section \ref{Section:Review}, we review the LLE algorithm. In Section \ref{Section:Asymptotics}, we provide the asymptotical analysis of the LLE under the manifold setup. 
In Section \ref{Section:Numerics}, we provide numerical simulations to support our theoretical findings. 
{The relationship between two common nearest neighbor search schemes is discussion in Section \ref{Section:KNNvsEpsilon}.}
The relationship between the LLE, the LLR, and the shrinkage scheme for the high dimensional covariance matrix are discussed in \ref{Section:LLR}. 
The discussions is shown in Section \ref{Section:Conclusion}. The technical proofs of the theorems are included in the Appendix. The perturbation argument of the eigenvalues and eigenvectors of a symmetric matrix is summarized in Section \ref{Section:Perturbation}. The statement of technical lemmas for the proof is given in Section \ref{Section:StatementTechnicalLemmas}. The covariance structure analysis is provided in Section \ref{Section:Proof:PropositionForCovariance}. The proofs of the main Theorems are given in Appendices \ref{Section:Theoremt1} and  \ref{Section:Theoremt0}. The technical lemmas for the theorems are given in Section \ref{Section:LemmasProof}.

Here we fix the notations used in this paper. For $d\in \mathbb{N}$, $I_{d\times d}$ means the identity matrix of size $d\times d$. For $n\in\mathbb{N}$, denote $\boldsymbol{1}_{n}$ to be the $n$-dim vector with all entries $1$. For $\epsilon\geq 0$, denote $B^{\mathbb{R}^p}_{\epsilon}(x):=\{y\in \mathbb{R}^p|\,\|x-y\|_{\mathbb{R}^p}\leq \epsilon\}$. Denote $e_i=[0,\cdots ,1, \cdots 0]^\top \in\mathbb{R}^p$ to be the unit $p$-dim vector with $1$ in the i-th entry. 
For $p,r\in \mathbb{N}$ so that $r\leq p$, denote $J_{p,r}\in \mathbb{R}^{p\times r}$ so that the $(i,i)$ entry is $1$ for $i=1,\ldots,r$, and zeros elsewhere and { denote $\bar J_{p,r}\in \mathbb{R}^{p\times r}$ so that the $(p-r+i,i)$ entry is $1$ for $i=1,\ldots,r$, and zeros elsewhere.} $I_{p,r}:=J_{p,r}J_{p,r}^\top$ is a $p\times p$ matrix so that the $(i,i)$-th entry is $1$ for $i=1,\ldots,r$ and $0$ elsewhere; and $\bar I_{p,r}:=\bar J_{p,r}\bar J_{p,r}^\top$ is a $p\times p$ matrix so that the $(i,i)$-th entry is $1$ for $i=p-r+1,\ldots,p$ and $0$ elsewhere.
Denote $S(p)$ to be the set of real symmetric matrix of size $p\times p$, $O(p)$ to be the orthogonal group in dimension $p$, and $\mathfrak{o}(p)$ to be the set of anti-symmetric matrix of size $p\times p$. For $M\in \mathbb{R}^{p\times p}$, denote $M^\top$ to be the transpose of $M$ and $M^\dagger$ to be the Moore-Penrose pseudo-inverse of $M$. 
For $a,b\in\mathbb{R}$, we use $a\wedge b:=\min\{a,b\}$ and $a\vee b:=\max\{a,b\}$ to simplify the notation.
We summarize the commonly used notations for the asymptotical analysis in Table \ref{Table:Notations} for the convenience of the readers.

\begin{table}\label{Table:Notations}
\caption{Commonly used notations in this paper.}
\begin{tabular}{|l|l|} 
\hline $Symbol$ & $Meaning$\\ 
\hline\hline 
$p$ & Dimension of the ambient space \\ 
$d$ & Dimension of the low-dimensional Riemannian manifold \\ 
$(M,g)$ &  $d$-dimensional smooth Riemannian manifold \\ 
$dV$ & Riemannian volume form of $(M,g)$ \\ 
$\exp_{x}$ & Exponential map at $x$\\ 
$T_{x}M$ & Tangent space of $M$ at $x$ \\ 
$\texttt{Ric}_{x}$ & Ricci curvature tensor of $(M,g)$ at $x$ \\ 
$\iota$, $\iota_*$ & Isometric embedding of $M$ into $\mathbb{R}^p$ and its differential \\ 
$\Second_{x}$ & Second fundamental form of the embedding $\iota$ at $x$ \\
$P$ & Probability density function on $\iota(M)$\\
$n\in\mathbb{N}$ & Number of data points sampled from $M$ \\ 
$\mathcal{X}=\{z_i\}_{i=1}^n$ & Point cloud sampled from $\iota(M)\subset \mathbb{R}^p$ \\ 
$w_{z_k} \in \mathbb{R}^N$ & Barycentric coordinates of $z_k$ with respect to data points in \\
                    & the $\epsilon$-neighborhood \\
\hline 
\end{tabular}
\end{table}

\section{Review of the Locally Linear Embedding}\label{Section:Review}

We start by summarizing the LLE algorithm. Suppose $\mathcal{X}=\{z_i\}_{i=1}^n \subset \mathbb{R}^p$ is the provided dataset, or the point cloud.

\begin{enumerate}
\item Fix $\epsilon>0$. For each $z_k\in \mathcal{X}$, denote $\mathcal{N}_{z_k}:=B^{\mathbb{R}^p}_{\epsilon}(z_k) \cap (\mathcal{X} \setminus\{z_k\}) =\{ z_{k,j}\}_{j=1}^{n_k}$, where $n_k\in\mathbb{N}$ is the number of points in $\mathcal{N}_{z_k}$. $\mathcal{N}_{z_k}$ is called the $\epsilon$-radius neighborhood of $z_k$. Alternatively, we can also fix a number $K$, and choose the K nearest points of $z_k$. {This is called the $K$ nearest neighbors (KNN) scheme. While the $\epsilon$-radius neighborhood scheme and the KNN scheme are closely related, they are different. In this paper, we study the LLE with the $\epsilon$-radius neighborhood scheme, and postpone the discussion of the relationship between these two schemes to Section \ref{Section:KNNvsEpsilon}.}

\item For each $z_k\in \mathcal{X}$, find its barycentric coordinate associated with $\mathcal{N}_{z_k}$ by
\begin{equation}
w_{z_k}= \arg\min_{w\in \mathbb{R}^{n_k},w^\top  \boldsymbol{1}_{n_k}=1} \|z_k-\sum_{j=1}^{n_k}w(j)z_{k,j} \|^2 \in \mathbb{R}^{n_k}. \label{Definition:FindBarycentric}
\end{equation}
Notice that $w_{z_k}$ satisfies $w_{z_k}^\top  \boldsymbol{1}_{n_k}=\sum_{j=1}^{n_k} w_{z_k}(j)=1$.

\item Define a $n \times n$ matrix $W${, called the {\em LLE matrix},} by
\begin{equation}\label{Definition:Wmatrix:LLE}
W_{k,l} = \left\{ 
\begin{array}{ll} 
w_{z_k}(j) & \mbox{if $z_l=z_{k,j} \in \mathcal{N}_{z_k}$};\\ 
0 & \mbox{otherwise}. 
\end{array} \right.
\end{equation}

\item To reduce the dimension of $\mathcal{X}$, it is suggested in \cite{Roweis_Saul:2000} to embed $\mathcal{X}$ into a low dimension Euclidean space
\begin{equation}
z_k\mapsto Y_k=[v_1(k) ,\cdots, v_{\ell}(k)]^\top  \in \mathbb{R}^{\ell},
\end{equation}
for each $z_k\in \mathcal{X}$, where $\ell$ is the dimension of the embedded points chosen by the {user}, and $v_1, \cdots ,v_{\ell}\in \mathbb{R}^n$ are eigenvectors of $(I-W)^\top (I-W)$ corresponding to the $\ell$ smallest eigenvalues. 
Note that this is equivalent to {minimizing} the cost function 
$\sum_{k=1}^n \|Y_k-\sum_{l=1}^{n}W_{k,l}Y_l\|^2$.
\end{enumerate}

Although the algorithm looks relatively simple, there are actually several details that should be discussed prior to the asymptotical analysis. To simplify the discussion, we focus on one point $z_k\in \mathcal{X}$ and assume that there are $N$ data points in $\mathcal{N}_{z_k}=\{z_{k,1}, \cdots, z_{k,N}\}$. 
To find the barycentric coordinate of $z_k$, we define the \textit{local data matrix} associated with $\mathcal{N}_{z_k}$:
\begin{equation}
G_n:=\begin{bmatrix}
| & & |\\
z_{k,1}-z_k & \ldots & z_{k,N}-z_k\\
| & & | 
\end{bmatrix}\in \mathbb{R}^{p \times N}.
\end{equation}
It is important to note that $G_n$ depends not only on $n$, but also $\epsilon$ and $z_k$. However,  we only keep $n$ to make the notation easier. The other notations in this section are simplified in the same way. 
Minimizing (\ref{Definition:FindBarycentric}) is equivalent to minimizing the functional $w^\top G_n^\top G_nw$ over $w\in\mathbb{R}^N$ under the constraint $w^\top \boldsymbol{1}_N=1$. 
Here, $G_n^\top G_n$ is the Gramian matrix associated with the dataset $\{z_{k,1}-z_k , \cdots, z_{k,N}-z_k \}$. 
In general, $G_n^\top G_n$ might be singular, and it is suggested in \cite{Roweis_Saul:2000} to stabilize the algorithm by regularizing the equation by
\begin{equation}
(G_n^\top G_n+c I_{N\times N})y=\boldsymbol{1}_N\,, \label{Section2:GGyN}
\end{equation}
where $c>0$ is the regularizer chosen by the user. For example, in \cite{Roweis_Saul:2000}, $c$ is suggested to be $\frac{\delta}{N}$, where $0<\delta < \|G_n\|^2_F$ is chosen by the user and $\|G_n\|_F$ is the Frobenius norm of $G_n$. It has been observed that the LLE is sensitive to the choice of the regularizer (see, for example, \cite{Zang_Wang:2006}). We will later quantify this dependence under the manifold setup. Using the Lagrange multiplier method, the minimizer is
\begin{equation}
w_{n}=\frac{y_{n}}{y_{n}^\top \boldsymbol{1}_N},
\end{equation}
where $y_{n}$ is the solution of (\ref{Section2:GGyN}). We will consider the regularized equation (\ref{Section2:GGyN}) in the following discussion.

{ Next, we explicitly express $w_{n}$,} which is the essential step toward the asymptotical analysis. Suppose $\texttt{rank}(G_n^\top G_n)=r_n$. Note that $r_n =\texttt{rank}(G_nG_n^\top ) =\texttt{rank}(G_n)\leq p$, so $G_n^\top G_n$ is singular when $p < N$. Moreover, $G_n^\top G_n$ is positive semidefinite. 
Denote the eigen-decomposition of $G_n^\top G_n$ as $V_n\Lambda_n V_n^\top $, where 
\begin{equation}
\Lambda_n=\texttt{diag}(\lambda_{n,1},\lambda_{n,2},\ldots,\lambda_{n,N}),
\end{equation} 
{$\lambda_{n,1} \geq \lambda_{n,2} \geq \cdots \geq \lambda_{n,r_n}> \lambda_{n,r_n+1}= \cdots =\lambda_{n,N}=0$,} and
\begin{equation}
V_n=\begin{bmatrix}| && |\\
v_{n,1}&\ldots&v_{n,N}\\ |&&|
\end{bmatrix}\in O(N).
\end{equation}
Clearly, { $\{v_{n,i}\}_{i=r_n+1}^N$} form an orthonormal basis of the null space of $\texttt{Null}(G_n^\top G_n)$, which is equivalent to $\texttt{Null}(G_n)$. 
Then (\ref{Section2:GGyN}) is equivalent to solving
\begin{equation}\label{regularized equation}
V_n (\Lambda_n+c I_{N\times N})V_n^\top y=\boldsymbol{1}_N\,,
\end{equation}
and the solution is
\begin{align}
y_{n}&\,=V_n (\Lambda_n+c I_{N\times N} )^{-1}V_n^\top  \boldsymbol{1}_N\nonumber\\
&\,=c^{-1}\boldsymbol{1}_N+V_n\big[(\Lambda_n+c I_{N\times N})^{-1}-c^{-1}I_{N\times N}\big]V_n^\top  \boldsymbol{1}_N\label{Algorithm:solutiony}\,.
\end{align}
Therefore, 
{
\begin{equation}
w^\top_n=\frac{\boldsymbol{1}^\top_N+\boldsymbol{1}^\top_N V_n\big[c (\Lambda_n+c I_{N\times N})^{-1}-I_{N\times N}\big]V_n^\top  }
{N+\boldsymbol{1}^\top_N V_n\big[c (\Lambda_n+c I_{N\times N})^{-1}-I_{N\times N}\big]V_n^\top \boldsymbol{1}_N}.\label{Definition:Wf:FirstVersion}
\end{equation}
}

Without recasting (\ref{Definition:Wf:FirstVersion}) into a proper form, it is not clear how to capture the geometric information contained in (\ref{Definition:Wf:FirstVersion}). Observe that while $G_n^\top G_n$ is the Gramian matrix, $G_nG_n^\top $ is related to the sample covariance matrix associated with $\mathcal{N}_{z_k}$. We call $\frac{1}{n}G_nG_n^\top $ the \textit{local sample covariance matrix}.\footnote{The usual sample covariance matrix associated with $\mathcal{N}_{z_k}$ is defined as $\frac{1}{n-1}\sum_{j=1}^N(z_{k,j}-\mu_k)(z_{k,j}-\mu_k)^\top $, where $\mu_k=\frac{1}{n}\sum_{j=1}^Nz_{k,j}$.} Clearly, $r_n \leq p$ and $G_nG_n^\top $ and $G_n^\top G_n$ share the same positive eigenvalues, $\lambda_{n,1} \cdots  \lambda_{n,r_n}$. Denote the eigen-decomposition of 
$G_nG_n^\top$ as $U_n\bar\Lambda_n U_n^\top$, where $U_n\in O(p)$ and $\bar\Lambda_n$ is a $p\times p$ diagonal matrix. By a direct calculation, the first $r_n$ columns of $V_n$ are related to $U_n$ by
\begin{equation}
V_{n}J_{N,r_n}=G_n^\top U_{n}(\bar\Lambda^\dagger_n)^{1/2}J_{p,r_n} \label{Equation:vni_uni_relationship}\,,
\end{equation} 
where $V_n=[V_{n}J_{N,r_n} | V_n\bar J_{N,N-r_n}]$. Since $\big(\Lambda_n+c I_{N\times N}\big)^{-1}-c^{-1}I_{N\times N}$ has only $r_n$ non-zero diagonal entries, { based on (\ref{Algorithm:solutiony})}, we have 
\begin{align*}
y_{n}^\top =&\,c^{-1}\boldsymbol{1}_N^\top+\boldsymbol{1}_N^\top V_n\big[(\Lambda_n+c I_{N\times N})^{-1}-c^{-1}I_{N\times N}\big]V_n^\top \\
=&\,c^{-1}\boldsymbol{1}_N^\top+\boldsymbol{1}_N^\top G_n^\top U_{n}(\bar\Lambda^\dagger_n)^{1/2}J_{p,r_n}J_{p,r_n}^\top  \big[(\bar\Lambda_n+c I_{p\times p})^{-1}-c^{-1}I_{p\times p}\big] J_{p,r_n}  J_{p,r_n}^\top   (\bar\Lambda^\dagger_n)^{1/2}U_{n}^\top   G_n \,.\nonumber
\end{align*}
{
Note that we have
\begin{align}
&U_{n}(\bar\Lambda^\dagger_n)^{1/2}J_{p,r_n}J_{p,r_n}^\top  \big[(\bar\Lambda_n+c I_{p\times p})^{-1}-c^{-1}I_{p\times p}\big] J_{p,r_n}  J_{p,r_n}^\top   (\bar\Lambda^\dagger_n)^{1/2}U_{n}^\top\nonumber\\
=\,&-c^{-1}U_{n}J_{p,r_n}J_{p,r_n}^\top  (\bar\Lambda_n+c I_{p\times p})^{-1} J_{p,r_n}  J_{p,r_n}^\top U_{n}^\top\,,
\end{align}
}
which could be understood as a ``regularized pseudo-inverse''. Specifically, when $c$ is small, we have 
\begin{equation}
U_nJ_{p,r_n}J_{p,r_n}^\top  (\bar\Lambda_n+c I_{p\times p})^{-1} J_{p,r_n}  J_{p,r_n}^\top U_n^\top \approx (G_nG_n^\top)^\dagger.
\end{equation}
Denote
\begin{equation}\label{Definition:Irho:Soft}
\mathcal{I}_c(G_nG_n^\top):=U_nJ_{p,r_n}J_{p,r_n}^\top  (\bar\Lambda_n+c I_{p\times p})^{-1} J_{p,r_n}  J_{p,r_n}^\top U_n^\top.
\end{equation}
{
Hence, we can recast (\ref{Algorithm:solutiony}) and (\ref{Definition:Wf:FirstVersion}) into
\begin{align}
y_{n}^\top =&\,c^{-1}\boldsymbol{1}_N^\top- c^{-1} \boldsymbol{1}_N^\top G_n^\top \mathcal{I}_c(G_nG_n^\top)G_n
\end{align}
and
\begin{align}
w^\top _n&\,=\frac{\boldsymbol{1}_N^\top-\boldsymbol{1}_N^\top G_n^\top \mathcal{I}_c(G_nG_n^\top)   G_n}
{N-\boldsymbol{1}_N^\top G_n^\top \mathcal{I}_c(G_nG_n^\top)  G_n\boldsymbol{1}_N}=\frac{\boldsymbol{1}_N^\top-\mathbf{T}^\top_{n,z_k}  G_n}
{N-\mathbf{T}^\top_{n,z_k}  G_n\boldsymbol{1}_N}\,,\label{Expansion:LLEweightedKernel}
\end{align}
}
where
\begin{align}
\mathbf{T}_{n,z_k}:= \mathcal{I}_c(G_nG_n^\top)  G_n\boldsymbol{1}_N\label{Definition:Tn}
\end{align}
is chosen in order to have a better geometric insight into the LLE algorithm. 
We now summarize the expansion of the barycentric coordinate.

\begin{proposition} \label{Proposition:p1}
Take a data set $\mathcal{X}=\{z_i\}_{i=1}^n \subset \mathbb{R}^p$. Suppose there are $N$ data points in the $\epsilon$ neighborhood of $z_k$, namely $\{z_{k,1}, \cdots, z_{k,N}\} \subset B^{\mathbb{R}^p}_{\epsilon}(z_k) \cap (\mathcal{X} \setminus\{z_k\})$. Assume $p < N$. Let $G_n^\top G_n$ be the Gramian matrix associated with $\{z_{k,1}-z_k, \cdots, z_{k,N}-z_k\}$ and let $\{{\lambda}_{n,i}\}_{i=1}^r$ and $\{u_{n,i}\}_{i=1}^r$, where $r\leq p$ is the rank of $G_n^\top G_n$, be the nonzero eigenvalues and the corresponding orthonormal eigenvectors of $G_nG_n^\top $ satisfying (\ref{Equation:vni_uni_relationship}). With $\mathbf{T}_{n,z_k}$ defined in (\ref{Definition:Tn}), the barycentric coordinates of $z_k$ coming from the regularized equation (\ref{Section2:GGyN}) is 
\begin{equation}
w^\top _n
=\frac{\boldsymbol{1}_N^\top - \mathbf{T}^\top _{n,z_k}G_n}
{N -  \mathbf{T}^\top _{n,z_k}G_n\boldsymbol{1}_N}.\label{Definition:Wf:SecondVersion}
\end{equation}
\end{proposition}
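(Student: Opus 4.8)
The plan is to derive (\ref{Definition:Wf:SecondVersion}) directly from the already-established identity (\ref{Expansion:LLEweightedKernel}) together with the definition (\ref{Definition:Tn}) of $\mathbf{T}_{n,z_k}$. In fact, the statement of Proposition~\ref{Proposition:p1} is essentially a clean repackaging of the computation carried out in the text immediately preceding it, so the proof is a matter of assembling the pieces and checking that every step used there is legitimate under the hypotheses $p<N$ and $\texttt{rank}(G_n^\top G_n)=r\le p$.

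First I would recall the Lagrange-multiplier solution: minimizing $w^\top G_n^\top G_n w$ subject to $w^\top\boldsymbol{1}_N=1$ with the regularization $c I_{N\times N}$ added yields $w_n = y_n/(y_n^\top\boldsymbol{1}_N)$ where $y_n$ solves (\ref{Section2:GGyN}). Since $G_n^\top G_n+cI_{N\times N}$ is positive definite for $c>0$, this solution is unique and well defined. Then I would substitute the eigen-decomposition $G_n^\top G_n = V_n\Lambda_n V_n^\top$ to obtain (\ref{Algorithm:solutiony}), i.e. $y_n = c^{-1}\boldsymbol{1}_N + V_n\big[(\Lambda_n+cI)^{-1}-c^{-1}I\big]V_n^\top\boldsymbol{1}_N$. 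The key algebraic observation is that the diagonal matrix $(\Lambda_n+cI)^{-1}-c^{-1}I$ has nonzero entries only in the first $r$ coordinates, so only the columns $V_nJ_{N,r}$ enter; by the singular-value relation (\ref{Equation:vni_uni_relationship}), $V_nJ_{N,r} = G_n^\top U_n(\bar\Lambda_n^\dagger)^{1/2}J_{p,r}$, which lets me rewrite everything in terms of $U_n$, $\bar\Lambda_n$, and $G_n$. Carrying out the matrix bookkeeping (the displayed identity rewriting the middle factor as $-c^{-1}U_nJ_{p,r}J_{p,r}^\top(\bar\Lambda_n+cI_{p\times p})^{-1}J_{p,r}J_{p,r}^\top U_n^\top$) produces $y_n^\top = c^{-1}\boldsymbol{1}_N^\top - c^{-1}\boldsymbol{1}_N^\top G_n^\top\mathcal{I}_c(G_nG_n^\top)G_n$, with $\mathcal{I}_c$ as in (\ref{Definition:Irho:Soft}).

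From there the conclusion is immediate: dividing $y_n$ by $y_n^\top\boldsymbol{1}_N$ cancels the common factor $c^{-1}$ and gives
\begin{equation*}
w_n^\top = \frac{\boldsymbol{1}_N^\top - \boldsymbol{1}_N^\top G_n^\top\mathcal{I}_c(G_nG_n^\top)G_n}{N - \boldsymbol{1}_N^\top G_n^\top\mathcal{I}_c(G_nG_n^\top)G_n\boldsymbol{1}_N},
\end{equation*}
and recognizing $\mathbf{T}_{n,z_k} = \mathcal{I}_c(G_nG_n^\top)G_n\boldsymbol{1}_N$ from (\ref{Definition:Tn}) yields exactly (\ref{Definition:Wf:SecondVersion}). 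One should also check that the denominator $N-\mathbf{T}_{n,z_k}^\top G_n\boldsymbol{1}_N = y_n^\top\boldsymbol{1}_N$ is nonzero, which follows because $y_n^\top\boldsymbol{1}_N = \boldsymbol{1}_N^\top(G_n^\top G_n+cI_{N\times N})^{-1}\boldsymbol{1}_N>0$ by positive-definiteness; this guarantees the ratio is meaningful.

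The main thing to be careful about — rather than a genuine obstacle — is the manipulation of the pseudo-inverse and the $J_{p,r}$ projectors: one must verify that $(\bar\Lambda_n^\dagger)^{1/2}J_{p,r}J_{p,r}^\top$ correctly picks out the invertible block of $\bar\Lambda_n$, that the identity $(\bar\Lambda_n+cI_{p\times p})^{-1}-c^{-1}I_{p\times p} = -c^{-1}(\bar\Lambda_n+cI_{p\times p})^{-1}\bar\Lambda_n$ is applied only on the range where $\bar\Lambda_n$ is invertible, and that $U_n\in O(p)$ and $V_n\in O(N)$ are used consistently. None of these requires new ideas; they are the routine linear-algebra checks behind the informal derivation already presented, and assembling them rigorously completes the proof.
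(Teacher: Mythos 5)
Your proposal is correct and follows exactly the paper's own inline derivation (the paper gives no separate proof for Proposition~\ref{Proposition:p1}; the argument is carried out in the text from (\ref{Section2:GGyN}) through (\ref{Expansion:LLEweightedKernel}), and the proposition merely summarizes it). The one small thing you add that the paper leaves implicit — verifying $y_n^\top\boldsymbol{1}_N=\boldsymbol{1}_N^\top(G_n^\top G_n+cI_{N\times N})^{-1}\boldsymbol{1}_N>0$ so the normalization is well defined — is a worthwhile check.
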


{
\begin{remark}
The denominator $N - \mathbf{T}^\top _{n,z_k}G_n\boldsymbol{1}_N$ is the sum of all entries of the numerator $\boldsymbol{1}_N^\top - \mathbf{T}^\top _{n,z_k}G_n$. We could thus view the LLE matrix defined in (\ref{Definition:Wmatrix:LLE}) as a ``normalized kernel'' defined on the point cloud. However, we mention that while all entries of $w_n$ are summed to $1$, the vector $\boldsymbol{1}_N^\top - \mathbf{T}^\top _{n,z_k}G_n$ might have negative entries, depending on the vector $\mathbf{T}^\top _{n,z_k}$. Therefore, in general, $W$ cannot be understood as a transition matrix. 
\end{remark}
}

How the LLE achieves the nonlinear dimension reduction and captures the geometric structure of the point cloud could thus be understood by understanding $\mathbf{T}_{n,z_k}$. 
In the next section, we will show that under the manifold assumption, $\mathbf{T}_{n,z_k}$ is intimately related to the ``normal bundle'' associated with the manifold, and see how the selection of { $c$ }influences the convergence behavior.

\section{Asymptotic behavior of LLE}\label{Section:Asymptotics}

In this section, we focus on the asymptotic analysis of LLE under the manifold setup. We start by introducing the manifold setup and assumptions for the analysis.

\subsection{Manifold Setup}\label{Section:ManifoldSetup}
Let $X$ be a $p$-dimensional random vector. 
Assume that the range of $X$ is supported on a $d$-dimensional compact, smooth Riemannian manifold $(M,g)$ isometrically embedded in $\mathbb{R}^p$ via $\iota:M\hookrightarrow \mathbb{R}^p$, where we assume that $M$ is boundary-free to simplify the discussion. 
Denote $d(\cdot,\cdot)$ to be the geodesic distance associated with $g$. 
For the tangent space $T_yM$ on $y\in M$, denote $\iota_*T_{y}M$ to be the embedded tangent space in $\mathbb{R}^p$. Denote $\exp_{y}:T_yM\to M$ to be the exponential map at $y$. {Denote $\text{Ric}$ to be the Ricci curvature, $\nabla$ to be the covariant derivative and $\Delta$ to be the Laplace-Beltrami operator.} Unless otherwise stated, in this paper we will carry out the calculation with the normal coordinate. 

{ Let $z=\iota(y)$. Denote $\Second_{y}$ to be the second fundamental form of $\iota$ at $y$. Denote the normal space at $z$ as $(\iota_*T_{y}M)^\bot$, which could be viewed as $\mathbb{R}^{p-d}$. Recall that the second fundamental form at $y$ is a symmetric bilinear map from $T_{y}M \times T_{y}M$ to $(\iota_*T_{y}M)^\bot$. If $S^{d-1}$ is the $(d-1)$-dim unit sphere in $T_{y}M$ and $\theta= (\theta^1, \cdots, \theta^d)\in S^{d-1}$, then for a fixed $e_k \in (\iota_*T_{y}M)^\bot$, we can expand $e_k^\top\Second_y(\theta,\theta)$ as $\sum_{i,j=1}^d p^k_{ij}\theta^i\theta^j$, where $p^k_{ij}\in \mathbb{R}$. The eigenvalues of the matrix $A^{(k)}\in \mathbb{R}^{d\times d}$, where $A^{(k)}_{ij}=p^k_{ij}$ for $i,j=1,\ldots,d$, are the {\em principal curvatures} at $z$ in the direction $e_k$. }

We now quickly summarize how the probability density function (p.d.f.) associated with $X$ is defined \cite{Cheng_Wu:2013}. The random vector $X:\Omega \rightarrow \mathbb{R}^p$ is a measurable function with respect to the probability space $(\Omega,\mathcal{F},\mathcal{P})$, where $\mathcal{P}$ is the probability measure defined on the sigma algebra $\mathcal{F}$ in $\Omega$. 
By assumption, the range of $X$ is supported on $\iota(M)$. Let $\tilde{\mathcal{B}}$ be the Borel sigma algebra of $\iota(M)$, and denote by $\tilde{\mathcal{P}}_X$ the probability measure defined on $\tilde{\mathcal{B}}$ that is induced from $P$. 
{If $\tilde{\mathcal{P}}_{X}$ is absolutely continuous with respect to the volume density on $\iota(M)$, by the Radon-Nikodym theorem, $ d \tilde{\mathcal{P}}_{X}(z)=P(z)\iota_* d V(z)$, where $ d V$ is the volume form associated with the metric $g$, $\iota_* d V(z)$ is the induced measure on $\iota(M)$ via $\iota$, and $P$ is a non-negative measurable function defined on $\iota(M)$. 
We call $P$ {\it the p.d.f. of $X$ on $M$}. When $P$ is constant, we call $X$ a \textit{uniform} random sampling scheme; otherwise it is \textit{nonuniform}.}

To facilitate the discussion and the upcoming analysis, we make the following assumption about the random vector $X$ and the regularity of the associated p.d.f..

\begin{Assumption}\label{AssumptionPDF}
Assume $\tilde{\mathcal{P}}_X$ is absolutely continuous with respect to the volume density on $\iota(M)$ so that $ d \tilde{\mathcal{P}}_X=P\iota_* d V$, where $P$ is a measurable function. We further assume that $P \in C^5(\iota(M))$ and there exist $P_m>0$ and $P_M\geq P_m$ so that $P_m \leq P(x) \leq P_M < \infty$ for all $x\in \iota(M)$. 
\end{Assumption}

Let $\mathcal{X}=\{\iota(x_i)\}_{i=1}^n\subset \iota(M)\subset \mathbb{R}^p$ denote a set of identical and independent (i.i.d.) random samples from $X$, where $x_i\in M$. We could then run the LLE on $\mathcal{X}$. For $\iota(x_k) \in \mathcal{X}$ and $\epsilon>0$, we have $\mathcal{N}_{\iota(x_k)}:=\{\iota(x_{k,1}), \cdots, \iota(x_{k,N})\} \subset B^{\mathbb{R}^p}_{\epsilon}(\iota(x_k)) \cap (\mathcal{X} \setminus \{\iota(x_k)\})$. Take $G_n\in \mathbb{R}^{p\times N}$ to be the local data matrix associated with $\mathcal{N}_{\iota(x_k)}$ and evaluate the barycentric coordinate $w_n=[w_{n,1}, \cdots ,w_{n,N}] ^\top \in \mathbb{R}^N$. Again, although $G_n$ and $w_n$ depend on $\epsilon$, $n$, and $x_k$, to ease the notation, we only keep $n$ to indicate that we have finite sampling points.

\subsection{Local covariance structure and local PCA}\label{Section:LocalPCA}

We call 
\begin{equation}
C_x:=\mathbb{E}[(X-\iota(x))(X-\iota(x))^{\top}\chi_{B_{\epsilon}^{\mathbb{R}^p}(\iota(x))}(X)]\in\mathbb{R}^{p\times p}
\end{equation}
the \textit{local covariance matrix} at $\iota(x)\in \iota(M)$, which is the covariance matrix associated with the local PCA \cite{Singer_Wu:2012,Cheng_Wu:2013}.
In the proof of the LLE under the manifold setup, the eigen-structure of $C_x$ plays an essential role due to its relationship with the barycentric coordinate. 
{Geometrically, for a $d$-dim manifold, the first $d$ eigenvectors of $C_x$ corresponding to the largest $d$ eigenvalues provide an estimated basis for the embedded tangent space $\iota_*T_xM$, and the remaining eigenvectors form an estimated basis for the normal space at $\iota(x)$. }
To be more precise, a smooth manifold can be well-approximated locally by an affine subspace. However, this approximation cannot be perfect, if the curvature exists. It is well-known that the contribution of curvature is of high order. For the purpose of fitting the manifold, we can ignore its contribution. For example, in \cite{Singer_Wu:2012,Cheng_Wu:2013} the local PCA is applied to estimate the tangent space. However, in the LLE, the curvature plays an essential role and a careful analysis is needed to understand its role. In Lemma \ref{Lemma:5}, we show a generalization of the result shown in \cite{Singer_Wu:2012,Cheng_Wu:2013} by expanding the $C_x$ up to the third order for the sake of capturing the LLE behavior.
The third order term is needed for analyzing the regularization step shown in (\ref{Section2:GGyN}).

\begin{Assumption}\label{AssumptionTangent}
Since the barycentric coordinate is rotational and translational invariant, without loss of generality, we assume that the manifold is translated and rotated properly, so that $\iota_*T_xM$ is spanned by $e_1,\ldots,e_d$. 
\end{Assumption}

\begin{proposition} \label{Proposition:1}
Fix $x\in M$ and suppose Assumption \ref{AssumptionTangent} holds. When $\epsilon$ is sufficiently small, we have
\begin{align*}
C_x =\frac{|S^{d-1}|  {P}(x)}{d(d+2)} \epsilon^{d+2}
\Big(\begin{bmatrix}
I_{d \times d} & 0 \\
0& 0  \\
\end{bmatrix}+
\begin{bmatrix}
M^{(2)}_{11} & M^{(2)}_{12}  \\
M^{(2)}_{21} & M^{(2)}_{22} 
\end{bmatrix}\epsilon^{2}+
\begin{bmatrix}
M^{(4)}_{11} & M^{(4)}_{12}  \\
M^{(4)}_{21} & M^{(4)}_{22}  \\
\end{bmatrix}\epsilon^{4}+O(\epsilon^{6})\Big) \,,
\end{align*}
where 
$M^{(2)}_{11},\,M^{(4)}_{11}\in S(d)$, $M^{(2)}_{22},\,M^{(4)}_{22}\in S(p-d)$, $M^{(2)}_{12},\,M^{(4)}_{12}\in \mathbb{R}^{d\times (p-d)}$, $M^{(2)}_{12}={M^{(2)\top}_{21}}$, and $M^{(4)}_{12}={M^{(4)\top}_{21}}$.
These matrices are defined in (\ref{Proof:LemmaD5:Definition:tildeM12}), (\ref{Proof:LemmaD5:Definition:barM22}), (\ref{Proof:LemmaD5:Definition:tildeM11}), and  (\ref{Proof:LemmaD5:Definition:tildeM22}), {and $S(d)$ and $S(p-d)$ are defined in the end of Section \ref{Section:Introduction}}.
$M^{(2)}_{22}$ depends on $\Second_x$ but does not depend on the p.d.f. $P$, and $M^{(4)}_{22}$ depends on the $\Second_x$ and its derivatives, the Ricci curvature, and $P$.
\end{proposition}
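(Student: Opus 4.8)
The plan is to compute $C_x$ by Taylor-expanding everything in sight about the point $x$, using normal coordinates on $M$ and Assumption \ref{AssumptionTangent} which aligns $\iota_*T_xM$ with $\operatorname{span}\{e_1,\dots,e_d\}$. First I would write a point near $x$ as $\iota(\exp_x(v))$ for $v\in T_xM$ with $\|v\|=t$ small, and expand its Euclidean coordinates: the first $d$ coordinates are $v$ plus a cubic-and-higher correction in $t$ coming from the intrinsic geometry (the $\exp$ map differs from the identity at third order via curvature), while the last $p-d$ (normal) coordinates start at order $t^2$, governed by the second fundamental form $\Second_x$, with an order-$t^3$ correction governed by $\nabla\Second_x$, and an order-$t^4$ term involving $\nabla^2\Second_x$, $|\Second_x|^2$-type quantities and curvature. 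So schematically $\iota(\exp_x(v))-\iota(x) = (v + a_3(v) + a_4(v) + \cdots,\ b_2(v) + b_3(v) + b_4(v)+\cdots)$ where $a_j, b_j$ are homogeneous of degree $j$ in $v$.

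Next I would form the rank-one integrand $(X-\iota(x))(X-\iota(x))^\top$, block it as $d$ versus $p-d$ coordinates, and substitute the expansion above; each block becomes a polynomial in $v$ (really in $t=\|v\|$ and the angular variable $\theta\in S^{d-1}$) whose lowest-order terms I can read off. Then I integrate against the density: $\,d\tilde{\mathcal P}_X = P\,\iota_*dV$, and in normal coordinates $dV = (1 - \tfrac16\operatorname{Ric}_{ij}v^iv^j + O(t^4))\,dv$, while $P(\exp_x(v)) = P(x) + \nabla P\cdot v + \tfrac12\nabla^2 P(v,v) + \cdots$; the indicator $\chi_{B_\epsilon}$ must be handled carefully because the Euclidean ball of radius $\epsilon$ does not pull back to the geodesic ball of radius $\epsilon$ — the discrepancy is an $O(\epsilon^2)$-relative correction to the radius coming again from $\Second_x$, and this correction is exactly what feeds into the higher-order blocks. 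I would then use the standard angular-integration identities (odd moments of $\theta$ over $S^{d-1}$ vanish; $\int \theta^i\theta^j = \tfrac{|S^{d-1}|}{d}\delta_{ij}$; the degree-4 moment gives the familiar $\tfrac{|S^{d-1}|}{d(d+2)}(\delta_{ij}\delta_{kl}+\delta_{ik}\delta_{jl}+\delta_{il}\delta_{jk})$, etc.) together with the radial integral $\int_0^\epsilon t^{d-1+m}\,dt = \epsilon^{d+m}/(d+m)$ to collect terms by powers of $\epsilon$. The leading term is $\frac{|S^{d-1}|P(x)}{d(d+2)}\epsilon^{d+2}\operatorname{diag}(I_{d\times d},0)$; the symmetry and vanishing of odd-order contributions force the expansion to proceed in even powers $\epsilon^{d+2}, \epsilon^{d+4}, \epsilon^{d+6},\dots$, i.e. the stated $I + M^{(2)}\epsilon^2 + M^{(4)}\epsilon^4 + O(\epsilon^6)$ shape. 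The block structure, symmetry of $M^{(2)}_{11}, M^{(2)}_{22}, M^{(4)}_{11}, M^{(4)}_{22}$ and the transpose relations for the off-diagonal blocks are immediate from $C_x$ being symmetric and the blocking being consistent.

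Tracking which geometric quantity lands in which block is then bookkeeping: the $(2,2)$ block at order $\epsilon^2$ comes from $b_2(v)b_2(v)^\top$, which is quadratic in $\Second_x$ and carries no $P$-derivatives (only $P(x)$, which factors out after normalization), whereas the order-$\epsilon^4$ correction in the $(2,2)$ block picks up cross terms $b_2 b_4^\top + b_3 b_3^\top$ (hence $\nabla\Second_x$, $\nabla^2\Second_x$ and products of $\Second_x$), the $\operatorname{Ric}$ term from $dV$, and the $\nabla^2 P$ term — matching the claimed dependence of $M^{(4)}_{22}$. The $(1,1)$ and off-diagonal blocks are filled analogously. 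The main obstacle, and the place where care is genuinely required, is getting the cubic-order geometry exactly right: the third-order term $a_3(v)$ of the embedding in tangential directions, the term $b_3(v) = \tfrac16(\nabla_v\Second_x)(v,v)$ in normal directions, the $O(\epsilon^2)$-relative correction to the radius of integration coming from converting the extrinsic $\epsilon$-ball to an intrinsic region, and the interaction of all of these — since odd-in-$v$ pieces that would naively vanish can pair with other odd pieces to produce nonzero even-order contributions. These are precisely the computations that Proposition \ref{Proposition:1} defers to equations (\ref{Proof:LemmaD5:Definition:tildeM12})--(\ref{Proof:LemmaD5:Definition:tildeM22}) in the appendix, and I would organize the proof so that all such third- and fourth-order expansions of $\iota\circ\exp_x$ are established as a preliminary lemma before the moment integrals are taken.
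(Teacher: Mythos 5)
Your proposal follows essentially the same route as the paper's proof: expand $\iota\circ\exp_x$ to fourth order (tangential corrections at cubic order, normal part starting at $\Second_x$ and picking up $\nabla\Second_x$, $\nabla^2\Second_x$ at the next orders), expand $dV$ via normal coordinates and $P$ by Taylor, convert the Euclidean $\epsilon$-ball to a star-shaped region in $T_xM$ with an $O(\epsilon^2)$-relative radius correction, and then collect terms by parity and sphere-moment identities. This matches the paper's Lemmas on the embedding expansion, the $\tilde t$-versus-$t$ relation, the volume form, and the radial integral, and your identification of the subtle points (odd-odd pairings producing even contributions, the ball-radius correction feeding into the curvature blocks) is exactly where the paper's care is concentrated.
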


The proof of Proposition \ref{Proposition:1} is postponed to Section \ref{Section:Proof:PropositionForCovariance}. 
{ Since $P$ is bounded by $P_m$ from below, when $\epsilon$ is sufficiently small, 
the $\epsilon^{d+2}$ term is dominant and the largest $d$ eigenvalues of $C_x$ are of order $\epsilon^{d+2}$. The other eigenvalues of $C_x$ are of higher order and depend on 
the $\epsilon^{d+4}$ term or even the
$\epsilon^{d+6}$ term.  
The behavior of eigenvectors is more complicated, due to the possible multiplicity of the corresponding eigenvalues. 

To precisely calculate the eigenvalues and the corresponding eigenvectors of $C_x$, we apply the perturbation technique.
We summarize the key steps here. 
Proposition \ref{Proposition:1} provides a Taylor expansion of $C_x$ in terms of $\epsilon$ up to the third order, and we could view $C_x$ as a function depending on $\epsilon$ around $0$. Consider the eigen-decomposition of $C_x$ as  
\begin{equation} \label{eigenvalue equation of C_x}
C_x U_x=U_x \Lambda_x \,,
\end{equation}
where $\Lambda_x$ is diagonal and $U_x \in O(p)$.
$\Lambda_x$ and $U_x$ satisfy 
$\Lambda_x=\Lambda_x(0) \epsilon^{d+2}+\Lambda_x^{'}(0) \epsilon^{d+4}+O(\epsilon^{d+6})$ and $
U_x=U_x(0) \epsilon^{d+2}+U_x^{'}(0) \epsilon^{d+4}+O(\epsilon^{d+6})$.
Therefore, we obtain $U_x$ and $\Lambda_x$ if we find $\Lambda_x(0)$, $\Lambda_x^{'}(0)$, $U_x(0)$ and $U_x^{'}(0)$. To achieve this goal, we differentiate (\ref{eigenvalue equation of C_x}), and compare terms with the same order of $\epsilon$. This technique fails to uniquely determine $U_x$ when the eigenvalue repeats,
and we need higher order terms in $C_x$ to determine the eigenvectors. The details could be found in Appendix \ref{Section:Perturbation}.

To simplify the statement of the eigen-structure, following Assumption \ref{AssumptionTangent}, we make one more assumption.}

\begin{Assumption}\label{AssumptionNormal}
Following Assumption \ref{AssumptionTangent}, without loss of generality, we assume that the manifold is translated and rotated properly, so that $e_{d+1}, \cdots ,e_p$ ``diagonalize'' the second fundamental form; that is, $M^{(2)}_{22}$ in Proposition \ref{Proposition:1} is diagonalized to $\Lambda_2^{(2)}=\text{diag}(\lambda^{(2)}_{d+1},\ldots,\lambda^{(2)}_{p})$. 
\end{Assumption}

The eigen-structure of the local covariance matrix is summarized in the following Proposition. The detailed proof of the Proposition is postponed to Section \ref{Section:Proof:PropositionForCovariance}.

\begin{proposition} \label{Proposition:2}
Fix $x\in M$. Suppose $\epsilon$ is sufficiently small and Assumptions \ref{AssumptionTangent} and \ref{AssumptionNormal} hold. The eigen-decomposition of $C_x=U_x\Lambda_x U_x^{\top}$, where $U_x\in O(p)$ and $\Lambda_x\in\mathbb{R}^{p\times p}$ is a diagonal matrix, is summarized below.

\textbf{Case 1:} When all diagonal entries of $\Lambda_2^{(2)}$ are nonzero, we have:
\begin{align*}
\Lambda_x&=\frac{|S^{d-1}|  {P}(x)\epsilon^{d+2}}{d(d+2)}\begin{bmatrix}
I_{d\times d} +\epsilon^2 \Lambda^{(2)}_1+\epsilon^4\Lambda^{(4)}_1 & 0 \\
0 & \epsilon^2\Lambda^{(2)}_2+\epsilon^4\Lambda^{(4)}_2 \\
\end{bmatrix}+O(\epsilon^6), \\
U_x&=U_x(0)(I_{p\times p}+\epsilon^2 \mathsf{S})+O(\epsilon^4)\in O(p),
\end{align*}
where $\Lambda^{(2)}_1,\Lambda^{(4)}_1\in \mathbb{R}^{d\times d}$ and $\Lambda^{(4)}_2\in \mathbb{R}^{(p-d)\times (p-d)}$ are diagonal matrices with diagonal entries of order $1$, $U_x(0)=\begin{bmatrix}X_1 & 0 \\ 0 & X_2\end{bmatrix}\in O(p)$, $X_1\in O(d)$, $X_2\in O(p-d)$, and $\mathsf{S}\in \mathfrak{o}(p)$. The explicit expression of these matrices are listed in (\ref{Proof:LemmaD6:Condition1:S1})-(\ref{description of X_2}).

\textbf{Case 2:} When $l$ diagonal entries for $\Lambda_2^{(2)}$ are $0$, where $1 \leq l \leq p-d$, we have the following eigen-decomposition under some conditions. 
Divide $C_x$ into blocks corresponding to the multiplicity $l$ as 
\begin{align} 
C_x =&\,\frac{|S^{d-1}|  {P}(x)}{d(d+2)} \epsilon^{d+2}
\Big(\begin{bmatrix}
I_{d \times d} & 0 &0 \\
0& 0 & 0 \nonumber \\
0& 0 & 0 \nonumber \\
\end{bmatrix}+
\begin{bmatrix}
M^{(2)}_{11} & M^{(2)}_{12,1} &  M^{(2)}_{12,2}  \\
M^{(2)}_{21,1} & \Lambda^{(2)}_{2,1} &  0 \\
M^{(2)}_{21,2} & 0 &  0 \\
\end{bmatrix}\epsilon^{2}\label{Expansion:LocalCovarianceMatrix:Case2}\\
&\quad+\begin{bmatrix}
M^{(4)}_{11} & M^{(4)}_{12,1} &  M^{(4)}_{12,2}  \\
M^{(4)}_{21,1} & M^{(4)}_{22,11} &  M^{(4)}_{22,12} \\
M^{(4)}_{21,2} & M^{(4)}_{22,21} &  M^{(4)}_{22,22} \\
\end{bmatrix}\epsilon^{4}+O(\epsilon^{6})\Big) \,, 
\end{align}
where $M^{(2)}_{12,1},M^{(4)}_{12,1}\in \mathbb{R}^{d\times (p-d-l)}$, $M^{(2)}_{12,2},M^{(4)}_{12,2}\in \mathbb{R}^{d\times l}$,
$M^{(2)}_{12,1}={M^{(2)\top}_{21,1}}$, $M^{(4)}_{12,1}={M^{(4)\top}_{21,1}}$, $M^{(2)}_{12,2}={M^{(4)\top}_{21,2}}$, $M^{(2)}_{12,2}={M^{(4)\top}_{21,2}}$,
$M^{(4)}_{22,11}\in S(p-d-l)$,  $M^{(4)}_{22,22}\in S(l)$, $M^{(4)}_{22,12}\in \mathbb{R}^{(p-d-l)\times l}$, and $M^{(4)}_{22,21}={M^{(4)\top}_{22,12}}$.

Denote the eigen-decomposition of the matrix $M^{(4)}_{22,22}-2M^{(2)}_{21,2}M^{(2)}_{12,2}$ as
\begin{equation}
M^{(4)}_{22,22}-2M^{(2)}_{21,2}M^{(2)}_{12,2}=U_{2,2}\Lambda^{(4)}_{2,2}U_{2,2}^\top,
\end{equation}
where $U_{2,2}\in O(l)$ and $\Lambda^{(4)}_{2,2}=\texttt{diag}[\lambda^{(4)}_{p-l+1},\ldots,\lambda^{(4)}_{p}]$ is a diagonal matrix. If we further assume that all diagonal entries of $\Lambda^{(4)}_{2,2}$ are nonzero,
we have
\begin{align*}
\Lambda_x&=\frac{|S^{d-1}|  {P}(x)\epsilon^{d+2}}{d(d+2)}\begin{bmatrix}
I_{d\times d} +\epsilon^2 \Lambda^{(2)}_1+\epsilon^4\Lambda^{(4)}_1 & 0 & 0\\
0 & \epsilon^2\Lambda^{(2)}_{2,1}+\epsilon^4\Lambda^{(4)}_{2,1} & 0\\
0 & 0 & \epsilon^4\Lambda^{(4)}_{2,2} \\
\end{bmatrix}+O(\epsilon^6), \\
U_x&=U_x(0)(I_{p\times p}+\epsilon^2 \mathsf{S})+O(\epsilon^4)\in O(p),
\end{align*}
where $\Lambda^{(4)}_1$ and $\Lambda^{(4)}_{2,1}$ are diagonal matrices, $U_x(0)=\begin{bmatrix}X_1 &0 &0\\ 0 & X_{2,1} & 0 \\ 0 & 0 & X_{2,2}\end{bmatrix}\in O(p)$, $X_1\in O(d)$, $X_{2,1}\in O(p-d-l)$, $X_{2,2}\in O(l)$, and $\mathsf{S}\in \mathfrak{o}(p)$. The explicit formula for these matrices are listed in (\ref{Proof:LemmaD6:Condition2:first})-(\ref{Proof:LemmaD6:Condition2:last}).

\end {proposition}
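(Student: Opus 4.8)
The plan is to treat Proposition~\ref{Proposition:1} as the Taylor expansion of a symmetric-matrix-valued function of $\epsilon$ and to feed it into the perturbation calculus of Section~\ref{Section:Perturbation}. Since only even powers of $\epsilon$ occur, set $s=\epsilon^2$, factor out the scalar, and write
\begin{equation}
C_x=\frac{|S^{d-1}|P(x)}{d(d+2)}\,\epsilon^{d+2}\,A(\epsilon^2),\qquad A(s)=A_0+sA_1+s^2A_2+O(s^3),
\end{equation}
where $A_0=\begin{bmatrix}I_{d\times d}&0\\0&0\end{bmatrix}$, $A_1=\begin{bmatrix}M^{(2)}_{11}&M^{(2)}_{12}\\M^{(2)}_{21}&M^{(2)}_{22}\end{bmatrix}$, $A_2=\begin{bmatrix}M^{(4)}_{11}&M^{(4)}_{12}\\M^{(4)}_{21}&M^{(4)}_{22}\end{bmatrix}\in S(p)$. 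By the perturbation calculus of Section~\ref{Section:Perturbation}, the eigenvalues and an orthonormal eigenbasis of $A(s)$ admit expansions in $s$, and it suffices to compute the eigenvalues to second order in $s$ and the eigenvectors to first order in $s$; translating back via $s=\epsilon^2$ then reproduces the expansions of $\Lambda_x$ and $U_x$ claimed in the proposition.

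The spectrum of $A_0$ is $\{1\}$, with eigenspace $\mathrm{span}(e_1,\dots,e_d)$, and $\{0\}$, with eigenspace $\mathrm{span}(e_{d+1},\dots,e_p)$, separated by the gap $1$. Thus for small $s$ the Riesz projection of $A(s)$ onto the part of the spectrum near $1$ is $I_{p,d}+O(s)$, and conjugating by the $O(s)$ orthogonal factor that realigns the two perturbed invariant subspaces with $\mathrm{span}(e_1,\dots,e_d)$ and $\mathrm{span}(e_{d+1},\dots,e_p)$ block-diagonalizes the problem. This is exactly why $U_x(0)$ is block diagonal, $U_x(0)=\begin{bmatrix}X_1&0\\0&X_2\end{bmatrix}$ with $X_1\in O(d)$ and $X_2\in O(p-d)$, and why the leading eigenvector correction takes the form $U_x(0)(I_{p\times p}+\epsilon^2\mathsf{S})$ with $\mathsf{S}\in\mathfrak{o}(p)$ whose off-diagonal block is fixed by $M^{(2)}_{12}$ divided by the gap. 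On the ``near $1$'' block the leading correction is the eigenproblem for $M^{(2)}_{11}$: choosing $X_1$ to diagonalize $M^{(2)}_{11}$ gives $\Lambda^{(2)}_1$, and the standard second-order formula, with resolvent $(A_0-I_{p\times p})^{-1}$ acting on the complementary ``near $0$'' subspace, yields $\Lambda^{(4)}_1$ from $M^{(4)}_{11}$ and $M^{(2)}_{12}M^{(2)}_{21}$.

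The delicate block is the ``near $0$'' block, where the eigenvalue $0$ has multiplicity $p-d$ and is only partially lifted at first order. Under Assumption~\ref{AssumptionNormal} the first-order correction there equals the diagonal matrix $\Lambda^{(2)}_2$. In Case~1 all its entries are nonzero, so the $p-d$ small eigenvalues already separate at order $\epsilon^{d+4}$; iterating the expansion one step gives $\Lambda^{(4)}_2$ and $X_2$, and the argument closes. In Case~2, $l$ diagonal entries of $\Lambda^{(2)}_2$ vanish, so on that $l$-dimensional subspace the $O(s)$ term does not resolve the degeneracy and one must pass to second order. Since the resolvent $(A_0-0\cdot I_{p\times p})^{-1}$ restricted to $\mathrm{span}(e_1,\dots,e_d)$ is the identity, the effective second-order operator there is $M^{(4)}_{22,22}$ minus the perturbation-through-the-eigenvalue-$1$-subspace contribution, which (with the normalization built into Proposition~\ref{Proposition:1}) is $2M^{(2)}_{21,2}M^{(2)}_{12,2}$; diagonalizing $M^{(4)}_{22,22}-2M^{(2)}_{21,2}M^{(2)}_{12,2}=U_{2,2}\Lambda^{(4)}_{2,2}U_{2,2}^\top$ and assuming all entries of $\Lambda^{(4)}_{2,2}$ are nonzero resolves the residual degeneracy, fixes $\Lambda^{(4)}_{2,2}$, and sets $X_{2,2}=U_{2,2}$; the non-degenerate part supplies $X_{2,1}$ and $\Lambda^{(4)}_{2,1}$ via the Case~1 argument. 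At each stage the $O(\epsilon^6)$ remainder of Proposition~\ref{Proposition:1} is negligible because the spectral gap being exploited is of the same order as the retained terms.

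The main obstacle is the degeneracy bookkeeping in Case~2: one must descend correctly through the chain of reduced perturbation problems --- eigenvalue $0$ of $A_0$, then its $A_1$-eigenspaces, then the $l$-dimensional kernel of that restriction --- identifying at each step the right effective operator, including the transition terms routed through the well-separated eigenvalue-$1$ block, and one must check that the nondegeneracy hypotheses (nonvanishing of the entries of $\Lambda_2^{(2)}$, respectively of $\Lambda^{(4)}_{2,2}$) are exactly what is needed to determine the eigenvectors to the stated order. Once this structure is in place, the explicit matrices in (\ref{Proof:LemmaD6:Condition1:S1})--(\ref{description of X_2}) and (\ref{Proof:LemmaD6:Condition2:first})--(\ref{Proof:LemmaD6:Condition2:last}) follow by routine, if lengthy, substitution into the perturbation identities of Section~\ref{Section:Perturbation}.
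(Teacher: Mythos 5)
Your proposal matches the paper's own proof in structure and content: the paper factors out the scalar $\frac{|S^{d-1}|P(x)}{d(d+2)}\epsilon^{d+2}$, sets up $A(\epsilon^2)=A_0+\epsilon^2 A_1+\epsilon^4 A_2+O(\epsilon^6)$ with the $M$-blocks as Taylor coefficients, and runs the perturbation machinery of Appendix \ref{Section:Perturbation} --- the non-repeated-eigenvalue branch for Case 1 and the repeated-eigenvalue branch (conjugation by $\Gamma$, then equation (\ref{A8}) identifying $M^{(4)}_{22,22}-2M^{(2)}_{21,2}M^{(2)}_{12,2}$ as the effective operator on the residual $l$-dimensional kernel) for Case 2. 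Your Riesz-projection and Rayleigh--Schr\"odinger phrasing is a somewhat slicker packaging of the paper's differentiate-and-compare-orders computation, but the decomposition of the spectrum, the reduction through the chain of degeneracies, and the resulting formulas are the same.
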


In general, the eigen-structure of $C_x$ may be more complicated than the two cases considered in Proposition \ref{Proposition:2}. In this general case, we could apply the same perturbation theory to evaluate the eigenvalues. Since the proof is similar but there is extensive notational loading, and it does not bring further insight to the LLE, we skip details of these more general situations.

\subsection{Variance analysis of the LLE} 
We now study the asymptotic behavior of the LLE. Under the manifold setup, from now on, we fix 
\begin{equation}
c=n\epsilon^{d+\rho}, 
\end{equation}
and we call $\rho$ the {\em regularization order}.
By (\ref{Definition:Wf:SecondVersion}), for $\boldsymbol{v}\in\mathbb{R}^N$, we have 
\begin{align}
\sum_{j=1}^N w_k(j)\boldsymbol{v}(j)=\frac{\boldsymbol{1}_N^\top \boldsymbol{v}-\boldsymbol{1}_N^\top G_n^\top \mathcal{I}_{n\epsilon^{d+\rho}}(G_nG_n^\top)  G_n\boldsymbol{v}}
{N -  \boldsymbol{1}_N^\top G_n^\top \mathcal{I}_{n\epsilon^{d+\rho}}(G_nG_n^\top) G_n\boldsymbol{1}_N }.
\end{align}
Before proceeding, we provide {\em a} geometric interpretation of this formula. By the eigen-decomposition $G_nG_n^\top=U_n\bar \Lambda_n U_n^\top$ and the fact that $\mathcal{I}_{n\epsilon^{d+\rho}}(G_nG_n^\top)=U_nJ_{p,r_n}J_{p,r_n}^\top(\bar\Lambda_n+{n\epsilon^{d+\rho}} I_{p\times p})^{-1}J_{p,r_n}J_{p,r_n}^\top U_n^\top=U_n\mathcal{I}_{n\epsilon^{d+\rho}}(\bar\Lambda_n)U_n^\top $ by the definition of $\mathcal{I}_\rho$ in (\ref{Definition:Irho:Soft}), we have $\boldsymbol{1}_N^\top G_n^\top \mathcal{I}_{n\epsilon^{d+\rho}}(G_nG_n^\top)  G_n\boldsymbol{v}=\boldsymbol{1}_N^\top G_n^\top U_n \mathcal{I}_{n\epsilon^{d+\rho}}(\bar\Lambda_n)U_n^\top  G_n\boldsymbol{v}$ and $\boldsymbol{1}_N^\top G_n^\top \mathcal{I}_{n\epsilon^{d+\rho}}(G_nG_n^\top)  G_n\boldsymbol{1}=\boldsymbol{1}_N^\top G_n^\top U_n \mathcal{I}_{n\epsilon^{d+\rho}}(\bar\Lambda_n)U_n^\top  G_n\boldsymbol{1}_N$. By the discussion of the local PCA in Section \ref{Section:LocalPCA}, $U_n^\top  G_n$ means evaluating the coordinates of all neighboring points of $\iota(x_k)$ {with the basis composed of the column vectors of $U_n$}, $U_n^\top  G_n \boldsymbol{1}$ means the mean coordinate of all neighboring points, $\mathcal{I}_{n\epsilon^{d+\rho}}(\bar \Lambda_n)$ means a regularized weighting of the coordinates that helps to enhance the nonlinear geometry of the point cloud, and $G_n^\top U_n \mathcal{I}_{n\epsilon^{d+\rho}}(\bar\Lambda_n)U_n^\top  G_n$ is a quadratic form of the averaged coordinates of all neighboring points. We could thus view the ``kernel'' part, $\boldsymbol{1}_N^\top G_n^\top U_n \mathcal{I}_{n\epsilon^{d+\rho}}(\bar\Lambda_n)U_n^\top  G_n$, as preserving the geometry of the point cloud, by evaluating how strongly the weighted coordinates of neighboring points are related to the mean coordinate of all neighboring points by the inner product.

Asymptotically, by the law of large numbers, when conditional on $\iota(x_k)$, 
\begin{equation*}
\frac{1}{n}G_n\boldsymbol{1}_N=\frac{1}{n} \sum_{j=1}^N(\iota(x_{k,j})-\iota(x_k))\xrightarrow[]{n\to \infty} \mathbb{E}[(X-\iota(x_k))\chi_{B_{\epsilon}^{\mathbb{R}^p}(\iota(x_k))}(X)]
\end{equation*}
and we ``expect'' the following holds
\begin{equation*}
n \mathcal{I}_{n\epsilon^{d+\rho}}(G_nG_n^\top) = \mathcal{I}_{\epsilon^{d+\rho}}(\frac{1}{n}G_nG_n^\top) \xrightarrow[]{n\to \infty}\mathcal{I}_{\epsilon^{d+\rho}}(C_{x_k}).
\end{equation*}
Also, we would ``expect'' to have
\begin{align*}
n \mathcal{I}_{n\epsilon^{d+\rho}}(G_nG_n^\top)\frac{1}{n}G_n\boldsymbol{1}_N \xrightarrow[]{n\to \infty} \mathcal{I}_{\epsilon^{d+\rho}}(C_{x_k})\big[\mathbb{E}(X-\iota(x_k))\chi_{B_{\epsilon}^{\mathbb{R}^p}(x_k)}\big]=:\mathbf{T}_{\iota(x_k)}\,.  
\end{align*}
Hence, for $f\in C(\iota(M))$, for $\iota(x_k)$ and its corresponding $\mathcal{N}_{\iota(x_k)}$, we would ``expect'' to have 
\begin{align}
\sum_{j=1}^N w_n(j)f(x_{k,j})
&\,\xrightarrow[]{n\to \infty}\frac{\mathbb{E}[\chi_{B_{\epsilon}^{\mathbb{R}^p}(x_k)}(X)f(X)]- \mathbf{T}_{\iota(x_k)}^\top \mathbb{E}[(X-\iota(x_k))\chi_{B_{\epsilon}^{\mathbb{R}^p}(x_k)}(X)f(X)]}{\mathbb{E}[\chi_{B_{\epsilon}^{\mathbb{R}^p}(x_k)}(X)]- \mathbf{T}_{\iota(x_k)}^\top \mathbb{E}[(X-\iota(x_k))\chi_{B_{\epsilon}^{\mathbb{R}^p}(x_k)}(X)]}\nonumber\\
&\,={
\frac{\mathbb{E}[f(X)(1-\mathbf{T}_{\iota(x)}^\top (X-\iota(x)))\chi_{B_{\epsilon}^{\mathbb{R}^p}(x)}(X)]}{\mathbb{E}[(1-\mathbf{T}_{\iota(x)}^\top (X-\iota(x)))\chi_{B_{\epsilon}^{\mathbb{R}^p}(x)}(X)]}}.
\end{align}
However, it is not possible to directly see how the { convergence} happens, due to the dependence among different terms and how the regularized pseudo-inverse converges. The dependence on the regularization order is also not clear. A careful theoretical analysis is needed. 
 
To proceed with the proof, we need to discuss a critical observation. Note that the term $C_x$ might be ill-conditioned for the pseudo-inverse procedure, and the regularized pseudo inverse depends on how the regularization penalty $\rho$ is chosen. As we will see later, the choice of $\rho$ is critical for the outcome. The ill-conditionedness depends on the manifold geometry, and can be complicated. In this paper we focus on the following three cases. 

\begin{Condition}\label{Condition:1}
Follow the notations used in Proposition \ref{Proposition:2}. 
For the local covariance matrix $C_x$ with the rank $r$, without loss of generality, we consider the following three cases:
\begin{itemize}
\item Case 0: $r=d$;
\item Case 1: $r=p>d$, and $\lambda^{(2)}_{d+1},\ldots,\lambda^{(2)}_p$ are nonzero;
\item Case 2: $r=p>d$, $\lambda^{(2)}_{d+1},\ldots,\lambda^{(2)}_{p-l}$, are nonzero, where $1 \leq l \leq p-d$, $\lambda^{(2)}_{p-l+1}=\ldots=\lambda^{(2)}_{p}=0$, and $\lambda^{(4)}_{p-l+1},\ldots,\lambda^{(4)}_{p}$ are nonzero. 
\end{itemize}
\end{Condition}

At first glance, it is limited to assume that when $r>d$, we have $r=p$ in Cases 1 and 2. However, it is general enough in the following sense. In Cases 1 and 2, if $C_x$ is degenerate, that is, $d<r<p$, it means that locally the manifold only occupies a lower dimensional affine subspace. Therefore, the sampled data are constrained to this affined subspace, and hence the rank of the local sample covariance matrix satisfies $r_n\leq r$. As a result, the analysis can be carried out only on this affine subspace without changing the outcome.
More general situations could be studied by the same analysis techniques shown below, but they will not provide more insights about our understanding of the algorithm and will introduce additional notational burdens. 
For $f \in C(\iota(M))$, define
\begin{equation}
Qf(x):=\frac{\mathbb{E}[f(X)(1-\mathbf{T}_{\iota(x)}^\top (X-\iota(x)))\chi_{B_{\epsilon}^{\mathbb{R}^p}(x)}(X)]}{\mathbb{E}[(1-\mathbf{T}_{\iota(x)}^\top (X-\iota(x)))\chi_{B_{\epsilon}^{\mathbb{R}^p}(x)}(X)]},\label{Definition:Qf:KernelExpansion}
\end{equation}
The following theorem summarizes the relationship between the LLE and $Qf$ under these three cases.

\begin{theorem} \label{Theorem:t0}
Fix $f \in C(\iota(M))$. Suppose the regularization order is $\rho\in \mathbb{R}$, $\epsilon=\epsilon(n)$ so that $\frac{\sqrt{\log(n)}}{n^{1/2}\epsilon^{d/2+1}}\to 0$ and $\epsilon\to 0$ as $n\to \infty$. 
With probability greater than $1-n^{-2}$, for all $x_k\in\mathcal{X}$,
under different conditions listed in Condition \ref{Condition:1}, we have:
\begin{align}
&\sum_{j=1}^N w_k(j)f(x_{k,j})-f(x_k)\\
=\,&
\left\{
\begin{array}{ll}
Qf(x_k)-f(x_k)+O\Big(\frac{\sqrt{\log (n)}}{n^{1/2}\epsilon^{d/2-1}}\Big)&\mbox{in Case 0}\\
Qf(x_k)-f(x_k)+O\Big(\frac{\sqrt{\log (n)}}{n^{1/2}\epsilon^{d/2+[(-1)\vee (0\wedge (\rho-4)]}}\Big)&\mbox{in Cases 1,2}
\end{array}
\right.\nonumber
\end{align}
Particularly, when $\rho\leq 3$, with probability greater than $1-n^{-2}$, for all $x_k\in\mathcal{X}$, for all Cases listed in Condition \ref{Condition:1}, we have:
\begin{align}
&\sum_{j=1}^N w_k(j)f(x_{k,j})-f(x_k)=Qf(x_k)-f(x_k)+O\Big(\frac{\sqrt{\log (n)}}{n^{1/2}\epsilon^{d/2-1}}\Big).
\end{align}
\end{theorem}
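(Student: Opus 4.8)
The plan is to establish the theorem as a quantitative, uniform-in-$k$ law of large numbers, obtained by inserting the conditional expectations of the empirical quantities appearing in the closed form of $w_n$ from Proposition~\ref{Proposition:p1}. Using the scaling identity $\mathcal{I}_{nc'}(nA)=n^{-1}\mathcal{I}_{c'}(A)$ for the operator in (\ref{Definition:Irho:Soft}), Proposition~\ref{Proposition:p1} gives
\[
\sum_{j=1}^N w_k(j)f(x_{k,j})=\frac{\tfrac1n\boldsymbol{1}_N^\top\boldsymbol{f}-a_n^\top\,\mathcal{I}_{\epsilon^{d+\rho}}(B_n)\,b_n}{\tfrac1nN-a_n^\top\,\mathcal{I}_{\epsilon^{d+\rho}}(B_n)\,a_n},
\]
where $a_n:=\tfrac1nG_n\boldsymbol{1}_N$, $b_n:=\tfrac1nG_n\boldsymbol{f}$, $B_n:=\tfrac1nG_nG_n^\top$, and $\boldsymbol{f}:=[f(x_{k,1}),\ldots,f(x_{k,N})]^\top$. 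Write $\chi_k:=\chi_{B^{\mathbb{R}^p}_\epsilon(\iota(x_k))}$. Conditioned on $x_k$, each of $\tfrac1nN$, $\tfrac1n\boldsymbol{1}_N^\top\boldsymbol{f}$, $a_n$, $b_n$, $B_n$ is an average of $n-1$ i.i.d. bounded random variables (respectively vectors, matrices) supported on an event of probability of order $\epsilon^d$, with conditional means $\mathbb{E}[\chi_k(X)]$, $\mathbb{E}[f(X)\chi_k(X)]$, $\mathbb{E}[(X-\iota(x_k))\chi_k(X)]$, $\mathbb{E}[(X-\iota(x_k))f(X)\chi_k(X)]$, and $C_{x_k}$. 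Substituting these means for the averages reproduces exactly the numerator and denominator of $Qf(x_k)$ in (\ref{Definition:Qf:KernelExpansion}), with $\mathbf{T}_{\iota(x_k)}=\mathcal{I}_{\epsilon^{d+\rho}}(C_{x_k})\mathbb{E}[(X-\iota(x_k))\chi_k(X)]$, so the task reduces to bounding the replacement errors.

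For the concentration step I would apply Bernstein's inequality to the scalar and vector averages and the matrix Bernstein inequality to $B_n$. The summands have sup-norm $O(1)$, $O(\epsilon)$, $O(\epsilon^2)$ respectively on an event of probability $O(\epsilon^d)$ (uniformly in $x_k$, by Assumption~\ref{AssumptionPDF} and compactness of $M$, which likewise make the order $\epsilon^d$ of $\mathbb{E}[\chi_k(X)]$ and the eigenvalue orders of $C_{x_k}$ in Propositions~\ref{Proposition:1} and \ref{Proposition:2} uniform), so the variance proxies are of order $\epsilon^d$, $\epsilon^{d+2}$, $\epsilon^{d+4}$, and each average deviates from its conditional mean by $O(\tfrac{\sqrt{\log n}}{\sqrt n}\epsilon^{d/2})$, $O(\tfrac{\sqrt{\log n}}{\sqrt n}\epsilon^{d/2+1})$, $O(\tfrac{\sqrt{\log n}}{\sqrt n}\epsilon^{d/2+2})$ on an event of probability at least $1-n^{-3}$ (with a constant we may enlarge). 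A union bound over $k=1,\ldots,n$ gives probability at least $1-n^{-2}$ on which all these bounds hold for every $x_k$; moreover $\tfrac{\sqrt{\log n}}{\sqrt n\,\epsilon^{d/2+1}}\to0$ forces $N\asymp n\epsilon^d\to\infty$, so $p<N$ and Proposition~\ref{Proposition:p1} is applicable, on the same event.

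The delicate point is propagating these deviations through $\mathcal{I}_{\epsilon^{d+\rho}}(B_n)$, and this is both where the regularization order enters and where the main obstacle lies. In Case~0 of Condition~\ref{Condition:1} the rank of $C_{x_k}$ is $d$ and its nonzero eigenvalues are all $\asymp\epsilon^{d+2}$; with high probability the same holds for $B_n$, the $d\times d$ compression of $\mathcal{I}_{\epsilon^{d+\rho}}(B_n)$ to the relevant range is well conditioned (the regularization $\epsilon^{d+\rho}$ being negligible against $\epsilon^{d+2}$), and ordinary resolvent perturbation on this compression yields a $\rho$-independent contribution, giving the rate $O(\tfrac{\sqrt{\log n}}{\sqrt n\,\epsilon^{d/2-1}})$. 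In Cases~1 and 2, $C_{x_k}$ has full rank $p$, hence with high probability so does $B_n$, and $\mathcal{I}_{\epsilon^{d+\rho}}(B_n)=(B_n+\epsilon^{d+\rho}I_{p\times p})^{-1}$; here one invokes the eigen-structure of Proposition~\ref{Proposition:2}, namely that the $d$ tangential eigenvalues of $C_{x_k}$ are $\asymp\epsilon^{d+2}$ while the $p-d$ normal eigenvalues are as small as $\epsilon^{d+4}$ (and $\epsilon^{d+6}$ in Case~2), so $\|\mathcal{I}_{\epsilon^{d+\rho}}(C_{x_k})\|\asymp\epsilon^{-(d+(4\wedge\rho))}$. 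Writing $\sigma=\epsilon^{d+\rho}$, the resolvent identity $(B_n+\sigma I)^{-1}-(C_{x_k}+\sigma I)^{-1}=(B_n+\sigma I)^{-1}(C_{x_k}-B_n)(C_{x_k}+\sigma I)^{-1}$, together with the eigenvector perturbation bookkeeping of Appendix~\ref{Section:Perturbation} — needed precisely because the small eigenvalues of $B_n$ are poorly separated from each other and from $0$ — turns the $O(\tfrac{\sqrt{\log n}}{\sqrt n}\epsilon^{d/2+2})$ deviation of $B_n$ and the $O(\tfrac{\sqrt{\log n}}{\sqrt n}\epsilon^{d/2+1})$ deviation of $a_n$ into a deviation of $\mathbf{T}_{n,z_k}$, and thence of the whole ratio, whose $\epsilon$-exponent is shifted by $\min(0,\rho-4)$; the floor $(-1)\vee(\cdot)$ reflects that once $\rho\le3$ the regularization dominates even the smallest normal eigenvalue, so no ill-conditioning survives and the estimate is no worse than Case~0.

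Finally I would assemble the bound. A uniform-in-$x$ lower estimate shows the denominator $\mathbb{E}[(1-\mathbf{T}_{\iota(x)}^\top(X-\iota(x)))\chi_{B^{\mathbb{R}^p}_\epsilon(\iota(x))}(X)]$ of (\ref{Definition:Qf:KernelExpansion}) is of order $\epsilon^d$ and bounded away from $0$; combined with the concentration and perturbation bounds above, $\tfrac1nN-a_n^\top\mathcal{I}_{\epsilon^{d+\rho}}(B_n)a_n\asymp\epsilon^d$ on the good event, so one may expand its reciprocal, write $\sum_j w_k(j)f(x_{k,j})-f(x_k)$ as $(Qf(x_k)-f(x_k))$ plus the accumulated error contributions, and read off the stated order in each case; specializing to $\rho\le3$ collapses the exponent $d/2+[(-1)\vee(0\wedge(\rho-4))]$ to $d/2-1$ for all three cases. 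Note that only the boundedness of $f\in C(\iota(M))$, not its smoothness, is used anywhere above; smoothness of $f$ enters only afterwards, when $Qf(x)-f(x)$ is Taylor-expanded in $\epsilon$ to extract the limiting differential operator.
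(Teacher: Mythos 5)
Your overall architecture matches the paper's: write the weights in the ratio form of Proposition \ref{Proposition:p1}, replace each empirical average by its conditional expectation to produce $Qf(x_k)$, control the replacements by Bernstein plus a union bound over $k$, and propagate the deviations through the regularized pseudo-inverse via the perturbation theory of Appendix \ref{Section:Perturbation}. However, there is a genuine quantitative gap in your concentration step: you bound the deviations of $a_n$, $b_n$ and $B_n$ \emph{isotropically}, using the worst-case sizes $\|X-\iota(x_k)\|=O(\epsilon)$ and $\|(X-\iota(x_k))(X-\iota(x_k))^\top\|=O(\epsilon^2)$ on the ball, which yields the variance proxies $\epsilon^{d+2}$ and $\epsilon^{d+4}$ uniformly over all coordinate directions. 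The paper's proof (Lemmas \ref{Proof:Theorem1:LemmaF3} and \ref{Proof:Theorem1:LemmaF4}) is entrywise and exploits that, by Lemma \ref{Lemma:2}, the \emph{normal} components of $x_{k,j}-x_k$ are $O(\epsilon^2)$ (and $O(\epsilon^3)$ in the $l$ degenerate directions of Case 2), so the corresponding variances are smaller by factors of $\epsilon^2$ (resp.\ $\epsilon^4$) and the normal deviations of $\frac{1}{n\epsilon^d}\sum_j(x_{k,j}-x_k)$ are $O(\frac{\sqrt{\log n}}{n^{1/2}\epsilon^{d/2-2}})$ rather than your $O(\frac{\sqrt{\log n}}{n^{1/2}\epsilon^{d/2-1}})$. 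This anisotropy is precisely what cancels the $\epsilon^{-2}$ (resp.\ $\epsilon^{-4}$) amplification of the normal subspace by $\mathcal{I}_{\epsilon^{d+\rho}}$, and without it the dominant normal contribution to $\mathcal{E}_2^\top\mathcal{I}_{\epsilon^\rho}(\mathbb{E}F_3)\mathbb{E}F_4+\mathbb{E}F_2^\top\mathcal{I}_{\epsilon^\rho}(\mathbb{E}F_3)\mathcal{E}_4$ comes out as $\frac{\sqrt{\log n}}{n^{1/2}\epsilon^{d/2+(4\wedge\rho)-3}}$ instead of $\frac{\sqrt{\log n}}{n^{1/2}\epsilon^{d/2+(4\wedge\rho)-4}}$ --- a loss of a full factor of $\epsilon^{-1}$ that already spoils the stated rate at $\rho=3$ and at all $\rho\geq 4$.

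The same crudeness breaks your treatment of $\mathcal{I}_{\epsilon^{d+\rho}}(B_n)$ in Cases 1 and 2 when $\rho\geq 4$. Your matrix-Bernstein bound gives $\|B_n-C_{x_k}\|=O(\frac{\sqrt{\log n}}{n^{1/2}}\epsilon^{d/2+2})$, but the normal eigenvalues you must track are of size $\epsilon^{d+4}$; the ratio is $\frac{\sqrt{\log n}}{n^{1/2}\epsilon^{d/2+2}}=\epsilon^{-1}\cdot\frac{\sqrt{\log n}}{n^{1/2}\epsilon^{d/2+1}}$, which is \emph{not} $o(1)$ under the standing bandwidth assumption (at the optimal bandwidth it is $\asymp 1$). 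So you cannot conclude that the small eigenvalues and eigenspaces of $B_n$ track those of $C_{x_k}$, and the resolvent comparison underlying your ``shift by $\min(0,\rho-4)$'' is not justified. The fix is exactly the paper's: bound the covariance deviation block by block, obtaining $O(\frac{\sqrt{\log n}}{n^{1/2}}\epsilon^{d/2+4})$ for the normal--normal entries and $O(\frac{\sqrt{\log n}}{n^{1/2}}\epsilon^{d/2+3})$ for the mixed entries, which is $o(\epsilon^{d+4})$ and makes the eigen-perturbation of Lemma \ref{Proof:Theorem1:LemmaF4} go through. In short, the skeleton of your argument is right, but the direction-dependent variance computation is not an optimization --- it is the load-bearing step, and the theorem's rates cannot be obtained without it.
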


The proof of Theorem \ref{Theorem:t0} is postponed to Appendix \ref{Section:Theoremt0}. 
Note that the convergence rate of Case 0 is fast, no matter what regularization order $\rho$ is chosen, while the convergence rate of Case 1 and Case 2 depends on $\rho$. This theorem echoes several practical findings of the LLE that the choice of regularization is critical in the performance, and it suggests that we should choose $\rho=3$.

\begin{remark}
We should compare the convergence rate of the LLE with that of the DM. The convergence rate of Case 0 is the same as that of the eigenmap or the DM without any normalization \cite{Singer_Wu:2016}, while the convergence rate of Case 1 and Case 2 is the same as that of the $\alpha$-normalized DM \cite{Coifman_Lafon:2006} when $\rho\geq 4$ \cite{Singer_Wu:2016}. Note that the main convergence rate bottleneck for the $\alpha$-normalized DM comes from the probability density function estimation, while the convergence bottleneck for the LLE is the regularized pseudo-inverse.  
\end{remark}

\subsection{The kernel function corresponding to the LLE}

Theorem \ref{Theorem:t0} describes how the LLE could be viewed as a ``diffusion process'' on the dataset. 
Note that
\begin{align}
&{
\mathbb{E}[f(X)(1-\mathbf{T}_{\iota(x)}^\top (X-\iota(x)))\chi_{B_{\epsilon}^{\mathbb{R}^p}(x)}(X)]}\\
=&\,\int_{M} (1-\mathbf{T}_{\iota(x_k)}^\top (\iota(y)-\iota(x_k)))\chi_{B_{\epsilon}^{\mathbb{R}^p}(x_k)}(\iota(y))f(\iota(y)) P(y)dV(y)\nonumber
\end{align}
Therefore, we can view $w_n$ as a ``zero-one'' kernel supported on $B_{\epsilon}^{\mathbb{R}^p}(x_k)\cap \iota(M)$ with the correction depending on $\mathbf{T}_{\iota(x_k)}$. Note that after the correction, the whole operator may no longer be a diffusion.

\begin{corollary}\label{kernel}
The integral kernel associated with the LLE when the regularization order is $\rho\in\mathbb{R}$ is 
\begin{equation}
K_{\texttt{LLE}}(x,y)=[1- \mathbf{T}_{\iota(x)}^\top(\iota(y)-\iota(x))]\chi_{B_{\epsilon}^{\mathbb{R}^p}(\iota(x)) \cap \iota(M)}(\iota(y)),
\end{equation}
where $x,y\in M$ and 
\begin{equation}
\mathbf{T}_{\iota(x)}:= \mathcal{I}_{\epsilon^{d+\rho}}(C_{x})\big[\mathbb{E}(X-\iota(x))\chi_{B_{\epsilon}^{\mathbb{R}^p}(x)}\big]    \in \mathbb{R}^p.\label{Definition:Tx:ContinuousCase}
\end{equation}
\end{corollary}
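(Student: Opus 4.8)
The plan is to identify $K_{\texttt{LLE}}(x,y)$ directly from the limiting expression for the barycentric-coordinate sums already derived heuristically in Section~\ref{Section:Asymptotics} and rigorously controlled in Theorem~\ref{Theorem:t0}. Recall that for $f\in C(\iota(M))$ the theorem shows $\sum_{j=1}^N w_k(j)f(x_{k,j})$ converges to $Qf(x_k)$, where $Qf$ is defined in (\ref{Definition:Qf:KernelExpansion}). The first step is to expand the numerator of $Qf(x)$ using the identity already noted right before the corollary, namely
\begin{equation*}
\mathbb{E}[f(X)(1-\mathbf{T}_{\iota(x)}^\top (X-\iota(x)))\chi_{B_{\epsilon}^{\mathbb{R}^p}(x)}(X)]=\int_{M}[1-\mathbf{T}_{\iota(x)}^\top(\iota(y)-\iota(x))]\chi_{B_{\epsilon}^{\mathbb{R}^p}(\iota(x))}(\iota(y))f(\iota(y))P(y)\,dV(y),
\end{equation*}
and to observe that on the support of the integrand $\chi_{B_{\epsilon}^{\mathbb{R}^p}(\iota(x))}(\iota(y))=\chi_{B_{\epsilon}^{\mathbb{R}^p}(\iota(x))\cap\iota(M)}(\iota(y))$ since $\iota(y)\in\iota(M)$ automatically. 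This exhibits $Qf(x)$ as the ratio $\int_M K_{\texttt{LLE}}(x,y)f(\iota(y))P(y)\,dV(y)\big/\int_M K_{\texttt{LLE}}(x,y)P(y)\,dV(y)$ with $K_{\texttt{LLE}}$ exactly as stated; the denominator is the same expression with $f\equiv 1$.

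Second, I would record that $\mathbf{T}_{\iota(x)}$ appearing in this kernel is precisely the limit, established in the ``variance analysis'' discussion, of $n\,\mathcal{I}_{n\epsilon^{d+\rho}}(G_nG_n^\top)\frac1n G_n\boldsymbol 1_N$, namely $\mathcal{I}_{\epsilon^{d+\rho}}(C_x)\big[\mathbb{E}(X-\iota(x))\chi_{B_{\epsilon}^{\mathbb{R}^p}(x)}\big]$, which is (\ref{Definition:Tx:ContinuousCase}). Here one uses the homogeneity $n\,\mathcal{I}_{n c'}(A)=\mathcal{I}_{c'}(\tfrac1n A)$ of the regularized pseudo-inverse operator $\mathcal{I}_c$ defined in (\ref{Definition:Irho:Soft}) together with $c=n\epsilon^{d+\rho}$, so that the $n$-dependence cancels and the continuous kernel depends on $\epsilon$ and $\rho$ only through $C_x$ and the first local moment of $X$. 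With Condition~\ref{Condition:1} in force, $\mathcal{I}_{\epsilon^{d+\rho}}(C_x)$ is well defined for all $\rho$, so $\mathbf{T}_{\iota(x)}$ is a well-defined element of $\mathbb{R}^p$ and the claimed formula is meaningful.

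Third, I would invoke Theorem~\ref{Theorem:t0} to close the loop: since, with probability greater than $1-n^{-2}$ and uniformly over $x_k\in\mathcal X$, the finite-sample LLE weights applied to $f$ differ from $Qf(x_k)$ by a vanishing error, the operator that LLE realizes in the limit is integration against $K_{\texttt{LLE}}(x,y)P(y)\,dV(y)$ followed by the normalization by $\int_M K_{\texttt{LLE}}(x,y)P(y)\,dV(y)$; thus $K_{\texttt{LLE}}$ as displayed is the integral kernel associated with the LLE. Essentially this corollary is a bookkeeping statement: it repackages the already-proven content of Theorem~\ref{Theorem:t0} and the definition of $\mathbf{T}_{\iota(x)}$ into the language of kernels, so there is no substantive new estimate to prove.

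The only point requiring a little care — and hence the ``main obstacle,'' though it is minor — is the consistency of the two appearances of $\mathbf{T}$: the finite-sample $\mathbf{T}_{n,z_k}=\mathcal{I}_c(G_nG_n^\top)G_n\boldsymbol 1_N$ of (\ref{Definition:Tn}) versus the population $\mathbf{T}_{\iota(x)}$ of (\ref{Definition:Tx:ContinuousCase}). One must check that the scaling $c=n\epsilon^{d+\rho}$ is exactly what makes $\frac1n G_nG_n^\top\to C_x$ and $\frac1n G_n\boldsymbol 1_N\to \mathbb{E}[(X-\iota(x))\chi_{B_\epsilon^{\mathbb{R}^p}(x)}]$ compatible, i.e. that no stray factor of $\epsilon$ or $n$ is lost when passing $\mathcal{I}_c$ through the $1/n$ normalization; this is immediate from the definition of $\mathcal{I}_c$ but deserves an explicit line. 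Everything else is a direct substitution.
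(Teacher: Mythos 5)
Your proposal is correct and matches the paper's implicit approach: the paper offers no separate proof for this corollary, treating it as a direct reading-off of the kernel from the displayed integral expression for $\mathbb{E}[f(X)(1-\mathbf{T}_{\iota(x)}^\top(X-\iota(x)))\chi_{B_\epsilon^{\mathbb{R}^p}(x)}(X)]$ just before the statement, together with the heuristic limit of $\mathbf{T}_{n,z_k}$ established in the variance-analysis discussion. Your explicit check of the homogeneity $n\,\mathcal{I}_{n c'}(A)=\mathcal{I}_{c'}(A/n)$, and your remark that $\chi_{B_\epsilon^{\mathbb{R}^p}(\iota(x))}$ restricted to $\iota(M)$ equals $\chi_{B_\epsilon^{\mathbb{R}^p}(\iota(x))\cap\iota(M)}$, are both correct bookkeeping details the paper leaves tacit.
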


Note that $K_{\texttt{LLE}}$ depends on $\epsilon$, the geometry of the manifold near $x$, and $\rho$ via $\mathbf{T}_{\iota(x)}$.
We provide some properties of the kernel function $K_{\texttt{LLE}}$. By a direct expansion, we have
$\mathbf{T}_{\iota(x)}^\top=\sum_{i=1}^r\frac{u_i^\top\mathbb{E}[(X-x_k)\chi_{B_{\epsilon}^{\mathbb{R}^p}(x_k)}(X)]}{\lambda_i+\epsilon^{d+\rho}} u_i^\top$, where $u_i$ and $\lambda_i$ are the $i$-th eigen-pair of $C_x$.
Since $| \mathbb{E}(X-\iota(x_k))\chi_{B_{\epsilon}^{\mathbb{R}^p}(x_k)}(X) |$ is bounded above by $\text{vol}(M) \epsilon$ , $\lambda_i+\epsilon^{d+\rho}$ is bounded below by $\epsilon^{d+\rho}$ and each $u_i$ is a unit vector, $|\mathbf{T}_{x_k}|$ is bounded above by $\sum_{i=1}^{r}\frac{\epsilon \text{vol}(M)}{\lambda_i+\epsilon^{d+\rho}}$. Consequently, we have the following proposition. 
\begin{proposition}
The kernel $K_{\texttt{LLE}}$ is compactly supported and is in $L^2(M\times M)$. Thus, the linear operator $A:L^2(M,PdV)\to L^2(M, PdV)$ defined by
\begin{equation}
{
Af(x):=\mathbb{E}[f(X)(1-\mathbf{T}_{\iota(x)}^\top (X-\iota(x)))\chi_{B_{\epsilon}^{\mathbb{R}^p}(x)}(X)]}\label{Definition:Aoperator}
\end{equation}
is Hilbert-Schmidt. 
\end{proposition}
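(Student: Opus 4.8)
The plan is to verify the two claims in sequence: first that $K_{\texttt{LLE}}\in L^2(M\times M)$, and then that this implies $A$ is Hilbert--Schmidt. The kernel is compactly supported because of the indicator factor $\chi_{B_{\epsilon}^{\mathbb{R}^p}(\iota(x))\cap\iota(M)}(\iota(y))$: for each fixed $x$, the kernel vanishes unless $\iota(y)\in B_{\epsilon}^{\mathbb{R}^p}(\iota(x))$, so the support lies in the compact set $\{(x,y)\in M\times M: \|\iota(x)-\iota(y)\|_{\mathbb{R}^p}\le \epsilon\}$, which is closed in the compact space $M\times M$.

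The first step is to bound $K_{\texttt{LLE}}(x,y)$ pointwise. Using the expansion $\mathbf{T}_{\iota(x)}^\top=\sum_{i=1}^r\frac{u_i^\top\mathbb{E}[(X-\iota(x))\chi_{B_{\epsilon}^{\mathbb{R}^p}(\iota(x))}(X)]}{\lambda_i+\epsilon^{d+\rho}}u_i^\top$ together with $\|\mathbb{E}[(X-\iota(x))\chi_{B_{\epsilon}^{\mathbb{R}^p}(\iota(x))}(X)]\|\le \epsilon\,\mathrm{vol}(M)$, $\lambda_i+\epsilon^{d+\rho}\ge \epsilon^{d+\rho}$, and $\|u_i\|=1$, we get the bound $\|\mathbf{T}_{\iota(x)}\|\le r\,\epsilon^{1-d-\rho}\mathrm{vol}(M)=:C_1$, uniform in $x$ (here $r\le p$). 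Then for $(x,y)$ in the support, $\|\iota(y)-\iota(x)\|\le\epsilon$, so $|K_{\texttt{LLE}}(x,y)|\le 1+C_1\epsilon=:C_2$, again uniform. Since $K_{\texttt{LLE}}$ is bounded and supported on a subset of the finite-measure space $(M\times M, (PdV)\otimes(PdV))$ — finite because $M$ is compact and $P\le P_M$ — we conclude $\int_{M\times M}|K_{\texttt{LLE}}(x,y)|^2 P(x)P(y)\,dV(x)dV(y)\le C_2^2\,(P_M\,\mathrm{vol}(M))^2<\infty$, i.e.\ $K_{\texttt{LLE}}\in L^2(M\times M)$.

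The second step is the standard fact that an integral operator with $L^2$ kernel on an $L^2$ space is Hilbert--Schmidt. One should check that $Af(x)=\int_M K_{\texttt{LLE}}(x,y)f(\iota(y))P(y)\,dV(y)$ matches the definition in (\ref{Definition:Aoperator}); this is immediate by writing out the expectation as an integral against $P\,dV$ as in the display preceding Corollary \ref{kernel}. Then, picking any orthonormal basis $\{\phi_j\}$ of $L^2(M,PdV)$, the Hilbert--Schmidt norm satisfies $\sum_j\|A\phi_j\|^2=\int_{M\times M}|K_{\texttt{LLE}}(x,y)|^2 P(x)P(y)\,dV(x)dV(y)<\infty$ by Parseval applied to $y\mapsto K_{\texttt{LLE}}(x,y)$ for each fixed $x$ and Tonelli; hence $A$ is Hilbert--Schmidt (in particular bounded and compact).

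I do not expect a genuine obstacle here; the statement is essentially a packaging of the uniform bound on $\mathbf{T}_{\iota(x)}$ already derived in the paragraph before the proposition, combined with compactness of $M$ and the textbook characterization of Hilbert--Schmidt integral operators. The only point requiring a modicum of care is the uniformity of the bound on $\|\mathbf{T}_{\iota(x)}\|$ in $x$: one needs that the eigenvalues $\lambda_i=\lambda_i(x)$ of $C_x$ are bounded below by $\epsilon^{d+\rho}$ only after regularization (the bare $\lambda_i$ could be tiny or the rank $r=r(x)$ could vary), but since we only use $\lambda_i+\epsilon^{d+\rho}\ge\epsilon^{d+\rho}$ and $r(x)\le p$, no lower bound on the bare spectrum is needed and the estimate is genuinely uniform for a fixed $\epsilon>0$.
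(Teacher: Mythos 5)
Your argument is correct and essentially reproduces the paper's own reasoning: you use the same spectral expansion of $\mathbf{T}_{\iota(x)}$, the same regularization lower bound $\lambda_i+\epsilon^{d+\rho}\ge\epsilon^{d+\rho}$, and the same bound $\|\mathbb{E}[(X-\iota(x))\chi_{B_\epsilon^{\mathbb{R}^p}(\iota(x))}(X)]\|\le\epsilon\,\mathrm{vol}(M)$ to conclude $\mathbf{T}_{\iota(x)}$ and hence $K_{\texttt{LLE}}$ are bounded uniformly in $x$, after which compactness of $M$ gives $K_{\texttt{LLE}}\in L^2$ and the standard fact about $L^2$-kernel integral operators finishes. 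Your explicit note that uniformity follows from $r(x)\le p$ and the regularized denominator (rather than any lower bound on the bare spectrum) is exactly the right observation, though the paper leaves it implicit.
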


Note that the kernel function $K_{\texttt{LLE}}(x,\cdot)$ depends on $x$ and hence the manifold, and the kernel is dominated by normal bundle information, due to the regularized pseudo-inverse procedure. 
For example, if $M$ is an affine subspace of $\mathbb{R}^p$ and the data is uniformly sampled, then $\mathbb{E}[(X-x)\chi_{B_{\epsilon}^{\mathbb{R}^p}(x)}(X)]=0$. Consequently, $\mathbf{T}_x=0$ and $K(x,y)=1$. 
If $M$ is $S^{p-1}$, a unit sphere centered at origin embedded in $\mathbb{R}^p$ and the data is uniformly sampled, the first dominant $p-1$ eigenvectors are perpendicular to $x$ and the last eigenvector is parallel to $x$. By a direct calculation, $\mathbb{E}[(X-x)\chi_{B_{\epsilon}^{\mathbb{R}^p}(x)}(X)]$ is parallel to $x$ and hence $K(x,y)$ behaves like a quadratic function $1-cu_p^\top (y-x)=1-cx^\top (y-x)$, where $c$ is the constant depending on the eigenvalues.

\subsection{Bias analysis}

For $f \in C(\iota(M))$, by the definition of $A$, we have 
\begin{equation}
Qf(x)=\frac{(Af)(x)}{(A1)(x)},\label{Definition:Qf:KernelExpansion}
\end{equation}
where $1$ means the constant function.
We now provide an {\em approximation of identity} expansion of the $Q$ operator. By a direct expansion, we have
\begin{equation}
Af(x)
=\int_MK_{\texttt{LLE}}(x,y)f(\iota(y))P(y)d V(y) .
\end{equation}
While the formula of the $Q$ operator looks like the diffusion process commonly encountered in the graph Laplacian based approach, like the DM \cite{Coifman_Lafon:2006}, the proof and the result are essentially different. 
To ease the notation, define
\begin{align}
\mathfrak{N}_0(x):= \frac{1}{|S^{d-1}|}\int_{S^{d-1}}\Second_{x}(\theta,\theta)d\theta,\label{Definition:N0x:forsimplification}
\end{align}
\vspace{-15pt}
\begin{align}
\mathfrak{M}_2(x):=\frac{1}{|S^{d-1}|}\int_{S^{d-1}} \Second_x(\theta,\theta) \theta\theta^{\top} d\theta,\quad\mathfrak{H}_f(x):=\texttt{tr}(\mathfrak{M}_2(x) \nabla^2 f(x)),\nonumber
\end{align}
where $f \in C^3(\iota(M))$.

\begin{theorem} \label{Theorem:t1}
Suppose $f \in C^3(\iota(M))$ and $P\in C^5(\iota(M))$ and fix $x \in M$. 
Assume that Assumptions \ref{AssumptionTangent} and \ref{AssumptionNormal} hold and the regularization order is $\rho\in\mathbb{R}$.
Following the same notations used in Proposition \ref{Proposition:2}, we have the following result 
\begin{equation}
Qf(x)-f(x)=(\mathfrak{C_1}(x)+\mathfrak{C_2}(x))\epsilon^2+O(\epsilon^3),
\end{equation}
where $\mathfrak{C_1}(x)$ and  $\mathfrak{C_2}(x)$ depend on different cases stated in Condition \ref{Condition:1}.

$\bullet$ \textbf{Case 0.} In this case,
\begin{align}
& \mathfrak{C_1}(x)=\frac{1}{d+2}\big[\frac{1}{2}\Delta f(x) +\frac{\nabla f(x) \cdot \nabla P(x)}{P(x)}-\frac{\nabla {f}(x)\cdot \nabla {P}(x)}{{P}(x)+\frac{d(d+2)}{|S^{d-1}|}\epsilon^{\rho-2}} \big]\,,\\
& \mathfrak{C_2}(x)=0\,.
\end{align}

$\bullet$ \textbf{Case 1.}
In this case, 
\begin{align}
\mathfrak{C_1}(x)&=\frac{\frac{1}{d+2}\big[\frac{1}{2}\Delta {f}(x)+ \frac{\nabla f(x) \cdot \nabla P(x)}{P(x)}-\frac{\nabla {f}(x)\cdot \nabla {P}(x)}{{P}(x)+\frac{d(d+2)}{|S^{d-1}|}\epsilon^{\rho-2}}\big] }
{1-\frac{d}{2(d+2) } \sum_{i=d+1}^p\frac{(\mathfrak{N}^{\top}_0(x)e_i)^2}{\frac{2}{d}\lambda^{(2)}_i+\frac{2(d+2)}{P(x)|S^{d-1}|}\epsilon^{\rho-4}}}\,,
\end{align}
\begin{align}
\mathfrak{C_2}(x)&=-\frac{\frac{1}{4(d+4) }\sum_{i=d+1}^p\frac{(\mathfrak{N}^{\top}_0(x)e_i)(\mathfrak{H}^{\top}_f(x)e_{i})}{\frac{2}{d}\lambda^{(2)}_i+\frac{2(d+2)}{P(x)|S^{d-1}|}\epsilon^{\rho-4}}}{\frac{1}{d}-\frac{1}{2(d+2) } \sum_{i=d+1}^p\frac{(\mathfrak{N}^{\top}_0(x)e_i)^2}{\frac{2}{d}\lambda^{(2)}_i+\frac{2(d+2)}{P(x)|S^{d-1}|}\epsilon^{\rho-4}}}.
\end{align}

$\bullet$ \textbf{Case 2.}
In this case, 
\begin{align}
\mathfrak{C_1}(x)&=\frac{\frac{1}{d+2}\big[\frac{1}{2}\Delta {f}(x)+ \frac{\nabla f(x) \cdot \nabla P(x)}{P(x)}-\frac{\nabla {f}(x)\cdot \nabla {P}(x)}{{P}(x)+\frac{d(d+2)}{|S^{d-1}|}\epsilon^{\rho-2}}\big] }
{1-\frac{d}{2(d+2) } \sum_{i=d+1}^{p-l}\frac{(\mathfrak{N}^{\top}_0(x)e_{i})^2}{\frac{2}{d}\lambda^{(2)}_i+\frac{2(d+2)}{P(x)|S^{d-1}|}\epsilon^{\rho-4}}}\,,
\end{align}
\begin{align}
\mathfrak{C_2}(x)&=-\frac{\frac{1}{4(d+4) } \sum_{i=d+1}^{p-l}\frac{(\mathfrak{N}^{\top}_0(x)e_i)(\mathfrak{H}^{\top}_f(x)e_{i})}{\frac{2}{d}\lambda^{(2)}_i+\frac{2(d+2)}{P(x)|S^{d-1}|}\epsilon^{\rho-4}}}{\frac{1}{d}-\frac{1}{2(d+2) } \sum_{i=d+1}^{p-l}\frac{(\mathfrak{N}^{\top}_0(x)e_{i})^2}{\frac{2}{d}\lambda^{(2)}_i+\frac{2(d+2)}{P(x)|S^{d-1}|}\epsilon^{\rho-4}}}.
\end{align}

\end{theorem}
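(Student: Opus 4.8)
The plan is to carry out a careful Taylor expansion in $\epsilon$ of all the ingredients entering $Qf(x) = (Af)(x)/(A1)(x)$, using the machinery already assembled: the third-order expansion of $C_x$ from Proposition \ref{Proposition:1}, its eigen-decomposition from Proposition \ref{Proposition:2} (separately in Cases 0, 1, 2), and the definition of $\mathbf{T}_{\iota(x)}$ from Corollary \ref{kernel}. First I would expand, in the normal coordinate centered at $x$ and with Assumptions \ref{AssumptionTangent}, \ref{AssumptionNormal} in force, the two ``moment'' quantities that appear everywhere: $\mathbb{E}[(X-\iota(x))\chi_{B_\epsilon}(X)]$ and, for $h \in \{1, f\}$, $\mathbb{E}[h(X)(X-\iota(x))^{\otimes j}\chi_{B_\epsilon}(X)]$ for $j=0,1,2$. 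Using $\iota(\exp_x(v)) = \iota(x) + \iota_* v + \tfrac12 \Second_x(v,v) + O(|v|^3)$, the indicator $\chi_{B_\epsilon^{\mathbb{R}^p}(\iota(x))}$ becomes $\chi_{\{|v| \le \epsilon + O(\epsilon^3)\}}$ on $T_xM$, and the volume form picks up the standard $1 - \tfrac16 \mathrm{Ric}_x(v,v) + \dots$ factor; expanding $P(\exp_x v) = P(x) + \nabla P \cdot v + \dots$ and $f$ likewise, the integrals reduce to moments of the uniform measure on the Euclidean ball, which are known multiples of $|S^{d-1}|$. The tangential part of $\mathbb{E}[(X-\iota(x))\chi_{B_\epsilon}(X)]$ is $O(\epsilon^{d+3})$ and driven by $\nabla P$; the normal part is $O(\epsilon^{d+3})$ and driven by $\mathfrak{N}_0(x)$ (the averaged second fundamental form). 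This is where $\nabla f \cdot \nabla P / P$, $\Delta f$, and $\mathfrak{H}_f$ will all originate.

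Next I would plug these expansions into $\mathbf{T}_{\iota(x)} = \mathcal{I}_{\epsilon^{d+\rho}}(C_x)\,\mathbb{E}[(X-\iota(x))\chi_{B_\epsilon}(X)]$. Here the eigen-decomposition of Proposition \ref{Proposition:2} is essential: writing $\mathcal{I}_{\epsilon^{d+\rho}}(C_x) = \sum_{i=1}^r (\lambda_i + \epsilon^{d+\rho})^{-1} u_i u_i^\top$, the tangential eigenvalues $\lambda_i \asymp \epsilon^{d+2}$ ($i\le d$), in Case 1 the normal eigenvalues $\lambda_i \asymp \epsilon^{d+4}$ with leading coefficient $\propto \lambda^{(2)}_i$, and in Case 2 the degenerate normal directions have $\lambda_i \asymp \epsilon^{d+6}$ with coefficient $\propto \lambda^{(4)}_i$. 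Thus each denominator $\lambda_i + \epsilon^{d+\rho}$ contributes a factor like $\tfrac{2}{d}\lambda^{(2)}_i + \tfrac{2(d+2)}{P(x)|S^{d-1}|}\epsilon^{\rho-4}$ (after dividing out $\epsilon^{d+2}$ and the overall constant $|S^{d-1}|P(x)/(d(d+2))$) — this is precisely the shape of the denominators appearing in $\mathfrak{C_1}, \mathfrak{C_2}$. One must also track the $O(\epsilon^2)$ rotation $U_x = U_x(0)(I + \epsilon^2\mathsf{S})$ of the eigenvectors, since mixing between tangential and normal blocks at order $\epsilon^2$ can a priori feed back into the leading $\epsilon^2$ term of $Qf(x)-f(x)$; I expect (and would verify) that because the relevant moments already carry the right orders of $\epsilon$, the $\mathsf{S}$-correction only affects $O(\epsilon^3)$ and can be discarded. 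Then form $Af(x) = \mathbb{E}[f(X)\chi_{B_\epsilon}(X)] - \mathbf{T}_{\iota(x)}^\top \mathbb{E}[f(X)(X-\iota(x))\chi_{B_\epsilon}(X)]$ and $A1(x)$ analogously, expand each to order $\epsilon^{d+4}$, and divide; the $\epsilon^{d+2}$ leading terms cancel in the ratio after subtracting $f(x)$, leaving the $\epsilon^2$ coefficient, which splits into a ``$P$-and-$\Delta f$'' piece ($\mathfrak{C_1}$, with the curious regularization-dependent term $\nabla f \cdot \nabla P/(P + \tfrac{d(d+2)}{|S^{d-1}|}\epsilon^{\rho-2})$ coming from the tangential part of $\mathbf{T}$ hitting the first-moment of $f$) and a ``$\Second$-and-$\mathfrak{H}_f$'' piece ($\mathfrak{C_2}$, coming from the normal part of $\mathbf{T}$ hitting the $\mathfrak{H}_f$-type second moment of $f$). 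Case 0 is the special case $r=d$ where there are no normal eigen-directions, so $\mathbf{T}$ has no normal component, the denominators disappear, and $\mathfrak{C_2} = 0$.

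The main obstacle is the bookkeeping in the degenerate Case 2 and, more subtly, making sure that the regularized pseudo-inverse is expanded consistently: when $\rho < 4$ the $\epsilon^{\rho-4}$ terms blow up relative to the $\lambda^{(2)}_i$ and the normal contribution of $\mathbf{T}$ is suppressed, while when $\rho > 4$ it is the $\lambda^{(2)}_i$ that dominate — the formulas must interpolate both regimes uniformly, which is why the denominators are written as sums $\tfrac{2}{d}\lambda^{(2)}_i + \tfrac{2(d+2)}{P(x)|S^{d-1}|}\epsilon^{\rho-4}$ rather than being simplified. I would handle this by keeping $\lambda_i + \epsilon^{d+\rho}$ intact throughout, only factoring out the common $\epsilon^{d+2}$, and controlling the error terms with the explicit $O(\epsilon^6)$ remainder in Proposition \ref{Proposition:1} together with a lemma (from Appendix \ref{Section:Perturbation}) that the eigenvalue/eigenvector expansions of $C_x$ have errors uniformly controlled in $\epsilon$. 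The remaining work — identifying $\tfrac{1}{|S^{d-1}|}\int_{S^{d-1}} \theta\theta^\top d\theta = \tfrac{1}{d}I$, $\int \theta^i\theta^j\theta^k\theta^l$, and recognizing $\tfrac12 \Delta f = \tfrac{d}{2}\,\tfrac1{|S^{d-1}|}\int \nabla^2 f(\theta,\theta)$, $\mathfrak{H}_f = \texttt{tr}(\mathfrak{M}_2 \nabla^2 f)$ — is routine spherical-integral algebra that I would not grind through here, referring instead to the detailed computation in Appendix \ref{Section:Theoremt1}.
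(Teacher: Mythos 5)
Your high-level plan is the same as the paper's: expand the moment integrals via the normal coordinate, use the eigen-decomposition of $C_x$ from Proposition \ref{Proposition:2} to expand $\mathbf{T}_{\iota(x)}$ case by case, and then assemble $Af(x)/(A1(x))$ to order $\epsilon^2$. But there is a genuine gap in your handling of the eigenvector rotation, and it is not a small technicality.

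You write that the $\mathsf{S}$-correction $U_x = U_x(0)(I+\epsilon^2\mathsf{S})$ ``only affects $O(\epsilon^3)$ and can be discarded.'' This is false in Cases 1 and 2, and discarding it leads to the wrong formula for $\mathfrak{C}_2(x)$. Keep the order bookkeeping honest: for $\rho\geq 4$, the normal eigenvalues satisfy $\lambda_i + \epsilon^{d+\rho} \asymp \epsilon^{d+4}$, while the normal component of $\mathbb{E}[(X-\iota(x))\chi_{B_\epsilon}]$ is $\asymp \epsilon^{d+2}$; hence the normal block of $\mathbf{T}_{\iota(x)}$ is of order $\epsilon^{-2}$, not $O(1)$. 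The tangential component of $\mathbf{T}_{\iota(x)}$ picks up the term
\[
\sum_{i=d+1}^{p}\frac{\mathfrak{N}_0^\top(x)\bar J_{p,p-d}X_2\bar J_{p,p-d}^\top e_i}{\tfrac{2}{d}\lambda^{(2)}_i+\tfrac{2(d+2)}{P(x)|S^{d-1}|}\epsilon^{\rho-4}}\,X_1\mathsf{S}_{12}\bar J_{p,p-d}^\top e_i,
\]
which is $O(1)$ — the $\epsilon^2$ in the rotation is exactly compensated by the $\epsilon^{-2}$ mismatch between the eigenvalue scale and the moment scale. This $O(1)$ term hits the tangential component of $\mathbb{E}[(X-\iota(x))(f(X)-f(x))\chi]$, which is $O(\epsilon^{d+2})$, and so contributes at the same order $\epsilon^{d+2}$ as everything else. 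You cannot discard it.

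The paper's proof exploits this term essentially: using the first-order perturbation relation $X_1\mathsf{S}_{12}=-M^{(2)}_{12}X_2$ from Proposition \ref{Proposition:2} together with the explicit expression (\ref{Proposition32:Proof:M2:12:Expansion}) for $M^{(2)}_{12}$, the $\mathsf{S}_{12}$ contribution cancels the terms $\tfrac12\nabla f(x)^\top\mathfrak{M}_2(x)\nabla P(x)$ and $\tfrac16 P(x)\mathfrak{R}_1(x)\nabla f(x)$ inside $\Omega_f$, leaving precisely $\tfrac14 P(x)\mathfrak{H}_f(x)$ in $\mathfrak{C}_2(x)$. Without this cancellation the coefficient in $\mathfrak{C}_2$ would involve the entire $\Omega_f$ rather than $\mathfrak{H}_f$ alone, and would depend on $\nabla P$ in a way the stated theorem does not. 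The same mechanism is needed in Case 2, where moreover Proposition \ref{Lemma:5.1} is required to show that $\langle\Second_x(\theta,\theta),e_i\rangle = 0$ for the degenerate directions, which is what kills the would-be $\epsilon^{-2}\cdot\epsilon^{d+6}$ contributions from the $\lambda^{(4)}_i$ eigenvalues and explains why $\mathfrak{C}_1,\mathfrak{C}_2$ sum only to $p-l$; your proposal does not flag this step. So: the eigenvector rotation is not an ignorable correction but the key algebraic ingredient, and you would need to carry it through in full.
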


The proof of this long theorem is postponed to Appendix \ref{Section:Theoremt1}. 
{
Intuitively, based on the approximation of the identity, the kernel representation of the $Q$ operator suggests that asymptotically we get the function value back, with the second order derivative popping out in the second order error term. In the GL setup, it has been well known that the second order derivative term is the Laplace-Beltrami operator when the p.d.f. is constant \cite{Coifman_Lafon:2006}. However, due to the interaction between the geometric structure and the barycentric coordinate, the LLE usually does not lead to the Laplace-Beltrami operator, unless under special situations. 
Note that while we could still see the Laplace-Beltrami operator in $\mathfrak{C}_1$, it is contaminated by other quantities, including $\mathfrak{N}_0(x)$, $\mathfrak{H}_f(x)$ and $\lambda^{(2)}_i$. These terms all depend on the second fundamental form. When $\rho>4$, the curvature term appears in the $\epsilon^2$ order term.}

This theorem states that the asymptotic behavior of LLE is sensitive to the choice of $\rho$. We discuss each case based on different choices of $\rho$.  
If $\rho <2$, for all cases,  
\begin{align}
\mathfrak{C_1}(x)=\frac{1}{(d+2)}\big[\frac{1}{2}\Delta f(x) +\frac{\nabla f(x) \cdot \nabla P(x)}{P(x)} \big] \quad\mbox{and}\quad \mathfrak{C_2}(x)=0,\label{Theorem:t1:SmallRho}
\end{align}
which comes from the fact that when $\epsilon^{\rho}$ is large, $\mathbf{T}_{\iota(x)}$ is small, and hence $K_{\texttt{LLE}}$ is dominated by $1$. Note that not only the Laplacian-Beltrami operator but also the p.d.f are involved, if the sampling is non-uniform. Therefore, when $\rho$ is chosen too small, the resulting asymptotic operator is the Laplace-Beltrami operator, only when the sampling is uniform.
If $\rho=3$, for all cases we have
\begin{align}
\mathfrak{C_1}(x)=\frac{1}{2(d+2)}\Delta f(x) \quad\mbox{and}\quad \mathfrak{C_2}(x)=0.\label{Theorem:t1:BestRho}
\end{align}
In this case, we recover the Laplacian-Beltrami operator, and the asymptotic result of the LLE is independent of the non-uniform p.d.f.. 
This theoretical finding partially explains why such regularization could lead to a good result.
If $\rho>4$, since $\epsilon^{d+\rho}$ is smaller than all eigenvalues of the local covariance matrix, asymptotically $\epsilon^{d+\rho}$ is negligible and the result depends on different cases considered in Condition \ref{Condition:1}:
for Case 0, we have
\begin{align}
\mathfrak{C_1}(x)=\frac{1}{2(d+2)}\Delta f(x)\quad\mbox{and}\quad  \mathfrak{C_2}(x)=0\,,\nonumber
\end{align}
for Case 1, we have
\begin{align}
\mathfrak{C_1}(x)&=\frac{\frac{1}{2(d+2)}\Delta {f}(x)}
{1-\frac{d^2}{4(d+2) } \sum_{i=d+1}^p\frac{(\mathfrak{N}^{\top}_0(x)e_i)^2}{\lambda^{(2)}_i}},\quad\mathfrak{C_2}(x)&=-\frac{\frac{d}{8(d+4) }\sum_{i=d+1}^p\frac{(\mathfrak{N}^{\top}_0(x)e_i)(\mathfrak{H}^{\top}_f(x)e_{i})}{\lambda^{(2)}_i}}{\frac{1}{d}-\frac{d}{4(d+2) } \sum_{i=d+1}^p\frac{(\mathfrak{N}^{\top}_0(x)e_i)^2}{\lambda^{(2)}_i}}\,,\nonumber
\end{align}
and for Case 2, we have
\begin{align}
\mathfrak{C_1}(x)&=\frac{\frac{1}{2(d+2)}\Delta {f}(x)}
{1-\frac{d^2}{4(d+2) } \sum_{i=d+1}^{p-l}\frac{(\mathfrak{N}^{\top}_0(x)e_i)^2}{\lambda^{(2)}_i}},\quad
\mathfrak{C_2}(x)&=-\frac{\frac{d}{8(d+4) }\sum_{i=d+1}^{p-l}\frac{(\mathfrak{N}^{\top}_0(x)e_i)(\mathfrak{H}^{\top}_f(x)e_{i})}{\lambda^{(2)}_i}}{\frac{1}{d}-\frac{d}{4(d+2) } \sum_{i=d+1}^{p-l}\frac{(\mathfrak{N}^{\top}_0(x)e_i)^2}{\lambda^{(2)}_i}}.\nonumber
\end{align}
Note that when $\rho>4$, we do not get the Laplace-Beltrami operator asymptotically in Cases 1 and 2. Furthermore, the behavior of LLE is dominated by the curvature and is independent of the p.d.f..  

{ It is worth mentioning a specific situation when $\rho>4$. Suppose the principal curvatures are equal to $\mathfrak{p}\in \mathbb{R}$ in the direction $e_i$, where $i=d+1,\ldots,p$, and vanish in the other directions. Then, there is a choice of basis $e_1, \ldots, e_d$ so that $\Second_x(\theta,\theta) \cdot e_i=\sum_{j=1}^d \mathfrak{p} \theta_j^2= \mathfrak{p}$, where $\theta=(\theta_1,\ldots,\theta_d)\in S^{d-1}$. Under this specific situation, by a direct expansion, we have a simplification that 
\begin{equation*}
\frac{d}{8(d+4)}(\mathfrak{N}^{\top}_0(x)e_i)(\mathfrak{H}^{\top}_f(x)e_{i})=\frac{1}{2(d+2)}\Delta {f}(x)\,,
\end{equation*}
which leads to $\mathfrak{C_1}(x)+\mathfrak{C_2}(x)=0$. Therefore, asymptotically we obtain a fourth order term.

The relationship between $\epsilon$ and the intrinsic geometry of the manifold requires further discussion, in order to better understand how the curvature plays a role in the whole analysis. We mention that the statement ``suppose $\epsilon$ is sufficiently small'' in Proposition \ref{Proposition:1}, Proposition \ref{Proposition:2} and Theorem \ref{Theorem:t1} is a technical condition needed in the proof of Lemma \ref{Lemma:3}, which describes how well we could estimate the local geodesic distance by the ambient space metric. This technical condition depends on the fact that the exponential map is a diffeomorphism only if it is restricted to a subset of $\iota_*T_xM$ that is bounded by the injectivity radius of the manifold. 
That is, $\epsilon$ needs to be less than the injectivity radius. For any closed (compact without boundary) and smooth manifold, it is clear that different kinds of curvatures are bounded and the injectivity radius is strictly positive, so there exists $\epsilon_0>0$ less than the injectivity radius, so that for all $\epsilon\leq \epsilon_0$, the statement ``suppose $\epsilon$ is sufficiently small'' is satisfied. The relationship between the curvature and the $\epsilon_0$ could be further elaborated by quoting the well known result in \cite{Cheeger:1982}: for a closed Riemannian manifolds of dimension $d$ with the sectional curvature bounded by $K$, where $K\geq 0$, and with the volume lower bound $v$, where $v>0$, the injectivity radius is bounded below by $i(d,K,v)>0$, where $i(d,K,v)$ can be expressed explicitly in terms of $d$, $K$ and $v$. Hence, $\epsilon_0$ needs to satisfy $\epsilon_0<i(d,K,v)$.

}

{
\subsection{Convergence of the LLE}

By combining the variation analysis and the bias analysis shown above, we conclude the following {\em pointwise} convergence theorem for the LLE, when we have a proper choice of $\rho$.

\begin{theorem} \label{Corollary:LLE:Asymptotitcs}
Take $f \in C(\iota(M))$, $\rho=3$, and $\epsilon=\epsilon(n)$ so that $\frac{\sqrt{\log(n)}}{n^{1/2}\epsilon^{d/2+1}}\to 0$ and $\epsilon\to 0$ as $n\to \infty$. With probability greater than $1-n^{-2}$, for all $x_k\in\mathcal{X}$, 
\begin{align*}
\frac{1}{\epsilon^2}\Big[\sum_{j=1}^N w_k(j)f(x_{k,j})-f(x_k)\Big]=\frac{1}{2(d+2)}\Delta f(x)+O(\epsilon)+O\Big(\frac{\sqrt{\log (n)}}{n^{1/2}\epsilon^{d/2+1}}\Big).
\end{align*}
\end{theorem}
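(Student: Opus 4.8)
The plan is to combine the two main ingredients that have already been established: the variance estimate of Theorem~\ref{Theorem:t0} and the bias estimate of Theorem~\ref{Theorem:t1}. Both are stated for a general regularization order $\rho$, so the first step is simply to specialize both to $\rho=3$ and then add them. Concretely, I would start from the decomposition
\begin{align}
\sum_{j=1}^N w_k(j)f(x_{k,j})-f(x_k)
=\big[\textstyle\sum_{j=1}^N w_k(j)f(x_{k,j})-Qf(x_k)\big]+\big[Qf(x_k)-f(x_k)\big],\nonumber
\end{align}
where the first bracket is the ``variance'' (fluctuation) term controlled by Theorem~\ref{Theorem:t0} and the second bracket is the ``bias'' term controlled by Theorem~\ref{Theorem:t1}.

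For the variance term, since $\rho=3\leq 3$, Theorem~\ref{Theorem:t0} applies in its uniform form across all cases in Condition~\ref{Condition:1}: with probability at least $1-n^{-2}$, simultaneously for every $x_k\in\mathcal{X}$,
\begin{align}
\sum_{j=1}^N w_k(j)f(x_{k,j})-f(x_k)=Qf(x_k)-f(x_k)+O\Big(\frac{\sqrt{\log n}}{n^{1/2}\epsilon^{d/2-1}}\Big).\nonumber
\end{align}
For the bias term, I would invoke Theorem~\ref{Theorem:t1} with $\rho=3$. Substituting $\rho=3$ makes the problematic $\epsilon^{\rho-2}=\epsilon$ and $\epsilon^{\rho-4}=\epsilon^{-1}$ factors behave favorably: in each of the three cases the correction pieces in $\mathfrak{C}_1$ and $\mathfrak{C}_2$ that are contaminated by $\mathfrak{N}_0$, $\mathfrak{H}_f$, and the curvature eigenvalues $\lambda^{(2)}_i$ either vanish or are absorbed, yielding exactly $\mathfrak{C}_1(x)=\frac{1}{2(d+2)}\Delta f(x)$ and $\mathfrak{C}_2(x)=0$ — this is precisely the content of the displayed formula \eqref{Theorem:t1:BestRho} in the text. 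Hence $Qf(x_k)-f(x_k)=\frac{\epsilon^2}{2(d+2)}\Delta f(x_k)+O(\epsilon^3)$. Adding the two estimates, dividing through by $\epsilon^2$, and noting that $\frac{\sqrt{\log n}}{n^{1/2}\epsilon^{d/2-1}}/\epsilon^2=\frac{\sqrt{\log n}}{n^{1/2}\epsilon^{d/2+1}}$ gives the claimed identity.

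There are two minor points that need care rather than a genuine obstacle. First, Theorem~\ref{Theorem:t1} as stated requires $f\in C^3(\iota(M))$ and $P\in C^5(\iota(M))$, whereas the corollary only assumes $f\in C(\iota(M))$; the resolution is that the $O(\epsilon)$ bias conclusion should be read under the appropriate smoothness, or, if one insists on merely continuous $f$, the bias term is handled by Theorem~\ref{Theorem:t0} alone (which needs only continuity) and the $\frac{1}{2(d+2)}\Delta f$ identification is the refinement available under extra regularity. Second, one must check that the error term $O(\epsilon^3)$ from the bias, after division by $\epsilon^2$, is $O(\epsilon)$ and is dominated as $n\to\infty$ by the hypothesis $\frac{\sqrt{\log n}}{n^{1/2}\epsilon^{d/2+1}}\to0$ together with $\epsilon\to 0$; this is immediate. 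The only genuinely substantive work — the careful pseudo-inverse perturbation analysis and the approximation-of-identity expansion that produce Theorems~\ref{Theorem:t0} and~\ref{Theorem:t1} — has already been done, so the proof of this corollary is essentially bookkeeping: specialize $\rho=3$, invoke \eqref{Theorem:t1:BestRho}, and collect error terms.
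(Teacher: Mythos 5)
Your proposal is correct and matches the paper's own argument exactly: the paper proves this by citing Theorem~\ref{Theorem:t0} (with the uniform $\rho\leq 3$ bound for the fluctuation) and the $\rho=3$ specialization \eqref{Theorem:t1:BestRho} of Theorem~\ref{Theorem:t1} for the bias, then dividing by $\epsilon^2$. Your observation about the mismatch in regularity hypotheses ($f\in C(\iota(M))$ in the statement versus $f\in C^3(\iota(M))$ needed for the bias expansion in Theorem~\ref{Theorem:t1}) is a real imprecision in the paper's statement, not a flaw in your argument.
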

Based on the Borel-Cantelli Lemma, it is clear that asymptotically the LLE converges almost surely. 
For practical purposes, we need to discuss the bandwidth choice when $\rho= 3$. Based on the assumption about the relationship between $n$ and $\epsilon$, we have $\frac{\sqrt{\log(n)}}{n^{1/2}\epsilon^{d/2+1}}\to 0$ as $n\to \infty$, but the convergence rate of $\frac{\sqrt{\log(n)}}{n^{1/2}\epsilon^{d/2+1}}$ might be slower than $\epsilon\to 0$. Suppose we call a bandwidth ``optimal'', if it balances the standard deviation and the bias for all cases in Condition \ref{Condition:1}; that is, $\frac{\sqrt{\log(n)}}{n^{1/2} \epsilon^{d/2+1}}\asymp\epsilon$. We then have $\frac{n}{\log(n)}\asymp\frac{1}{\epsilon^{d+4}}$, and we can estimate the optimal bandwidth from $n$.

}

\section{Numerical Examples}\label{Section:Numerics}

{We adapt the LLE code provided in \url{https://www.cs.nyu.edu/~roweis/lle/code.html} to implement the LLE with the $\epsilon$-radius neighborhood.
The Matlab code for the figures can be found in \url{https://sites.google.com/site/hautiengwu/home/download}. }

\subsection{Sphere}\label{Section:Example:Sphere}
{
Suppose that $S^{p-1} \in \mathbb{R}^p$ is the unit sphere in $\mathbb{R}^p$. Denote $H_{k}$ to be the space of homogeneous polynomials in $\mathbb{R}^p$ restricted on $S^{p-1}$. We have that the space $H_{k}$ is the eigenspace of the Laplace-Beltrami operator on $S^{p-1}$ corresponding to eigenvalue $-k(k+p-2)$, and the dimension of $H_k$ is $\begin{pmatrix}p+k-1\\p-1\end{pmatrix}-\begin{pmatrix}p+k-3\\p-1\end{pmatrix}$ \cite{stein2016}.
In this example, we show that if we choose a $\epsilon^{d+\rho}$ that is too small, then we are not going to get the Laplace-Beltrami operator. }
When $\rho=8$, which is much greater than $3$, by Theorem \ref{Theorem:t1}, 
we have
\begin{align}
 Qf(x_k) -f(x_k)  
=&\,\bigg(\frac{-(p-1)}{8(p+3)(p+5)}\sum_{i=1}^{p-1} \partial^4_i  f(x_k)-\frac{(p-1)}{24(p+3)(p+5)}\sum_{i \not =j}  \partial^2_i \partial^2_j f(x_k)\nonumber\\
&\quad-\frac{p+1}{24(p+3)(p+5)}\sum_{i=1}^{p-1} \partial^2_i  f(x_k)  \bigg) \epsilon^4+O(\epsilon^6). \label{Example:Sp:Expansion}
\end{align}
A detailed calculation is shown in Section \ref{Section:Calculation} (a calculation for the torus case is also provided). It is obvious that asymptotically, we get the fourth order differential operator, instead of the Laplace-Beltrami operator. 
Specifically, when $p=2$, or $S^1$, 
\begin{align}
& Qf(x_k)-f(x_k)=-\frac{1}{280}\big(f''''(x_k)+f''(x_k) \big) \epsilon^4+O(\epsilon^6). \label{Example:AsympotitcS1}
\end{align}
We mention that if the data set $\{x_i\}_{i=1}^n$ is non-uniformly sampled based on the p.d.f. $P$ from $S^1$, then for any $x_k$ we have $Qf(x_k)-f(x_k) =\,C \epsilon^4+O(\epsilon^6)$,
where $C$ depends on the first four order differentiation of $f$ at $x_k$ and the first three order differentiations of $P$ at $x_k$.

{We now} numerically show the relationship between the non-uniform sampling scheme and the regularization term.  
Fix $n=30,000$. Take non-uniform sampling points {$\theta_i:=2\pi U_i+0.3\sin(2\pi i/n)$} on $(0,2\pi]$, where $i=1,\ldots,n$ {and $U_i$ is the uniform distribution on $[0,1]$}, and construct $\mathcal{X}_2=\{(\cos(\theta_i),\sin(\theta_i))^\top\}_{i=1}^n\subset\mathbb{R}^2$. Run the LLE with $\epsilon=0.0002$ and different $\rho$'s, and evaluate the first $400$ eigenvalues. Based on the theory, we know that when $\rho<3$, the asymptotic depends on the non-uniform density function; when $\rho=3$, we recover the Laplace-Beltrami operator in the $\epsilon^2$ order; when $\rho>3$, we get the fourth order differential operator in the $\epsilon^4$, which depends on the non-uniform density function.
See Figure \ref{Figure:S1nonuniform} for a comparison of the estimated eigenvalues and the predicted eigenvalues under different setups. We clearly see that the eigenvalues are well predicted under different $\rho$. When $\rho=8$, {we} get the fourth order term that depends on the non-uniform density function; when $\rho=3$, {the LLE is independent of the non-uniform density function} and we recover the spectrum of the Laplace-Beltrami operator in the second order term, as is predicted by the developed theory; when $\rho=-5$, the non-uniform density function comes into play, and the eigenvalues are slightly shifted. {To enhance the visualization, the difference between the estimated eigenvalues of $S^1$ and the theoretical values are shown on the middle subplot.} The eigenfunctions provide more information. When $\rho=-5$ and $\rho=8$, the dependence of the eigenfunctions on the non-uniform density function could be clearly seen. 

\begin{figure}[h!]
\centering
\subfigure[$S^1$ Eigenvalues]{
\includegraphics[width=0.42\columnwidth]{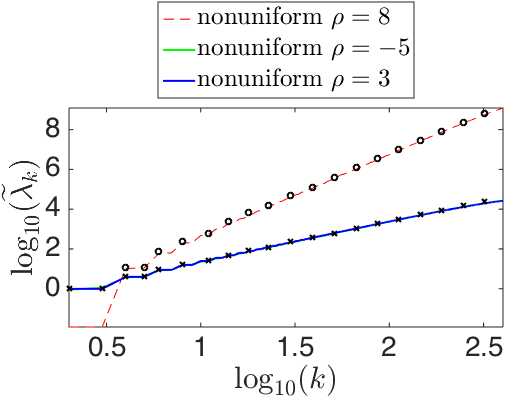}\label{Fig1:S1ev}
}
\subfigure[$S^1$ Eigenvalues error]{
\includegraphics[width=0.42\columnwidth]{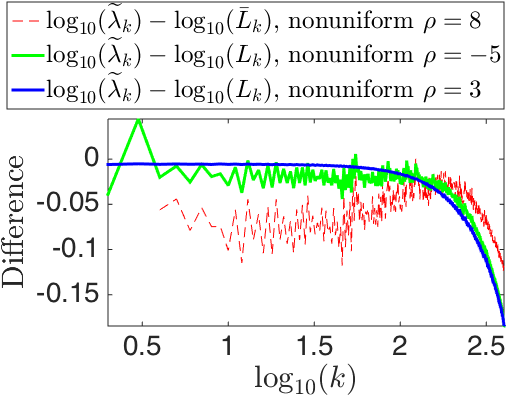}\label{Fig1:S1evErr}
}
\subfigure[$S^1$ Eigenfunctions]{
\includegraphics[width=0.42\columnwidth]{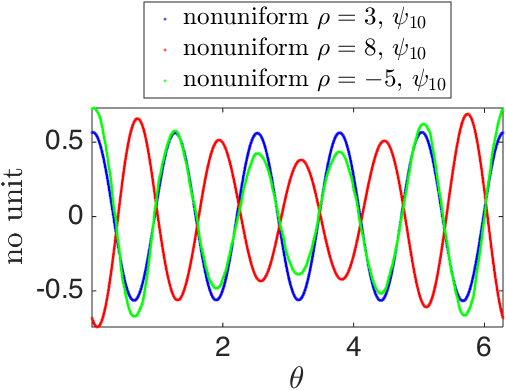}\label{Fig1:S1ef}
}
\caption{The first $400$ eigenvalues of the LLE on $30,000$ points sampled from $S^1$ under a non-uniform sampling scheme with $\rho=-5,3,8$. $\lambda_k$ and $\psi_k$ are the $k$-th largest eigenvalue and the corresponding eigenfunction of the LLE under different situations. $\widetilde{\lambda}_k$ denotes the estimated $k$-th smallest eigenvalue of the Laplace-Beltrami operator or the fourth order differential operator under different situations. The theoretical value, $L_k:=\lceil \frac{k-1}{2}\rceil^2$ for the Laplace-Beltrami operator and $\bar{L}_k:=\lceil \frac{k-1}{2}\rceil^4-\lceil \frac{k-1}{2}\rceil^2$ for the fourth order differential operator $f''''+f''$, where $\lceil x\rceil$ means the the least integer greater than or equal to $x$, are provided for a comparison. 
The eigenvalues and the theoretical values under different setups are shown in \ref{Fig1:S1ev}, with $L_k$ shown as the black crosses and $\bar{L}_k$ as the black circles. To enhance the visualization, the deviation of the evaluated eigenvalues from the theoretical values under different setups are shown in \ref{Fig1:S1evErr}. The tenth eigenfunctions associated with the tenth  largest eigenvalues of the LLE under different setups are shown in \ref{Fig1:S1ef}. Note that when $\rho=3$, we recover the spectrum of the Laplace-Beltrami operator when the sampling is non-uniform; when $\rho=-5$, the non-uniform density function comes into play, and the eigenvalues are shifted from the theoretical value. 
For the non-uniform sampling scheme and $\rho=8$, theoretically the first three eigenvalues come from the six order term and depend on the non-uniform density function. Therefore, numerically the first three eigenvalues are non-zero. 
When $\rho=-5$ and $\rho=8$, the eigenfunctions are the same (up to the global rotation), and depend on the non-uniform density function. 
}\label{Figure:S1nonuniform}
\end{figure}

{Next, we show the results on $S^2$ with different radii under the non-uniform sampling scheme with $\rho=3$ and different $\epsilon$'s.  
Fix $n=30,000$. Take uniform sampling points $x_i=(x_{i1},x_{i2},x_{i3})^\top\in S^2\subset \mathbb{R}^3$, where $i=1,\ldots,n$, randomly choose $n/10$ points, randomly perturb those $n/10$ points by setting $\bar{x}_{i3}:=x_{i3}+1-\cos(2\pi U_i)$, where $U_i$ is the uniform distribution on $[0,1]$, and $y_i:=\frac{(x_{i1},x_{i2},\bar{x}_{i3})^\top}{\|(x_{i1},x_{i2},\bar{x}_{i3})^\top\|}$. As a result, $\mathcal{Y}:=\{y_i\}_{i=1}^n\subset S^2$ is nonuniformly distributed on $S^2$. Denote $r\mathcal{Y}$ to be the scaled sampling points on the sphere with radius $r>0$. 
Run the LLE on $r\mathcal{Y}$ with different $\epsilon$'s, and evaluate the first $400$ eigenvalues. We consider $r=0.5,1,2$. For $r=1$, consider $\epsilon=0.02$; for $r=0.5$, consider $\epsilon=0.02/4$ and $0.02/6$; for $r=2$, consider $\epsilon=0.02\times 4$ and $0.02\times 3$. Based on the theory, when $\rho=3$, the LLE is independent of the non-uniform density function and we obtain the eigenvalues of the Laplace-Beltrami operator in all cases.
See Figure \ref{Figure:S2Eigenvalues} for the results under different setups. Theoretically, the eigenvalues of $S^2$ without counting multiplicities are $\nu_i=-i(i+1)$, where $i=0,1,\ldots$. The multiplicity of $\nu_i$ is $2i+1$. When the radius is $r>0$, the eigenvalues are scaled by $r^{-2}$. The eigenvalues, as is shown in Figure \ref{Figure:S2Eigenvalues}, can be well estimated by the LLE, and the gap between the eigenvalues of spheres with different radii is predicted. The sawtooth behavior of the error comes from the spectral convergence behavior of eigenvalues with multiplicities. Note that there are $19$ eigenvalues with multiplicity greater than $1$ in the first $400$ eigenvalues, which match the $19$ oscillations found in Figure\ref{Fig2:S2evErr}. 
The eigenfunctions are shown in Figure \ref{Fig2:S2ef}. As is predicted, the first eigenfunction is constant, as is shown in $\psi_1$. The eigenspace of $\nu_1$ is spanned by three linear functions $x$, $y$, and $z$, restricted on $S^2$. Theresore, $\psi_4$ is a linear. The eigenspace of $\nu_\ell$ is spanned by spherical harmonics of order $\ell$, and its oscillation is illustrated in $\psi_9$ associated with $\nu_2$ and $\psi_{16}$ associated with $\nu_3$.}

\begin{figure}[h!]
\centering
\subfigure[$S^2$ Eigenvalues]{
\includegraphics[width=0.42\columnwidth]{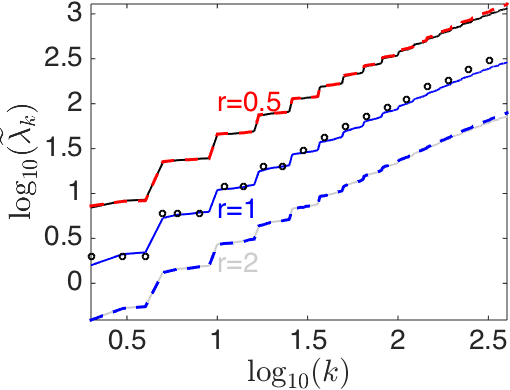}\label{Fig2:S2ev}
}
\subfigure[$S^2$ Eigenvalues error]{
\includegraphics[width=0.42\columnwidth]{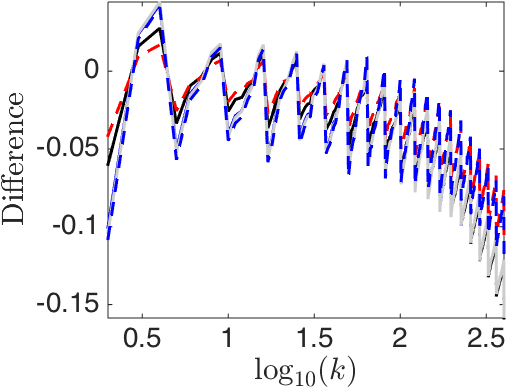}\label{Fig2:S2evErr}
}
\subfigure[$S^2$ Eigenfunctions]{
\includegraphics[width=0.42\columnwidth]{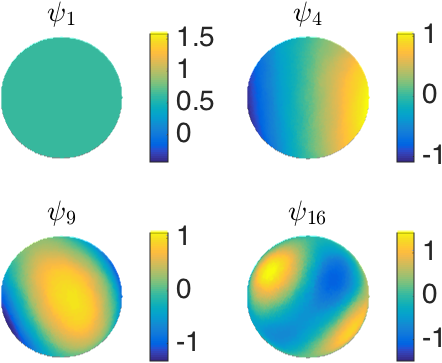}\label{Fig2:S2ef}
}
\caption{\ref{Fig2:S2ev}: the first $400$ eigenvalues of the LLE with $\rho=3$ but different $\epsilon$, over a $n=30,000$ non-uniform sampling points on $S^2$ with different radii $r>0$. $\tilde{\lambda}_k$ is the $k$-th smallest eigenvalue of the Laplace-Beltrami operator estimated by the LLE under different situations.
When $r=0.5$ (respectively $r=1$ and $r=2$), $\tilde{\lambda}_k$ are shown in the black (respectively blue and gray) curve. The results with different $\epsilon$ are shown as the red dash (respectively blue dash) when $r=0.5$ (respectively $r=2$).
The theoretical eigenvalues for the canonical $S^2$ (with the radius $1$), denoted as $L_k$, $k=1,\ldots$, are provided for a comparison (superimposed as black circles).
\ref{Fig2:S2evErr}: to enhance the visualization, the difference between the theoretical values and numerical values, $\log_{10}(\tilde{\lambda}_k)-\log_{10}(L_k)$, are shown with the same color and line properties as those shown on \ref{Fig2:S2ev}. Some eigenfunctions evaluated when $r=0.5$ are shown on \ref{Fig2:S2ef}.}\label{Figure:S2Eigenvalues}
\end{figure}

\subsection{Examine the kernel}

We now show the numerical simulations of the corresponding kernel on the unit circle $S^1$ embedded in $\mathbb{R}^2$. We take a uniform grid $\theta_i:=2\pi i/n$ on $(0,2\pi]$, where $n\in \mathbb{N}$ and $i=1,\ldots,n$, and construct $\mathcal{X}=\{x_i:=(\cos(\theta_i),\sin(\theta_i))^\top\}_{i=1}^n\subset\mathbb{R}^2$, which could be viewed as a uniform sampled set from the unit circle. We fix $n=10,000$. We then run the LLE with {$\epsilon=[(\cos(\theta_{K/2})-1)^2+\sin(\theta_{K/2})^2]^{1/2}$, where $K\in \mathbb{N}$}. See Figure \ref{Figure:S1Kernel} for an example of the corresponding kernels when $K=80$, and $K=320$. Note that the constructed normalized kernel, $\frac{K_{\texttt{LLE}}(x_{1000},y)}{\int K_{\texttt{LLE}}(x_{1000},y) dV(y)}$, is non-positive.

Next, we show the numerical simulations of the corresponding kernel on the $1$-dim flat torus $\mathbb{T}^1\sim \mathbb{R}/\mathbb{Z}$ with the induced metric from the canonical metric on $\mathbb{R}^1$. We take a uniform grid on $\mathbb{T}^1$ as $\{\theta_i=2\pi i/n\}_{i=1}^n$, and take $\mathcal{X}=\{x_i:=(\cos(\theta_i),\sin(\theta_i))^\top\}_{i=1}^n\subset\mathbb{R}^2$ to illustrate the flat torus. Fix $n=10,000$ and run the LLE with {$\epsilon=|\theta_{K/2}|$, where $K\in \mathbb{N}$}. See Figure \ref{Figure:S1Kernel} for an example of the corresponding kernels when $K=80$ and $K=320$. The constructed normalized kernel, as the theory predicts, is constant. Note that in this case, we can view the flat $1$-dim flat torus as the unit circle, when we have the access to the geodesic distance information on the manifold.

Finally, we take a look at the unit sphere $S^2$ embedded in $\mathbb{R}^3$ with the center at $(0,0,1)$, and its corresponding kernel. We uniformly sample $n$ points, $\mathcal{X}=\{x_i\}_{i=1}^n\subset\mathbb{R}^3$, from ${S}^2$. Fix $n=10,000$ and run the LLE with $400$ nearest neighbors. See Figure \ref{Figure:S1Kernel} for the corresponding kernel. Note that the normalized kernel is not positive. These examples show that even with the simple manifolds, the corresponding kernels might be complicated.

\begin{figure}[h!]
\centering
\subfigure[$S^1$ kernel]{
\includegraphics[width=0.42\columnwidth]{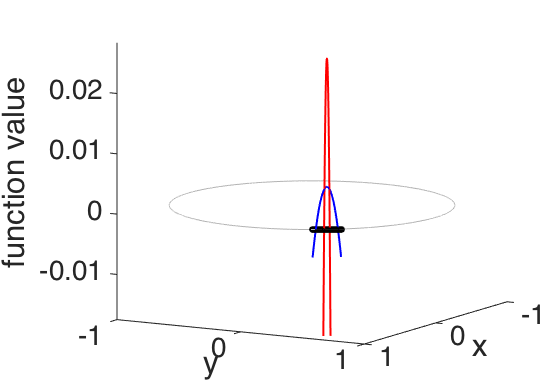}\label{Fig3:S1}
}
\subfigure[$\mathbb{T}^1$ kernel]{
\includegraphics[width=0.42\columnwidth]{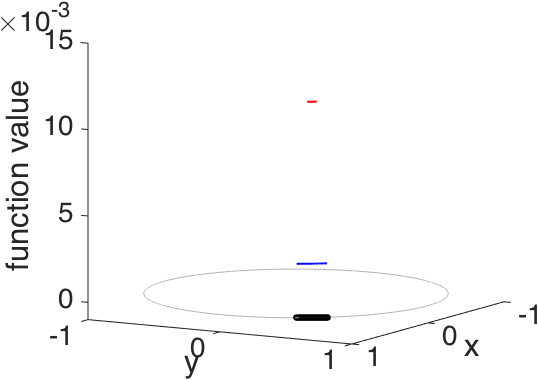}\label{Fig3:T1}
}
\subfigure[$S^2$ kernel]{
\includegraphics[width=0.42\columnwidth]{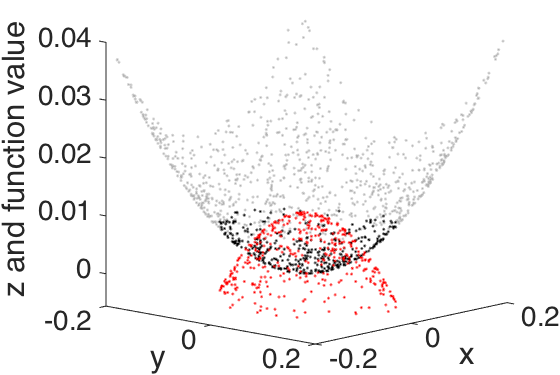}\label{Fig3:S2}
}
\caption{\ref{Fig3:S1}: the sampled $S^1$ is illustrated as the gray circle embedded in the $(x,y)$-plane. The black thick line indicates the first $320$ neighbors of the central point $x_{1000}$. The red line is the corresponding normalized kernel, $\frac{K_{\texttt{LLE}}(x_{1000},y)}{\int K_{\texttt{LLE}}(x_{1000},y) dV(y)}$, when $K=80$, and the blue line is the corresponding normalized kernel when $K=320$. It is clear that the kernel changes sign. 
\ref{Fig3:T1}: a surrogate of the sampled flat $1$-dim torus $\mathbb{T}^1$ is illustrated as the gray circle embedded in the $(x,y)$-plane. The black thick line indicates the first $320$ neighbors of the central point $x_{1000}$. The red line is the corresponding normalized kernel when $K=80$, and the blue line is the corresponding normalized kernel when $K=320$. In this flat manifold case, the kernel is constant.
\ref{Fig3:S2}: a surrogate of the uniformly sampled $\mathbb{S}^2$. Only the first $10,000$ nearest points of the chosen $x=(0,0,0)$ are plotted as the gray points. Note that the scale of the $x$ and $y$ axes and the $z$ axis are different. The black points indicate the first $400$ neighbors of $x$. The red points are the corresponding normalized kernel values when $K=400$. It is clear that the kernel is non-positive.
}\label{Figure:S1Kernel}
\end{figure}

\subsection{Two-dimensional random tomography example}

To further examine the capability of the LLE from the viewpoint of nonlinear dimension reduction, we consider the two-dimensional random tomography problem \cite{Singer_Wu:2013a}. It is chosen because its geometrical structure is well known and complicated. 

We briefly describe the dataset and refer the reader with interest to \cite{Singer_Wu:2013a}. The classical two-dimensional transmission computerized tomography problem is to recover the function $f:\mathbb{R}^2\rightarrow\mathbb{R}$ from its Radon transform. In the parallel beam model, the Radon transform of $f$ is given by the line integral
$R_\theta f (s)=\int_{x\cdot \theta=s}f(x)dx$, 
where $\theta\in S^1$ is perpendicular to the beaming direction $\theta^\perp \in S^1$, where $S^1$ is the unit circle, and $s\in \mathbb{R}$. We call $\theta$ the {\em projection direction} and $R_\theta f$ the {\em projected image}.
There are cases, however, in which we only have the projected images and the projection directions are unknown. 
In such cases, the problem at hand is to estimate $f$ from these projected images without knowing their corresponding projection directions. 
To better study this random projection problem, we need the following facts and assumptions. First, we know that for $f\in L^2(\mathbb{R}^2)$ with a compact support within $B_1(0)$, the map $R_{\cdot} f:\theta\in S^1\mapsto  L^2([-1,1])$ is continuous \cite{Singer_Wu:2013a}.
To simplify the discussion, we assume that
there is no symmetry in $f$; that is, $R_{\theta_1} f$ and $R_{\theta_2} f$ are different for all pairs of $\theta_1\neq \theta_2$.
Next, take $S:=\{s_i\}_{i=1}^p$ to be the chosen set of sampling points on $[-1,1]$, where $p\in \mathbb{N}$. In this example, we assume that $S$ is a uniform grid on $[-1,1]$; that is, $s_i=-1+2(i-1)/(p-1)$. For $\theta\in S^1$, denote the discretization of the projection image $R_\theta f$ as $D_S:L^2([-1,1])\to \mathbb{R}^p$, which is defined by
$D_S:R_{\theta}f \mapsto \left(R_{\theta}f\star h_\epsilon(s_1),R_{\theta}f\star h_\epsilon(s_2),\ldots,R_{\theta}f\star h_\epsilon(s_p)\right)^\top\in \mathbb{R}^p$, where $h_\epsilon(x):=\frac{1}{\epsilon}h(\frac{x}{\epsilon})$, $h$ is a Schwartz function, $h_\epsilon$ converges weakly to the Dirac delta measure at $0$ as $\epsilon\to 0$. Note that, in general, $R_{\theta}f$ is a $L^2$ function when $f$ is a $L^2$ function. Therefore, we need a convolution to model the sampling step. We assume that the discretization $D_S$ is dense enough, so that $M^1:=\{D_p\circ R_{\theta} f\}_{\theta\in S^1}$ is also simple.
In other words, we assume that $p$ is large enough so that $M^1$ is a one-dimensional closed simple curved embedded in $\mathbb{R}^p$ and $M^1$ is diffeomorphic to $S^1$.
Finally, we sample finite points from $S^1$ uniformly and obtain the simulation.

With the above facts and assumptions, we sample the Radon transform $\mathcal{X}:=\{x_i:=D_S\circ R_{\theta_i} f\}_{i=1}^{n}\subset \mathbb{R}^{p}$ with finite projection directions $\left\{\theta_i\right\}_{i=1}^n$, where $\left\{\theta_i\right\}_{i=1}^n$ is a finite uniform grid on $S^1$; that is, $\mathcal{X}$ is sampled from the one-dimensional manifold $M^1$. 
For the simulations with the Shepp-Logan phantom, we take $n=4096$, and the number of discretization points was $p=128$. It has been shown in \cite{Singer_Wu:2013a}, that the DM could recover the $M^1$ up to diffeomorphism, that is, we could achieve the nonlinear dimensional reduction. In order to avoid distractions, we do not consider any noise as is considered in \cite{Singer_Wu:2013a}, and focus our analysis on the clean dataset.  
The Shepp-Logan image, some examples of the projections and {the results of PCA, DM and LLE}, are shown in Figure \ref{Figure:SheppLogan}. As is shown in \cite{Singer_Wu:2013a}, the PCA fails to embed $\mathcal{X}$ with only the first three principal components, while the DM succeeded. There can be additional discussion for the DM, particularly its robustness to the noise and metric design. They have been extensively discussed in \cite{Singer_Wu:2013a}, so they are not discussed here. {For the LLE, we take $\epsilon=0.004$. The embedding results of the LLE with different regularization orders, $\rho=8,3,-5$, are shown.}
Due to the complicated geometrical structure, we encounter difficulty even to recover the topology of $M^1$ by the LLE, if the regularization order is not chosen properly.
%

To examine whether the sign of the kernel corresponding to the LLE is indeterminate in this database, we fixed $x_{3555}\in \mathcal{X}$, and apply the PCA to visualize its $K=150$ neighbors. The kernel function is shown in Figure \ref{Figure:SheppLogan} as the color encoded on the embedded points. The sign of the kernel is indeterminate, as is predicted by the above theory due to the existence of curvature.
%
%
In summary, we should be careful when we apply the LLE to a complicated real database.

\begin{figure}[h!]
\centering
\includegraphics[width=0.325\columnwidth]{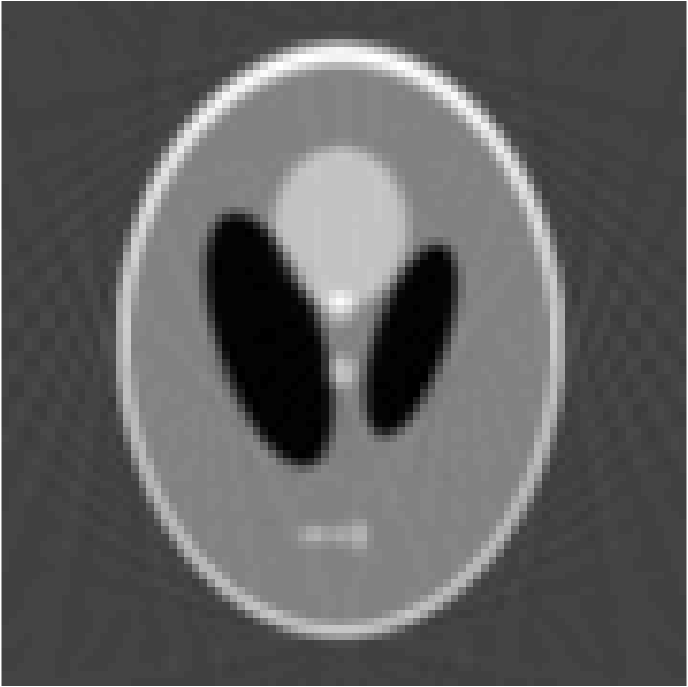}
\includegraphics[width=0.325\columnwidth]{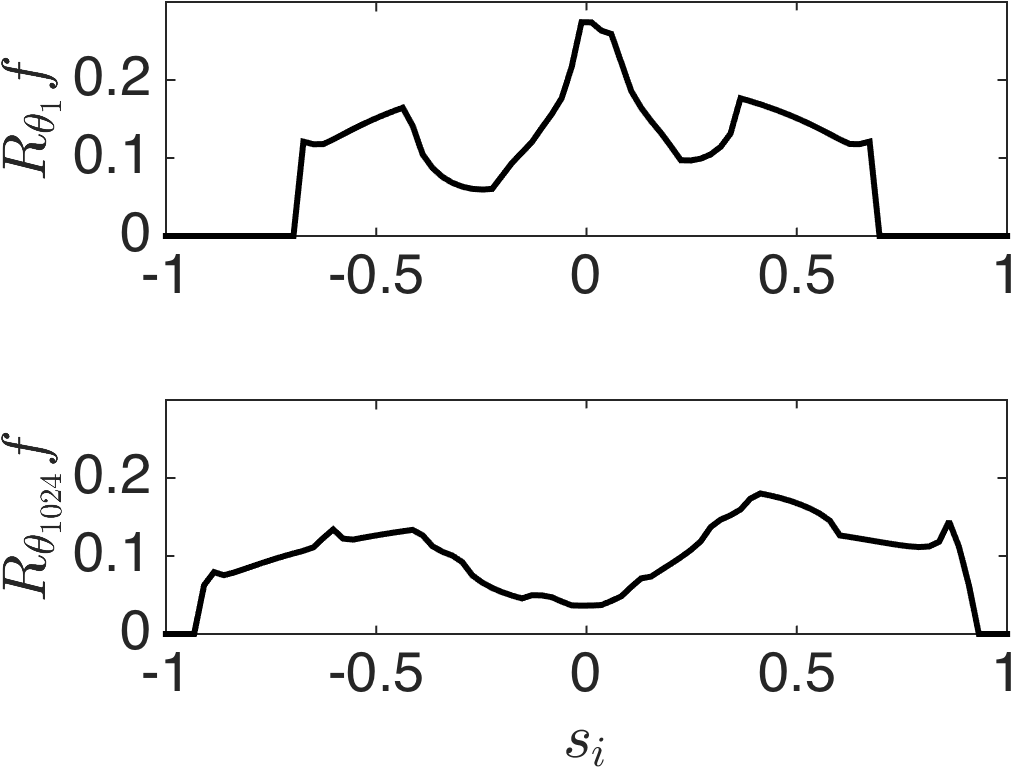}
\includegraphics[width=0.325\columnwidth]{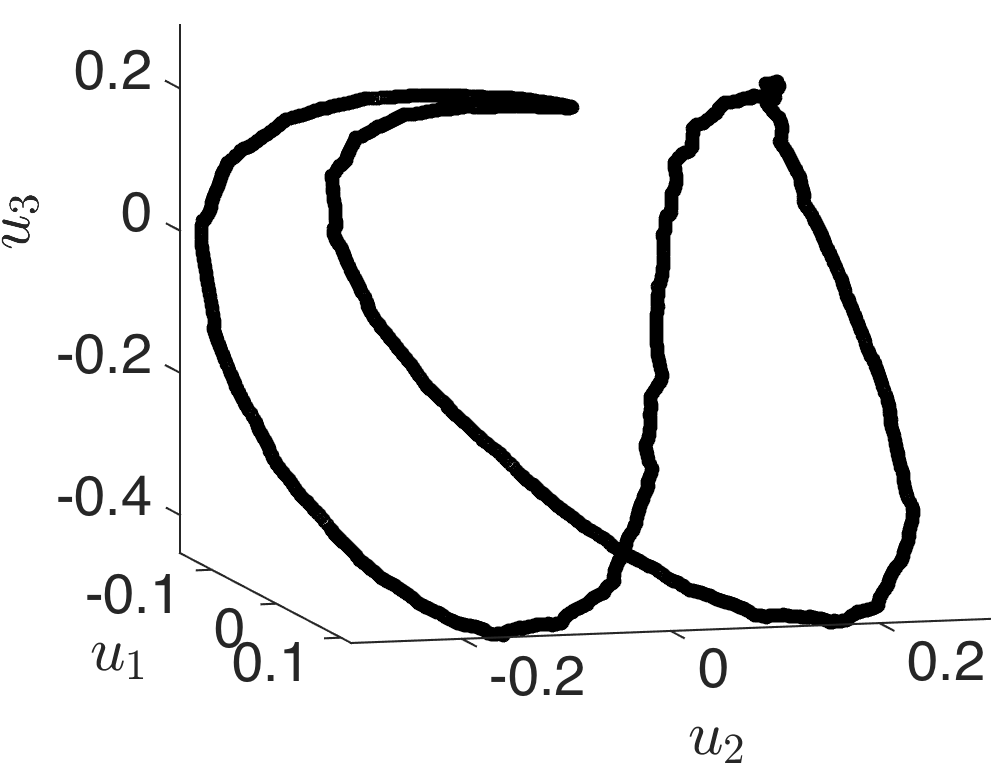}\\
\includegraphics[width=0.325\columnwidth]{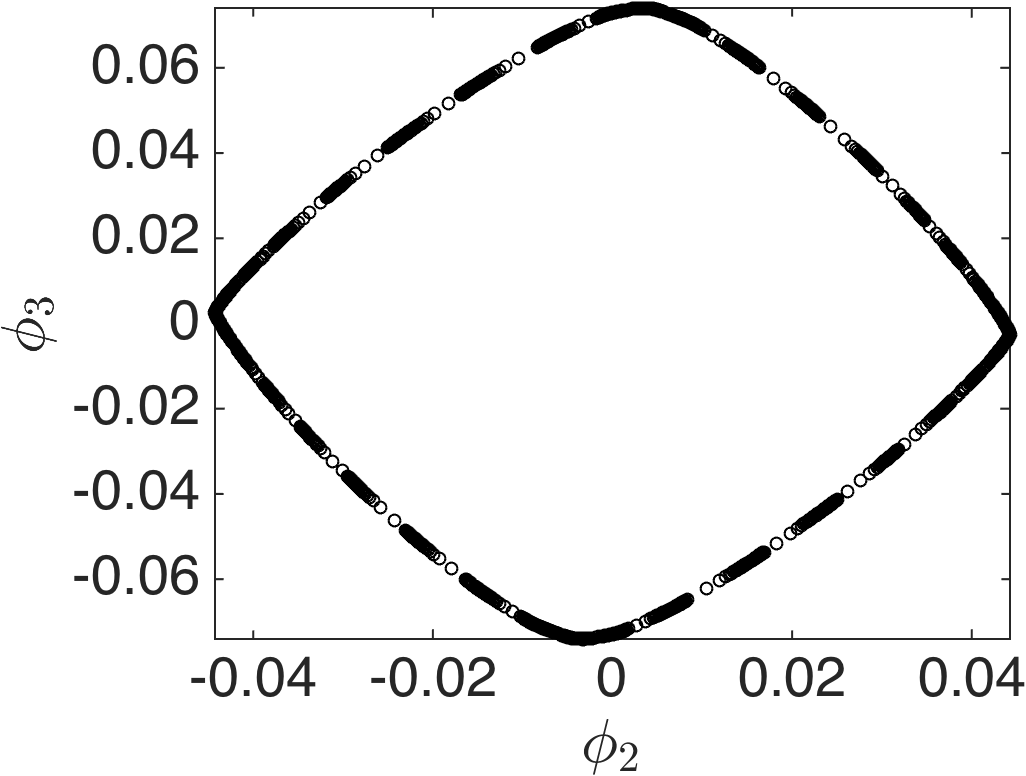}
\includegraphics[width=0.325\columnwidth]{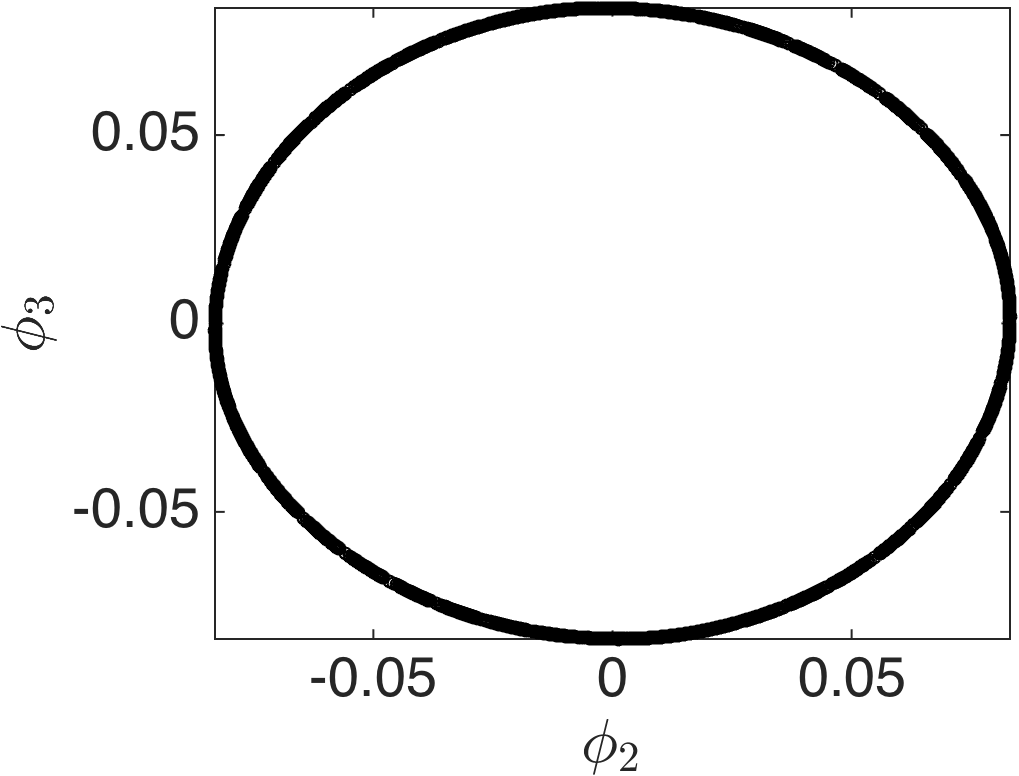}
\includegraphics[width=0.325\columnwidth]{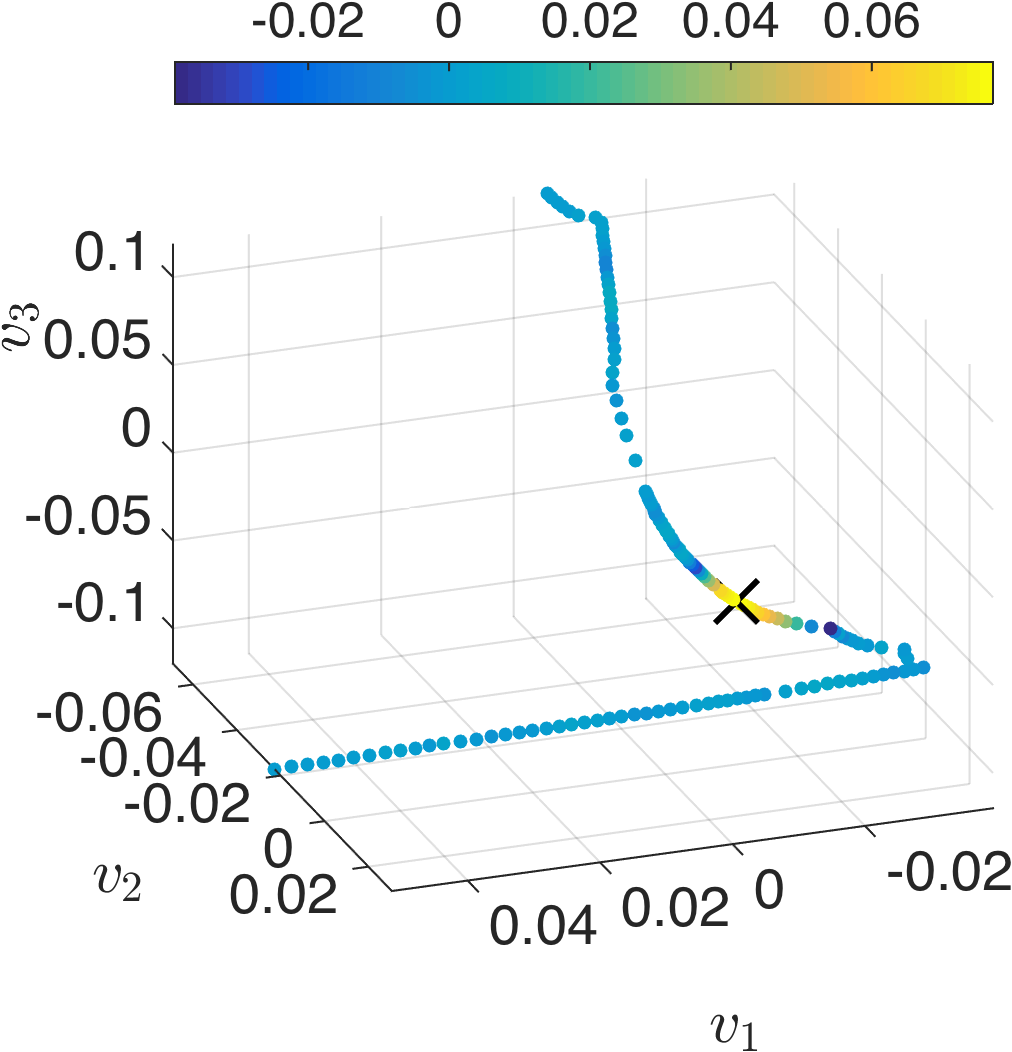}\\
\includegraphics[width=0.325\columnwidth]{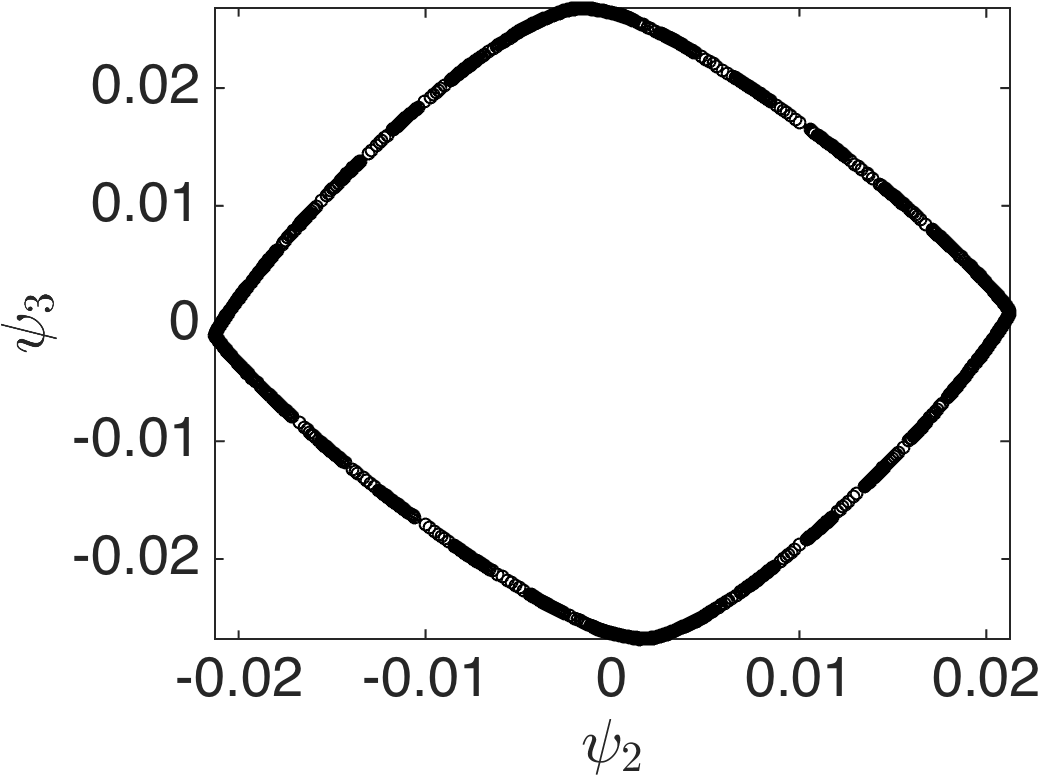}
\includegraphics[width=0.325\columnwidth]{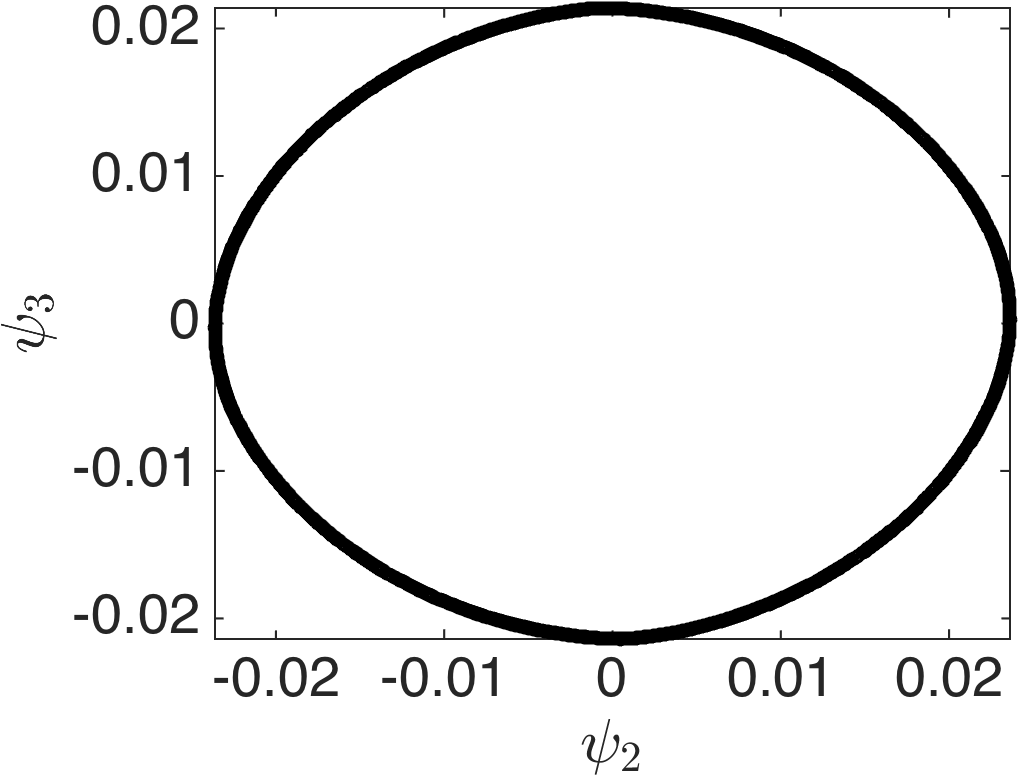}
\includegraphics[width=0.325\columnwidth]{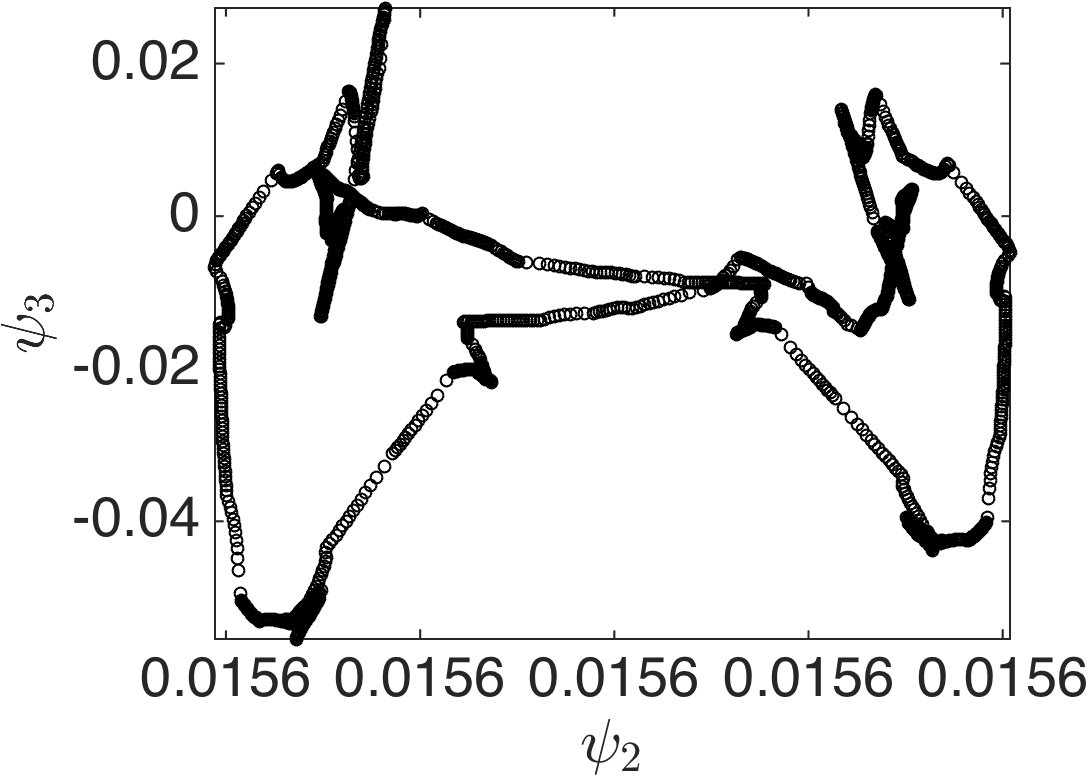}
\caption{Top row: the left panel is the Shepp-Logan phantom, the middle panel shows two projection images from two different projection directions, and the right panel shows the linear dimension reduction of the dataset by the first three principal components, $u_1,u_2$ and $u_3$. Middle row: the left panel shows the diffusion map (DM) of the dataset, where the embedding is done by choosing the first two non-trivial eigenvectors of the graph Laplacian, $\phi_2$ and $\phi_3$, and we simply take the Gaussian kernel to design the affinity without applying the $\alpha$-normalization technique \cite{Coifman_Lafon:2006}, the middle panel shows the DM of the dataset, where we apply the $\alpha$-normalization technique when $\alpha=1$, and the right panel shows that the sign of the kernel corresponding to the locally linear embedding (LLE) is indeterminate, where the black cross indicates $x_{3555}$, and the kernel value on its neighbors are encoded by color (the neighbors are visualized by the top three principal components, $v_1$, $v_2$, and $v_3$).
Bottom row: the embedding using the second and third eigenvectors of the LLE, $\psi_2$ and $\psi_3$, under different setups are shown. The left panel shows the result with $\rho=-5$, the middle panel shows the result with $\rho=3$, and the right panel shows the result with $\rho=8$. The results shows the importance of choosing the regularization and are explained by the theory.}\label{Figure:SheppLogan}
\end{figure}

\section{$\epsilon$-radius neighborhood v.s. $K$ nearest neighborhood}\label{Section:KNNvsEpsilon}

{
In the original article \cite{Roweis_Saul:2000}, the KNN scheme was proposed for the LLE algorithm. However, the analysis in this paper has been based on the $\epsilon$-radius neighborhood scheme. These two schemes are closely related asymptotically from the viewpoint of density function estimation \cite{Moore_Yackel:1977}. The following argument shows that the developed theorems are actually transferrable to the KNN scheme under the manifold setup.

We follow the notations in Section \ref{Section:ManifoldSetup}. 
For $\iota(x_k)\in \mathcal{X}$, take $K$ nearest neighbors of $\iota(x_k)$, namely $\iota(x_{k,1}), \ldots, \iota (x_{k,K})$, with respect to the Euclidean distance.  
Intuitively, $K$ is closely related to the volume of the minimal ball centered at $x_k$ with the radius $\epsilon(x_k)$ containing the $K$ nearest neighbors of $x_k$, where ${\epsilon(x_k)}$ depends on $K$ and the p.d.f.; that is, we expect to have 
\begin{equation}\label{Expected:K:epsilon:relationship}
nP(x_k)\text{vol}(D_{x_k})\approx K\,,
\end{equation}
where $D_x:=B^{\mathbb{R}^p}_{{\epsilon(x)}}(\iota(x)) \cap \iota(M)$ is the minimal ball centered at $x\in M$ with the radius $\epsilon(x)>0$ so that $D_x$ contains the $K$ nearest neighbors of $x$.
Under the smoothness assumption of the p.d.f. and the manifold setup, we claim that asymptotically when $n\to \infty$, this relationship holds uniformly over the manifold a.s., if $K=K(n)$, $K/\log(n)\to \infty$ and $K/n\to 0$ as $n\to \infty$. This claim could be achieved by slightly modifying the argument for the Theorem in \cite{Devroye_Wagner:1977} to obtain the large deviation bound for (\ref{Expected:K:epsilon:relationship}) when $n$ is finite. To bound $\text{Pr}\{\sup_{x \in M}|\frac{K}{n\text{vol}(D_{x})}-P(x)|>\alpha\}$, where $\alpha>0$, it is sufficient to bound the two terms on the right hand side of \cite[equation (10)]{Devroye_Wagner:1977}.
By a straightforward calculation of the equations on page 539 in  \cite{Devroye_Wagner:1977}, we achieve the bound $\text{Pr}\{\sup_{x \in M}|\frac{K}{n\text{vol}(D_{x})}-P(x)|>\alpha\}\leq \text{poly}(n)e^{-cK\alpha^3}$, where $c$ is a constant depending on  $d$ and the upper bounds of $P(x)$  on $M$, and $\text{poly}(n)=3(1+2^{p+3}n^{p+3})$.\footnote{{This can be observed by combining \cite[equations (6) (7) (9) and (10)]{Devroye_Wagner:1977}. The second term on the right hand side of \cite[equation (10)]{Devroye_Wagner:1977} is dominated by the first term. To bound the first term, we can substitute $\delta=\frac{K \beta}{4n(P_M+\beta)}$ and $M=\frac{4k P_M}{n \beta}$ into the fourth unlabeled equation on page 539  in  \cite{Devroye_Wagner:1977}, where $P_M$ is the upper bound of p.d.f. In the fourth unlabeled equation, $\alpha$ is the upper bound of the volume ratio of $B^{\mathbb{R}^p}_{2\epsilon(x)}(\iota(x)) \cap \iota(M)$ and  $B^{\mathbb{R}^p}_{\epsilon(x)}(\iota(x)) \cap \iota(M)$, which can be chosen as $3^d$ when $\epsilon(x)$ is sufficiently small. Finally, we use the fact that when $\beta$ is small, the equation follows.}}    
Therefore, if we choose $\alpha=(\frac{2p+10}{c})^{1/3}(\frac{\log n}{K})^{1/3}$, with probability greater than $1-n^{-2}$, we have uniformly $\frac{K}{n\text{vol}(D_{x})}=P(x)+O(\alpha)$. Note that by the assumption, $\alpha\to 0$ as $n\to \infty$. We conclude that with probability greater than $1-n^{-2}$, 
\begin{equation}\label{epsilon approximated by K/n}
\epsilon(x)=\Big(\frac{d}{|S^{d-1}|}\Big)^{1/d} \Big(\frac{K}{nP(x)}\Big)^{1/d}\Big(1+O\Big(\Big(\frac{\log n}{K}\Big)^{1/3}\Big)\Big)\,,
\end{equation}
where we use the fact that
$\text{vol}(D_{x})=\frac{|S^{d-1}|}{d} \epsilon(x)^{d}+O(\epsilon(x)^{d+1})$
when $\epsilon(x)$ is sufficiently small. It is transparent that $\epsilon(x)$ depends on $n$ and $\epsilon(x)\to 0$ a.s. as $n\to \infty$ since $K(n)/n\to 0$ by assumption. In other words, $\epsilon$ is not a constant value. It is a function depending on the p.d.f.. If we requre $K=K(n)$ to additionally satisfy $\frac{K(n)}{n}\frac{K(n)^{d/2}}{\log(n)^{d/2}}\to \infty$, then $\epsilon(x_k)$ satisfies $\frac{\sqrt{n}}{n^{1/2}\epsilon(x)^{d/2+1}}\to 0$ a.s..
On the other hand, notice that the statement of Theorem \ref{Corollary:LLE:Asymptotitcs} is pointwise. Therefore, its proof could be directly employed to the case when $\epsilon$ is chosen pointwisely, and hence the KNN scheme. As a result, 
if we take $\rho=3$ and is $K/n \to 0$, $K/\log(n)\to \infty$, and $(K/n)(K/\log(n))^{d/2}\to \infty$ when $n\to \infty$, by plugging (\ref{epsilon approximated by K/n}) into Theorem \ref{Corollary:LLE:Asymptotitcs}, when $n$ is sufficiently large, the following convergence holds for all $x_k$ with probability greater than $1-2n^{-2}$:
\begin{align}
\sum_{j=1}^K w_k(j)f(x_{k,j})&-f(x_k)=\frac{(\frac{d}{|S^{d-1}|})^{1/d}}{2(d+2)}\frac{\Delta f(x_k)}{P(x_k)^{2/d}}\Big(\frac{K}{n}\Big)^{2/d}\nonumber\\
&+O\Big(\Big(\frac{\log(n)}{K}\Big)^{1/3}\Big(\frac{K}{n}\Big)^{2/d}\Big)+O\Big(\Big(\frac{\log(n)}{K}\Big)^{1/2}\Big(\frac{K}{n}\Big)^{1/d}\Big)\label{RelationshipKNNepsilonAsymptoticalExpansion}\,.
\end{align}
In summary, unless the sampling is uniform, we do not obtain the Laplace-Beltrami operator with the KNN scheme. Based on the expansion (\ref{RelationshipKNNepsilonAsymptoticalExpansion}), to obtain the Laplace-Beltrami operator with the KNN scheme, we could numerically consider a ``normalized LLE matrix''; that is, find the eigen-structure of $\tilde{L}:=\mathcal{E}^{-1}(W-I)$, where $W$ is the ordinary LLE matrix, and $\mathcal{E}\in \mathbb{R}^{n\times n}$ is a diagonal matrix so that $\mathcal{E}_{ii}=\epsilon(x_i)^2$. Since the analysis of the pointwise convergence of $\tilde{L}$ is similar to that of Theorem \ref{Corollary:LLE:Asymptotitcs}, we skip the details here.

%
}

\section{Relationship with two statistical topics}\label{Section:LLR}

\subsection{Locally linear regression}
Based on the above theoretical study under the manifold setup, we could link the LLE to the locally linear regression (LLR) \cite{fan_gijbels:1996,Cheng_Wu:2013}. Recall that in the LLR, we locally fit a linear function to the response, and the associated kernel depends on the inverse of a variation of the covariance matrix. We summarize how the LLR is operated.
Consider the following regression model 
\begin{equation}\label{model1}
Y=m(X)+\sigma(X)\,\xi,
\end{equation}
where $\xi$ is a random error independent of $X$ with $\mathbb{E} (\xi)=0$ and $\operatorname{Var}(\xi)=1$, and both the regression function $m$ and the conditional variance function  $\sigma^2$ are defined on $\mathbb{R}^d$. 
Let  $\{(X_l,Y_l)\}_{l=1}^n$ denote a random sample observed from model (\ref{model1}) 
with $\mathcal{X}:=\{X_l\}_{l=1}^n$ being sampled from $X$. 
Given $\{(X_l,Y_l)\}_{l=1}^n$ and $x\in \mathbb{R}^d$, the problem is then to estimate $m({x})$ assuming enough smoothness of $m$.
Choose a smooth kernel function with fast decay $K:[0,\infty]\to \mathbb{R}$ and a bandwidth $\epsilon>0$. 
The LLR estimator for $m(x)$ is defined as $e_1^\top \hat{\boldsymbol{\beta}}_x$, where 
\begin{align}
\hat{\boldsymbol{\beta}}_x&=\arg\min_{\boldsymbol{\beta}\in\mathbb{R}^{d+1}} (\mathbf{Y}-\mathbf{X}_x\boldsymbol\beta)^\top\mathbf{W}_{x}(\mathbf{Y}-\mathbf{X}_x\boldsymbol\beta)\,,\label{functionalmfd}\\
\mathbf{Y}&=\left(Y_{1},\ldots,Y_{n}\right)^\top,\quad \mathbf{X}_x=\bigg[
\begin{array}{ccc}
1 & \dots & 1 \\
X_{1} & \dots & X_{n} \\
\end{array}
\bigg]^\top\in \mathbb{R}^{n\times (d+1)},\nonumber\\
\mathbf{W}_{x}&=\texttt{diag}\left(K_\epsilon(X_{1},x),\ldots,K_\epsilon(X_{n},x)\right)\in\mathbb{R}^{n\times n},\nonumber
\end{align}
and $K_\epsilon(X_{l},x):=\epsilon^{-d}K\big(\|X_{l}-x\|_{\mathbb{R}^d}\big/\epsilon\big)$. 
By a direct expansion, (\ref{functionalmfd}) becomes
\begin{equation}\label{functional2}
\hat{\boldsymbol{\beta}}_x=(\mathbf{X}_x^\top\mathbf{W}_{x}\mathbf{X}_x)^{-1}\mathbf{X}^\top_x\mathbf{W}_{x}\mathbf{Y}
\end{equation}
if $(\mathbf{X}^\top_x\mathbf{W}_{x}\mathbf{X}_x)^{-1}$ exists. We have $\mathbf{X}_x=\begin{bmatrix}\boldsymbol{1}_n^\top\\ \mathbf{G}_x\end{bmatrix}$, where $\mathbf{G}_x$ is the data matrix associated with $\{X_i\}_{i=1}^n$ centered at $x$. By yet another direct expansion by the block inversion,
\begin{equation}
e_1^\top \hat{\boldsymbol{\beta}}_x={w^{(\texttt{LLR})}_x}^\top\mathbf{Y}\,,
\end{equation}
where $w^{(\texttt{LLR})}_x$ is called the ``smoothing kernel'' and satisfies
\begin{equation}
w^{(\texttt{LLR})}_x:=\frac{\boldsymbol{1}_n^\top\mathbf{W}_x-\boldsymbol{1}_n^\top\mathbf{W}_x\mathbf{G}^\top_x(\mathbf{G}_x\mathbf{W}_x\mathbf{G}_x^\top)^{-1}\mathbf{G}_x\mathbf{W}_x}
{\boldsymbol{1}_n^\top\mathbf{W}_x\boldsymbol{1}_n-\boldsymbol{1}_n^\top\mathbf{W}_x\mathbf{G}^\top_x(\mathbf{G}_x\mathbf{W}_x\mathbf{G}_x^\top)^{-1}\mathbf{G}_x\mathbf{W}_x\boldsymbol{1}_n}.
\end{equation}
Through a direct comparison, we see that the vector $w^{(\texttt{LLR})}_x$ is almost the same as the weight matrix in the LLE algorithm shown in (\ref{Expansion:LLEweightedKernel}), except the weighting by the chosen kernel -- in the LLE, the kernel function {and its support are both determined by the data}, while in the LLR the kernel is selected in the beginning and the data points are weighted by the chosen kernel like $\mathbf{G}_x\mathbf{W}_x$. If we choose the kernel to be a zero-one kernel with the support on the ball centered at $x$ with the radius $\epsilon$, then we ``recover'' (\ref{Expansion:LLEweightedKernel}). 

{Under the low dimensional manifold setup, $\mathbf{G}_x\mathbf{W}_x\mathbf{G}_x^\top$ might not be of full rank. Note that the term $\mathbf{G}_x\mathbf{W}_x \mathbf{G}_x^\top$ is the weighted local covariance matrix, which is considered in \cite{Singer_Wu:2012} to estimate the tangent space. 
Unlike the regularized pseudo-inverse (\ref{Definition:Irho:Soft}) in the LLE, to handle this degeneracy issue, in LLR the data matrix $\mathbf{G}_x$ is constructed by projecting the point cloud to the estimated tangent plane. This projection step could be understood as taking the Moore-Penrose pseudo-inverse approach to handle the degeneracy.}
We mention that in \cite[Section 6]{Cheng_Wu:2013}, the relationship between the LLR and the manifold learning under the manifold setup is established. {It is shown that asymptotically, the smooth matrix from the kernel $w^{(\texttt{LLR})}_x$ leads to the Laplace-Beltrami operator. The result is parallel to the reported result in this paper. }

These relationships between the LLE and the LLR suggest the possibility of fitting the data locally by taking the {locally polynomial regression into account, and generalizing the barycentric coordinate by fitting a polynomial function locally. This might lead to a variation of the LLE that catches more delicate structure of the manifold, in a different adaptive way}. Since this direction is outside the scope of this paper, the study of this possibility is left to future studies.

\subsection{Error in variable}

In this work, we analyze the LLE under the assumption that the dataset is randomly sampled directly from a manifold, without any influence of the noise. However, the noise is inevitable and a further study is needed. {By the analysis, we observe that the LLE takes care of the error in variable challenge ``in some sense''.

Suppose the dataset is $\{y_i\}_{i=1}^n \subset \mathbb{R}^p$,  where $y_i=z_i+\xi_i$, $z_i$ is supported on a manifold and $\xi_i$ is an i.i.d. noise with good properties. The question is to ask how much information the LLE could recover from $\{z_i\}_{i=1}^n$. A parallel problem for the GL, or the more general graph connection Laplacian (GCL), has been studied in \cite{ElKaroui:2010a,ElKaroui_Wu:2016b}. It shows that the spectral properties of the GL and GCL are robust to noise. For the LLE, while a similar analysis could be applied, if we view the LLE as a kernel method and show a similar result, we mention that we might benefit by taking the special algorithmic structure of the LLE into account. 

When the dimension of the dataset is high, the noise might have a nontrivial behavior. For example, when the dimension of the database $p=p(n)$ satisfies $p(n)/n\to \gamma>0$ when $n\to \infty$ (known as the {\em large $p$ and large $n$ setup}), it is problematic to even estimate the covariance matrix. Note that the covariance matrix is directly related to the LLE algorithm
%
since the covariance matrix appears in the regularized pseudo inverse, $\mathcal{I}_{n\epsilon^{d+\rho}}(\bar G_n\bar G_n^\top)$, where $\bar G_n$ is the local data matrix associated with $y_k$ determined from the noisy database $\{y_i\}_{i=1}^n$, and $\bar G_n\bar G_n^\top$ is the covariance matrix.  
Under the large $p$ and large $n$ setup,  
the eigenvalues and eigenvectors of the covariance matrix will both be biased, depending on the ``signal-to-noise ratio'' and $\gamma$ \cite{Johnstone:2006}.
A careful manipulation of the noise, or a modification of the covariance matrix estimator, is needed in order to address these introduced biases. For example, the ``shrinkage technique'' was introduced to correct the eigenvalue bias with a theoretical guarantee \cite{Singer_Wu:2013a,Donoho_Gavish_Johnstone:2013}. The covariance matrix estimator based on the shrinkage technique is $\tilde{C}_n := \sum_{l=1}^p f(\lambda_l) u_lu_l^\top$, where $u_l$ and $\lambda_l$ form the $l$-th eigenpair of $\bar G_n\bar G_n^\top$ and $f$ is the designed shrinkage function.

A direct comparison shows that the regularized pseudo inverse in the LLE behaves like a shrinkage technique. Recall that $\mathcal{I}_{n\epsilon^{d+\rho}}(\bar G_n\bar G_n^\top)=\sum_{l=1}^{r_n} \frac{1}{\lambda_l+n\epsilon^{d+\rho}} u_lu_l^\top$ (\ref{Definition:Irho:Soft}), where $r_n$ is the rank of $\bar G_n\bar G_n^\top$, the shrinkage function is $f(x)=\frac{1}{x+n\epsilon^{d+\rho}}\chi_{(0,\infty)}(x)$, and $\chi$ is the indicator function. 
Although how $f$ corrects the noise impact is outside the scope of this paper,
it would be potential to carefully improve the regularized pseudo inverse by taking the shrinkage technique into account. In other words, by modifying the barycentric coordinate evaluation and applying the technique discussed in \cite{ElKaroui:2010a,ElKaroui_Wu:2016b}, it is possible to improve the LLE algorithm. An extensive study of the topic will be reported in the upcoming research.}

\section{Conclusions and Discussion}\label{Section:Conclusion}

We provide an asymptotical analysis of the LLE under the manifold setup. The theoretical results indicate that asymptotically, the LLE generally may not give the expected Laplace-Beltrami operator, unless the regularization is chosen properly. From the integral operator viewpoint, the corresponding kernel of the LLE in general is not positive. Therefore, the LLE in general is not a diffusion operator. Some direct calculations of the LLE operator over simple manifolds, like the sphere, indicate that asymptotically the fourth order differential operator might pop out as the dominant term, if the regularization is chosen to be too small. The numerical results support the theoretical findings. {In addition, we also discuss the relationship between the LLE and two statistical problems, the LLR and the error in variable problem, and point out the potential future work.}

There are more important topics we do not explore in this paper. 
{First, note that the pointwise convergence result established in this paper comes from a careful analysis of the ``fit locally'' part of the LLE algorithm. However, it is not sufficient to fully understand the ``think globally'' part of the LLE algorithm. Recall that we evaluate the eigen-decomposition of the LLE matrix for the embedding in the last step of the LLE algorithm. The theoretical and numerical results suggest that the eigen-structure of the LLE matrix provides an approximation of the eigen-structure of the Laplace-Beltrami operator. The embedding in the last step could therefore be understood from the point of view of the spectral embedding theory \cite{Berard:1986,Berard_Besson_Gallot:1994}. The eigen-structure of the LLE matrix integrates the local information. As a result, we catch the ``think globally'' part. However, the pointwise convergence is not strong enough to guarantee the spectral convergence. In other words, we need to show that asymptotically, the eigen-decomposition provides a proper approximation of the eigen-structure of the Laplace-Beltrami operator. While a similar proof of that in \cite{Singer_Wu:2016} could be slightly modified to achieve the spectral convergence of the LLE, however, more may be needed, such as the spectral convergence rate, from the statistical viewpoint.
Recently, there have been some relevant works for the GL under the manifold model in this direction \cite{GarciaTrillos_Slepcev:2015,Wang:2015}. Based on the special structure of the LLE, like the regularization, the optimal convergence rate of the LLE could be different and additional exploration is needed. The} result will be reported in the future work. 

Another important topic is the appearance of the fourth order differential operator in the LLE, when the manifold has a special structure and the regularization is improperly chosen. Although it would be a by-product, it would be interesting to ask if it is possible to take the fourth order differential operator into account in the data analysis and which kind of information could be extracted from the dataset. {It would also be interesting to ask if it is possible to directly obtain the fourth order differential operator for more general manifolds with a slight modification of the LLE algorithm. A direct benefit of this possibility is linked back to the regression problem, such as the LLR. If we could directly eliminate the second order term, the regression result could be more accurate.} We leave this study direction to the future work.

\section*{Acknowledgement}

Hau-tieng Wu's research is partially supported by Sloan Research Fellow FR-2015-65363. He acknowledges the continuous support from the Department of Mathematics, University of Toronto.

\bibliographystyle{plain}
\bibliography{bib}

\clearpage
\appendix

\setcounter{page}{1}
\renewcommand{\thepage}{SI.\arabic{page}}

{\large\center Online Supplementary Information for \\
\Large \center \textbf{Think globally, fit locally under the Manifold Setup}\\ \Large \center \textbf{Asymptotic Analysis of Locally Linear Embedding}\\
\large\center by Hau-Tieng Wu and Nan Wu\\}

\section{Perturbation analysis of eigenvalue and eigenvectors}\label{Section:Perturbation}

Suppose $A: \mathbb{R} \rightarrow S(p)$, where $S(p)$ is the set of real symmetric $p \times p$ matrices, is an analytic function around $0$. In this appendix, we are going to introduce an algorithm to calculate the eigenvalues and orthonormal eigenvectors of $A(\epsilon)$ when {$\epsilon$ is small enough.} The method introduced in this appendix follows the standard approach, like \cite{Andrew1998,VanDerAn_TerMorsche:2007}. For discussion of more general matrices, interested readers are referred to \cite{VanDerAn_TerMorsche:2007}. 

Suppose 
\begin{equation}
A(0)=\begin{bmatrix}
\lambda I_{d\times d} & 0 \\
0 & 0 \\
\end{bmatrix},\nonumber
\end{equation}
where $0<d< p$ and $\lambda\neq 0$. Decompose $A(0)$ by
\begin{equation} \label{A2}
A(0)X(0)=X(0)\Lambda(0) ,
\end{equation}
where $\Lambda(0)=A(0)$ is a diagonal matrix consisting of eigenvalues of $A(0)$, and 
\begin{equation}
X(0)=
\begin{bmatrix}
X_1 & 0 \\
0 & X_2 \\
\end{bmatrix}\in O(p),\nonumber
\end{equation}
where $X_1\in O(d)$ and $X_2\in O(p-d)$. 
Note that due to the possible nontrivial multiplicity of eigenvalues, $X(0)$ may not be uniquely determined.
Take the Taylor expansion of $A$ around $0$ as
\begin{equation}
A(\epsilon)=A(0)+A'(0)\epsilon+\frac{1}{2}A''(0)\epsilon^2+O(\epsilon^3)\,,\nonumber
\end{equation} 
where $\epsilon>0$ is sufficiently small, $A'(0)$ and $A''(0)$ are divided into blocks of the same size as those of $A(0)$ by
\begin{equation}
A'(0)=\begin{bmatrix}
A'_{11} & A'_{12} \\
A'_{21} & A'_{22} \\
\end{bmatrix}, \quad 
A''(0)=\begin{bmatrix}
A''_{11} & A''_{12} \\
A''_{21} & A''_{22} \\
\end{bmatrix},\nonumber
\end{equation}
where $A'_{11}\in S(d)$, $ A'_{22}\in S(p-d)$, $A''_{11}\in S(d)$ and $A''_{22}\in S(p-d)$. 
Let $\Lambda(\epsilon)\in \mathbb{R}^{p\times p}$ be the diagonal matrix consisting of eigenvalues  of $A(\epsilon)$ and $X(\epsilon)\in \mathbb{R}^{p\times p}$ be the matrix formed by the corresponding eigenvectors, i.e.
\begin{equation} \label{A1}
A(\epsilon)X(\epsilon)=X(\epsilon)\Lambda(\epsilon)\,.
\end{equation}
Since $A$ is symmetric, $X$ and $\Lambda$ are both analytic around $0$ based on \cite[Section 3.6.2, Theorem 1]{baumgArtel1985analytic}. We thus have the following Taylor expansion when $\epsilon$ is sufficiently small:
\begin{equation}
\Lambda(\epsilon)=\Lambda(0)+\epsilon\Lambda'(0)+\frac{1}{2}\epsilon^2\Lambda''(0)+O(\epsilon^3), \nonumber
\end{equation}
\begin{equation}
X(\epsilon)=X(0)+X'(0)\epsilon+O(\epsilon^2).\nonumber
\end{equation}
Here $\Lambda(0)$, $\Lambda'(0)$ and $\Lambda''(0)$ are all diagonal matrices and columns of $X(\epsilon)$ form an {\em orthogonal} set. 
Note that if we normalize $X(\epsilon)$ to be in $O(p)$, then by the fact that the Lie algebra of $O(p)$ is the set of anti-symmetric matrices, we know that $X(0)^{-1}X'(0)$ is an anti-symmetric matrix. 
We discuss the eigendecomposition of $A(\epsilon)$ under two different setups, depending on the multiplicity of eigenvalues.

\medskip

\textbf{When there is no repeated eigenvalue in both $A'_{11}$ and $A'_{22}$.}  In the first case, we assume that the eigenvalues of $A'_{11}$ are distinct and the eigenvalues of $A'_{22}$ are distinct (but the eigenvalues of $A'_{11}$ and the eigenvalues of $A'_{22}$ could overlap). 
To get $\Lambda(\epsilon)$ up to the first order, we need to solve $\Lambda'(0)$. 
To determined $\Lambda'(0)$, 
we check the first order derivative of $A(\epsilon)$ at $\epsilon=0$. 
Differentiate (\ref{A1}) and we get
\begin{equation} \label{A3}
A'(0)X(0)+A(0)X'(0)=X'(0)\Lambda(0)+X(0)\Lambda'(0)\,.\nonumber
\end{equation}
Denote
\begin{equation}
\Lambda'(0)=
\begin{bmatrix}
\Lambda'_1 & 0 \\
0 & \Lambda'_2 \\
\end{bmatrix}\nonumber
\end{equation}
and set 
\begin{equation}
X'(0)=X(0)C, \nonumber
\end{equation}
where
\begin{equation}
C=
\begin{bmatrix}
C_{11} & C_{12} \\
C_{21} & C_{22} \\
\end{bmatrix}\in\mathbb{R}^{p\times p}.\nonumber
\end{equation}
If we substitute $X(0)$, $X'(0)$, and $\Lambda'(0)$ into (\ref{A3}), we have the following linear equations by comparing blocks:
\begin{align}
A'_{11}X_1&=X_1 \Lambda'_1,\label{AppendixB:Norepeat:Eq1} \\
A'_{22}X_2&= X_2\Lambda'_2,\label{AppendixB:Norepeat:Eq4}\\
A'_{12}X_2&=-\lambda X_1C_{12},\label{AppendixB:Norepeat:Eq2} \\
A'_{21}X_1&= \lambda X_2C_{21} , \label{AppendixB:Norepeat:Eq3}
\end{align}
By (\ref{AppendixB:Norepeat:Eq1}) and (\ref{AppendixB:Norepeat:Eq4}), $\Lambda'_1$ and $\Lambda'_2$ are eigenvalue matrices of $A'_{11}$ and $A'_{22}$, and $X_1$ and $X_2$ are the corresponding eigenvector matrices, and we obtain the first order approximation of the eigenvalues.
Note that above equations hold without assuming that the eigenvalues of $A'_{11}$ are distinct and the eigenvalues of $A'_{22}$ are distinct. Also note that although we could obtain the first order relationship between the eigenvectors of $A(\epsilon)$ and $A(0)$, without assuming distinct eigenvalues, the eigenvectors may not be unique. 

If we want to further get $\Lambda(\epsilon)$ up to the second order and solve $X(\epsilon)$ uniquely up to the first order, we need to solve $\Lambda'(0)$, $\Lambda''(0)$, $X(0)$, and $X'(0)$. To solve $\Lambda'(0)$, $\Lambda''(0)$, $X(0)$, and $X'(0)$, we need the assumption that $A'_{11}$ and $A'_{22}$ have no repeated eigenvalues, while we allow eigenvalues of $A'_{11}$ to be same as those of $A'_{22}$. 
By (\ref{AppendixB:Norepeat:Eq2}) and (\ref{AppendixB:Norepeat:Eq3}) we have 
\begin{align}
C_{12}&=-\lambda^{-1}X_1^\top A'_{12}X_2 , \label{AppendixB:Norepeat:Eq2.1}\\
C_{21}&=\lambda^{-1}X_2^\top A'_{21}X_1 \label{AppendixB:Norepeat:Eq3.1}.
\end{align}
Clearly, since $A'_{11}$ and $A'_{22}$ do not have repeated eigenvalues, $X_1$ and $X_2$ are uniquely defined and $C_{12}$ and $C_{21}$ can be uniquely determined. 
Since the information of $C_{11}$ and $C_{22}$ are not available from (\ref{A3}), we need the higher order derivative of $A(\epsilon)$. Differentiate $A(\epsilon)$ twice, we get
\begin{equation} \label{A4}
A''(0)X(0)+2A'(0)X'(0)+A(0)X''(0)=X''(0)\Lambda(0)+2X'(0)\Lambda'(0)+X(0)\Lambda''(0) .
\end{equation}
Now, we further substitute $X(0)$ and $X'(0)=X(0)C$ into (\ref{A4}), and get
\begin{align}
A''_{11}X_1-X_1\Lambda''_1&=2X_1(C_{11}\Lambda'_1-\Lambda'_1C_{11})-2A'_{12}X_2C_{21},\label{AppendixB:Norepeat:Eq5}\\
A''_{22}X_2-X_2\Lambda''_2&=2X_2(C_{22}\Lambda'_2-\Lambda'_2C_{22})-2A'_{21}X_1C_{12}.\label{AppendixB:Norepeat:Eq6}
\end{align}
Since $X_1\in O(d)$ and $X_2\in O(p-d)$, we have
\begin{align} 
X^\top _{1}(A''_{11}X_1+2A'_{12}X_2C_{21})&=2(C_{11}\Lambda'_1-\Lambda'_1C_{11})+\Lambda''_1\label{A5} \\
X^\top _{2}(A''_{22}X_2+2A'_{21}X_1C_{12})&=2(C_{22}\Lambda'_2-\Lambda'_2C_{22})+
\Lambda''_2.\label{A6}
\end{align}
Since that diagonal entries of $C_{11}\Lambda'_1-\Lambda'_1C_{11}$ and $C_{22}\Lambda'_2-\Lambda'_2C_{22}$ are zero, and the off-diagonal entries of $\Lambda''_1$ and $\Lambda''_2$ are zero, the off diagonal entries of $C_{11}$ and $C_{22}$, as well as $\Lambda''_{11}$ and $\Lambda''_{22}$, can be found from (\ref{A5}). Specially, since $A'_{11}$ and $A'_{22}$ do not have repeated eigenvalues, we have
\begin{align}
(C_{11})_{m,n}&=\frac{-1}{(\Lambda'_1)_{m,m}-(\Lambda'_1)_{n,n}}e_m^\top\big(X_1^\top A_{11}''X_1+\frac{2}{\lambda}X_1^\top A_{12}'A_{21}'X_1\big)e_n,\nonumber\\
(\Lambda_{11}'')_{m,m}&=e_m^\top\big(X_1^\top A_{11}''X_1+\frac{2}{\lambda}X_1^\top A_{12}'A_{21}'X_1\big)e_m,\nonumber
\end{align}
where $1\leq m\neq n\leq d$ and
\begin{align}
(C_{22})_{m,n}&=\frac{-1}{(\Lambda'_2)_{m,m}-(\Lambda'_2)_{n,n}}e_m^\top \big(X_2^\top A_{22}''X_2-\frac{2}{\lambda}X_2^\top A_{21}'A_{12}'X_2\big)e_n,\nonumber\\
(\Lambda_{22}'')_{m,m}&=e_m^\top \big(X_2^\top A_{22}''X_2-\frac{2}{\lambda}X_2^\top A_{21}'A_{12}'X_2\big)e_m,\nonumber
\end{align}
where $1\leq m\neq n\leq p-d$. By the above evaluation, we know $C_{i,j}=-C_{j,i}$ for $1\leq i\neq j\leq p$, and what is left unknown is the diagonal entries of $C$.
To determine the diagonal entries of $C$, we normalize $X(\epsilon)=X(0)+X(0)C\epsilon+O(\epsilon^2)$ so that $X(\epsilon)\in O(p)$. We thus have
\begin{align} \label{A7}
I_{p\times p}&=(X(0)+X'(0)\epsilon+O(\epsilon^2))^\top (X(0)+X'(0)\epsilon+O(\epsilon^2))\nonumber\\
&=X(0)^\top X(0)+(C^\top X(0)^\top X(0)+X(0)^\top X(0)C)\epsilon+O(\epsilon^2)\nonumber\\
&=I_{p\times p}+2\epsilon \texttt{diag}(C)+O(\epsilon^2),
\end{align}
where the last equality holds since $C_{i,j}=-C_{j,i}$ when $i\neq j$, and $\texttt{diag}(C)$ is a diagonal matrix so that $\texttt{diag}(C)_{i,i}=C_{i,i}$ for $i=1,\ldots,p$. As a result, we know that the diagonal entries of $C$ are of order $\epsilon$. As a result, we have the following solution to the eigenvalues and eigenvectors of $A(\epsilon)$:
\begin{align}
\Lambda(\epsilon)&=\begin{bmatrix}
\lambda I_{d\times d} +\epsilon\Lambda'_1+\frac{1}{2}\epsilon^2\Lambda''_1 & 0 \\
0 & \epsilon\Lambda'_2+\frac{1}{2}\epsilon^2\Lambda''_2 \\
\end{bmatrix}+O(\epsilon^3),\label{AppendixB:Norepeat:ResultLambda} \\
X(\epsilon)&=X(0)(I_{p\times p}+\epsilon \mathsf{S})+O(\epsilon^2)\in O(p),\label{AppendixB:Norepeat:ResultEigenVector}
\end{align}
where $\mathsf{S}:=C-\texttt{diag}(C)$
and the last equality holds since the entries of $\texttt{diag}(C)$ are of order $\epsilon$. Note that $\mathsf{S}$ is an anti-symmetric matrix. This result could be understood from the fact that the Lie algebra of $O(p)$ is the set of anti-symmetric matrices, and the tangent vector at $X(0)$ leading to $X(\epsilon)$ is $X(0)\mathsf{S}$.

\medskip
\textbf{When there exists a repeated eigenvalue in $A'_{22}$.}  In this case, we assume that $A'_{22}$ may have repeated eigenvalues, and to simplify the discussion, we assume that $A'_{11}$ does not have a repeated eigenvalue. Recall (\ref{AppendixB:Norepeat:Eq1}) and (\ref{AppendixB:Norepeat:Eq4}).
Write
\begin{equation}
\Lambda'_2=
\begin{bmatrix}
\Lambda'_{2,1} &0\\
0 & \Lambda'_{2,2} \\
\end{bmatrix},\nonumber
\end{equation}
where $\Lambda'_{2,2}\in \mathbb{R}^{l\times l}$, $1\leq l\leq p-d$, is a diagonal matrix with the same diagonal entries, denoted as $\gamma\in\mathbb{R}$. To simplify the discussion, we assume that the diagonal entries of $\Lambda'_{2,1}\in \mathbb{R}^{(p-d-l)\times (p-d-l)}$ are all distinct and are different from $\gamma$. Hence, we have
\begin{equation}
\Lambda'(0)=
\begin{bmatrix}
\Lambda'_1 & 0 &0\\
0 & \Lambda'_{2,1} &0\\
0 & 0 & \Lambda'_{2,2} \\
\end{bmatrix}.\nonumber
\end{equation}
Let $\Gamma_1\in O(d)$ be the orthonormal eigenvector matrix of $A'_{11}$ and $\Gamma_2\in O(p-d)$ be any
orthonormal eigenvector matrix of $A'_{22}$. Define 
\begin{equation}
\Gamma=
\begin{bmatrix}
\Gamma_1 & 0 \\
0 & \Gamma_2 \\
\end{bmatrix}.\nonumber
\end{equation}
Consider 
\begin{equation}
\tilde{A}(\epsilon) =\Gamma^{-1}A(\epsilon)\Gamma\,.\label{AppendixA:Repeated:AtildeDefinition}
\end{equation}
Note that $A(\epsilon)$ has the same eigenvalue matrix as $\tilde{A}(\epsilon)$.  By a direct expansion,
\begin{align}
\tilde{A}(\epsilon)=&\,\Gamma^{-1}A(\epsilon)\Gamma\nonumber \\
=&\,\Gamma^{-1}A(0)\Gamma+\Gamma^{-1}A'(0)\Gamma\epsilon+\frac{1}{2}\Gamma^{-1}A''(0)\Gamma\epsilon^2+O(\epsilon^3)  \nonumber \\
=&\,\tilde{A}(0)+\tilde{A}'(0)\epsilon+\frac{1}{2}\tilde{A}''(0)\epsilon^2+O(\epsilon^3)  \nonumber \,,
\end{align}
where $\tilde{A}(0):=\Gamma^{-1}A(0)\Gamma$,  $\tilde{A}'(0):=\Gamma^{-1}A'(0)\Gamma$, and $\tilde{A}''(0):=\Gamma^{-1}A''(0)\Gamma$.
By the assumption of $A(0)$, we have 
\begin{equation}
\tilde{A}(0)=\Gamma^{-1}A(0)\Gamma=A(0)\nonumber.
\end{equation}
 Furthermore, we have 
\begin{align}
\tilde{A}'(0)=\Gamma^{-1}A'(0)\Gamma=
\begin{bmatrix}
\Gamma_1^{-1}A'_{11}\Gamma_1 & \Gamma_1^{-1}A'_{12}\Gamma_2\\
\Gamma_2^{-1}A'_{21}\tilde{X}_1 & \Gamma_2^{-1}A'_{22}\Gamma_2  \\
\end{bmatrix}
=\begin{bmatrix}
\Lambda'_1 & \Gamma_1^{-1}A'_{12}\Gamma_2\\
\Gamma_2^{-1}A'_{21}\Gamma_1 & \Lambda'_2  \\
\end{bmatrix}\,,\nonumber
\end{align}
where the last equality holds since $\Gamma_1$ and $\Gamma_2$ are eigenvector matrices of $A'_{11}$ and $A'_{22}$. 
Then, we divide $\tilde{A}(0)$, $\tilde{A}'(0)$ and $\tilde{A}''(0)$ and $\Lambda''(0)$ into blocks in the same way as that of $\Lambda'(0)$:
\begin{align*}
\tilde{A}(0)&=\begin{bmatrix}
\lambda I_{d\times d} & 0 & 0\\
0 & 0 & 0\\
0 & 0 & 0\\
\end{bmatrix} \quad 
\tilde{A}'(0)=\begin{bmatrix}
\Lambda'_1 & \tilde{A}'_{12,1} & \tilde{A}'_{12,2} \\
\tilde{A}'_{21,1} & \Lambda'_{2,1} & 0 \\
\tilde{A}'_{21,2} & 0 & \Lambda'_{2,2} \\
\end{bmatrix} \\ 
\tilde{A}''(0)&=\begin{bmatrix}
\tilde{A}''_{11} & \tilde{A}''_{12,1} & \tilde{A}''_{12,2} \\
\tilde{A}''_{21,1} & \tilde{A}''_{22,11} & \tilde{A}''_{22,12} \\
\tilde{A}''_{21,2} & \tilde{A}''_{22,21} & \tilde{A}''_{22,22} \\
\end{bmatrix}\quad
\Lambda''(0)=
\begin{bmatrix}
\Lambda''_1 & 0 &0\\
0 & \Lambda''_{2,1} &0\\
0 & 0 & \Lambda''_{2,2} \\
\end{bmatrix} \,,\nonumber
\end{align*}
where we use the following notations for the blocks of $\tilde{A}'(0)$:
\begin{align}
\Gamma_2^{-1}A'_{21}\Gamma_1=\begin{bmatrix}
\tilde{A}'_{21,1} \\
\tilde{A}'_{21,2} \\
\end{bmatrix} \quad 
\Gamma_1^{-1}A'_{12}\Gamma_2=\begin{bmatrix}
\tilde{A}'_{12,1} & \tilde{A}'_{12,2} \\
\end{bmatrix}\,.\nonumber
\end{align}
%
If $\tilde{X}(\epsilon)$ is an orthonormal eigenvector matrix of $\tilde{A}(\epsilon)$, by (\ref{AppendixA:Repeated:AtildeDefinition}), we have 
\begin{equation}
X(\epsilon)=\Gamma \tilde{X}(\epsilon)\,.\nonumber
\end{equation}
By the expansion $\tilde{X}(\epsilon)=\tilde{X}(0)+\epsilon\tilde{X}'(0)+O(\epsilon^2)$, we have $X(0)=\Gamma \tilde{X}(0)$ and $X'(0)=\Gamma \tilde{X}'(0)$. Therefore, it is sufficient to find $\tilde{X}(0)$ and $\tilde{X}'(0)$.  
Since
\begin{align}
\tilde{A}'_{22}=\Gamma_2^{-1}A'_{22}\Gamma_2 =
\begin{bmatrix}
\Lambda'_{2,1} &0\\
0 & \Lambda'_{2,2}\\
\end{bmatrix}\nonumber
\end{align} 
is a diagonal matrix after the conjugation with $\Gamma$, by the  assumption about the eigenvalues and (\ref{AppendixB:Norepeat:Eq1}) and (\ref{AppendixB:Norepeat:Eq4}), we have
\begin{align}
\tilde{X}(0)=
\begin{bmatrix}
\tilde{X}_1 & 0 &0\\
0 & \tilde{X}_{2,1} &0\\
0 & 0 & \tilde{X}_{2,2} \\
\end{bmatrix} .\nonumber
\end{align}
Similarly, define $\tilde{X}'(0) =\tilde{X}(0)C$, where we divide $C$ into blocks in the same way as that of $\Lambda'(0)$:
\begin{align}
C=\begin{bmatrix}
C_{11} & C_{12,1} & C_{12,2} \\
C_{21,1} & C_{22,11} & C_{22,12} \\
C_{21,2} & C_{22,21} & C_{22,22} \\
\end{bmatrix}\,.\nonumber
\end{align}
Under such a block decomposition, we apply (\ref{A3}) to $\tilde{A}'(0)$, and we have
\begin{align}
\Lambda'_1\tilde{X}_1&=\tilde{X}_1 \Lambda'_1,\label{AppendixB:Repeat:Eq1} \\
\Lambda'_{2,1}\tilde{X}_{2,1}&= \tilde{X}_{2,1}\Lambda'_{2,1}.\label{AppendixB:Repeat:Eq41}\\
\Lambda'_{2,2}\tilde{X}_{2,2}&= \tilde{X}_{2,2}\Lambda'_{2,2}.\label{AppendixB:Repeat:Eq42}\\
\tilde{A}'_{12,1}\tilde{X}_{2,1}&=-\lambda \tilde{X}_1C_{12,1},\label{AppendixB:Repeat:Eq21} \\
\tilde{A}'_{12,2}\tilde{X}_{2,2}&=-\lambda \tilde{X}_1C_{12,2},\label{AppendixB:Repeat:Eq22} \\
\tilde{A}'_{21,1}\tilde{X}_1&= \lambda \tilde{X}_{2,1}C_{21,1} , \label{AppendixB:Repeat:Eq31}\\
\tilde{A}'_{21,2}\tilde{X}_1&= \lambda \tilde{X}_{2,2}C_{21,2}\,. \label{AppendixB:Repeat:Eq32}
\end{align}
Then, we apply (\ref{A4}) to $\tilde{A}''(0)$, we have
\begin{align}
\tilde{A}''_{11}\tilde{X}_1-\tilde{X}_1\Lambda''_1&=2\tilde{X}_1(C_{11}\Lambda'_1-\Lambda'_1C_{11})-2\tilde{A}'_{12,1}\tilde{X}_{2,1}C_{21,1}-2\tilde{A}'_{12,2}\tilde{X}_{2,2}C_{21,2},\label{AppendixB:Repeat:Eq5}\\
\tilde{A}''_{22,11}\tilde{X}_{2,1}-\tilde{X}_{2,1}\Lambda''_{2,1}&=2\tilde{X}_{2,1}(C_{22,11}\Lambda'_{2,1}-\Lambda'_{2,1}C_{22,11})-2\tilde{A}'_{21,1}\tilde{X}_1C_{12,1},\label{AppendixB:Repeat:Eq61}\\
\tilde{A}''_{22,22}\tilde{X}_{2,2}-\tilde{X}_{2,2}\Lambda''_{2,2}&=2\tilde{X}_{2,2}(C_{22,22}\Lambda'_{2,2}-\Lambda'_{2,2}C_{22,22})-2\tilde{A}'_{21,2}\tilde{X}_1C_{12,2},\label{AppendixB:Repeat:Eq62}\\
\tilde{A}_{22,12}''\tilde{X}_{2,2}&=2\tilde{X}_{2,1}(C_{22,12}\Lambda'_{2,2}-\Lambda'_{2,1}C_{22,12})-2\tilde{A}'_{21,1}\tilde{X}_1C_{12,2},\label{AppendixB:Repeat:Eq63}\\
\tilde{A}_{22,21}''\tilde{X}_{2,1}&=2\tilde{X}_{2,2}(C_{22,21}\Lambda'_{2,1}-\Lambda'_{2,2}C_{22,21})-2\tilde{A}'_{21,2}\tilde{X}_1C_{12,1}.\label{AppendixB:Repeat:Eq64}
\end{align}
Since $\Lambda'_1$ and $\Lambda'_{2,1}$ both have distinct diagonal entries, by (\ref{AppendixB:Repeat:Eq1}) and (\ref{AppendixB:Repeat:Eq41}), we have 
\begin{equation}
\tilde{X}_1= I_{d \times d}\nonumber
\end{equation}
and 
\begin{equation}
\tilde{X}_{2,1}= I_{(p-d-l) \times (p-d-l)}.\nonumber
\end{equation}
 In this case, $C_{12,1}$ and $C_{21,1}$ can be uniquely determined by (\ref{AppendixB:Repeat:Eq21}) and (\ref{AppendixB:Repeat:Eq31}), and we have
\begin{align}
C_{12,1}&= \frac{-1}{\lambda}\tilde{A}'_{12,1},\label{AppendixB:Repeat:Eq21S} \\
C_{21,1} &= \frac{1}{\lambda}\tilde{A}'_{21,1}. \label{AppendixB:Repeat:Eq31S}
\end{align} 
Similarly, by (\ref{AppendixB:Repeat:Eq22}) and (\ref{AppendixB:Repeat:Eq32}), we have
\begin{align}
C_{12,2}&= \frac{-1}{\lambda}\tilde{A}'_{12,2},\label{AppendixB:Repeat:Eq22S} \\
C_{21,2}&= \frac{1}{\lambda}\tilde{A}'_{21,2},\label{AppendixB:Repeat:Eq32S}
\end{align}
By plugging (\ref{AppendixB:Repeat:Eq22S}) into (\ref{AppendixB:Repeat:Eq62}), and use the assumption that $\Lambda'_{2,2}=\gamma I_{l\times l}$, we can solve
$\Lambda''_{2,2}$. Indeed, since $\Lambda'_{2,2}$ is a scalar multiple of the identity matrix, $C_{22,22}\Lambda'_{2,2}-\Lambda'_{2,2}C_{22,22}=0$ in (\ref{AppendixB:Repeat:Eq62}). Thus, (\ref{AppendixB:Repeat:Eq62}) becomes
\begin{equation} \label{A8}
(\tilde{A}''_{22,22}-2\lambda^{-1}\tilde{A}'_{21,2}\tilde{A}'_{12,2})\tilde{X}_{2,2}=\tilde{X}_{2,2}\Lambda''_{2,2},
\end{equation}
and $\Lambda''_{2,2}$ and $\tilde{X}_{2,2}$ are eigenvalue and orthonormal eigenvector matrices of $\tilde{A}''_{22,22}-2\lambda^{-1}\tilde{A}'_{21,2}\tilde{A}'_{12,2}$. Thus, we have obtained the eigenvalue information.
However, note that in general $\tilde{X}_{2,2}$ cannot be uniquely determined. 

Suppose we want to uniquely determine the eigenvectors, $\tilde{X}(\epsilon)$, we have to further assume that $\Lambda''_{2,2}$ does not have repeated diagonal entries; {that is, eigenvalues of $\tilde{A}''_{22,22}-2\lambda^{-1}\tilde{A}'_{21,2}\tilde{A}'_{12,2}$ do not repeat.} Under this assumption, $\tilde{X}_{2,2}$ is uniquely determined, and we can proceed to solve $C$.
With $\tilde{X}_{2,2}$, from (\ref{AppendixB:Repeat:Eq63}) and (\ref{AppendixB:Repeat:Eq64}) we can solve $C_{22,12}$ and $C_{22,21}$ since $\Lambda_{2,2}'$ is a scalar multiple of the identity matrix and the diagonal entries of $\Lambda_{2,1}'$ are assumed to be different from $\Lambda_{2,2}$. In fact, we have
 \begin{align}
C_{22,12}&=(\gamma I_{(p-d-l)\times (p-d-l)}-\Lambda_{2,1}')^{-1}(\frac{1}{2}\tilde{A}_{22,12}''\tilde{X}_{2,2}+\tilde{A}_{21,1}'C_{12,2}),\label{AppendixB:Repeat:Eq63S} \\
C_{22,21}&=\tilde{X}_{2,2}^\top (\frac{1}{2}\tilde{A}_{22,21}''+\tilde{A}_{21,2}'C_{12,1})(\Lambda_{2,1}'-\gamma I_{(p-d-l)\times(p-d-l)})^{-1}\,.\label{AppendixB:Repeat:Eq64S}
\end{align}
Next, $\Lambda''_1$, $\Lambda''_{2,1}$ and the off-diagonal entries of $C_{11}$ and $C_{22,11}$ are solved by rewriting (\ref{AppendixB:Repeat:Eq5}) and (\ref{AppendixB:Repeat:Eq61}) as
\begin{align} 
&2(C_{11}\Lambda'_1-\Lambda'_1C_{11})+\Lambda''_1=(\tilde{A}''_{11}+2\tilde{A}'_{12,1}C_{21,1}+2\tilde{A}'_{12,2}\tilde{X}_{2,2}C_{21,2})\label{AppendixB:Repeat:Eq5S} \\
&2(C_{22,11}\Lambda'_{2,1}-\Lambda'_{2,1}C_{22,11})+\Lambda''_{2,1}=(\tilde{A}''_{22,11}+2\tilde{A}'_{21,1}C_{12,1})\,. \label{AppendixB:Repeat:Eq61S}
\end{align}
Therefore, with the assumption that $\Lambda''_{2,2}$ does not have repeated diagonal entries, we have
\begin{align}
(C_{11})_{m,n}&=\frac{-1}{(\Lambda'_1)_{m,m}-(\Lambda'_1)_{n,n}}e_m^{\top}\big((\frac{1}{2}\tilde{A}''_{11}+A'_{12,1}C_{21,1}+\tilde{A}'_{12,2}\tilde{X}_{2,2}C_{21,2})\big)e_n,\nonumber\\
(C_{22,11})_{m,n}&=\frac{-1}{(\Lambda'_{2,1})_{m,m}-(\Lambda'_{2,1})_{n,n}}e_m^{\top}\big(  (\frac{1}{2}\tilde{A}''_{22,11}+\tilde{A}'_{21,1}C_{12,1}) \big)e_n\nonumber,
\end{align}
where $1 \leq m \not= n \leq d$ and $d+1 \leq i \not= j \leq p-l$.
However, the problem cannot be solved and more information is needed. Indeed, note that (\ref{AppendixB:Repeat:Eq62}) can be rewritten as
\begin{align}
\Lambda''_{2,2}&=\tilde{X}_{2,2}^\top (\tilde{A}''_{22,22}\tilde{X}_{2,2}+2\tilde{A}'_{21,2}C_{12,2})=\tilde{X}_{2,2}^\top (\tilde{A}''_{22,22}-\frac{2}{\lambda}\tilde{A}'_{21,2}\tilde{A}'_{12,2})\tilde{X}_{2,2}\label{AppendixB:Repeat:Eq62S}
\end{align}
since $C_{22,22}\Lambda'_{2,2}-\Lambda'_{2,2}C_{22,22}=0$, which is the same as (\ref{A8}).
Thus, it is not informative and we need higher order derivatives of $A(\epsilon)$ at $0$ to solve $C_{22,22}$. 

Suppose we know $A'''(0)$, and denote $\tilde{A}'''(0)=\Gamma^{-1}A'''(0)\Gamma$, which is divided correspondingly as
\begin{align}
\tilde{A}'''(0)&=\begin{bmatrix}
\tilde{A}'''_{11} & \tilde{A}'''_{12,1} & \tilde{A}'''_{12,2} \\
\tilde{A}'''_{21,1} & \tilde{A}'''_{22,11} & \tilde{A}'''_{22,12} \\
\tilde{A}'''_{21,2} & \tilde{A}'''_{22,21} & \tilde{A}'''_{22,22} \\
\end{bmatrix}\,.
\end{align}
Then, if we differentiate (\ref{A1}) three times and use the similar method as before, we get
\begin{align}
(C_{22,22})_{m,n}=&\,\frac{-1}{(\Lambda''_{2,2})_{m,m}-(\Lambda''_{2,2})_{n,n}}e_m^{\top}\Big[\tilde{X}_{2,2}^{\top}\tilde{A}_{22,22}'''\tilde{X}_{2,2}-\frac{2}{\lambda^2}\tilde{X}_{2,2}^{\top}\tilde{A}_{21,2}'(\tilde{A}'_{11}-\gamma I_{d\times d})\tilde{A}_{12,2}'\tilde{X}_{2,2}  \label{Proof:ApendixA:C22_22} \\ 
 &+\frac{1}{\lambda}\tilde{X}_{2,2}^{\top} \tilde{A}_{21,2}'\tilde{A}_{12,2}''\tilde{X}_{2,2}+\frac{1}{\lambda}\tilde{X}_{2,2}^{\top}\tilde{A}_{21,2}''\tilde{A}_{12,2}'\tilde{X}_{2,2} \nonumber \\
&-\frac{4}{\lambda^2}\tilde{X}_{2,2}^\top (\tilde{A}_{21,2}'\tilde{A}_{12,1}')(\gamma I_{(p-d-l)\times(p-d-l)}-\Lambda'_{2,1})^{-1} (\tilde{A}'_{21,1}\tilde{A}'_{12,2})\tilde{X}_{2,2}\Big]e_n\,.\nonumber
\end{align}

By normalizing $\tilde{X}(\epsilon)$, we can get the diagonal terms of $C_{11}$, $C_{22,11}$ and $C_{22,22}$, which are of order $\epsilon$. As a result, we have
\begin{align}
\Lambda(\epsilon)&=\begin{bmatrix}
\lambda I_{d\times d} +\epsilon\Lambda'_1+\epsilon^2\Lambda''_1 & 0 & 0\\
0 & \epsilon\Lambda'_{2,1}+\epsilon^2\Lambda''_{2,1}& 0 \\
0 & 0 & \epsilon\Lambda'_{2,2}+\epsilon^2\Lambda''_{2,2}
\end{bmatrix}+O(\epsilon^3),\label{AppendixB:Repeat:ResultLambda} \\
\tilde{X}(\epsilon)&=\tilde{X}(0)(I_{p\times p}+\epsilon (C-\texttt{diag}(C)))+O(\epsilon^2)\in O(p),\label{AppendixB:Repeat:ResultEigenVector}
\end{align}
where the last equality holds since the entries of $\texttt{diag}(C)$ are of order $\epsilon$. Finally, we can find
$X(0)$  and $X'(0)$ by using
\begin{align}
X(0)=\Gamma \tilde{X}(0), \quad X'(0)=\Gamma \tilde{X}'(0).\nonumber
\end{align}

\textbf{General cases.} 
In general, if $\Lambda'_1$ or $\Lambda'_{2,1}$ has repeated diagonal entries, we divide them into more blocks, and the block with the same diagonal entries can be treated in the same way as we treated $\Lambda'_{2,2}$ above. We skip details here.

\section{Technical lemmas for the proof}\label{Section:StatementTechnicalLemmas}

In this section we prepare several technical lemmas. 
For $v\in\mathbb{R}^p$, we use the following notation to simplify the proof:
\begin{align} \label{vectornotation}
v=[\![v_1,\,v_2]\!]\in \mathbb{R}^p\,,
\end{align}
where $v_1\in \mathbb{R}^{d}$ forms the first $d$ coordinates of $v$ and $v_2\in\mathbb{R}^{p-d}$ forms the last $p-d$ coordinates of $v$. 
Thus, under Assumptions \ref{AssumptionTangent} and \ref{AssumptionNormal}, for $v=[\![v_1,\,v_2]\!]\in T_{\iota(x)}\mathbb{R}^p$, $v_1=J_{p,d}^\top v$ is tangential to $\iota_*T_xM$ and $v_2=\bar{J}_{p,p-d}^\top v$ is the coordinate of the normal component of $v$ associated with a chosen basis of the normal bundle. 
The first three lemmas are basic facts about the exponential map, the normal coordinate, and the volume form. The proofs of Lemmas \ref{Lemma:1} and \ref{Lemma:2} are standard and we skip the proof. Interested readers are referred to \cite{Singer_Wu:2012}.

\begin{lemma} \label{Lemma:1}
Fix $x\in M$. If we use the polar coordinate $(t,\theta)\in [0,\infty)\times S^{d-1}$ to parametrize $T_{x}M$, the volume form has the following expansion:
\begin{align}
dV=\Big(&\, t^{d-1}-\frac{1}{6}\texttt{Ric}_{x}(\theta,\theta)t^{d+1}-\frac{1}{12}\nabla_\theta \texttt{Ric}_{x}(\theta,\theta) t^{d+2}\nonumber \\
&\,-\big(\frac{1}{40}\nabla^2_{\theta\theta}\texttt{Ric}_{x}(\theta,\theta)+\frac{1}{180}\sum_{a,b=1}^d\texttt{R}_x(\theta,a,\theta,b)\texttt{R}_x(\theta,a,\theta,b)- \frac{1}{72}\texttt{Ric}_{x}(\theta,\theta)^2\big) t^{d+3}\nonumber\\
&\,+O(t^{d+4})\Big) dt d\theta,\nonumber
\end{align}
where $\texttt{R}_x$ is the Riemannian curvature of $(M,g)$ at $x$.
If we use the Cartesian coordinate to parametrize $T_{x}M$, the volume form has the following expansion
\begin{align}
dV=\Big(&1-\sum_{i,j=1}^d\frac{1}{6}\texttt{Ric}_{x}(\partial_i,\partial_j) u^iu^j-\sum_{i,j,k=1}^d\frac{1}{12}\nabla_k\texttt{Ric}_{x}(\partial_i,\partial_j) u^iu^ju^k \nonumber\\
&-\sum_{i,j,k,l=1}^d\Big[\frac{1}{40}\nabla^2_{kl}\texttt{Ric}_{x}(\partial_i,\partial_j)+\frac{1}{180}\sum_{a,b=1}^d\texttt{R}_x(\partial_i,\partial_a,\partial_j,\partial_b)\texttt{R}_x(\partial_k,\partial_a,\partial_l,\partial_b)\nonumber\\
&- \frac{1}{72}\texttt{Ric}_{x}(\partial_i,\partial_j)\texttt{Ric}_{x}(\partial_k,\partial_l)\Big] u^iu^ju^ku^l+O(\|u\|^5)\Big) du,\nonumber
\end{align}
where $u=u^i\partial_i\in T_{x}M$. 
\end{lemma}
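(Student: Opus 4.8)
The plan is to read off the expansion of $dV$ from the Taylor expansion of the metric tensor in geodesic normal coordinates centered at $x$, and then convert to polar coordinates via the substitution $u=t\theta$. First I would write, in normal coordinates $(u^1,\dots,u^d)$ around $x$, $dV=\sqrt{\det\big(g_{ij}(u)\big)}\,du^1\cdots du^d$, and recall the classical Taylor expansion of the metric: writing $g_{ij}(u)=\delta_{ij}+A^{(2)}_{ij}(u)+A^{(3)}_{ij}(u)+A^{(4)}_{ij}(u)+O(\|u\|^5)$ with $A^{(m)}$ homogeneous of degree $m$ in $u$, one has $A^{(2)}_{ij}=-\tfrac13 \texttt{R}_x(\partial_i,\partial_k,\partial_j,\partial_l)u^ku^l$, $A^{(3)}_{ij}=-\tfrac16\nabla_m\texttt{R}_x(\partial_i,\partial_k,\partial_j,\partial_l)u^ku^lu^m$, and $A^{(4)}_{ij}$ the standard quartic term built from $\nabla^2\texttt{R}_x$ and a quadratic self-contraction of $\texttt{R}_x$ (see \cite{Singer_Wu:2012} and the references therein). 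The only structural facts I will use beyond this are the Riemann symmetries and the identity that contracting $\texttt{R}_x(\partial_i,\partial_k,\partial_j,\partial_l)$ on the pair $(i,j)$ produces $\texttt{Ric}_x(\partial_k,\partial_l)$.

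Next I would expand the square root of the determinant order by order. With $A=A^{(2)}+A^{(3)}+A^{(4)}+\cdots$,
\[\sqrt{\det(I+A)}=\exp\!\Big(\tfrac12\operatorname{tr}\log(I+A)\Big)=1+\tfrac12\operatorname{tr}A-\tfrac14\operatorname{tr}(A^2)+\tfrac18(\operatorname{tr}A)^2+\cdots,\]
and I would collect contributions by homogeneity degree. Degree two: $\tfrac12\operatorname{tr}A^{(2)}=-\tfrac16\texttt{Ric}_x(\partial_i,\partial_j)u^iu^j$. Degree three: $\tfrac12\operatorname{tr}A^{(3)}=-\tfrac1{12}\nabla_k\texttt{Ric}_x(\partial_i,\partial_j)u^iu^ju^k$. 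Degree four: three pieces contribute, namely $\tfrac12\operatorname{tr}A^{(4)}$, which yields the $\nabla^2_{kl}\texttt{Ric}_x(\partial_i,\partial_j)$ term (after symmetrizing in $k,l$); $-\tfrac14\operatorname{tr}\big((A^{(2)})^2\big)$, which yields the $\sum_{a,b}\texttt{R}_x(\partial_i,\partial_a,\partial_j,\partial_b)\texttt{R}_x(\partial_k,\partial_a,\partial_l,\partial_b)$ term; and $\tfrac18\big(\operatorname{tr}A^{(2)}\big)^2$, which yields the $\texttt{Ric}_x(\partial_i,\partial_j)\texttt{Ric}_x(\partial_k,\partial_l)$ term. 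Keeping track of the numerical factors then reproduces the stated Cartesian expansion.

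Finally, for the polar expansion I would set $u=t\theta$ with $\theta\in S^{d-1}$, so that $du=t^{d-1}\,dt\,d\theta$ and a term homogeneous of degree $m$ in $u$ becomes $t^m$ times the corresponding tensor evaluated on $\theta$; thus $\texttt{Ric}_x(\partial_i,\partial_j)u^iu^j\mapsto t^2\texttt{Ric}_x(\theta,\theta)$, $\nabla_k\texttt{Ric}_x(\partial_i,\partial_j)u^iu^ju^k\mapsto t^3\nabla_\theta\texttt{Ric}_x(\theta,\theta)$, and so on, giving precisely the displayed polar form. Alternatively, one can derive the polar version directly from the Jacobi equation $J''+\texttt{R}_x(J,\gamma')\gamma'=0$ along $\gamma(t)=\exp_x(t\theta)$: solving for the Jacobi fields $J_i$ with $J_i(0)=0$, $J_i'(0)=e_i$ by iteration in $t$ and computing $t^{-(d-1)}\det[\,J_1(t)\mid\cdots\mid J_d(t)\,]$ gives the volume density, and integrating against $\theta$ recovers the Cartesian version.

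I expect the one genuinely delicate step to be the degree-four coefficient: correctly carrying out the double Riemann-tensor contraction in $-\tfrac14\operatorname{tr}\big((A^{(2)})^2\big)$, symmetrizing the $\nabla^2\texttt{Ric}_x$ contribution, and verifying that the various cross terms assemble into exactly the coefficients $\tfrac1{40}$, $\tfrac1{180}$, and $\tfrac1{72}$ requires careful but routine bookkeeping; by contrast the $t^{d-1}$, $t^{d+1}$, and $t^{d+2}$ coefficients are immediate. Since the whole computation is standard, I would simply cite \cite{Singer_Wu:2012} rather than reproduce it in full.
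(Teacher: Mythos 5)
Your proposal is correct and follows exactly the standard route that the paper itself defers to: the paper skips the proof of this lemma and simply cites \cite{Singer_Wu:2012}, and your argument (Taylor expansion of $g_{ij}$ in normal coordinates, expansion of $\sqrt{\det(I+A)}$ via $\tfrac12\operatorname{tr}A-\tfrac14\operatorname{tr}(A^2)+\tfrac18(\operatorname{tr}A)^2$, then the substitution $u=t\theta$) is that computation, with the degree-four bookkeeping indeed assembling to the stated coefficients since $\tfrac12\cdot\tfrac{2}{45}-\tfrac14\cdot\tfrac19=-\tfrac{1}{180}$ and $\tfrac18\cdot\tfrac19=\tfrac1{72}$. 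No gaps.
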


\begin{lemma} \label{Lemma:2}
Fix $x\in M$. For $u\in T_xM$ with $\|u\|$ sufficiently small, we have the following Taylor expansion:
\begin{align}
\iota \circ \exp_x(u)-\iota(x)
=&\,\iota_{*}u+\frac{1}{2}\Second_x(u,u)+\frac{1}{6}\nabla_{u}\Second_x(u,u)\nonumber\\
&+\frac{1}{24}\nabla^2_{uu}\Second_x(u,u)+\frac{1}{120}\nabla^3_{uuu}\Second_x(u,u)+O(\|u\|^6).\nonumber
\end{align}
\end{lemma}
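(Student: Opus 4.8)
The plan is to realize the left-hand side as the time-$1$ value of the curve $c(t):=\iota\big(\exp_x(tu)\big)\in\mathbb{R}^p$, $t\in[0,1]$, and to Taylor-expand $c$ about $t=0$. For $\|u\|$ below the injectivity radius at $x$ the geodesic $\gamma(t)=\exp_x(tu)$ stays inside a fixed normal neighborhood, so $c$ is smooth on $[0,1]$ and each derivative obeys $\|c^{(k)}(t)\|=O(\|u\|^k)$ uniformly in $t$ (each $t$-differentiation produces one more factor of $\dot\gamma$, of norm $\|u\|$, with coefficients controlled by the geometry of $M$ near $x$). I would then write
\begin{align*}
\iota\circ\exp_x(u)-\iota(x)=c(1)-c(0)=\sum_{k=1}^{5}\frac{1}{k!}c^{(k)}(0)+O(\|u\|^6),
\end{align*}
so that the whole problem reduces to computing $c^{(k)}(0)$ for $k=1,\dots,5$.

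Next I would read off the low-order coefficients from the Gauss formula. Writing $D$ for the flat connection on $\mathbb{R}^p$ and identifying $\iota_*T_yM$ with a subspace of $\mathbb{R}^p$, one has $c'(t)=\iota_*\dot\gamma(t)$, hence $c'(0)=\iota_*u$; and $c''(t)=D_{\dot\gamma}\dot\gamma=\nabla_{\dot\gamma}\dot\gamma+\Second_{\gamma(t)}(\dot\gamma,\dot\gamma)$, which because $\gamma$ is a geodesic gives $c''(0)=\Second_x(u,u)$. For $k=3,4,5$ the plan is to iterate the differentiation, at each stage decomposing the vector field into its tangential and normal parts: a tangent-valued field is differentiated via the Gauss formula, a normal-valued field via the Weingarten formula $D_X\nu=-A_\nu X+\nabla^{\perp}_X\nu$ (with $A_\nu$ the shape operator and $\nabla^{\perp}$ the normal connection), and one uses repeatedly that $\nabla_{\dot\gamma}\dot\gamma\equiv 0$ along with $\nabla^{\perp}_{\dot\gamma}\big(\Second(\dot\gamma,\dot\gamma)\big)=(\nabla_{\dot\gamma}\Second)(\dot\gamma,\dot\gamma)$. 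This should yield $c'''(0)=\nabla_u\Second_x(u,u)$, $c^{(4)}(0)=\nabla^2_{uu}\Second_x(u,u)$, $c^{(5)}(0)=\nabla^3_{uuu}\Second_x(u,u)$, where $\nabla^{m}_{u\cdots u}\Second_x(u,u)$ is understood as the $m$-fold iterated submanifold derivative of $\Second$ along $\gamma$ evaluated at $x$, in the sense used in \cite{Singer_Wu:2012}; inserting the normalizations $1/k!$ then reproduces the stated expansion.

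The part I expect to be the main obstacle is the bookkeeping at orders $4$ and $5$: each step of the recursion spawns several Gauss/Weingarten terms, and one must verify that after invoking $\nabla_{\dot\gamma}\dot\gamma=0$ (and its consequences) and symmetrizing in the repeated argument $u$, exactly the advertised term survives with the correct coefficient --- it is here that the shape-operator and normal-connection contributions get packaged into the single symbol $\nabla^{m}_{u\cdots u}\Second_x$. The only genuinely analytic point is the uniform smallness of $\|u\|$ relative to the injectivity radius, which both makes $\exp_x$ a diffeomorphism onto a neighborhood of $x$ and legitimizes the $O(\|u\|^6)$ remainder. Since Lemma~\ref{Lemma:2} (together with Lemma~\ref{Lemma:1}, whose proof is parallel and rests on the Jacobi equation for the differential of $\exp_x$) is classical, I would present only this sketch and refer to \cite{Singer_Wu:2012} for the routine computations.
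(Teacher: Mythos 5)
The paper does not actually prove this lemma: it states that Lemmas \ref{Lemma:1} and \ref{Lemma:2} are standard and defers to \cite{Singer_Wu:2012}. Your plan --- differentiate $c(t)=\iota\circ\exp_x(tu)$ repeatedly, use the Gauss formula, use $\nabla_{\dot\gamma}\dot\gamma=0$, and Taylor-expand at $t=1$ --- is exactly the standard route, and the injectivity-radius remark correctly justifies both the smoothness on $[0,1]$ and the $O(\|u\|^6)$ remainder.

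One point deserves to be stated more sharply, because your sketch wobbles on it. The recursion forces you to differentiate a normal-valued field via Weingarten, $D_{\dot\gamma}\nu=-\iota_*(A_\nu\dot\gamma)+\nabla^\perp_{\dot\gamma}\nu$, and the tangential Weingarten piece does \emph{not} vanish: already at third order one gets
\begin{equation*}
c'''(0)=(\nabla^\perp_{u}\Second)_x(u,u)-\iota_*\bigl(A_{\Second_x(u,u)}u\bigr),
\end{equation*}
and this tangential part is generically nonzero. So the identity $c^{(k)}(0)=\nabla^{k-2}_{u\cdots u}\Second_x(u,u)$ is only true if the symbol $\nabla^{m}_{u\cdots u}\Second_x$ is read as the iterated ambient (flat) derivative along the geodesic, carrying both the Weingarten and the normal-connection contributions --- not as the normal-bundle covariant derivative alone. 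That this is indeed the paper's convention can be checked against the proof of Proposition \ref{Proposition:1}, which uses the fact that $\langle\nabla_\theta\Second_x(\theta,\theta),e_m\rangle=-\langle\Second_x(e_m,\theta),\Second_x(\theta,\theta)\rangle$ for tangential $e_m$ --- i.e.\ the tangential component of $\nabla_\theta\Second_x(\theta,\theta)$ is exactly $-A_{\Second_x(\theta,\theta)}\theta$. Once that convention is fixed, your recursion becomes the tautology $c^{(k)}(t)=D_{\dot\gamma}c^{(k-1)}(t)$ together with $c''(t)=\Second(\dot\gamma,\dot\gamma)$, the $1/k!$ normalizations are automatic from Taylor's theorem, and there is nothing left to check at orders $4$ and $5$ beyond unwinding the definition. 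In short, the approach is right; just be explicit that the advertised equalities hold \emph{by definition} of the iterated-ambient-derivative notation, rather than after cancellation of shape-operator terms.
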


\begin{lemma} \label{Lemma:3}
Fix $x\in M$. If we use the polar coordinate $(t,\theta)\in [0,\infty)\times S^{d-1}$ to parametrize $T_{x}M$, when $\tilde{t}=\|\iota \circ \exp_x(\theta t)-\iota(x)\|_{\mathbb{R}^p}$ is sufficiently small, we have
\begin{align*}
\tilde{t} = &\,  t-\frac{1}{24} \|\Second_x(\theta,\theta)\|^2 t^3 -\frac{1}{24} \nabla_{\theta}\Second_x(\theta,\theta)  \cdot  \Second_x(\theta,\theta) t^4 - \big(\frac{1}{80}\nabla^2_{\theta\theta}\Second_x(\theta,\theta) \cdot \Second_x(\theta,\theta)\nonumber \\
&\,+ \frac{1}{90}\nabla_{\theta}\Second_x(\theta,\theta) \cdot \nabla_{\theta}\Second_x(\theta,\theta) +\frac{1}{1152} \|\Second_x(\theta,\theta)\|^4\big)t^5+O(t^6)\,,  \\
t = &\,  \tilde{t} + \frac{1}{24} \|\Second_x(\theta,\theta)\|^2 \tilde {t}^3 +\frac{1}{24} \nabla_{\theta}\Second_x(\theta,\theta)  \cdot  \Second_x(\theta,\theta)  \tilde{t}^4 +\big(\frac{1}{80}\nabla^2_{\theta\theta}\Second_x(\theta,\theta) \cdot \Second_x(\theta,\theta)\nonumber \\
&\, +\frac{1}{90}\nabla_{\theta}\Second_x(\theta,\theta) \cdot \nabla_{\theta}\Second_x(\theta,\theta)  +\frac{7}{1152} \|\Second_x(\theta,\theta)\|^4\big) \tilde{t}^5+O(\tilde{t}^6). 
\end{align*}
Hence, $(\iota \circ \exp_x)^{-1}(B^{\mathbb{R}^p}_{\tilde{t}}(\iota(x))\cap \iota(M)) \subset T_xM^d$ is star shaped.
\end{lemma}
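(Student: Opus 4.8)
The plan is to feed the expansion of $\iota\circ\exp_x$ from Lemma \ref{Lemma:2} into the Euclidean norm and invert the resulting power series in $t$. Write $c(t):=\iota\circ\exp_x(t\theta)-\iota(x)\in\mathbb{R}^p$, so that $\tilde t=\|c(t)\|_{\mathbb{R}^p}$. Substituting $u=t\theta$ in Lemma \ref{Lemma:2} gives $c(t)=t\,\iota_*\theta+\tfrac{t^2}{2}\Second_x(\theta,\theta)+\tfrac{t^3}{6}\nabla_\theta\Second_x(\theta,\theta)+\tfrac{t^4}{24}\nabla^2_{\theta\theta}\Second_x(\theta,\theta)+\tfrac{t^5}{120}\nabla^3_{\theta\theta\theta}\Second_x(\theta,\theta)+O(t^6)$. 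The key structural input I will use is that $t\mapsto\iota\circ\exp_x(t\theta)$ is the image of a unit-speed geodesic under an isometric embedding, hence a unit-speed curve, so $\|c'(t)\|_{\mathbb{R}^p}^2\equiv1$.

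Expanding this identity order by order in $t$ supplies the inner products needed to simplify $\|c(t)\|^2$. The lowest one, $\iota_*\theta\cdot\Second_x(\theta,\theta)=0$, just records that $\Second_x$ is normal-valued; the next, from the $t^2$-coefficient of $\|c'(t)\|^2$, gives $\iota_*\theta\cdot\nabla_\theta\Second_x(\theta,\theta)=-\|\Second_x(\theta,\theta)\|^2$, and two further relations come out at the subsequent orders. Plugging $c(t)$ into $\|c(t)\|^2$ and using these, the $\iota_*\theta$-contributions either vanish or cancel against the $\Second$-terms; for instance the $t^4$-coefficient collapses to $\tfrac13\iota_*\theta\cdot\nabla_\theta\Second_x(\theta,\theta)+\tfrac14\|\Second_x(\theta,\theta)\|^2=-\tfrac1{12}\|\Second_x(\theta,\theta)\|^2$. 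This yields $\tilde t^2=t^2\big(1-\tfrac1{12}\|\Second_x(\theta,\theta)\|^2 t^2+a_3t^3+a_4t^4+O(t^5)\big)$ with $a_3,a_4$ explicit in $\Second_x$ and its covariant derivatives, and taking the square root produces the first displayed expansion.

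For the second displayed expansion, I note that $\tilde t=t+b_3t^3+b_4t^4+b_5t^5+O(t^6)$ has invertible linear part, so by the inverse function theorem it admits a genuine inverse series $t=\tilde t+c_3\tilde t^3+c_4\tilde t^4+c_5\tilde t^5+O(\tilde t^6)$; substituting into the forward series and matching coefficients determines $c_3,c_4,c_5$. The one point worth watching is the $\|\Second_x(\theta,\theta)\|^4\,\tilde t^5$ term, whose coefficient absorbs an extra contribution from squaring the cubic term of the forward series and shifts from $\tfrac1{1152}$ to $\tfrac7{1152}$; tracking this is a good consistency check. Finally, for the star-shapedness claim: once $\tilde t$ is below a fixed fraction of the injectivity radius, $\iota\circ\exp_x$ restricts to a diffeomorphism onto a neighbourhood of $\iota(x)$ in $\iota(M)$ that contains $B^{\mathbb{R}^p}_{\tilde t}(\iota(x))\cap\iota(M)$, so the pullback is well defined. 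Fixing $\theta\in S^{d-1}$, the point $\exp_x(s\theta)$ lies in the ball iff $r_\theta(s):=\|\iota\circ\exp_x(s\theta)-\iota(x)\|\le\tilde t$, and the expansion gives $r_\theta(s)=s\big(1-\tfrac1{24}\|\Second_x(\theta,\theta)\|^2s^2+O(s^3)\big)$, so $r_\theta'(s)>0$ on $[0,s_0]$ for some $s_0>0$ that can be taken uniform in $\theta$ (and, by compactness of $M$, uniform in $x$) since $\Second_x$ and its covariant derivatives are uniformly bounded. Hence for $\tilde t$ small enough $\{s\in[0,s_0]:r_\theta(s)\le\tilde t\}$ is an interval $[0,s_*(\theta)]$, i.e.\ the preimage of $B^{\mathbb{R}^p}_{\tilde t}(\iota(x))\cap\iota(M)$ meets every ray through the origin of $T_xM$ in a segment containing $0$, which is exactly star-shapedness with respect to $0$.

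The main obstacle is the bookkeeping in the second paragraph: the inner products $\iota_*\theta\cdot\nabla^k_{\theta\cdots\theta}\Second_x(\theta,\theta)$ are not manifestly zero and have to be pinned down from the unit-speed constraint $\|c'\|^2\equiv1$, after which the coefficients of $\tilde t^2$ and the series inversion are elementary but error-prone. Everything else — monotonicity of $r_\theta$ and the resulting star-shapedness — is then immediate, with the hypothesis that $\epsilon$ (equivalently $\tilde t$) is sufficiently small entering only to ensure $\iota\circ\exp_x$ is a diffeomorphism on the relevant scale and that $r_\theta$ is strictly increasing there; the needed uniformity over $\theta$ and over $x$ follows from compactness of $M$ and the results in \cite{Cheeger:1982} already invoked in the text.
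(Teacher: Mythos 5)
Your argument is correct and follows essentially the same route as the paper: expand $\iota \circ \exp_x(t\theta)$ via Lemma \ref{Lemma:2}, exploit the unit-speed constraint $\|c'(t)\|^2 \equiv 1$ to pin down the inner products $\iota_*\theta\cdot\nabla^k_{\theta\cdots\theta}\Second_x(\theta,\theta)$, expand $\|c(t)\|^2$, take the square root, and invert the resulting power series. The only differences are cosmetic — the paper works with abstract derivatives $\gamma^{(l)}(0)$ and substitutes from Lemma \ref{Lemma:2} at the end, whereas you substitute first — and you additionally spell out the star-shapedness argument (monotonicity of $s\mapsto r_\theta(s)$ on a scale uniform in $\theta$ and $x$) that the paper leaves implicit.
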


The proof of Lemma \ref{Lemma:3} could be found in Appendix \ref{Section:LemmasProof}. {The essence of Lemma \ref{Lemma:3} is describing how well we could estimate the local geodesic distance by the ambient space metric. When the manifold setup is considered in an algorithm, this Lemma could be helpful in the analysis since most of time we only have an access to the ambient space metric, but not the intrinsic Riemannian metric. }

\begin{remark}
{This lemma could be applied to analyze other nonlinear dimension reduction algorithms under the manifold model, for example, the ISOMAP \cite{Tenenbaum_deSilva_Langford:2000}. Recall that the ISOMAP algorithm is composed of two steps. First, build up an undirected affinity graph from the point cloud with a chosen nearest neighbor scheme, and find the shortest distance for each pair of data points on the affinity graph. Second, run the multidimensional scaling algorithm with the obtained pairwise distances and hence the dimension reduction. Although it is out of the scope of the current paper, we mention that Lemma \ref{Lemma:3} could help us better understand the global geodesic distance approximation error in the first step of ISOMAP.}  
\end{remark}

To alleviate the notational load, we denote 
\begin{equation}
\tilde{B}_{\epsilon}(x):=\iota^{-1}( B_{\epsilon}^{\mathbb{R}^p}(\iota(x))\cap \iota(M))\subset M\label{Definition:Notation:SmallBallonTangentSpace},
\end{equation}
and for a sufficiently small $\epsilon$, by Lemma \ref{Lemma:3}, denote
\begin{align}
\tilde{\epsilon} = &\,  \epsilon-\frac{1}{24} \|\Second_x(\theta,\theta)\|^2 \epsilon^3 -\frac{1}{24} \nabla_{\theta}\Second_x(\theta,\theta)  \cdot  \Second_x(\theta,\theta) \epsilon^4 - \big(\frac{1}{80}\nabla^2_{\theta\theta}\Second_x(\theta,\theta) \cdot \Second_x(\theta,\theta)\nonumber \\
&\,+ \frac{1}{90}\nabla_{\theta}\Second_x(\theta,\theta) \cdot \nabla_{\theta}\Second_x(\theta,\theta) +\frac{1}{1152} \|\Second_x(\theta,\theta)\|^4\big)\epsilon^5+O(\epsilon^6).\nonumber
\end{align}

To have a more succinct proof, we prepare the following integration, which comes from a direct expansion and the proof is skipped.
\begin{lemma}\label{Lemma:4}
For $d\in\mathbb{N}$, $\gamma> -d$ and $h_1,h_2,h_3\in \mathbb{R}$, we have the following asymptotical expansion when $\epsilon$ is sufficiently small:
\begin{align*}
&\int_{0}^{\epsilon+h_1 \epsilon^3+ h_2 \epsilon^4+h_3\epsilon^5+O(\epsilon^6)} t^{d-1+\gamma}dt\\
=&\frac{\epsilon^{d+\gamma}}{d+\gamma}\Big(1+(d+\gamma)h_1\epsilon^2+(d+\gamma)h_2\epsilon^3+\big[(d+\gamma)h_3+\frac{(d+\gamma)(d+\gamma-1)}{2}h_1^2\big]\epsilon^4\Big)+O(\epsilon^{d+\gamma+5})\nonumber.
\end{align*}
\end{lemma}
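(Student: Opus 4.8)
The plan is to reduce the statement to an exact elementary integration followed by a binomial expansion. Since $\gamma > -d$, the exponent satisfies $d-1+\gamma > -1$, so $t\mapsto t^{d-1+\gamma}$ is integrable on $[0,a]$ for every $a\geq 0$ and
\[
\int_0^a t^{d-1+\gamma}\,dt=\frac{a^{d+\gamma}}{d+\gamma}.
\]
Write $s:=d+\gamma>0$ and denote the upper limit by $a=a(\epsilon)=\epsilon+h_1\epsilon^3+h_2\epsilon^4+h_3\epsilon^5+O(\epsilon^6)$. For $\epsilon$ below a fixed threshold we have $a(\epsilon)>0$, and we may factor $a(\epsilon)=\epsilon\,(1+u(\epsilon))$ with $u(\epsilon):=h_1\epsilon^2+h_2\epsilon^3+h_3\epsilon^4+O(\epsilon^5)$, so that $a^s=\epsilon^s(1+u)^s$ and the whole integral equals $\tfrac{1}{s}\epsilon^s(1+u)^s$.

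Next I would apply the binomial expansion $(1+u)^s=1+su+\tfrac{s(s-1)}{2}u^2+\tfrac{s(s-1)(s-2)}{6}u^3+\cdots$, valid for $|u|<1$, hence for small $\epsilon$, and then truncate by powers of $\epsilon$. Since $u=O(\epsilon^2)$ we get $u^2=h_1^2\epsilon^4+O(\epsilon^5)$ and $u^3=O(\epsilon^6)$, so collecting terms through order $\epsilon^4$ gives
\[
(1+u)^s=1+s h_1\epsilon^2+s h_2\epsilon^3+\Big(s h_3+\tfrac{s(s-1)}{2}h_1^2\Big)\epsilon^4+O(\epsilon^5).
\]
Multiplying by $\epsilon^s/s$ and recalling $s=d+\gamma$ reproduces exactly the claimed expansion, with remainder $\tfrac{1}{s}\epsilon^s\cdot O(\epsilon^5)=O(\epsilon^{d+\gamma+5})$.

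The only points needing a little care, rather than being genuine obstacles, are bookkeeping. First, one must check that the $O(\epsilon^6)$ term hidden inside $a(\epsilon)$ contributes only at order $\epsilon^{d+\gamma+5}$ after raising to the power $s$: by the mean value theorem applied to $t\mapsto t^s$ on an interval of length $O(\epsilon^6)$ around a point of size $\Theta(\epsilon)$, the difference $a(\epsilon)^s-\big(\epsilon+h_1\epsilon^3+h_2\epsilon^4+h_3\epsilon^5\big)^s$ is $O(\epsilon^{s-1})\cdot O(\epsilon^6)=O(\epsilon^{s+5})$, so it may be absorbed into the stated remainder. Second, all manipulations should be understood for $\epsilon$ small enough that $1+u(\epsilon)>0$ and the binomial series converges. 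Since nothing here is delicate, the ``hard part'' is merely tracking which cross-terms in $(1+u)^s$ survive to order $\epsilon^4$, which I would present compactly as above rather than expanding in full detail.
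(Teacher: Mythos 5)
Your argument is correct: the exact antiderivative, the factorization $a(\epsilon)=\epsilon(1+u)$ with $u=O(\epsilon^2)$, the binomial/Taylor expansion of $(1+u)^{d+\gamma}$ through $u^2$, and the absorption of the $O(\epsilon^6)$ perturbation of the upper limit via the mean value theorem all check out and reproduce the stated coefficients. The paper explicitly omits the proof as "a direct expansion," which is exactly what you have carried out, so your route coincides with the intended one.
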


In the next lemma, we calculate the asymptotical expansion of few quantities that we are going to use in proving the main theorem. Note that in order to capture the extra terms introduced by the barycentric coordinate, we calculate the normal term 2 orders higher than those for the tangential direction. To handle the normal component is the main reason we need the $C^5$ regularity for $P$.
        
\begin{lemma} \label{Lemma:5}
Fix $x\in M$ and assume Assumptions \ref{AssumptionTangent} and \ref{AssumptionNormal} hold. When $\epsilon$ is sufficiently small, we have the following expansion for $\mathbb{E}[f(X)\chi_{B_{\epsilon}^{\mathbb{R}^p}(\iota(x))}(X)]$:
\begin{align}
\mathbb{E}[f(X)&\chi_{B_{\epsilon}^{\mathbb{R}^p}(\iota(x))}(X)]
=\,\frac{|S^{d-1}|}{d}f(x)P(x)\epsilon^d\label{Lemma:4:Statement1:withf}\\
&+\frac{|S^{d-1}|}{d(d+2)}\Big[\frac{1}{2}P(x)\Delta f(x)+\frac{1}{2}f(x)\Delta P(x)+\nabla f(x)\cdot \nabla P(x)\nonumber\\
&\qquad+\frac{s(x)f(x)P(x)}{6}+\frac{d(d+2)\omega(x)f(x)P(x)}{24}\Big]\epsilon^{d+2}+O(\epsilon^{d+3})\nonumber
\end{align}
and the following expansion for $\mathbb{E}[(X-\iota(x))f(X)\chi_{B_{\epsilon}^{\mathbb{R}^p}(\iota(x))}(X)]\in\mathbb{R}^p$:
\begin{align}
\mathbb{E}[(X-\iota(x))f(X)\chi_{B_{\epsilon}^{\mathbb{R}^p}(x)}(X)]=\, [\![v_1,\,v_2]\!]+O(\epsilon^{d+5}),\label{Lemma:4:Statement2:withf}
\end{align}
where $v_1\in\mathbb{R}^d$ and $v_2\in\mathbb{R}^{p-d}$ are defined in (\ref{Proof:LemmaD4:v1}) and (\ref{Proof:LemmaD4:v2}) respectively, which contain terms of order $\epsilon^{d+2}$ and $\epsilon^{d+4}$. 
\end{lemma}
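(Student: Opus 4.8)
\textbf{Proof proposal for Lemma~\ref{Lemma:5}.}
The plan is to reduce both expectations to integrals over a small ball in $T_xM$ via the exponential map, and then expand every ingredient—the integrand $f\circ\exp_x$, the embedding difference $\iota\circ\exp_x(u)-\iota(x)$, the density $P$, and the volume form $dV$—in $\|u\|$, collecting terms by parity and by order of $\epsilon$. First I would write
$\mathbb{E}[f(X)\chi_{B_\epsilon^{\mathbb{R}^p}(\iota(x))}(X)]=\int_{\tilde B_\epsilon(x)}f(\iota(y))P(y)\,dV(y)$,
using the notation $\tilde B_\epsilon(x)$ from (\ref{Definition:Notation:SmallBallonTangentSpace}), and then pass to polar normal coordinates $(t,\theta)\in[0,\infty)\times S^{d-1}$ on $T_xM$. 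By Lemma~\ref{Lemma:3}, the region $\tilde B_\epsilon(x)$ pulls back to a star-shaped domain whose radial boundary is $\tilde\epsilon=\tilde\epsilon(\theta)$ with the explicit expansion recorded after Lemma~\ref{Lemma:3}. Then I would Taylor-expand $f\circ\exp_x(t\theta)$ and $P\circ\exp_x(t\theta)$ in $t$ using the normal coordinate (gradient, Hessian, and so on, with the Laplacian appearing as the trace of the Hessian after integrating $\theta\theta^\top$ over $S^{d-1}$), insert the volume-form expansion from Lemma~\ref{Lemma:1}, and perform the radial integral using Lemma~\ref{Lemma:4}, which handles exactly the "ball radius perturbed at order $\epsilon^3,\epsilon^4,\epsilon^5$" structure. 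Odd-degree monomials in $\theta$ integrate to zero over $S^{d-1}$, so only even orders survive; matching powers of $\epsilon$ gives (\ref{Lemma:4:Statement1:withf}), with the $s(x)$ (scalar curvature) and $\omega(x)$ terms coming respectively from the $\mathrm{Ric}$ term in $dV$ and from the quartic $\|\Second_x(\theta,\theta)\|^2$ correction to $\tilde\epsilon$.

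For the vector-valued expansion (\ref{Lemma:4:Statement2:withf}), I would proceed the same way but now the integrand carries the extra factor $\iota\circ\exp_x(u)-\iota(x)$, which by Lemma~\ref{Lemma:2} expands as $\iota_*u+\tfrac12\Second_x(u,u)+\tfrac16\nabla_u\Second_x(u,u)+\cdots$. Splitting into tangential and normal blocks via the notation $v=[\![v_1,v_2]\!]$: the tangential part $v_1$ is driven by the linear term $\iota_*u$ (which is odd, so it must pair with an odd factor from $f$, $P$, or $dV$ to survive, producing the leading $\epsilon^{d+2}$ term and an $\epsilon^{d+4}$ correction), while the normal part $v_2$ is driven by the even quadratic term $\tfrac12\Second_x(u,u)$ (leading $\epsilon^{d+2}$, involving $\mathfrak{N}_0(x)$ after averaging over $S^{d-1}$) together with the quartic $\nabla^2\Second_x$ and $\Second_x\cdot(\text{Ric, }\nabla P,\ldots)$ contributions at order $\epsilon^{d+4}$. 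This is precisely why the normal direction must be pushed two orders higher than the tangential one, and why $C^5$ regularity of $P$ is required: the $\epsilon^{d+4}$ normal term involves up to the third derivative of $P$ and the second covariant derivative of $\Second_x$ (hence fourth-order manifold data through the Gauss equation). The explicit forms $v_1$, $v_2$ are then defined by collecting these terms, matching the statement's reference to (\ref{Proof:LemmaD4:v1}) and (\ref{Proof:LemmaD4:v2}).

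The routine but voluminous part is the bookkeeping: keeping track of which combinations of (derivative of $f$)$\times$(derivative of $P$)$\times$(curvature term in $dV$)$\times$(curvature correction in $\tilde\epsilon$)$\times$(term in the embedding expansion) contribute at each order, and performing the sphere integrals $\int_{S^{d-1}}\theta^{i_1}\cdots\theta^{i_{2k}}\,d\theta$ in closed form. The main genuine obstacle is organizing the normal-component computation: because the normal part of $\iota\circ\exp_x(u)-\iota(x)$ starts at quadratic order while its first nonzero correction is quartic, one must carry the expansions of $P$, $f$, $dV$, and $\tilde\epsilon$ consistently to high enough order (fifth order in $\|u\|$) that nothing contributing to $\epsilon^{d+4}$ is dropped, and simultaneously verify that everything feeding into $\epsilon^{d+6}$ really is negligible—this is where the error terms $O(\epsilon^{d+3})$ in (\ref{Lemma:4:Statement1:withf}) and $O(\epsilon^{d+5})$ in (\ref{Lemma:4:Statement2:withf}) are pinned down. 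Once the expansions are set up carefully, each coefficient is an elementary, if lengthy, integral, and the detailed computation is deferred to Appendix~\ref{Section:LemmasProof}.
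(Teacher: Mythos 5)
Your proposal follows the paper's proof essentially verbatim: passing to polar normal coordinates, expanding $f$, $P$, $dV$ (Lemma~\ref{Lemma:1}), the embedding (Lemma~\ref{Lemma:2}), and the perturbed radial boundary $\tilde\epsilon$ (Lemma~\ref{Lemma:3}), then integrating radially via Lemma~\ref{Lemma:4} and using parity over $S^{d-1}$ to kill odd terms, with $s(x)$ coming from the Ricci correction to $dV$ and $\omega(x)$ from the $\|\Second_x\|^2$ correction to $\tilde\epsilon$—all exactly as in the paper. The only nit is a minor misattribution: in the $\epsilon^{d+4}$ coefficient, the $\nabla^3 P$ and $\nabla^3 f$ terms actually land in the tangential block $v_1$ (via $\mathfrak{J}_f$), while the normal block $v_2$ at that order only sees up to $\nabla^2 P$, $\nabla^2 f$, $\nabla^2\Second_x$; this does not affect the correctness of your argument.
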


The proof of Lemma \ref{Lemma:5} is postponed to Section \ref{Section:LemmasProof}.
We comment that if $\mathbf{T}_{\iota(x)}=0$, then (\ref{Definition:Qf:KernelExpansion}) becomes $\frac{\mathbb{E}[f(X)\chi_{B_{\epsilon}^{\mathbb{R}^p}(\iota(x))}(X)]}{\mathbb{E}[\chi_{B_{\epsilon}^{\mathbb{R}^p}(\iota(x))}(X)]}$, which is nothing but the diffusion process corresponding to the zero-one kernel with a compact support. Thus, the analysis is the same as those for the diffusion map shown in \cite{Coifman_Lafon:2006}, except that the kernel is discontinuous. When  $\mathbf{T}_{\iota(x)}\neq 0$, $\mathbb{E}[(X-\iota(x))f(X)\chi_{B_{\epsilon}^{\mathbb{R}^p}(x)}(X)]$ is the new component specific to the LLE, and $\mathbf{T}_{\iota(x)}$ contributes to the ``correction of the kernel''.

Recall the definition of $\mathbf{T}_{\iota(x)}= \mathcal{I}_{\epsilon^{d+\rho}}(C_{x})\big[\mathbb{E}(X-\iota(x))\chi_{B_{\epsilon}^{\mathbb{R}^p}(x)}\big]$ in (\ref{Definition:Tx:ContinuousCase}), which could be expanded as $\sum_{i=1}^r\frac{\mathbb{E}[(X-\iota(x))\chi_{B_{\epsilon}^{\mathbb{R}^p}(\iota(x))}(X)] \cdot u_i}{\lambda_i+\epsilon^{d+\rho}}u_i \in \mathbb{R}^p$, where $r$ is the rank of $C_x$ and $u_i$ and $\lambda_i$ form the $i$-th eigen-pair of $C_x$. Clearly, $\mathbf{T}_{\iota(x)}$ is dominated by those ``small'' eigenvalues of $C_x$. 

Define the notation to simplify the statement of the next lemma:
\begin{equation}
\tilde{J}:=\bar J_{p,p-d}J_{p-d,p-d-l}\in \mathbb{R}^{p\times(p-d-l)}.
\end{equation}

\begin{lemma} \label{Lemma:6}
Fix $x\in M$ and assume Assumptions \ref{AssumptionTangent} and \ref{AssumptionNormal} hold. Suppose $\epsilon$ is sufficiently small. Following the same notations used in Proposition \ref{Proposition:2}, under three conditions shown in Condition \ref{Condition:1}, $\mathbf{T}_{\iota(x)}$ satisfies:

\textbf{Case 0.} $\mathbf{T}_{\iota(x)}=[\![v_1,\,v_2]\!]+[\![O(\epsilon^2),\,0]\!]$, where
\begin{align}
v_1=\,\frac{J_{p,d}^\top \iota_*\nabla {P}(x)}{{P}(x)+\frac{d(d+2)}{|S^{d-1}|}\epsilon^{\rho-2}}+O(\epsilon^2),\qquad v_2=\,0.\nonumber
\end{align}

\textbf{Case 1.} $\mathbf{T}_{\iota(x)}=[\![v_1,\,v_2]\!]+[\![O(\epsilon^2),\,O(1)]\!]$, where
\begin{align}
v_1=\,&\frac{J_{p,d}^\top \iota_*\nabla P(x)}{{P}(x)+\frac{d(d+2)}{|S^{d-1}|}\epsilon^{\rho-2}}+\sum_{i=d+1}^p\frac{\mathfrak{N}^\top _0(x)\bar{J}_{p,p-d}X_2\bar{J}_{p,p-d}^\top e_i}{\frac{2}{d}\lambda^{(2)}_i+\frac{2(d+2)}{P(x)|S^{d-1}|}\epsilon^{\rho-4}} X_1\mathsf{S}_{12}\bar{J}_{p,p-d}^\top e_i\,,\nonumber\\
v_2=\,&\frac{1}{ \epsilon^2}\sum_{i=d+1}^p\frac{\mathfrak{N}^\top _0(x)\bar{J}_{p,p-d}X_2\bar{J}_{p,p-d}^\top e_{i}}{\frac{2}{d}\lambda^{(2)}_i+\frac{2(d+2)}{P(x)|S^{d-1}|}\epsilon^{\rho-4}}X_2\bar{J}_{p,p-d}^\top e_{i}\,,\nonumber
\end{align}
and $\mathfrak{N}_0(x)$ is defined in (\ref{Definition:N0x:forsimplification}).

\textbf{Case 2.} $\mathbf{T}_{\iota(x)}=[\![v_1,\,v_2]\!]+[\![O(\epsilon^2),\,O(1)]\!]$, where 
\begin{align}
v_1=\,&\frac{J_{p,d}^\top \iota_*\nabla P(x)}{{P}(x)+\frac{d(d+2)}{|S^{d-1}|}\epsilon^{\rho-2}}+\sum_{i=d+1}^{p-l}\frac{\mathfrak{N}^\top _0(x)\tilde{J}X_{2,1}\tilde{J}^\top e_i}{\frac{2}{d}\lambda^{(2)}_i+\frac{2(d+2)}{P(x)|S^{d-1}|}\epsilon^{\rho-4}}X_{2,1}\mathsf{S}_{12,1}\tilde{J}^\top e_i\nonumber\\
&\qquad+\sum_{i=p-l+1}^p\frac{\alpha_i }{\frac{|S^{d-1}|  {P}(x)}{d(d+2)}\lambda^{(4)}_i+\epsilon^{\rho-6}} X_1\mathsf{S}_{12,2}\bar J_{p,l}^\top e_{i}\,,\label{Expansion:Tx:Case2:v1}\\
v_2=\,&\frac{1}{\epsilon^2}\sum_{i=d+1}^{p-l}\frac{\mathfrak{N}^\top _0(x)\tilde{J}X_{2,1}\tilde{J}^\top e_{i}}{\frac{2}{d}\lambda^{(2)}_i+\frac{2(d+2)}{P(x)|S^{d-1}|}\epsilon^{\rho-4}}\begin{bmatrix}X_{2,1} & 0 \\ 0 & X_{2,2} \end{bmatrix}\bar{J}_{p,p-d}^\top e_i\nonumber\\
&\quad+\frac{1}{\epsilon^2}\sum_{i=p-l+1}^p\frac{\alpha_i}{\frac{|S^{d-1}|  {P}(x)}{d(d+2)}\lambda^{(4)}_i+\epsilon^{\rho-6}}\begin{bmatrix}X_{2,1} & 0 \\ 0 & X_{2,2} \end{bmatrix}\bar{J}_{p,p-d}^\top e_i\,,\label{Expansion:Tx:Case2:v2}
\end{align}
where $\alpha_i\in \mathbb{R}$ is defined in (\ref{Definition:Alphai:ProofC6}).
\end{lemma}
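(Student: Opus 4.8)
The plan is to compute $\mathbf{T}_{\iota(x)}$ directly from the spectral form of the regularized pseudo-inverse. Writing $b:=\mathbb{E}[(X-\iota(x))\chi_{B_\epsilon^{\mathbb{R}^p}(\iota(x))}(X)]\in\mathbb{R}^p$ and letting $\{(\lambda_i,u_i)\}_{i=1}^r$ be the eigenpairs of $C_x$ with nonzero eigenvalues, definitions (\ref{Definition:Irho:Soft}) and (\ref{Definition:Tx:ContinuousCase}) give
\[\mathbf{T}_{\iota(x)}=\sum_{i=1}^{r}\frac{b^\top u_i}{\lambda_i+\epsilon^{d+\rho}}\,u_i\,.\]
Thus the proof amounts to (i) expanding the vector $b$ in $\epsilon$, (ii) expanding the eigenpairs $(\lambda_i,u_i)$ in $\epsilon$, and then (iii) performing an order-by-order accounting of the scalars $b^\top u_i$ and the ratios $(\lambda_i+\epsilon^{d+\rho})^{-1}$, keeping the tangential block ($J_{p,d}^\top\,\cdot$) and the normal block ($\bar J_{p,p-d}^\top\,\cdot$) separate so as to read off $v_1$ and $v_2$.

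For (i) I would apply Lemma \ref{Lemma:5} with $f\equiv 1$, obtaining $b=[\![v_1^{(b)},v_2^{(b)}]\!]+O(\epsilon^{d+5})$. By the parity of the integrand the $P(x)$-homogeneous part of the tangential component cancels, so $v_1^{(b)}$ is dominated at order $\epsilon^{d+2}$ by a multiple of $J_{p,d}^\top\iota_*\nabla P(x)$; by contrast, because $\tfrac12\Second_x(u,u)$ in Lemma \ref{Lemma:2} is even in $u$, the normal component $v_2^{(b)}$ is dominated at order $\epsilon^{d+2}$ by a multiple of $P(x)\mathfrak{N}_0(x)$. The $\epsilon^{d+4}$ corrections of $b$ must be retained since, once divided by the small eigenvalues of $C_x$, they feed back at the orders appearing in $v_1$ and $v_2$.

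For (ii) I would quote Proposition \ref{Proposition:2}: in all cases $U_x=U_x(0)(I_{p\times p}+\epsilon^2\mathsf{S})+O(\epsilon^4)$ with $\mathsf{S}\in\mathfrak{o}(p)$ block off-diagonal, so each ``normal'' eigenvector acquires a tangential component of size $\epsilon^2$ via the block $\mathsf{S}_{12}$ (respectively $\mathsf{S}_{12,1}$ and $\mathsf{S}_{12,2}$ in Case~2), while the eigenvalues factor as $\epsilon^{d+2}(1+O(\epsilon^2))$ on the tangential block, $\epsilon^{d+4}$ times the $\lambda_i^{(2)}$-data on the first normal block, and $\epsilon^{d+6}$ times the $\lambda_i^{(4)}$-data on the degenerate $l$-dimensional block of Case~2. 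Splitting $\sum_{i=1}^r$ accordingly, the tangential block yields $J_{p,d}^\top\iota_*\nabla P(x)/(P(x)+\tfrac{d(d+2)}{|S^{d-1}|}\epsilon^{\rho-2})$ once one writes $\lambda_i+\epsilon^{d+\rho}=\epsilon^{d+2}(\tfrac{|S^{d-1}|P(x)}{d(d+2)}+\epsilon^{\rho-2}+O(\epsilon^2))$, and in Cases~1 and~2 it contributes only $O(\epsilon^2)$ to $v_1$ and nothing to $v_2$. For a normal eigenvector $u_i$ with eigenvalue of order $\epsilon^{d+4}$, the pairing $b^\top u_i$ is governed by $v_2^{(b)}\cdot(\text{normal part of }u_i)$, of size $\epsilon^{d+2}\mathfrak{N}_0^\top(x)X_2\bar J_{p,p-d}^\top e_i$; dividing by $\lambda_i+\epsilon^{d+\rho}$ and multiplying by $u_i=[\![\epsilon^2 X_1\mathsf{S}_{12}\bar J_{p,p-d}^\top e_i,\;X_2\bar J_{p,p-d}^\top e_i]\!]+O(\epsilon^4)$ produces an $O(1)$ tangential term (the second term of $v_1$) and an $O(\epsilon^{-2})$ normal term (the $v_2$ of Case~1). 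The degenerate block of Case~2 is treated identically, but now the eigenpair data come from the second-order perturbation in Appendix \ref{Section:Perturbation} (the matrix $M^{(4)}_{22,22}-2M^{(2)}_{21,2}M^{(2)}_{12,2}$), the eigenvalue is of order $\epsilon^{d+6}$, and the pairing $b^\top u_i$ is collected into the scalar $\alpha_i$ of (\ref{Definition:Alphai:ProofC6}), giving the extra sums in (\ref{Expansion:Tx:Case2:v1}) and (\ref{Expansion:Tx:Case2:v2}). Case~0, with no normal eigenvectors in the range of $C_x$, keeps only the tangential block, whence $v_2=0$.

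The main obstacle is the bookkeeping: one must carry both the $\epsilon^{d+2}$ and $\epsilon^{d+4}$ (and in the degenerate Case~2 block the $\epsilon^{d+6}$) terms of $b$ from Lemma \ref{Lemma:5}, pair them against the $O(1)$ and $O(\epsilon^2)$ pieces of the perturbed eigenvectors from Proposition \ref{Proposition:2}, and check that the constants emerging from $\lambda_i+\epsilon^{d+\rho}$ are exactly the displayed denominators $\tfrac2d\lambda_i^{(2)}+\tfrac{2(d+2)}{P(x)|S^{d-1}|}\epsilon^{\rho-4}$ and its analogues --- and that this holds uniformly in $\rho$, so the single set of formulas covers the small-$\rho$, $\rho\approx 3$, and large-$\rho$ regimes simultaneously. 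Orthogonality of the $u_i$ is what rules out spurious cross terms between the blocks, and the remainder bounds follow from the $O(\epsilon^{d+5})$ control on $b$ in Lemma \ref{Lemma:5} and the $O(\epsilon^4)$ control on $U_x$ in Proposition \ref{Proposition:2}.
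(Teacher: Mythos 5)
Your plan tracks the paper's proof of Lemma~\ref{Lemma:6} closely: the spectral expansion $\mathbf{T}_{\iota(x)}=\sum_{i=1}^r\frac{b^\top u_i}{\lambda_i+\epsilon^{d+\rho}}u_i$, the use of Lemma~\ref{Lemma:5} with $f\equiv 1$ to expand $b$, Proposition~\ref{Proposition:2} for the eigenpairs, and the block-by-block accounting in each case are all the same moves, and your argument for $\rho$-uniformity and for the $O(\epsilon^2)$/$O(1)$ error sizes is sound.

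There is, however, a genuine gap in Case~2. You say the degenerate block ($p-l+1\leq i\leq p$, eigenvalue $\sim\epsilon^{d+6}$) is ``treated identically'' and that the pairing $b^\top u_i$ ``is collected into the scalar $\alpha_i$,'' but this skips the step that actually makes the claimed scaling of $v_2$ correct. If you carry out the same computation as in Case~1, the leading term of $b^\top u_i$ coming from $v_2^{(b)}\cdot(\text{normal part of }u_i)$ is $\frac{|S^{d-1}|P(x)}{2(d+2)}\,\mathfrak{N}_0^\top(x)\,\bar J_{p,l}X_{2,2}\bar J_{p,l}^\top e_i\,\epsilon^{d+2}+\cdots$. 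Dividing by $\lambda_i+\epsilon^{d+\rho}=O(\epsilon^{d+6}+\epsilon^{d+\rho})$ would then produce a normal contribution of size $\epsilon^{-4}$ (or $\epsilon^{2-\rho}$), not the $\epsilon^{-2}$ asserted in (\ref{Expansion:Tx:Case2:v2}). What saves the computation is a geometric cancellation that is not ``bookkeeping'': by Proposition~\ref{Lemma:5.1}, the degenerate normal directions satisfy $\langle\Second_x(\theta,\theta),e_i\rangle=0$ for all $\theta$ and all $p-l+1\leq i\leq p$, hence $\mathfrak{N}_0^\top(x)e_i=0$ for these $i$, and with the block structure of $X_2$ this gives $\mathfrak{N}_0^\top(x)\bar J_{p,l}X_{2,2}\bar J_{p,l}^\top e_i=0$. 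Only then does the pairing begin at order $\epsilon^{d+4}$, making $\alpha_i$ in (\ref{Definition:Alphai:ProofC6}) a finite $O(1)$ coefficient and delivering the $\epsilon^{-2}$ behavior. You need to invoke this vanishing explicitly; without it the Case~2 formulas do not close.
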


The proof of Lemma \ref{Lemma:6} is postponed to Appendix \ref{Section:LemmasProof}.

\section{Proofs of Propositions 3.1 and 3.2}\label{Section:Proof:PropositionForCovariance}

The idea of the proof is same as that in \cite{Singer_Wu:2012,Cheng_Wu:2013}, except that we are going to calculate it more explicitly. 

\subsection{Proof of Proposition \ref{Proposition:1}}

We use the notation $\langle \cdot, \cdot \rangle$ to mean the inner product and use the notation (\ref{Definition:Notation:SmallBallonTangentSpace}). 
The $(m,n)$-th entry of $C_x=\mathbb{E}[(X-\iota(x))(X-\iota(x))^{\top}\chi_{B_{\epsilon}^{\mathbb{R}^p}(\iota(x))}(X)]$ is
\begin{align}
& e_m^{\top}C_xe_n= \int_{\tilde{B}_{\epsilon}(x)} \langle \iota(y)-\iota(x),e_m \rangle \langle \iota(y)-\iota(x),e_n\rangle P(y)dV(y).\label{Proof:Lemma5:Expression1}
\end{align}
The quantities $\iota \circ \exp_{x}(\theta t)$, $\tilde{\epsilon}$ and $dV$ need to be expanded up to higher order terms. By the change of variable $y=\exp_x(t\theta)$, where $(t,\theta)\in [0,\infty)\times S^{d-1}$ constitutes the polar coordinate, we have the following expressions:
\begin{align*}
\iota \circ \exp_{x}(\theta t)-\iota(x)&\,=K_1(\theta)t+K_2(\theta)t^2+K_3(\theta)t^3+K_4(\theta)t^4+K_5(\theta)t^5+O(t^6)\\
\tilde{\epsilon}&\,= \epsilon+H_1(\theta) \epsilon^3+ H_2(\theta) \epsilon^4+H_3(\theta)\epsilon^5+O(\epsilon^6)\,\\
dV(\exp_x(t\theta))&\,=t^{d-1}+R_1(\theta)t^{d+1}+R_2(\theta)t^{d+2}+R_3(\theta)t^{d+3}+O(t^{d+4})\,\\
P(\exp_x(t\theta))&\,=P_0+P_1(\theta)+P_2(\theta)t^2+P_3(\theta)t^3+P_4(\theta)t^4+O(t^5)\,,
\end{align*}
where 
\begin{align*}
\left\{
\begin{array}{l}
\displaystyle K_1(\theta)=\iota_{*}\theta, \quad K_2(\theta)=\frac{1}{2}\Second_{x}(\theta,\theta), \quad K_3(\theta)=\frac{1}{6}\nabla_{\theta}\Second_{x}(\theta,\theta), \\
\displaystyle  K_4(\theta)=\frac{1}{24}\nabla^2_{\theta\theta}\Second_{x}(\theta,\theta), \quad K_5(\theta)=\frac{1}{120}\nabla^3_{\theta\theta\theta}\Second_{x}(\theta,\theta), 
\end{array}
\right.
\end{align*}
by Lemma (\ref{Lemma:1}),
\begin{align*}
\left\{
\begin{array}{l}
\displaystyle  H_1(\theta)=\,\frac{1}{24} \|\Second_x(\theta,\theta)\|^2,\quad H_2(\theta)=\frac{1}{24} \nabla_{\theta}\Second_x(\theta,\theta)  \cdot  \Second_x(\theta,\theta), \\ 
\displaystyle  H_3(\theta)=\,\frac{1}{80}\nabla^2_{\theta\theta}\Second_x(\theta,\theta) \cdot \Second_x(\theta,\theta)\\
\displaystyle \qquad\qquad +\frac{1}{90}\nabla_{\theta}\Second_x(\theta,\theta) \cdot \nabla_{\theta}\Second_x(\theta,\theta) +\frac{7}{1152} \|\Second_x(\theta,\theta)\|^4 , 
\end{array}
\right.
\end{align*}
by Lemma (\ref{Lemma:2}),
\begin{align*}
\left\{
\begin{array}{l}
\displaystyle R_1(\theta)=\,-\frac{1}{6}\texttt{Ric}_{x}(\theta,\theta) ,\quad R_2(\theta)=-\frac{1}{12}\nabla_\theta \texttt{Ric}_{x}(\theta,\theta) , \\
\displaystyle R_3(\theta)=\,-\frac{1}{40}\nabla^2_{\theta\theta}\texttt{Ric}_{x}(\theta,\theta)\\
\displaystyle\qquad\qquad-\frac{1}{180}\sum_{a,b=1}^d\texttt{R}_x(\theta,a,\theta,b)\texttt{R}_x(\theta,a,\theta,b)+ \frac{1}{72}\texttt{Ric}_{x}(\theta,\theta)^2  , 
\end{array}
\right.
\end{align*}
by Lemma (\ref{Lemma:3}), and
\begin{align*}
\left\{
\begin{array}{l}
\displaystyle P_0:=P(x),\quad P_1(\theta):=\nabla_\theta P(x),\quad P_2(\theta):=\frac{1}{2}\nabla^2_{\theta,\theta} P(x)\\ 
\displaystyle P_3(\theta):=\frac{1}{6}\nabla^3_{\theta,\theta,\theta} P(x),\quad P_4(\theta):=\frac{1}{24}\nabla^4_{\theta,\theta,\theta,\theta} P(x).
\end{array}
\right.
\end{align*}
Note that $H_1$, $H_3$, $R_1$, $R_3$, $P_0$, $P_2$, and $P_4$ are even functions on $S^{d-1}$ and $H_2$, $R_2$, $P_1$ and $P_3$ are odd on $S^{d-1}$. 
Similarly, for $m,n=1,\ldots,p$, we have
\begin{align}
& \langle \iota \circ \exp_{x}(\theta t)-\iota(x),e_m \rangle \langle\iota \circ \exp_{x}(\theta t)-\iota(x),e_n\rangle \nonumber \\ 
=&\,  A_{m,n}(\theta)t^2+B_{m,n}(\theta)t^3 + C_{m,n}(\theta)t^4+D_{m,n}(\theta)t^5+E_{m,n}(\theta)t^6+O(t^7) \nonumber\,,
\end{align} 
where
\begin{align*}
A_{m,n}(\theta) =\,& \langle K_1(\theta),e_m \rangle  \langle K_1(\theta),e_n \rangle   \\
B_{m,n}(\theta) =\,& \langle K_2(\theta),e_m \rangle  \langle K_1(\theta),e_n \rangle +  \langle K_1(\theta),e_m \rangle \langle K_2(\theta),e_n \rangle   \\
 C_{m,n}(\theta)  =\,&  \langle K_2(\theta),e_m \rangle  \langle K_2(\theta),e_n \rangle + \langle K_1(\theta),e_m \rangle  \langle K_3(\theta),e_n \rangle  \\
& +  \langle K_3(\theta),e_m \rangle \langle K_1(\theta),e_n \rangle    \nonumber \\
D_{m,n}(\theta) =\,& \langle K_1(\theta),e_m \rangle  \langle K_4(\theta),e_n \rangle + \langle K_2(\theta),e_m \rangle  \langle K_3(\theta),e_n \rangle  \\
                      & +  \langle K_3(\theta),e_m \rangle \langle K_2(\theta),e_n \rangle + \langle K_4(\theta),e_m \rangle  \langle K_1(\theta),e_n \rangle \nonumber \\
E_{m,n}(\theta) =\,& \langle K_1(\theta),e_m \rangle  \langle K_5(\theta),e_n \rangle + \langle K_2(\theta),e_m \rangle  \langle K_4(\theta),e_n \rangle  \\
                      & + \langle K_3(\theta),e_m \rangle \langle K_3(\theta),e_n \rangle + \langle K_4(\theta),e_m \rangle  \langle K_2(\theta),e_n \rangle + \langle K_5(\theta),e_m \rangle  \langle K_1(\theta),e_n \rangle\,.  \nonumber 
\end{align*} 
Observe that $A_{m,n}$, $C_{m,n}$ and $E_{m,n}$, for $m,n=1,\ldots,p$, are even functions on $S^{d-1}$, while $B_{m,n}$ and $D_{m,n}$ are odd functions on $S^{d-1}$. But plugging these expressions into (\ref{Proof:Lemma5:Expression1}), we have
\begin{align}
  e_m^{\top}C_xe_n= &\, \int_{S^{d-1}}\int_{0}^{\tilde\epsilon}\big(A_{m,n}(\theta)t^2+B_{m,n}(\theta)t^3 + C_{m,n}(\theta)t^4 \nonumber\\
 &\qquad\qquad\qquad\qquad\qquad\qquad +D_{m,n}(\theta)t^5 +  E_{m,n}(\theta)t^6+O(t^7)\big)\nonumber\\
 &\qquad\times  \big(P_0+P_1(\theta)t+ P_2(\theta)t^2+P_3(\theta)t^3+P_4(\theta)t^4+O(t^5)\big) \nonumber\\
 &\qquad\times \big(t^{d-1}+R_1(\theta)t^{d+1}+R_2(\theta)t^{d+2}+R_3(\theta)t^{d+3}+O(t^{d+4})\big) dt d\theta \nonumber.
\end{align}
We now collect terms of the same order to simplify the calculation. We focus on those terms with the order less than or equal to $\epsilon^{d+6}$. 
\begin{align*}
   e_m^{\top}C_xe_n
= &\, \int_{S^{d-1}}\int_{0}^{\tilde\epsilon} 
P_0 A_{m,n}(\theta)t^{d+1}+\big(P_0B_{m,n}(\theta) +  P_1(\theta)A_{m,n}(\theta)\big)t^{d+2}\\
&+\big(P_0 A_{m,n}(\theta)R_1(\theta) +P_0 C_{m,n}(\theta)+P_1(\theta)B_{m,n}(\theta)+P_2(\theta)A_{m,n}(\theta)\big)t^{d+3} \nonumber \\
& + \big(P_0A_{m,n}(\theta)R_2(\theta) + P_0 B_{m,n}(\theta)R_1(\theta)+ P_1(\theta) A_{m,n}(\theta)R_1(\theta)+P_0D_{m,n}(\theta) \nonumber \\
 &\qquad +  P_1(\theta)C_{m,n}(\theta)+P_2(\theta)B_{m,n}(\theta)+P_3(\theta)A_{m,n}(\theta)\big)t^{d+4}\nonumber\\
&+\big(P_0A_{m,n}(\theta)R_3(\theta) + P_0B_{m,n}(\theta)R_2(\theta)+P_1(\theta)A_{m,n}(\theta)R_2(\theta)+P_0C_{m,n}(\theta)R_1(\theta)\nonumber\\
&\qquad  +P_1(\theta)B_{m,n}(\theta)R_1(\theta)  + P_2(\theta)A_{m,n}(\theta)R_1(\theta) +P_0E_{m,n}(\theta)+P_1(\theta)D_{m,n}(\theta)\nonumber\\
&\qquad + P_2(\theta)C_{m,n}(\theta)  + P_3(\theta)B_{m,n}(\theta)+P_4(\theta)A_{m,n}(\theta)\big)t^{d+5} dtd\theta +O(\epsilon^{d+7})  \nonumber
\end{align*} 
By further expanding the integration of $t$ over $[0,\tilde\epsilon]=[0,\epsilon+H_1(\theta) \epsilon^3+ H_2(\theta) \epsilon^4+H_3(\theta)\epsilon^5+O(\epsilon^6)]$ by Lemma \ref{Lemma:4}, 
we have
\begin{align*} 
e_m^{\top}C_xe_n = \epsilon^{d+2} Q_{m,n}^{(0)}(x)+\epsilon^{d+4} Q_{m,n}^{(2)}(x)+ \epsilon^{d+6}Q_{m,n}^{(4)}(x) + O(\epsilon^{d+7}),
\end{align*} 
where 
\begin{align*}
Q_{m,n}^{(0)}(x)&=P_0\int_{S^{d-1}} \frac{ A_{m,n}(\theta)}{d+2} d\theta,\\
Q_{m,n}^{(2)}(x)&= \int_{S^{d-1}} P_0 A_{m,n}(\theta)H_1(\theta) +\frac{1}{d+4}\big[ P_0A_{m,n}(\theta)R_1(\theta)+ P_0C_{m,n}(\theta)\nonumber \\
&\qquad\qquad+ P_1(\theta)B_{m,n}(\theta) + P_2(\theta)A_{m,n}(\theta)\big]d\theta\nonumber\,,
\end{align*}
and
\begin{align*}
Q_{m,n}^{(4)}(x)&=\int_{S^{d-1}}  \bigg( P_0 A_{m,n}(\theta)H_3(\theta)+\frac{d+1}{2}P_0    A_{m,n}(\theta)H_1^2(\theta)  \\
  &+\big[ P_0 B_{m,n}(\theta)+P_1(\theta)A_{m,n}(\theta)\big]H_2(\theta)\nonumber\\
  &+\big[ P_0 A_{m,n}(\theta)R_1(\theta)+ P_0 C_{m,n}(\theta) +P_1(\theta)B_{m,n}(\theta) + P_2(\theta)A_{m,n}(\theta)\big]H_1(\theta)\nonumber\\
  & +\frac{1}{d+6} \big[  P_0A_{m,n}(\theta)R_3(\theta)+P_0B_{m,n}(\theta)R_2(\theta)+P_1(\theta)A_{m,n}(\theta)R_2(\theta)+ P_0C_{m,n}(\theta)R_1(\theta) \nonumber \\
&\qquad\qquad + P_1(\theta)B_{m,n}(\theta)R_1(\theta) + P_2(\theta)A_{m,n}(\theta)R_1(\theta) + P_0E_{m,n}(\theta)+P_1(\theta)D_{m,n}(\theta) \nonumber \\
&\qquad\qquad + P_2(\theta) C_{m,n}(\theta) + P_3(\theta)B_{m,n}(\theta)+P_4(\theta)A_{m,n}(\theta)\big] \bigg) d\theta\nonumber.
\end{align*}

To finish the proof,
we evaluate $Q^{(0)}_{m,n}$, $Q^{(2)}_{m,n}$, and $Q^{(4)}_{m,n}$, for $1\leq m,n\leq p$. 
Due to Assumptions \ref{AssumptionTangent} and \ref{AssumptionNormal}, $\{e_1, \cdots , e_d\}$ is an orthonormal basis of  $\iota_*T_x M$ and $\{e_{d+1}, \cdots , e_p\}$ is an orthonormal basis of  $(\iota_*T_x M)^\bot $. There, we have $\langle K_1(\theta),e_i \rangle = \langle \iota_*\theta,e_i \rangle =0$ for $i=d+1, \cdots, p$. Using Lemma \ref{Lemma:2} and symmetry of sphere, we can evaluate the term of order $\epsilon^{d+2}$ in $C_x$.
For $1 \leq m=n \leq d$, we have
\begin{equation}
Q^{(0)}_{m,n}=\frac{P_0}{d+2}\int_{S^{d-1}}A_{m,n}(\theta)d\theta=\frac{P(x)}{d+2} \int_{S^{d-1}} | \langle \iota_*\theta, e_1 \rangle |^2d\theta=\frac{|S^{d-1}| P(x)}{d(d+2)};
\end{equation}
for other $m$ and $n$, $\int_{S^{d-1}}A_{m,n}(\theta)d\theta=0$. 
Thus, the coefficient of the $\epsilon^{d+2}$ term is $\frac{|S^{d-1}|{P}(x)}{d(d+2)} 
\begin{bmatrix}
I_{d \times d} & 0 \\
0& 0 \nonumber \\
\end{bmatrix}$. Denote $M^{(0)}_{11}=I_{d\times d}\in\mathbb{R}^{d\times d}$ and $M^{(0)}_{12}=0\in\mathbb{R}^{d\times (p-d)}$, $M^{(0)}_{21}={M^{(0)}_{12}}^\top$ and $M^{(0)}_{22}=0\in\mathbb{R}^{(p-d)\times (p-d)}$.

Next, we evaluate the term of order $\epsilon^{d+4}$ in $C_x$. Note that $\langle\Second_{x}(\theta,\theta),e_m \rangle =0$, for $m=1, \cdots, d$, so $B_{m,n}(\theta)=0$. 
Thus, for $1\leq m,n\leq d$, by a direct calculation,
\begin{align} 
Q^{(2)}_{m,n} 
=\,& \frac{P(x)}{24} \int_{S^{d-1}} \langle \iota_*\theta,e_m \rangle  \langle \iota_*\theta,e_n \rangle \|\Second_{x}(\theta,\theta)\|^2d\theta \label{Proof:LemmaD5:Definition:tildeM11} \\
&- \frac{P(x)}{6(d+4)}\int_{S^{d-1}} \langle \iota_*\theta,e_m \rangle  \langle \iota_*\theta,e_n \rangle \texttt{Ric}_{x}(\theta,\theta)d\theta \nonumber \\
& -   \frac{P(x)}{6(d+4)} \int_{S^{d-1}} \langle \iota_*\theta,e_m \rangle  \langle \Second_{x}(e_n,\theta),\Second_{x}(\theta,\theta) \rangle+ \langle \iota_*\theta,e_n \rangle \langle \Second_{x}(e_m,\theta) , \Second_{x}(\theta,\theta) \rangle d\theta   \nonumber \\
& +  \frac{1}{2(d+4)}  \int_{S^{d-1}} \nabla^2_{\theta,\theta}P(x)
\langle \iota_*\theta,e_m \rangle  \langle \iota_*\theta,e_n \rangle d\theta \nonumber,
\end{align} 
where we use the fact that $ \langle \nabla_{\theta}\Second_{x}(\theta,\theta), e_m \rangle =-\langle \Second_{x}(e_m,\theta), \Second_{x}(\theta,\theta) \rangle$ when $m=1,\ldots,d$.
By defintion, it is clear that $A_{m,n}(\theta)=0$ when $1\leq m\leq d$ and $d+1\leq n\leq p$. Thus, for $1\leq m\leq d$ and $d+1\leq n\leq p$, we have
\begin{align}
 Q^{(2)}_{m,n} =&\, \frac{P(x)}{6(d+4)}\int_{S^{d-1}} \langle \iota_*\theta,e_m \rangle  \langle \nabla_{\theta}\Second_{x}(\theta,\theta),e_n \rangle d\theta \label{Proof:LemmaD5:Definition:tildeM12} \\
&+ \frac{1}{2(d+4)}\int_{S^{d-1}} \nabla_\theta P(x) \langle \iota_*\theta,e_m \rangle  \langle \Second_{x}(\theta,\theta),e_n \rangle d\theta \nonumber.
\end{align}
By definition, for $d+1\leq m,n \leq p$, $A_{m,n}(\theta)=B_{m,n}(\theta)=0$, and hence
\begin{equation}
Q^{(2)}_{m,n}=\frac{P(x)}{4(d+4)}\int_{S^{d-1}} \langle \Second_{x}(\theta,\theta),e_m  \rangle  \langle  \Second_{x}(\theta,\theta) ,e_n \rangle d\theta\,.\label{Proof:LemmaD5:Definition:tildeQ22}
\end{equation}

Finally, we evaluate the $\epsilon^{d+6}$ term. Again, recall the fact that when $d+1\leq m,n \leq p$, $A_{m,n}(\theta)=0$ and $B_{m,n}(\theta)=0$. Therefore, $Q^{(4)}_{m,n}$, where $d+1\leq m,n\leq p$, consists of only  
\begin{align}
Q^{(4)}_{m,n}=\,&\int_{S^{d-1}} P_0C_{m,n}(\theta)H_1(\theta)\nonumber\\
&+\frac{1}{d+6}\bigg(P_0C_{m,n}(\theta)R_1(\theta)+P_0E_{m,n}(\theta)+P_1(\theta)D_{m,n}(\theta)+P_2(\theta)C_{m,n}(\theta)\bigg) d\theta.\nonumber
\end{align}
Based on Lemmas \ref{Lemma:1}, \ref{Lemma:2} and \ref{Lemma:3}, for $d+1\leq m,n\leq p$, we have
\begin{align} 
Q^{(4)}_{m,n}=&\, \frac{P(x)}{96}\int_{S^{d-1}}  \langle \Second_{x}(\theta,\theta),e_m  \rangle  \langle  \Second_{x}(\theta,\theta) ,e_n \rangle \|\Second_{x}(\theta,\theta)\|^2 d\theta \label{Proof:LemmaD5:Definition:barM22} \\
                                & -  \frac{P(x)}{24(d+6)} \int_{S^{d-1}} \langle \Second_{x}(\theta,\theta),e_m  \rangle  \langle  \Second_{x}(\theta,\theta) ,e_n \rangle \texttt{Ric}_{x}(\theta,\theta) d\theta\nonumber \\
                             & + \frac{P(x)}{48(d+6)}\int_{S^{d-1}} \langle \Second_{x}(\theta,\theta),e_m\rangle \langle \nabla^2_{\theta\theta}\Second_{x}(\theta,\theta),e_n \rangle \nonumber \\
 &\qquad\qquad\qquad + \langle \nabla^2_{\theta\theta}\Second_{x}(\theta,\theta),e_m \rangle \langle \Second_{x}(\theta,\theta),e_n\rangle d\theta\nonumber \\
                             & + \frac{P(x)}{36(d+6)} \int_{S^{d-1}} \langle \nabla_{\theta}\Second_{x}(\theta,\theta),e_m \rangle \langle \nabla_{\theta}\Second_{x}(\theta,\theta),e_n \rangle d\theta\nonumber \\
                             & +  \frac{1}{12(d+6)} \int_{S^{d-1}} \nabla_\theta P(x)\Big(\langle \Second_{x}(\theta,\theta),e_m \rangle \langle \nabla_{\theta}\Second_{x}(\theta,\theta),e_n \rangle \nonumber \\
&\qquad\qquad\qquad+\langle \nabla_{\theta}\Second_{x}(\theta,\theta),e_m \rangle \langle \Second_{x}(\theta,\theta),e_n \rangle \Big) d\theta \nonumber \\
                         & +   \frac{1}{4(d+6)}  \int_{S^{d-1}} \nabla^2_{\theta,\theta}P(x) \langle \Second_{x}(\theta,\theta),e_m  \rangle  \langle  \Second_{x}(\theta,\theta) ,e_n \rangle d\theta \nonumber.
\end{align}
Since we only need to evaluate $Q^{(4)}_{mn}$, where $d+1\leq m,n\leq p$, for the LLE analysis, we omit the calculation of the other pairs of $m,n$.
We thus conclude that 
\begin{align}
C_x =\,\epsilon^{d+2}\frac{|S^{d-1}|  {P}(x)}{d(d+2)} 
\Big(&\begin{bmatrix}
I_{d \times d} & 0 \\
0& 0  \\
\end{bmatrix}+
\begin{bmatrix}
M^{(2)}_{11} & M^{(2)}_{12}  \\
M^{(2)}_{21} & M^{(2)}_{22} 
\end{bmatrix}\epsilon^{2}\nonumber\\
&+
\begin{bmatrix}
M^{(4)}_{11} & M^{(4)}_{12}  \\
M^{(4)}_{21} & M^{(4)}_{22}  \\
\end{bmatrix}\epsilon^{4}+O(\epsilon^{6})\Big) \,,\label{Proof:LemmaD5:FinalResult}
\end{align}
where $M^{(j)}_{11}\in \mathbb{R}^{d\times d}$ is defined as 
\begin{equation}
e_m^\top M^{(j)}_{11}e_n=\frac{d(d+2)}{|S^{d-1}|  {P}(x)} Q^{(j)}_{m,n},\label{Proof:LemmaD5:Definition:tildeM11}
\end{equation} 
for $m,n=1,\ldots,d$ and $j=2,4$, 
$M^{(j)}_{22}\in \mathbb{R}^{(p-d)\times (p-d)}$ is defined as 
\begin{equation}
e_m^\top M^{(j)}_{22}e_n=\frac{d(d+2)}{|S^{d-1}|  {P}(x)} Q^{(j)}_{m+d,n+d}, \label{Proof:LemmaD5:Definition:tildeM22}
\end{equation} 
for $m,n=1,\ldots,p-d$ and $j=2,4$,
$M^{(2)}_{12}\in \mathbb{R}^{d\times (p-d)}$ is defined as 
\begin{align}
e_m^\top M^{(2)}_{12}e_n=&\,\frac{d(d+2)}{|S^{d-1}|  {P}(x)}Q^{(2)}_{m,n+d} \nonumber\\
=&\, \frac{d(d+2)}{6|S^{d-1}|(d+4)}\int_{S^{d-1}} \langle \iota_*\theta,e_m \rangle  \langle \nabla_{\theta}\Second_{x}(\theta,\theta),e_n \rangle d\theta \label{Proposition32:Proof:M2:12:Expansion}  \\
&+ \frac{d(d+2)}{2(d+4)|S^{d-1}|P(x)}\int_{S^{d-1}} \nabla_\theta P(x) \langle \iota_*\theta,e_m \rangle  \langle \Second_{x}(\theta,\theta),e_n \rangle d\theta \nonumber
\end{align} 
for $m=1,\ldots,d$ and $n=1,\ldots,p-d$, and $M^{(2)}_{21}={M^{(2)}_{12}}^\top$.

\subsection{Proof of Proposition \ref{Proposition:2}}

We now evaluate the eigenvalue and eigenvectors of $C_x$ shown in (\ref{Proof:LemmaD5:FinalResult}) based on the technique introduced in Appendix \ref{Section:Perturbation}. 

For Case 0 in Condition \ref{Condition:1}, when $\epsilon$ is sufficiently small, we have
\begin{align}
C_x:=\frac{|S^{d-1}|  {P}(x)\epsilon^{d+2}}{d(d+2)} 
\Big(\begin{bmatrix}
I_{d \times d} & 0 \\
0& 0  \\
\end{bmatrix}+
\begin{bmatrix}
O(\epsilon^2)&0  \\
0 & 0 
\end{bmatrix}\Big)\nonumber
\end{align}
and hence the $d$ non-zero eigenvalues satisfies 
\begin{align}
\Lambda_x&=\frac{|S^{d-1}|  {P}(x)\epsilon^{d+2}}{d(d+2)}\begin{bmatrix}
I_{d\times d} +O(\epsilon^2) & 0 \\
0 & 0 \\
\end{bmatrix}+O(\epsilon^4), \nonumber\\
U_x(\epsilon)&=U_x(0)(I_{p\times p}+\epsilon^2 \mathsf{S})+O(\epsilon^4)\in O(p),\nonumber
\end{align}
where 
$U_x(0)=\begin{bmatrix}
X_1 & 0 \\
0 & X_2\\
\end{bmatrix}$, $X_1 \in O(d)$, $X_2 \in O(p-d)$, and $\mathsf{S}\in\mathfrak{o}(p)$. Note that in this case $\mathsf{S}$, $X_1$ and $X_2$ cannot be uniquely determined by the order $\epsilon^{d+2}$ part of $C_x$. 

For Case 1 in Condition \ref{Condition:1}, when $\epsilon$ is sufficiently small, we have
\begin{align*}
&\Lambda_x=\frac{|S^{d-1}|  {P}(x)\epsilon^{d+2}}{d(d+2)}\begin{bmatrix}
I_{d\times d} +\epsilon^2\Lambda^{(2)}_1+\epsilon^4\Lambda^{(4)}_1 & 0 \\
0 & \epsilon^2\Lambda^{(2)}_2+\epsilon^4\Lambda^{(4)}_2 \\
\end{bmatrix}+O(\epsilon^3), \\
&U_x(\epsilon)=U_x(0)(I_{p\times p}+\epsilon^2 \mathsf{S})+O(\epsilon^4)\in O(p),
\end{align*}
where 
$U_x(0)=\begin{bmatrix}
X_1 & 0 \\
0 & X_2\\
\end{bmatrix}$, $X_1 \in O(d)$, and $X_2 \in O(p-d)$, and 
\begin{equation}
\mathsf{S}:=\begin{bmatrix}\mathsf{S}_{11} & \mathsf{S}_{12}\\ \mathsf{S}_{21} & \mathsf{S}_{22} \end{bmatrix}\in\mathfrak{o}(p)\label{Proof:LemmaD6:Condition1:S1}.
\end{equation} 
Since  $M^{(2)}_{22}$ is a diagonal matrix under Assumptions \ref{AssumptionTangent} and \ref{AssumptionNormal}, (\ref{AppendixB:Norepeat:Eq4}) implies that it is $M^{(2)}_{22}=\Lambda^{(2)}_2$. From (\ref{AppendixB:Norepeat:Eq2.1}) and (\ref{AppendixB:Norepeat:Eq3.1}), we have
\begin{align}
\mathsf{S}_{12}&=-X_1^\top M^{(2)}_{12}X_2 ,\label{Proof:LemmaD6:Condition1:S4} \\
\mathsf{S}_{21}&=X_2^\top M^{(2)}_{21}X_1.\label{Proof:LemmaD6:Condition1:S5}
\end{align}
If all eigenvalues of $M^{(2)}_{11}$ are distinct, then $X_1$ could be uniquely determined; if all eigenvalues of $M^{(2)}_{22}$ are distinct, since it is a diagonal matrix, $X_2$ is identity matrix.  Moreover, $\Lambda_{1}''$, $\Lambda_{2}''$ and $\mathsf{S}$ can be uniquely determined: 
\begin{align}
\Lambda_{1}''&=\texttt{diag}\big(X_1^{\top}M^{(4)}_{11}X_1+2X_1^{\top}M^{(2)}_{12}M^{(2)}_{21}X_1\big),\label{Proof:LemmaD6:Condition1:S2}\\
\Lambda_{2}''&=\texttt{diag}\big(M^{(4)}_{22}-2 M^{(2)}_{21}M^{(2)}_{12}\big),\label{Proof:LemmaD6:Condition1:S3}\\
(\mathsf{S}_{11})_{m,n}&=\frac{-1}{(\Lambda^{(2)}_1)_{m,m}-(\Lambda^{(2)}_1)_{n,n}}e_m^{\top}\big(X_1^{\top}M^{(4)}_{11}X_1+2X_1^{\top}M^{(2)}_{12}M^{(2)}_{21}X_1\big)e_n,\label{Proof:LemmaD6:Condition1:S6}\\
(\mathsf{S}_{22})_{i,j}&=\frac{-1}{(\Lambda^{(2)}_2)_{i,i}-(\Lambda^{(2)}_2)_{j,j}}e_i^{\top}\big(M^{(4)}_{22}-2M^{(2)}_{21}M^{(2)}_{12}\big)e_i,\label{Proof:LemmaD6:Condition1:S7}
\end{align}
where $1\leq m\neq n\leq d$ and $1\leq i\neq j\leq p-d$.
On the other hand, if $M^{(2)}_{22}$ has $q+t$ distinct eigenvalues, where $q,t\geq 0$, and $q$ eigenvalues are simple, then based on Appendix B, we have
\begin{align} \label{description of X_2}
X_2=\begin{bmatrix}
I_{q \times q} & 0 & \cdots &0 \\
0 & X_2^{1} & \cdots & 0\\
0 & 0 &  \ddots &0\\
0 &0  & \cdots & X_2^{t}\\
\end{bmatrix}
\end{align}
since $M^{(2)}_{22}$ is diagonal under Assumption \ref{AssumptionNormal}.
Each of $X_2^1 \cdots X_2^{t}$ corresponds to a repeated eigenvalue, and each of them is an orthogonal matrix whose dimension depends on the multiplicity of the repeated eigenvalue. We mention that they may be uniquely determined by higher order terms in $C_x$ as described in Appendix \ref{Section:Perturbation}.

For Case 2 in Condition \ref{Condition:1},  when $\epsilon$ is sufficiently small, by dividing all matrices into blocks of the same size, we have 
\begin{align*}
\Lambda_x&=\frac{|S^{d-1}|  {P}(x)\epsilon^{\epsilon^{d+2}}}{d(d+2)}\begin{bmatrix}
I_{d\times d} +\epsilon^2 \Lambda^{(2)}_1+\epsilon^4\Lambda^{(4)}_1 & 0 & 0\\
0 & \epsilon^2\Lambda^{(2)}_{2,1}+\epsilon^4\Lambda^{(4)}_{2,1} & 0\\
0 & 0 & \epsilon^4\Lambda^{(4)}_{2,2} \\
\end{bmatrix}+O(\epsilon^6),
\end{align*}
\begin{align}
U_x(\epsilon)&=U_x(0)(I_{p\times p}+\epsilon^2 \mathsf{S})+O(\epsilon^4)\in O(p),   \label{Proof:LemmaD6:Condition2:first}
\end{align}
\begin{align*}
U_x(0)&=\begin{bmatrix}
X_1 & 0 & 0\\
0 & X_{2,1} & 0\\
0 & 0 & X_{2,2} \\
\end{bmatrix}\in O(p) ,\quad
\mathsf{S}=
\begin{bmatrix}
\mathsf{S}_{11} & \mathsf{S}_{12,1} & \mathsf{S}_{12,2} \\ 
\mathsf{S}_{21,1} & \mathsf{S}_{22,11}  & \mathsf{S}_{22,12} \\
\mathsf{S}_{21,2} & \mathsf{S}_{22,21} & \mathsf{S}_{22,22}
\end{bmatrix}\in \mathfrak{o}(p)\,,
\end{align*}
by Assumptions \ref{AssumptionTangent} and \ref{AssumptionNormal}, where $\Lambda^{(2)}_1$ is the eigenvalue matrix of $M^{(2)}_{11}$, diagonal entries of $\Lambda^{(2)}_{2,1}$ are nonzero, $X_1 \in O(d)$,  $X_{2,1} \in O(p-d-l)$ and $X_{2,2} \in O(l)$. By (\ref{AppendixB:Norepeat:Eq2.1}) and (\ref{AppendixB:Norepeat:Eq3.1}), we have
\begin{align}
\mathsf{S}_{12,1}&= -X_1^\top M^{(2)}_{12,1}X_{2,1}, \nonumber\\
\mathsf{S}_{21,1} &= X_{2,1}^\top M^{(2)}_{21,1}X_1, \nonumber\\
\mathsf{S}_{12,2}&= -X_1^\top M^{(2)}_{12,2}X_{2,2},\label{Expansion:S122:ForthOrder}\\
\mathsf{S}_{21,2}&=X_{2,2}^\top M^{(2)}_{21,2}X_1\,. \nonumber
\end{align}
If the eigenvalues of $M^{(2)}_{11}$ are distinct, then $X_1$ is the corresponding orthonormal eigenvector matrix. $\Lambda^{(2)}_{2,2}=0$ by the assumption of Case 2 in Condition \ref{Condition:1}. Recall that $\Lambda^{(4)}_{2,2}$ is the eigenvalue matrix of $M^{(4)}_{22,22}-2\frac{d(d+2)} {|S^{d-1}|  {P}(x)}M^{(2)}_{21,2}M^{(2)}_{12,2}$. If  $\Lambda^{(4)}_{2,2}$  has different diagonal entries  then $X_{2,2}$ is the corresponding orthonormal eigenvector matrix. Recall that if $\Lambda^{(2)}_1$ , $\Lambda^{(2)}_{2,1}$ and $\Lambda^{(4)}_{2,2}$, each has distinct diagonal entries, then $X(0)$ and $\mathsf{S}$ can be determined uniquely, and we have
\begin{align*} 
\mathsf{S}_{22,12}&=(-\Lambda_{2,1}')^{-1} (\frac{1}{2}M^{(4)}_{22,12}X_{2,2}+M^{(2)}_{21,1}X_1 \mathsf{S}_{12,2}), \\
\mathsf{S}_{22,21}&=X_{2,2}^\top (\frac{1}{2} M^{(4)}_{22,21}+M^{(2)}_{21,2}X_1 \mathsf{S}_{12,1})(\Lambda_{2,1}^{(2)})^{-1}, \\
\Lambda^{(4)}_1& =\texttt{diag}\big[X^\top _{1}(M^{(4)}_{11}X_1+2M^{(2)}_{12,1}\mathsf{S}_{21,1}+2M^{(2)}_{12,2}X_{2,2}\mathsf{S}_{21,2})\big] \\
\Lambda^{(4)}_{2,1}&=\texttt{diag}\big[ (M^{(4)}_{22,11}+2M^{(2)}_{21,1}X_1\mathsf{S}_{12,1}) \big], \\
(\mathsf{S}_{11})_{m,n}&=\frac{-1}{(\Lambda^{(2)}_1)_{m,m}-(\Lambda^{(2)}_1)_{n,n}}e_m^{\top}\big[X^\top _{1}(\frac{1}{2}M^{(4)}X_1+M^{(2)}_{12,1}\mathsf{S}_{21,1}+M^{(2)}_{12,2}X_{2,2}\mathsf{S}_{21,2})\big]e_n,
\end{align*}
where $1 \leq m \not= n \leq d$, and
\begin{align}
(\mathsf{S}_{22,11})_{m,n}&=\frac{-1}{(\Lambda^{(2)}_{2,1})_{m,m}-(\Lambda^{(2)}_{2,1})_{n,n}}e_m^{\top}\big[  (\frac{1}{2}M^{(4)}_{22,11}+M^{(2)}_{21,1}X_1\mathsf{S}_{12,1}) \big]e_n. \label{Proof:LemmaD6:Condition2:last}
\end{align}
where $d+1 \leq m \not= n \leq p-l$. However, we need higher order derivative of $C_x$ to solve $\mathsf{S}_{22,22}$ following the same step as evaluating (\ref{Proof:ApendixA:C22_22}). We skip the details here.
Finally, if diagonal entries of $\Lambda^{(2)}_{2,1}$ are distinct, then $X_{2,1}$ is the identity matrix. If $\Lambda^{(2)}_{2,1}$ or $\Lambda^{(4)}_{2,2}$ contains repeated eigenvalues, then it can be described as (\ref{description of X_2}). We also skip the details here.

\section{Proof of Theorem 3.2}\label{Section:Theoremt1}

{%

We need the following Proposition for the proof.

\begin{proposition} \label{Lemma:5.1}
Suppose $l=\texttt{nullity}(M_{22}^{(2)})>0$ and Assumptions \ref{AssumptionTangent} and \ref{AssumptionNormal} hold. Then 
$\langle \Second_{x}(\theta,\theta),e_i  \rangle =0$ for $p-l+1 \leq i \leq p$. Moreover, for $m,n=p-l+1,\ldots,p$, we have
\begin{align*}
 & \big[M^{(4)}_{22,22}-2M^{(2)}_{21,2}M^{(2)}_{12,2}\big]_{m-p+l,n-p+l}
=\,\frac{d(d+2)}{36(d+6)|S^{d-1}|} \int_{S^{d-1}} \langle \nabla_{\theta}\Second_{x}(\theta,\theta),e_m \rangle \langle \nabla_{\theta}\Second_{x}(\theta,\theta),e_n \rangle d\theta  \\
  &- \frac{d^2(d+2)^2}{18|S^{d-1}|^2(d+4)^2} \sum_{k=1}^d  \int_{S^{d-1}} \langle \nabla_{\theta}\Second_{x}(\theta,\theta),e_m \rangle \langle \iota_*\theta,e_k \rangle  d\theta \int_{S^{d-1}} \langle \iota_*\theta,e_k \rangle  \langle \nabla_{\theta}\Second_{x}(\theta,\theta),e_n \rangle d\theta .\nonumber 
\end{align*}

\end{proposition}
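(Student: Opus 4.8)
The plan is to derive the two claimed formulas directly from the expansions collected in the proof of Proposition \ref{Proposition:1}, specializing all indices to the normal-bundle block corresponding to the degenerate directions $p-l+1\le i\le p$. First I would establish that $\langle\Second_x(\theta,\theta),e_i\rangle=0$ for $p-l+1\le i\le p$: by Assumption \ref{AssumptionNormal} the matrix $M^{(2)}_{22}$ is diagonal with entries proportional to $\int_{S^{d-1}}\langle\Second_x(\theta,\theta),e_m\rangle\langle\Second_x(\theta,\theta),e_n\rangle\,d\theta$ (this is exactly (\ref{Proof:LemmaD5:Definition:tildeQ22})), so the $i$-th diagonal entry vanishes iff $\int_{S^{d-1}}|\langle\Second_x(\theta,\theta),e_i\rangle|^2\,d\theta=0$, which since the integrand is continuous and nonnegative forces $\langle\Second_x(\theta,\theta),e_i\rangle\equiv 0$ on $S^{d-1}$. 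The hypothesis $l=\texttt{nullity}(M^{(2)}_{22})>0$ and the labeling convention then give this for precisely the last $l$ indices.

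Next I would compute $[M^{(4)}_{22,22}-2M^{(2)}_{21,2}M^{(2)}_{12,2}]_{m-p+l,n-p+l}$ for $m,n$ in the degenerate range. For the first piece, I substitute $\langle\Second_x(\theta,\theta),e_m\rangle=0$ into the formula (\ref{Proof:LemmaD5:Definition:barM22}) for $e_m^\top M^{(4)}_{22}e_n$: every term there contains at least one factor $\langle\Second_x(\theta,\theta),e_m\rangle$ or $\langle\Second_x(\theta,\theta),e_n\rangle$ except the $\nabla_\theta\Second$--$\nabla_\theta\Second$ term, so after this cancellation only
\[
e_m^\top M^{(4)}_{22}e_n=\frac{d(d+2)}{36(d+6)|S^{d-1}|}\int_{S^{d-1}}\langle\nabla_\theta\Second_x(\theta,\theta),e_m\rangle\langle\nabla_\theta\Second_x(\theta,\theta),e_n\rangle\,d\theta
\]
survives (note $Q^{(4)}_{m,n}$ and $M^{(4)}_{22}$ differ by the normalization $\tfrac{d(d+2)}{|S^{d-1}|P(x)}$, which clears the $P(x)$). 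For the second piece, I use the explicit expansion (\ref{Proposition32:Proof:M2:12:Expansion}) of $M^{(2)}_{12}$; restricted to columns $n$ with $p-l+1\le n\le p$ the density-gradient term drops (again because it carries a factor $\langle\Second_x(\theta,\theta),e_n\rangle$), leaving only
\[
e_k^\top M^{(2)}_{12,2}e_n=\frac{d(d+2)}{6|S^{d-1}|(d+4)}\int_{S^{d-1}}\langle\iota_*\theta,e_k\rangle\langle\nabla_\theta\Second_x(\theta,\theta),e_n\rangle\,d\theta,
\]
and $M^{(2)}_{21,2}=M^{(2)\top}_{12,2}$. Multiplying and summing over the tangential index $k=1,\dots,d$ gives the stated double-integral term with the coefficient $\frac{d^2(d+2)^2}{18|S^{d-1}|^2(d+4)^2}$ (the factor $2$ from $-2M^{(2)}_{21,2}M^{(2)}_{12,2}$ combines with $(1/6)^2=1/36$ to produce $1/18$). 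Assembling the two pieces yields the claimed identity.

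The only genuine subtlety — the step I would flag as the main obstacle — is bookkeeping the index conventions and normalization constants consistently: one must be careful that the "$12,2$" block indexed in Proposition \ref{Proposition:2} corresponds to columns $p-l+1,\dots,p$ of $M^{(2)}_{12}$ and hence to the last $l$ normal directions, that the shift $m\mapsto m-p+l$ matches the block indexing of $M^{(4)}_{22,22}$, and that the $\frac{d(d+2)}{|S^{d-1}|P(x)}$ renormalization relating $Q^{(j)}_{\cdot,\cdot}$ to $M^{(j)}_{\cdot\cdot}$ is applied uniformly so that all $P(x)$ factors cancel. Once those are pinned down, the computation is a routine substitution of the already-established formulas with the vanishing $\langle\Second_x(\theta,\theta),e_i\rangle$ used to kill all but the two surviving terms. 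I would present it as: (i) the vanishing lemma, (ii) simplification of $M^{(4)}_{22}$, (iii) simplification of $M^{(2)}_{12,2}$ and formation of the product, (iv) combine.
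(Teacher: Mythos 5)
Your proposal is correct and follows the same overall strategy as the paper: establish the vanishing of $\langle\Second_x(\theta,\theta),e_i\rangle$ on the degenerate directions, then substitute into the formulas (\ref{Proof:LemmaD5:Definition:barM22}) and (\ref{Proposition32:Proof:M2:12:Expansion}) to isolate the two $\nabla_\theta\Second$ terms, and combine. The only place your argument diverges is step (i), and there you are a bit cleaner: the paper expands $\langle\Second_x(\theta,\theta),e_m\rangle=\sum_{i,j}p^m_{ij}\theta^i\theta^j$, evaluates the spherical moment integrals, and reads off that the resulting sum-of-squares expression forces $p^m_{ij}=0$ for all $i,j$; you instead note that the diagonal entry of $M^{(2)}_{22}$ is $\frac{d(d+2)}{4(d+4)|S^{d-1}|}\int_{S^{d-1}}|\langle\Second_x(\theta,\theta),e_i\rangle|^2\,d\theta$ — a nonnegative continuous integrand — so a zero integral forces the integrand, and hence the quadratic form $\theta\mapsto\langle\Second_x(\theta,\theta),e_i\rangle$, to vanish identically. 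Both are valid; your version avoids the explicit moment computation. Your normalization check ($(1/6)^2\cdot 2 = 1/18$; the $P(x)$ factors cancel against $\tfrac{d(d+2)}{|S^{d-1}|P(x)}$) is correct, and your identification of $M^{(2)}_{12,2}$ with the last $l$ columns of $M^{(2)}_{12}$ matches the block convention in Proposition \ref{Proposition:2}.
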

}

{This Proposition essentially says that if $\texttt{nullity}(M_{22}^{(2)})=l>0$ and $M_{22}^{(2)}$ is diagonalized as in (\ref{Expansion:LocalCovarianceMatrix:Case2}), then geometrically $e_{p-l+1}, \ldots ,e_p$ are perpendicular to the second fundamental form $\Second_{x}(\theta,\theta)$. Furthermore, the eigenvalues of order $\epsilon^{d+6}$ in Case 2 of Proposition \ref{Proposition:2} depend only on the third order derivative of the embedding, $\nabla_{\theta}\Second_{x}(\theta,\theta)$, in those directions. }

\begin{proof}
{
Suppose $l=\texttt{nullity}(M_{22}^{(2)})>0$. By Assumption \ref{AssumptionNormal}, $M_{22}^{(2)}$ is diagonalized as in (\ref{Expansion:LocalCovarianceMatrix:Case2}). Therefore, based on (\ref{Proof:LemmaD5:Definition:tildeQ22}) and (\ref{Proof:LemmaD5:Definition:tildeM22}), we have

\begin{equation}
\int_{S^{d-1}} \langle \Second_{x}(\theta,\theta), e_m  \rangle  \langle  \Second_{x}(\theta,\theta) , e_m \rangle d\theta\,=0, \label{Proof:Lemma6:Condition2ForTheM222}
\end{equation}
where $m=p-l+1,\ldots,p$. }

If we denote $\theta=\theta^i\partial_i \in S^{d-1}\subset T_xM$, the following expression for the second fundamental form holds: 
\begin{equation}
\langle \Second_{x}(\theta,\theta),e_m  \rangle = \sum_{i=1}^d p^m_{ii}\,{\theta^i}^2 +2\sum_{i<j} p^m_{ij}\,\theta^i \theta^j\,,\label{Proof:Lemma6:ExpressionPim}
\end{equation}
where $p^m_{ij}=\langle \Second_{x}(\partial_i,\partial_j),e_m  \rangle\in \mathbb{R}$, $i,j=1,\ldots,d$, are the corresponding coefficients.
Note that $\iota_*\partial_i=e_i$ for $i=1,\ldots,d$.
By plugging (\ref{Proof:Lemma6:ExpressionPim}) into (\ref{Proof:Lemma6:Condition2ForTheM222}), we have
\begin{align}
0=&\,\int_{S^{d-1}}\Big[ \sum_{i=1}^d (p^m_{ii}{\theta^i}^2)^2+4\sum_{k=1}^d p^m_{kk}{\theta^k}^2\sum_{i <j} p^m_{ij}\theta^i \theta^j+4\big(\sum_{i <j} p^m_{ij}\theta^i \theta^j\big)^2 \Big]d\theta\nonumber\\
=&\, \frac{1}{d(d+2)}|S^{d-1}|\Big(3\sum_{i=1}^d (p^m_{ii})^2 +2 \sum_{i < j} p^m_{ii}p^m_{jj}+4\sum_{i < j} (p^m_{ij})^2\Big)\nonumber\\
=&\,2\sum_{i=1}^d (p^m_{ii})^2 + \big(\sum_{i = 1}^d p^m_{ii}\big)^2+4\sum_{i < j} (p^m_{ij})^2,\nonumber
\end{align}
which leads to the conclusion that $p^m_{ij}=0$ for all $i$ and $j$.
To get the expansion of $\big[M^{(4)}_{22,22}-2M^{(2)}_{21,2}M^{(2)}_{12,2}\big]_{m-p+l,n-p+l}$, we directly plug the above formula to (\ref{Proof:LemmaD5:Definition:tildeM12}) and (\ref{Proof:LemmaD5:Definition:barM22}) and get the claim.
\end{proof}


We introduce the following notations to simplify the proof:
\begin{align*}
\omega(x)&:=\frac{1}{|S^{d-1}|}\int_{S^{d-1}}\|\Second_x(\theta,\theta)\|^2d\theta\\
\mathfrak{N}_1(x)&:= \frac{1}{|S^{d-1}|}\int_{S^{d-1}}\|\Second_x(\theta,\theta)\|^2\Second_{x}(\theta,\theta)d\theta\\ 
\mathfrak{N}_2(x)&:= \frac{1}{|S^{d-1}|}\int_{S^{d-1}} \Second_x(\theta,\theta)\texttt{Ric}_{x}(\theta,\theta) d \theta\\ 
\mathfrak{M}_1(x)&:=\frac{1}{|S^{d-1}|}\int_{S^{d-1}}\|\Second_x(\theta,\theta)\|^2\theta \theta^{\top} d\theta\\
\mathfrak{M}_2(x)&:=\frac{1}{|S^{d-1}|}\int_{S^{d-1}} \Second_x(\theta,\theta) \theta\theta^{\top} d\theta \\
\mathfrak{R}_0(x)&:=\frac{1}{|S^{d-1}|}\int_{S^{d-1}} \theta \nabla_{\theta}\Second_x(\theta,\theta)  \cdot  \Second_x(\theta,\theta) d\theta \\
\mathfrak{R}_1(x)&:=\frac{1}{|S^{d-1}|}\int_{S^{d-1}}\nabla_\theta\Second_x(\theta,\theta)\theta^{\top} d\theta \\
\mathfrak{R}_2(x)&:=\frac{1}{|S^{d-1}|}\int_{S^{d-1}}\nabla_{\theta\theta}\Second_x(\theta,\theta) d\theta .
\end{align*}
For $f \in C^3(\iota(M))$ and $P\in C^5(M)$, define
\begin{align}
\Omega_f &:=\frac{1}{2}\nabla f(x)^{\top}\mathfrak{M}_2(x) \nabla P(x) +\frac{1}{4}P(x)\texttt{tr}(\mathfrak{M}_2(x) \nabla^2 f(x))+\frac{1}{6}P(x)\mathfrak{R}_1(x)\nabla f(x)\label{Definition:Omegaf:Proof}\\
\mathfrak{J}_f(x)&:=\frac{1}{|S^{d-1}|}\iota_*\int_{S^{d-1}}
\theta \big(\frac{1}{6}\nabla^3_{\theta,\theta,\theta}f(x)P(x)+\frac{1}{6}\nabla^3_{\theta,\theta,\theta}P(x)f(x)+\frac{1}{2}\nabla^2_{\theta,\theta}f(x)\nabla_\theta P(x)\nonumber\\
&\quad+\frac{1}{2}\nabla^2_{\theta,\theta}P(x)\nabla_\theta f(x)-\frac{1}{6}\texttt{Ric}_{x}(\theta,\theta)[f(x)\nabla_{\theta}P(x)+\nabla_{\theta}f(x)P(x)]\big)d \theta\,.\nonumber
\end{align}
We prepare some calculations. By Lemma \ref{Lemma:5},
we have
\begin{align}
\mathbb{E}[\chi_{B_{\epsilon}^{\mathbb{R}^p}(\iota(x))}(X)]\,=&\frac{|S^{d-1}|}{d}P(x)\epsilon^d+\frac{|S^{d-1}|}{d(d+2)}\Big[\frac{1}{2}\Delta P(x)\label{Expansion:ProofTheorem2:1}\\
&+\frac{s(x)P(x)}{6}+\frac{d(d+2)\omega(x)P(x)}{24}\Big]\epsilon^{d+2}+O(\epsilon^{d+3}),\nonumber
\end{align}
and hence
\begin{align}
&\mathbb{E}[(f(X)-f(x))\chi_{B_{\epsilon}^{\mathbb{R}^p}(\iota(x))}(X)]\label{Expansion:ProofTheorem2:f}\\
=\,&\mathbb{E}[f(X)\chi_{B_{\epsilon}^{\mathbb{R}^p}(\iota(x))}(X)]-f(x)\mathbb{E}[\chi_{B_{\epsilon}^{\mathbb{R}^p}(\iota(x))}(X)]\nonumber\\
= &\,\frac{|S^{d-1}|}{d(d+2)}\Big[\frac{1}{2} {P}(x)\Delta {f}(x)+ \nabla f(x) \cdot \nabla P(x)\Big] \epsilon^{d+2}+O( \epsilon^{d+3})\nonumber.
\end{align}
Similarly, by Lemma \ref{Lemma:5}, we have
\begin{align}
\mathbb{E}[(X-\iota(x))\chi_{B_{\epsilon}^{\mathbb{R}^p}(\iota(x))}(X)]=[\![{v}_1,{v}_2]\!],\label{Expansion:ProofTheorem2:X}
\end{align}
where
\begin{align}
 v_1=\, &\frac{|S^{d-1}|}{d+2} \frac{J_{p,d}^{\top}\iota_*\nabla {P}(x)}{d}\epsilon^{d+2}+\frac{|S^{d-1}|}{24} J_{p,d}^{\top}\iota_{*} \big(\mathfrak{M}_1(x)\nabla P(x)+P(x) \mathfrak{R}_0(x)\big)\epsilon^{d+4}\nonumber\\
&+ \,\frac{|S^{d-1}|}{d+4}J_{p,d}^{\top}\big[\mathfrak{J}_1(x) +\frac{1}{6}\mathfrak{R}_1(x) \nabla P(x)+\frac{1}{24}P(x)\mathfrak{R}_2(x) \big]\epsilon^{d+4}+O(\epsilon^{d+5})\nonumber,
\end{align}
and
\begin{align}
 v_2=\,& \frac{|S^{d-1}|}{d+2} \frac{{P}(x)\bar J_{p,p-d}^{\top}\mathfrak{N}_0(x)}{2}\epsilon^{d+2}+\frac{|S^{d-1}|}{24}\frac{P(x)\bar J_{p,p-d}^{\top}\mathfrak{N}_1(x)}{2}\epsilon^{d+4}\nonumber\\
&+\,\frac{|S^{d-1}|}{(d+4)}\bar J_{p,p-d}^{\top}\big[\frac{1}{4}\texttt{tr}(\mathfrak{M}_2(x) \nabla^2 P(x))-\frac{1}{12}f(x)P(x)\mathfrak{N}_2(x)\big]\epsilon^{d+4}\nonumber\\
&+\, \frac{|S^{d-1}|}{6(d+4)}\bar J_{p,p-d}^{\top}\big[\mathfrak{R}_1(x) \nabla P(x)+\frac{1}{4}P(x)\mathfrak{R}_2(x) \big]\epsilon^{d+4}  +O(\epsilon^{d+5})\,.\nonumber
\end{align}
Again, by Lemma \ref{Lemma:5}, we have 
\begin{align}
&\mathbb{E}[(X-\iota(x))(f(X)-f(x))\chi_{B_{\epsilon}^{\mathbb{R}^p}(\iota(x))}(X)]\label{Expansion:ProofTheorem2:Xf}\\
=\,&\mathbb{E}[(X-\iota(x))f(X)\chi_{B_{\epsilon}^{\mathbb{R}^p}(\iota(x))}(X)]-f(x)\mathbb{E}[(X-\iota(x))\chi_{B_{\epsilon}^{\mathbb{R}^p}(\iota(x))}(X)]\nonumber\\
=\,&[\![v_1,v_2]\!],\nonumber
\end{align}
where
\begin{align}
v_1=\, &\frac{|S^{d-1}|}{d(d+2)} P(x)J_{p,d}^{\top}\iota_*\nabla {f}(x)\epsilon^{d+2}+\,\frac{|S^{d-1}|}{24}P(x) J_{p,d}^{\top}\iota_{*} \mathfrak{M}_1(x)\nabla {f}(x)\epsilon^{d+4}\nonumber\\
&+ \,\frac{|S^{d-1}|}{d+4}J_{p,d}^{\top}\big[\mathfrak{J}_f(x)-f(x)\mathfrak{J}_1(x) +\frac{1}{6}P(x)\mathfrak{R}_1(x)\nabla f(x) \big]\epsilon^{d+4}+O(\epsilon^{d+5})\nonumber
\end{align}
and
\begin{align}
v_2  =\,& \frac{|S^{d-1}|}{d+4}\bar{J}_{p,p-d}^{\top}\Big(\frac{1}{2}\nabla f(x)^{\top}\mathfrak{M}_2(x) \nabla P(x) +\frac{1}{4}P(x)\texttt{tr}(\mathfrak{M}_2(x) \nabla^2 f(x))\nonumber\\
&+\frac{1}{6}P(x)\mathfrak{R}_1(x)\nabla f(x)\Big)\epsilon^{d+4}+O(\epsilon^{d+5}) \nonumber \\
=\, & \frac{|S^{d-1}|}{d+4}\bar{J}_{p,p-d}^{\top} \Omega_f \epsilon^{d+4}+O(\epsilon^{d+5}). \nonumber
\end{align}
With the above preparation, we are ready to prove Theorem \ref{Theorem:t1}.

\begin{proof}[Proof of Theorem \ref{Theorem:t1}]
The proof is straightforward, and we show it case by case. 

\underline{Case 0 in Condition \ref{Condition:1}}. In this case, by Lemma \ref{Lemma:6}, (\ref{Expansion:ProofTheorem2:Xf}), and (\ref{Expansion:ProofTheorem2:X}),
\begin{align}
&\mathbf{T}_{\iota(x)}^{\top}\mathbb{E}[X(f(X)-f(x))\chi_{B_{\epsilon}^{\mathbb{R}^p}(\iota(x))}(X)]
=\frac{|S^{d-1}|}{d(d+2)}\frac{P(x)\nabla {f}(x)\cdot \nabla {P}(x)}{{P}(x)+\frac{d(d+2)}{|S^{d-1}|}\epsilon^{\rho-2}}\epsilon^{d+2}+O(\epsilon^{d+4})\nonumber
\end{align}
and
\begin{align}
&\mathbf{T}_{\iota(x)}^{\top}\mathbb{E}[(X-\iota(x))\chi_{B_{\epsilon}^{\mathbb{R}^p}(\iota(x))}(X)]
=\frac{|S^{d-1}|}{d(d+2)}\frac{\nabla P(x)\cdot \nabla {P}(x)}{P(x)+\frac{d(d+2)}{|S^{d-1}|}\epsilon^{\rho-2}}\epsilon^{d+2}+O(\epsilon^{d+4}),\nonumber
\end{align}
and hence
\begin{align}
&\mathbb{E}[(f(X)-f(x))\chi_{B_{\epsilon}^{\mathbb{R}^p}(\iota(x))}(X)]-\mathbf{T}_{\iota(x)}^{\top}\mathbb{E}[(X-\iota(x))(f(X)-f(x))\chi_{B_{\epsilon}^{\mathbb{R}^p}(\iota(x))}(X)] \nonumber\\
=\,&\frac{|S^{d-1}|}{d(d+2)}\Big[\frac{1}{2} {P}(x)\Delta {f}(x)+ \nabla f(x) \cdot \nabla P(x)-\frac{P(x)\nabla {f}(x)\cdot \nabla {P}(x)}{{P}(x)+\frac{d(d+2)}{|S^{d-1}|}\epsilon^{\rho-2}}\Big] \epsilon^{d+2}+O(\epsilon^{d+4})\,.  \nonumber
\end{align}
Note that $\mathbf{T}_{\iota(x)}^{\top}\mathbb{E}[(X-\iota(x))\chi_{B_{\epsilon}^{\mathbb{R}^p}(\iota(x))}(X)]$ is of order $O(\epsilon^{d+2})$ for any $\rho$, therefore
\begin{align}
&\mathbb{E}[\chi_{B_{\epsilon}^{\mathbb{R}^p}(\iota(x))}(X)]-\mathbf{T}_{\iota(x)}^{\top}\mathbb{E}[(X-\iota(x))\chi_{B_{\epsilon}^{\mathbb{R}^p}(\iota(x))}(X)]=\frac{|S^{d-1}|}{d}P(x)\epsilon^d +O(\epsilon^{d+2}).\nonumber
\end{align}
As a result, we conclude that
\begin{align}
& Qf(x)-f(x)=\frac{1}{(d+2)}\big[\frac{1}{2}\Delta f(x) +\frac{\nabla f(x) \cdot \nabla P(x)}{P(x)}-\frac{\nabla {f}(x)\cdot \nabla {P}(x)}{{P}(x)+\frac{d(d+2)}{|S^{d-1}|}\epsilon^{\rho-2}} \big]\epsilon^2+O(\epsilon^4) .\nonumber
\end{align}

\underline{Case 1 in Condition \ref{Condition:1}}. 
Observe that by Lemma \ref{Lemma:6}, the tangential component of $\mathbf{T}_{\iota(x)}$ is of order $O(1)$ and the normal component of $\mathbf{T}_{\iota(x)}$ is of order $O(\frac{1}{\epsilon^2})$. Hence, by (\ref{Expansion:ProofTheorem2:X})
\begin{align}
&\mathbf{T}_{\iota(x)}^{\top}\mathbb{E}[(X-\iota(x))\chi_{B_{\epsilon}^{\mathbb{R}^p}(\iota(x))}(X)]\nonumber\\
=&\,\frac{|S^{d-1}|{P}(x)}{2(d+2) } \sum_{i=d+1}^p\frac{(\mathfrak{N}^{\top}_0(x)\bar{J}_{p,p-d}X_2\bar{J}_{p,p-d}^{\top}e_{i})^2}{\frac{2}{d}\lambda^{(2)}_i+\frac{2(d+2)}{P(x)|S^{d-1}|}\epsilon^{\rho-4}}\epsilon^{d}+O(\epsilon^{d+2})\,, \nonumber
\end{align}
and hence
\begin{align}
&\mathbb{E}[\chi_{B_{\epsilon}^{\mathbb{R}^p}(\iota(x))}(X)]-\mathbf{T}_{\iota(x)}^{\top}\mathbb{E}[(X-\iota(x))\chi_{B_{\epsilon}^{\mathbb{R}^p}(\iota(x))}(X)] \nonumber\\
=&\bigg[\frac{|S^{d-1}|}{d}P(x)-\frac{|S^{d-1}|{P}(x)}{2(d+2) } \sum_{i=d+1}^p\frac{(\mathfrak{N}^{\top}_0(x)\bar{J}_{p,p-d}X_2\bar{J}_{p,p-d}^{\top}e_{i})^2}{\frac{2}{d}\lambda^{(2)}_i+\frac{2(d+2)}{P(x)|S^{d-1}|}\epsilon^{\rho-4}}\bigg]\epsilon^d +O(\epsilon^{d+2}).\nonumber
\end{align}
Similarly, by (\ref{Expansion:ProofTheorem2:Xf}) 
\begin{align}
&\mathbf{T}_{\iota(x)}^{\top}\mathbb{E}[(X-\iota(x))(f(X)-f(x))\chi_{B_{\epsilon}^{\mathbb{R}^p}(\iota(x))}(X)]\nonumber \\
=\, &\bigg[\frac{|S^{d-1}|P(x)}{d(d+2)}\Big(\frac{\nabla {f}(x)\cdot \nabla {P}(x)}{{P}(x)+\frac{d(d+2)}{|S^{d-1}|}\epsilon^{\rho-2}}+\sum_{i=d+1}^p\frac{\mathfrak{N}^{\top}_0(x)\bar{J}_{p,p-d}X_2\bar{J}_{p,p-d}^{\top}e_i}{\frac{2}{d}\lambda^{(2)}_i+\frac{2(d+2)}{P(x)|S^{d-1}|}\epsilon^{\rho-4}}\nabla {f}(x)^{\top}J_{p,d}X_1\mathsf{S}_{12}\bar{J}_{p,p-d}^{\top}e_i\Big) \nonumber \\
& +\frac{|S^{d-1}|}{(d+4) } \sum_{i=d+1}^p\frac{\mathfrak{N}^{\top}_0(x)\bar{J}_{p,p-d}X_2\bar{J}_{p,p-d}^{\top}e_{i}}{\frac{2}{d}\lambda^{(2)}_i+\frac{2(d+2)}{P(x)|S^{d-1}|}\epsilon^{\rho-4}}\Omega_f^{\top}\bar{J}_{p,p-d}X_2\bar{J}_{p,p-d}^{\top}e_{i}\bigg ]\epsilon^{d+2}  +O(\epsilon^{d+3} )\,,\nonumber
\end{align}
which could be significantly simplified.
Since $M^{(2)}_{12}$ satisfies 
(\ref{Proposition32:Proof:M2:12:Expansion}), by a direct expansion we have that
\begin{equation} 
\nabla {f}(x)^{\top}J_{p,d}M_{12}^{(2)}=\frac{d(d+2)}{P(x)(d+4)}\Big(\frac{1}{2}\nabla f(x)^{\top}\mathfrak{M}_2(x) \nabla P(x) +\frac{1}{6}P(x)\mathfrak{R}_1(x)\nabla f(x)\Big)^{\top} \bar{J}_{p,p-d}\nonumber\,.
\end{equation}
By (\ref{Proof:LemmaD6:Condition1:S4}), we have $X_1\mathsf{S}_{12}=-M^{(2)}_{12} X_2$,
and hence 
\begin{align} 
&\frac{|S^{d-1}|{P}(x)}{d(d+2)}\frac{\mathfrak{N}^{\top}_0(x)\bar{J}_{p,p-d}X_2\bar{J}_{p,p-d}^{\top}e_i}{\frac{2}{d}\lambda^{(2)}_i+\frac{2(d+2)}{P(x)|S^{d-1}|}\epsilon^{\rho-d-4}}\nabla {f}(x)^{\top}J_{p,d}X_1\mathsf{S}_{12}\bar{J}_{p,p-d}^{\top}e_i \nonumber\\
=\,&-\frac{|S^{d-1}|}{d+4}\frac{\mathfrak{N}^{\top}_0(x)\bar{J}_{p,p-d}X_2\bar{J}_{p,p-d}^{\top}e_i}{\frac{2}{d}\lambda^{(2)}_i+\frac{2(d+2)}{P(x)|S^{d-1}|}\epsilon^{\rho-d-4}}\Big(\frac{1}{2}\nabla f(x)^{\top}\mathfrak{M}_2(x) \nabla P(x) 
+\frac{1}{6}P(x)\mathfrak{R}_1(x)\nabla f(x)\Big)^{\top} \bar{J}_{p,p-d} X_2\bar{J}_{p,p-d}^{\top}e_i \,.\nonumber
\end{align}
Combining this with $\Omega_f$ defined in (\ref{Definition:Omegaf:Proof}),
the second and third terms in $\mathbf{T}_{\iota(x)}^{\top}\mathbb{E}[(X-\iota(x))(f(X)-f(x))\chi_{B_{\epsilon}^{\mathbb{R}^p}(\iota(x))}(X)]$ are simplified. As a result, we have
\begin{align}
&\mathbb{E}[(f(X)-f(x))\chi_{B_{\epsilon}^{\mathbb{R}^p}(\iota(x))}(X)]-\mathbf{T}_{\iota(x)}^{\top}\mathbb{E}[(X-\iota(x))(f(X)-f(x))\chi_{B_{\epsilon}^{\mathbb{R}^p}(\iota(x))}(X)] \nonumber\\
=&\,\bigg[\frac{|S^{d-1}|}{d(d+2)}\Big(\frac{1}{2} {P}(x)\Delta {f}(x)+ \nabla f(x) \cdot \nabla P(x)-\frac{P(x)\nabla {f}(x)\cdot \nabla {P}(x)}{{P}(x)+\frac{d(d+2)}{|S^{d-1}|}\epsilon^{\rho-2}}\Big)\nonumber \\
& -\frac{|S^{d-1}|P(x)}{4(d+4) } \sum_{i=d+1}^p\frac{\mathfrak{N}^{\top}_0(x)\bar{J}_{p,p-d}X_2\bar{J}_{p,p-d}^{\top}e_i}{\frac{2}{d}\lambda^{(2)}_i+\frac{2(d+2)}{P(x)|S^{d-1}|}\epsilon^{\rho-4}}\mathfrak{H}^{\top}_f(x)\bar{J}_{p,p-d}X_2\bar{J}_{p,p-d}^{\top}e_{i}\bigg ]\epsilon^{d+2}+O(\epsilon^{d+3}) \nonumber\,.
\end{align}
To finish the proof for Case 1, we claim that 
\begin{align}\label{projection onto the eigenspace}
\sum_{i=d+1}^p\frac{\mathfrak{N}^{\top}_0(x)\bar{J}_{p,p-d}X_2\bar{J}_{p,p-d}^{\top}e_i}{\frac{2}{d}\lambda^{(2)}_i+\frac{2(d+2)}{P(x)|S^{d-1}|}\epsilon^{\rho-4}}\mathfrak{H}^{\top}_f(x)\bar{J}_{p,p-d}X_2\bar{J}_{p,p-d}^{\top}e_{i}=\sum_{i=d+1}^p\frac{\mathfrak{N}^{\top}_0(x)e_i\mathfrak{H}^{\top}_f(x)e_{i}}{\frac{2}{d}\lambda^{(2)}_i+\frac{2(d+2)}{P(x)|S^{d-1}|}\epsilon^{\rho-4}}.\nonumber
\end{align}
Recall (\ref{description of X_2}). Suppose there are $q+t$ eigenvalues of $M^{(2)}_{22}$, where $q,t\in \mathbb{N}\cup\{0\}$, so that $q$ eigenvalues are simple. We have  
\begin{align} 
X_2=\begin{bmatrix}
I_{q \times q} & 0 & \cdots &0 \\
0 & X_2^{1} & \cdots & 0\\
0 & 0 &  \ddots &0\\
0 &0  & \cdots & X_2^t\\
\end{bmatrix}\,,\nonumber
\end{align}
where $X_2^j$, where $j=1,\ldots,t$ are orthogonal matrices whose size is the multiplicity of the associated eigenvalue.
Suppose $X_2^1\in O(\alpha)$, where $\alpha>1$.  Then 
\begin{align*}
\sum_{i=d+q}^{d+q+\alpha}(\mathfrak{N}^{\top}_0(x)\bar{J}_{p,p-d}X_2\bar{J}_{p,p-d}^{\top}e_i)(\mathfrak{H}^{\top}_f(x)\bar{J}_{p,p-d}X_2\bar{J}_{p,p-d}^{\top}e_{i})=\sum_{i=d+q}^{d+q+\alpha}(\mathfrak{N}^{\top}_0(x)e_i)(\mathfrak{H}^{\top}_f(x)e_{i})
\end{align*}
since the left hand side is the inner product between the projections of $\mathfrak{N}_0(x)$ and $\mathfrak{H}_f(x)$ onto the eigenspace.
By a similar argument for the other blocks, we conclude the claim.
By exactly the same argument we have
\begin{align}
\sum_{i=d+1}^p\frac{(\mathfrak{N}^{\top}_0(x)\bar{J}_{p,p-d}X_2\bar{J}_{p,p-d}^{\top}e_i)^2}{\frac{2}{d}\lambda^{(2)}_i+\frac{2(d+2)}{P(x)|S^{d-1}|}\epsilon^{\rho-4}}=\sum_{i=d+1}^p\frac{(\mathfrak{N}^{\top}_0(x)e_i)^2}{\frac{2}{d}\lambda^{(2)}_i+\frac{2(d+2)}{P(x)|S^{d-1}|}\epsilon^{\rho-4}}.\nonumber
\end{align}
In conclusion, we have
\begin{equation}
Qf(x)-f(x)=(\mathfrak{C_1}(x)+\mathfrak{C_2}(x))\epsilon^2+O(\epsilon^3),\nonumber
\end{equation}
where
\begin{align}
\mathfrak{C_1}(x)&=\frac{\frac{1}{d(d+2)}\Big[\frac{1}{2}\Delta {f}(x)+ \frac{\nabla f(x) \cdot \nabla P(x)}{P(x)}-\frac{\nabla {f}(x)\cdot \nabla {P}(x)}{{P}(x)+\frac{d(d+2)}{|S^{d-1}|}\epsilon^{\rho-2}}\Big] }{\frac{1}{d}-\frac{1}{2(d+2) } \sum_{i=d+1}^p\frac{(\mathfrak{N}^{\top}_0(x)e_i)^2}{\frac{2}{d}\lambda^{(2)}_i+\frac{2(d+2)}{P(x)|S^{d-1}|}\epsilon^{\rho-4}}}\nonumber
\end{align}
and
\begin{align}
\mathfrak{C_2}(x)&=-\frac{\frac{1}{4(d+4) }\sum_{i=d+1}^p\frac{(\mathfrak{N}^{\top}_0(x)e_i)(\mathfrak{H}^{\top}_f(x)e_{i})}{\frac{2}{d}\lambda^{(2)}_i+\frac{2(d+2)}{P(x)|S^{d-1}|}\epsilon^{\rho-4}}}{\frac{1}{d}-\frac{1}{2(d+2) } \sum_{i=d+1}^p\frac{(\mathfrak{N}^{\top}_0(x)e_i)^2}{\frac{2}{d}\lambda^{(2)}_i+\frac{2(d+2)}{P(x)|S^{d-1}|}\epsilon^{\rho-4}}}.\nonumber
\end{align}

\underline{Case 2 in Condition \ref{Condition:1}}. 
In this case, by (\ref{Expansion:Tx:Case2:v1}) and (\ref{Expansion:Tx:Case2:v2}), we rewrite $\mathbf{T}_{\iota(x)}$ as
\begin{align}
\mathbf{T}_{\iota(x)}
=&\,[\![v_1,\,v_2]\!]+[\![O(\epsilon^2),\,O(1)]\!],\nonumber
\end{align}
where 
\begin{align}
v_1=\,&\frac{J_{p,d}^\top \iota_*\nabla P(x)}{{P}(x)+\frac{d(d+2)}{|S^{d-1}|}\epsilon^{\rho-2}}+\sum_{i=d+1}^{p-l}\frac{\mathfrak{N}^\top _0(x)\tilde{J}X_{2,1}\tilde{J}^\top e_i}{\frac{2}{d}\lambda^{(2)}_i+\frac{2(d+2)}{P(x)|S^{d-1}|}\epsilon^{\rho-4}}X_{2,1}\mathsf{S}_{12,1}\tilde{J}^\top e_i\nonumber\\
&\qquad+\sum_{i=p-l+1}^p\frac{\alpha_i }{\frac{|S^{d-1}|  {P}(x)}{d(d+2)}\lambda^{(4)}_i+\epsilon^{\rho-6}} X_1\mathsf{S}_{12,2}(\bar J_{p,l})^\top e_{i}\nonumber
\end{align}
and
\begin{align}
v_2=\,&\frac{1}{\epsilon^2}\sum_{i=d+1}^{p-l}\frac{\mathfrak{N}^\top _0(x)\tilde{J}X_{2,1}\tilde{J}^\top e_{i}}{\frac{2}{d}\lambda^{(2)}_i+\frac{2(d+2)}{P(x)|S^{d-1}|}\epsilon^{\rho-4}} X_{2,2} \bar{J}_{p,l}^\top e_i\nonumber\\
&\quad+\frac{1}{\epsilon^2}\sum_{i=p-l+1}^p\frac{\alpha_i}{\frac{|S^{d-1}|  {P}(x)}{d(d+2)}\lambda^{(4)}_i+\epsilon^{\rho-6}}X_{2,2} \bar{J}_{p,l}^\top e_i\,,\nonumber
\end{align}
Note that $\frac{\alpha_i }{\frac{|S^{d-1}|  {P}(x)}{d(d+2)}\lambda^{(4)}_i+\epsilon^{\rho-6}}$ is of order $1$ or smaller, no matter what regularization order $\rho$ is chosen.
Rewrite (\ref{Expansion:ProofTheorem2:X}) up to $O(\epsilon^{d+4})$ as 
\begin{align}
\mathbb{E}[(X-\iota(x))\chi_{B_{\epsilon}^{\mathbb{R}^p}(\iota(x))}(X)]=&\Big[\!\!\!\Big[\frac{|S^{d-1}|}{d+2} \frac{J_{p,d}^{\top}\iota_*\nabla {P}(x)}{d}\epsilon^{d+2}+O(\epsilon^{d+4}), \nonumber\\
&\qquad \frac{|S^{d-1}|}{d+2} \frac{{P}(x)\bar{J}_{p,p-d}^{\top}\mathfrak{N}_0(x)}{2}\epsilon^{d+2}+O(\epsilon^{d+4}) \Big]\!\!\!\Big]\,.\nonumber
\end{align}
We claim that in $\mathbb{E}[(X-\iota(x))\chi_{B_{\epsilon}^{\mathbb{R}^p}(\iota(x))}(X)]^{\top} \mathbf{T}_{\iota(x)}$, the ``fourth order'' terms, i.e., the terms with $\sum_{i=p-l+1}^p$, do not have dominant contribution asymptotically by showing that for each $i=p-l+1,\ldots,p$,
we have
\begin{align} 
 \mathbb{E}[(X-\iota(x))\chi_{B_{\epsilon}^{\mathbb{R}^p}(\iota(x))}(X)]^{\top}\Big[\!\!\!\Big[ X_1\mathsf{S}_{12,2}\bar J_{p,l}^\top e_{i},\, \frac{1}{\epsilon^2}X_{2,2}\bar{J}_{p,l}^\top e_i\Big]\!\!\!\Big] =O(\epsilon^{d+1})\label{case3 denominator}
\end{align}
and
\begin{align}
& \mathbb{E}[(f(X)-f(x))(X-\iota(x))\chi_{B_{\epsilon}^{\mathbb{R}^p}(\iota(x))}(X)]^\top \Big[\!\!\!\Big[ X_1\mathsf{S}_{12,2}\bar J_{p,l}^\top e_{i}, \, \frac{1}{\epsilon^2}X_{2,2}\bar{J}_{p,l}^\top e_i\Big]\!\!\!\Big] =O(\epsilon^{d+3}). \label{case3 numerator}
\end{align}
Since the tangential direction of $\mathbf{T}_{\iota(x)}$ is of order $1$ and the normal direction of $\mathbf{T}_{\iota(x)}$ is of order $\epsilon^{-2}$, it is sufficient to focus on the normal direction in order to show (\ref{case3 denominator}). By Proposition \ref{Lemma:5.1}, the dominant term in the normal direction satisfies 
\begin{align}
\mathfrak{N}^{\top}_0(x) \bar{J}_{p,l}X_{2,2}  \bar{J}_{p,l}^{\top}e_i =0\,,\nonumber
\end{align}
and hence (\ref{case3 denominator}) follows.

To show (\ref{case3 numerator}), 
for each $p-l+1 \leq i \leq p$, by a direct expansion we have
\begin{align}
& \mathbb{E}[(f(X)-f(x))(X-\iota(x))\chi_{B_{\epsilon}^{\mathbb{R}^p}(\iota(x))}(X)]^\top  \Big[\!\!\!\Big[ X_1\mathsf{S}_{12,2}\bar J_{p,l}^\top e_{i},\,\frac{1}{\epsilon^2} X_{2,2} \bar{J}_{p,p-d}^\top e_i\Big]\!\!\!\Big] \nonumber\\
=\,& \Big(\frac{|S^{d-1}|}{d(d+2)} P(x)\iota_*\nabla {f}(x)^\top J_{p,d}X_1\mathsf{S}_{12,2}\bar J_{p,l}^\top e_{i} + \frac{|S^{d-1}|}{d+4}\Omega_f^\top  \bar{J}_{p,l}  X_{2,2}  \bar{J}_{p,l}^\top e_i \Big) \epsilon^{d+2}+O(\epsilon^{d+3}) \,. \nonumber
\end{align}
Again, it is sufficient to focus on the normal direction.
We now claim that 
\begin{align}
\frac{|S^{d-1}|}{d(d+2)} P(x)\iota_*\nabla {f}(x)^\top J_{p,d}X_1\mathsf{S}_{12,2}\bar J_{p,l}^\top e_{i} + \frac{|S^{d-1}|}{d+4}\Omega_f^\top  \bar{J}_{p,l}  X_{2,2}  \bar{J}_{p,l}^\top e_i=0.\label{Expansion:ExpfX:Proof:Theorem2}
\end{align}
Based on Lemma \ref{Lemma:6} and (\ref{Expansion:S122:ForthOrder}), the first part of (\ref{Expansion:ExpfX:Proof:Theorem2}) becomes
\begin{align}
&\frac{|S^{d-1}|P(x)}{d(d+2)}\nabla {f}(x)^\top J_{p,d}X_1\mathsf{S}_{12,2}\bar J_{p,l}^\top e_i \nonumber\\
=&\,-\frac{|S^{d-1}|P(x)}{d(d+2)}\nabla {f}(x)^\top J_{p,d}M_{12,2}^{(2)}X_{2,2}\bar J_{p,l}^\top e_i  \nonumber \\
=&\,-\frac{|S^{d-1}|P(x)}{d(d+2)}\nabla {f}(x)^\top J_{p,d}M_{12}^{(2)}\bar{J}_{p,p-d}^\top \bar J_{p,l}X_{2,2}\bar J_{p,l}^\top e_i  \nonumber \\
=&\,-\frac{|S^{d-1}|}{(d+4)} \Big(\frac{1}{2}\nabla f(x)^\top \mathfrak{M}_2(x) \nabla P(x) 
+\frac{1}{6}P(x)\mathfrak{R}_1(x)\nabla f(x)\Big)^\top  \bar J_{p,l} X_{2,2}\bar J_{p,l}^\top e_i\,, \nonumber
\end{align}
where the second equality comes from the direct expansion that
\begin{equation}
M^{(2)}_{12,2}=M^{(2)}_{12}\bar{J}_{p,p-d}^\top \bar J_{p,l}\nonumber
\end{equation}
and the last equality comes from (\ref{Proposition32:Proof:M2:12:Expansion}). 
For the second part of (\ref{Expansion:ExpfX:Proof:Theorem2}), based on Proposition \ref{Lemma:5.1}, for $p-l+1\leq i \leq p$, we have
\begin{align}
& \frac{|S^{d-1}|}{d+4}\Omega_f^\top  \bar{J}_{p,l} X_{2,2}  \bar{J}_{p,l}^\top e_i\nonumber \\
=\, &\frac{|S^{d-1}|}{(d+4)} \Big(\frac{1}{2}\nabla f(x)^\top \mathfrak{M}_2(x) \nabla P(x) 
+\frac{1}{6}P(x)\mathfrak{R}_1(x)\nabla f(x)\Big)^\top  \bar J_{p,l}X_{2,2}\bar J_{p,l}^\top e_i\nonumber\,.
\end{align}
Thus, two terms in (\ref{Expansion:ExpfX:Proof:Theorem2}) cancel each other and 
(\ref{case3 numerator}) follows.
Based on the above discussion, we know that $\mathbb{E}[(X-\iota(x))\chi_{B_{\epsilon}^{\mathbb{R}^p}(\iota(x))}(X)]^\top  \mathbf{T}_{\iota(x)}$ is dominated by 
\begin{align}
& \mathbb{E}[(X-\iota(x))\chi_{B_{\epsilon}^{\mathbb{R}^p}(\iota(x))}(X)]^\top  \Big[\!\!\!\Big[\frac{J_{p,d}^\top \iota_*\nabla P(x)}{{P}(x)+\frac{d(d+2)}{|S^{d-1}|}\epsilon^{\rho-2}}+\sum_{i=d+1}^{p-l}\frac{\mathfrak{N}^\top _0(x)\tilde{J}X_{2,1}\tilde{J}^\top e_i}{\frac{2}{d}\lambda^{(2)}_i+\frac{2(d+2)}{P(x)|S^{d-1}|}\epsilon^{\rho-4}}X_{2,1}\mathsf{S}_{12,1}\tilde{J}^\top e_i,\nonumber\\
&\qquad\frac{1}{\epsilon^2}\sum_{i=d+1}^{p-l}\frac{\mathfrak{N}^\top _0(x)\tilde{J}X_{2,1}\tilde{J}^\top e_{i}}{\frac{2}{d}\lambda^{(2)}_i+\frac{2(d+2)}{P(x)|S^{d-1}|}\epsilon^{\rho-4}}\begin{bmatrix}X_{2,1} & 0 \\ 0 & X_{2,2} \end{bmatrix}\bar{J}_{p,p-d}^\top e_i\Big]\!\!\!\Big] \nonumber\,,
\end{align}
which is of order $O(\epsilon^{d})$ by a similar argument as in Case 1, and $\mathbb{E}[(f(X)-f(x))(X-\iota(x))\chi_{B_{\epsilon}^{\mathbb{R}^p}(\iota(x))}(X)]^\top  \mathbf{T}_{\iota(x)}$ is dominated by
\begin{align}
& \mathbb{E}[(f(X)-f(x))(X-\iota(x))\chi_{B_{\epsilon}^{\mathbb{R}^p}(\iota(x))}(X)]^\top  \Big[\!\!\!\Big[\frac{J_{p,d}^\top \iota_*\nabla P(x)}{{P}(x)+\frac{d(d+2)}{|S^{d-1}|}\epsilon^{\rho-2}}+\sum_{i=d+1}^{p-l}\frac{\mathfrak{N}^\top _0(x)\tilde{J}X_{2,1}\tilde{J}^\top e_i}{\frac{2}{d}\lambda^{(2)}_i+\frac{2(d+2)}{P(x)|S^{d-1}|}\epsilon^{\rho-4}}X_{2,1}\mathsf{S}_{12,1}\tilde{J}^\top e_i,\nonumber\\
&\qquad\frac{1}{\epsilon^2}\sum_{i=d+1}^{p-l}\frac{\mathfrak{N}^\top _0(x)\tilde{J}X_{2,1}\tilde{J}^\top e_{i}}{\frac{2}{d}\lambda^{(2)}_i+\frac{2(d+2)}{P(x)|S^{d-1}|}\epsilon^{\rho-4}}\begin{bmatrix}X_{2,1} & 0 \\ 0 & X_{2,2} \end{bmatrix}\bar{J}_{p,p-d}^\top e_i\Big]\!\!\!\Big] \nonumber .
\end{align}
which is of order $O(\epsilon^{d+2})$ by using a similar argument in Case 1. 
By putting the above together, we conclude that
\begin{equation}
Qf(x)-f(x)=(\mathfrak{C_1}(x)+\mathfrak{C_2}(x))\epsilon^2+O(\epsilon^3),\nonumber
\end{equation}
where
\begin{align}
\mathfrak{C_1}(x)&=\frac{\frac{1}{d(d+2)}\Big[\frac{1}{2}\Delta {f}(x)+ \frac{\nabla f(x) \cdot \nabla P(x)}{P(x)}-\frac{\nabla {f}(x)\cdot \nabla {P}(x)}{{P}(x)+\frac{d(d+2)}{|S^{d-1}|}\epsilon^{\rho-2}}\Big] }{\frac{1}{d}-\frac{1}{2(d+2) } \sum_{i=d+1}^{p-l}\frac{(\mathfrak{N}^{\top}_0(x)e_{i})^2}{\frac{2}{d}\lambda^{(2)}_i+\frac{2(d+2)}{P(x)|S^{d-1}|}\epsilon^{\rho-4}}}\nonumber
\end{align}
\begin{align}
\mathfrak{C_2}(x)&=-\frac{\frac{1}{4(d+4) } \sum_{i=d+1}^{p-l}\frac{\mathfrak{N}^{\top}_0(x)e_i\mathfrak{H}^{\top}_f(x)e_{i}}{\frac{2}{d}\lambda^{(2)}_i+\frac{2(d+2)}{P(x)|S^{d-1}|}\epsilon^{\rho-4}}}{\frac{1}{d}-\frac{1}{2(d+2) } \sum_{i=d+1}^{p-l}\frac{(\mathfrak{N}^{\top}_0(x)e_{i})^2}{\frac{2}{d}\lambda^{(2)}_i+\frac{2(d+2)}{P(x)|S^{d-1}|}\epsilon^{\rho-4}}}\,,\nonumber
\end{align}
and hence we finish the proof.
\end{proof}

\section{Proof of Theorem 3.1}\label{Section:Theoremt0}

For each $x_k$, denote $\boldsymbol{f}=(f(x_{k,1}),f(x_{k,2}),\ldots,f(x_{k,N}))^{\top}\in\mathbb{R}^N$. By the expansion
\begin{align}
\sum_{j=1}^N w_k(j)f(x_{k,j})&=\frac{\boldsymbol{1}_N^{\top}\boldsymbol{f} - \boldsymbol{1}^{\top}_NG_n^\top \mathcal{I}_{n\epsilon^{d+\rho}}(G_nG_n^{\top}) G_n\boldsymbol{f}}
{N -  \boldsymbol{1}^{\top}_NG_n^\top \mathcal{I}_{n\epsilon^{d+\rho}}(G_nG_n^{\top})G_n\boldsymbol{1}_N },\nonumber
\end{align}
we can write $\sum_{j=1}^N w_k(j)f(x_{k,j})-f(x_k)$ as
\begin{align}
\frac{\frac{1}{n}\sum_{j=1}^N(f(x_{k,j})-f(x_k))- [\frac{1}{n}\sum_{j=1}^N(x_{k,j}-x_k)]^{\top}n\mathcal{I}_{n\epsilon^{d+\rho}}(G_nG_n^{\top})[\frac{1}{n}\sum_{j=1}^N(x_{k,j}-x_k)(f(x_{k,j})-f(x_k))]}
{\frac{N}{n} -  [ \frac{1}{n}\sum_{j=1}^N(x_{k,j}-x_k)]^\top n\mathcal{I}_{n\epsilon^{d+\rho}}(G_nG_n^{\top}) [ \frac{1}{n}\sum_{j=1}^N(x_{k,j}-x_k)] }.\label{Proof:Theorem0:FirstDirectExpansion}
\end{align}
Note that we have
\begin{equation}
n\mathcal{I}_{n\epsilon^{d+\rho}}(G_nG_n^{\top})=\mathcal{I}_{\epsilon^{d+\rho}}(\frac{1}{n}G_nG_n^\top).\nonumber
\end{equation}
Thus, the goal is to relate the finite sum quantity (\ref{Proof:Theorem0:FirstDirectExpansion}) with the following ``expectation''  {
\begin{equation}
\frac{Af(x_k)}{A1(x_k)}-f(x_k)= Qf(x_k)-f(x_k)\,,\label{Proof:Theorem1:FinalExpectationFormulation}
\end{equation}
where $A$ is defined in (\ref{Definition:Aoperator}).}
Note that the LLE is a ratio of two dependent random variables, and the denominator and numerator both involve complicated mixup of sampling points. Therefore, the convergence fluctuation cannot be simply computed. 
We control the size of the fluctuation of the following five terms
\begin{align}
&\frac{1}{n\epsilon^d}\sum_{j=1}^N1\label{Proof:Theorem1:FiniteSum0}\\
&\frac{1}{n\epsilon^{d}}\sum_{j=1}^N(f(x_{k,j})-f(x_k)) 
\label{Proof:Theorem1:FiniteSum1}\\
&\frac{1}{n\epsilon^{d}}\sum_{j=1}^N(x_{k,j}-x_k) \label{Proof:Theorem1:FiniteSum2}\\
&\frac{1}{n\epsilon^{d}}\sum_{j=1}^N(x_{k,j}-x_k)(f(x_{k,j})-f(x_k)) 
\label{Proof:Theorem1:FiniteSum3}\\
&\frac{1}{n\epsilon^{d}}(G_nG_n^{\top}+\epsilon^{\rho}I_{p\times p}) 
\label{Proof:Theorem1:FiniteSum4}
\end{align}
as functions of $n$ and $\epsilon$ by the Bernstein type inequality. Here, we put $\epsilon^{-d}$ in front of each term to normalize the kernel so that the computation is consistent with the existing literature, like \cite{Cheng_Wu:2013,Singer_Wu:2016}. 
The size of the fluctuation of these terms are controlled in the following Lemmas. The term (\ref{Proof:Theorem1:FiniteSum0}) is the usual kernel density estimation, so we have the following lemma.
\begin{lemma}\label{Proof:Theorem1:LemmaF1}
When $n$ is large enough, we have with probability greater than $1-n^{-2}$ that for all $k=1,\ldots,n$ that
\begin{equation}
\left|\frac{1}{n\epsilon^d}\sum_{j=1}^N1 - \mathbb{E}\frac{1}{\epsilon^d}\chi_{{B}^{\mathbb{R}^p}_\epsilon(x_k)}(X)\right| =  O\Big(\frac{\sqrt{\log (n)}}{n^{1/2}\epsilon^{d/2}}\Big)\,.\nonumber
\end{equation}
\end{lemma}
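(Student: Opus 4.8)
The plan is to view $\frac{1}{n\epsilon^d}\sum_{j=1}^N 1$ as an empirical average of i.i.d. bounded random variables and apply a Bernstein-type concentration inequality, followed by a union bound over the $n$ choices of $x_k$. Concretely, conditioning on $x_k$ (which is legitimate since the other $n-1$ samples are independent of $x_k$), write $\frac{1}{n\epsilon^d}\sum_{j=1}^N 1 = \frac{1}{n}\sum_{i\neq k} Y_i$, where $Y_i := \epsilon^{-d}\chi_{B_\epsilon^{\mathbb{R}^p}(\iota(x_k))}(\iota(x_i))$. These $Y_i$ are i.i.d. (given $x_k$), nonnegative, and bounded by $\epsilon^{-d}$. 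By Assumption \ref{AssumptionPDF} and the volume expansion (e.g. using $\text{vol}(\tilde B_\epsilon(x_k)) = \frac{|S^{d-1}|}{d}\epsilon^d + O(\epsilon^{d+1})$ from Lemma \ref{Lemma:3} or the standard computation underlying Lemma \ref{Lemma:5} with $f\equiv 1$), we have $\mathbb{E}[Y_i \mid x_k] = \mathbb{E}[\epsilon^{-d}\chi_{B_\epsilon^{\mathbb{R}^p}(x_k)}(X)] = \frac{|S^{d-1}|}{d}P(x_k) + O(\epsilon)$, which is $O(1)$, and $\text{Var}(Y_i \mid x_k) \leq \mathbb{E}[Y_i^2 \mid x_k] = \epsilon^{-d}\mathbb{E}[\epsilon^{-d}\chi_{B_\epsilon^{\mathbb{R}^p}(x_k)}(X)] = O(\epsilon^{-d})$.

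Next I would apply Bernstein's inequality to $\frac{1}{n-1}\sum_{i\neq k}(Y_i - \mathbb{E}[Y_i\mid x_k])$: for $a>0$,
\begin{equation}
\text{Pr}\Big\{\Big|\frac{1}{n-1}\sum_{i\neq k}(Y_i - \mathbb{E}[Y_i\mid x_k])\Big| > a \;\Big|\; x_k\Big\} \leq 2\exp\Big(-\frac{(n-1)a^2/2}{C\epsilon^{-d} + C\epsilon^{-d}a/3}\Big),\nonumber
\end{equation}
where $C$ absorbs the bounds $P_m \leq P \leq P_M$ and the implied constants in the volume expansion. Choosing $a = C'\frac{\sqrt{\log n}}{n^{1/2}\epsilon^{d/2}}$ for a sufficiently large constant $C'$, and using the hypothesis $\frac{\sqrt{\log n}}{n^{1/2}\epsilon^{d/2+1}}\to 0$ (so in particular $\frac{\sqrt{\log n}}{n^{1/2}\epsilon^{d/2}}\to 0$, making the subexponential term $\epsilon^{-d}a$ comparable to or dominated by the variance term $\epsilon^{-d}$), the exponent is bounded above by $-3\log n$ for $n$ large, giving a failure probability at most $2n^{-3}$ for each fixed $x_k$. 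The difference between normalizing by $n$ versus $n-1$ contributes only an $O(1/n) = O(\epsilon^{-d/2}n^{-1/2}) \cdot o(1)$ term absorbed into the stated rate. Since this bound is uniform in $x_k$, the conditional probability bound integrates to the same unconditional bound; a union bound over $k=1,\ldots,n$ then yields that with probability at least $1 - 2n^{-2} \geq 1 - n^{-2}$ (for $n$ large) the estimate holds simultaneously for all $k$.

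The main obstacle — really the only subtlety — is bookkeeping the interplay between the variance proxy $\epsilon^{-d}$ and the uniform boundedness constant $\epsilon^{-d}$ in Bernstein's inequality, and verifying that under the bandwidth assumption the Gaussian (variance-driven) regime dominates so that the rate is genuinely $\sqrt{\log n}/(n^{1/2}\epsilon^{d/2})$ rather than the slower Poisson-tail rate $\log n/(n\epsilon^d)$. This is handled by observing that $\sqrt{\log n}/(n^{1/2}\epsilon^{d/2}) \to 0$ forces $a \to 0$, hence $\epsilon^{-d}a \ll \epsilon^{-d}$, so the denominator in the exponent is $\Theta(\epsilon^{-d})$ and the exponent is $\Theta(na^2\epsilon^d) = \Theta(\log n)$ with the constant tunable via $C'$. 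Everything else — i.i.d.-ness after conditioning, the moment computations, the union bound — is routine. I would also remark that the same template (conditioning, boundedness, variance estimate, Bernstein, union bound) is exactly what drives the companion Lemmas controlling the fluctuations of (\ref{Proof:Theorem1:FiniteSum1})–(\ref{Proof:Theorem1:FiniteSum4}), which is why this lemma is isolated first.
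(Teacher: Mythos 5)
Your proof is correct and takes essentially the same route the paper does: the paper omits the explicit argument for this lemma (``the usual kernel density estimation'') but gives the template in the immediately following proof of Lemma \ref{Proof:Theorem1:LemmaF2}, namely conditioning on $x_k$, bounding $\|Y_i\|_\infty$ by $\epsilon^{-d}$, computing $\operatorname{Var}(Y_i)=O(\epsilon^{-d})$, applying Bernstein with a deviation of size $\asymp\sqrt{\log n}/(n^{1/2}\epsilon^{d/2})$ so the exponent is $\geq 3\log n$, then a union bound over $k$. Your bookkeeping of the variance-dominated regime and the $n$ versus $n-1$ normalization matches the paper's treatment.
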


The behavior of (\ref{Proof:Theorem1:FiniteSum1}) is summarized in the following Lemma. Although the proof is standard, we provide it for the sake of self-containedness.
\begin{lemma}\label{Proof:Theorem1:LemmaF2}
When $n$ is large enough, we have with probability greater than $1-n^{-2}$ that for all $k=1,\ldots,n$ that
\begin{equation}
\left|\frac{1}{n\epsilon^d}\sum_{j=1}^N(f(x_{k,j})-f(x_k)) - \mathbb{E}\frac{1}{\epsilon^d}(f(X)-f(x_k))\chi_{{B}^{\mathbb{R}^p}_\epsilon(x_k)}(X)\right| =  O\Big(\frac{\sqrt{\log (n)}}{n^{1/2}\epsilon^{d/2-1}}\Big)\,.\nonumber
\end{equation}
\end{lemma}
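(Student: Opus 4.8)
\textbf{Proof proposal for Lemma \ref{Proof:Theorem1:LemmaF2}.} The plan is to write the left-hand side as an empirical average of i.i.d.\ bounded random variables and apply a Bernstein-type concentration inequality, followed by a union bound over the $n$ choices of the base point $x_k$. First I would fix $x_k$ and condition on it; then the remaining $n-1$ points $x_{k,j}$ are i.i.d.\ copies of $X$, and I would write
\begin{equation}
\frac{1}{n\epsilon^d}\sum_{j=1}^N(f(x_{k,j})-f(x_k))=\frac{1}{n}\sum_{i\neq k}Y_i,\qquad Y_i:=\frac{1}{\epsilon^d}(f(x_i)-f(x_k))\chi_{B_\epsilon^{\mathbb{R}^p}(x_k)}(x_i),
\end{equation}
so that $\mathbb{E}[Y_i\mid x_k]=\mathbb{E}\big[\tfrac{1}{\epsilon^d}(f(X)-f(x_k))\chi_{B_\epsilon^{\mathbb{R}^p}(x_k)}(X)\big]$ and the quantity to control is $\tfrac1n\sum_{i\neq k}(Y_i-\mathbb{E}[Y_i\mid x_k])$.

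Next I would estimate the moment parameters of $Y_i$. Since $f\in C(\iota(M))$ is bounded and Lipschitz on the compact set $\iota(M)$, on the event $\{x_i\in B_\epsilon^{\mathbb{R}^p}(x_k)\}$ we have $|f(x_i)-f(x_k)|\le C\epsilon$ (using the ambient-distance bound together with Lemma \ref{Lemma:3} to pass between geodesic and Euclidean distances), so $\|Y_i\|_\infty\le C\epsilon^{1-d}$. For the variance, $\mathbb{E}[Y_i^2\mid x_k]\le C^2\epsilon^{2-2d}\,\mathbb{E}[\chi_{B_\epsilon^{\mathbb{R}^p}(x_k)}(X)]=O(\epsilon^{2-2d})\cdot O(\epsilon^d)=O(\epsilon^{2-d})$, using that the probability mass of an $\epsilon$-ball on the manifold is $\Theta(\epsilon^d)$ (this is the leading term of Lemma \ref{Lemma:5} applied with $f\equiv 1$). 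Then Bernstein's inequality gives, for $t>0$,
\begin{equation}
\mathrm{Pr}\Big\{\Big|\tfrac1n\sum_{i\neq k}(Y_i-\mathbb{E}[Y_i\mid x_k])\Big|>t\ \Big|\ x_k\Big\}\le 2\exp\!\Big(-\frac{cnt^2}{\epsilon^{2-d}+\epsilon^{1-d}t}\Big).
\end{equation}
Choosing $t=C'\sqrt{\log n}\,/(n^{1/2}\epsilon^{d/2-1})$ makes the exponent $\asymp -\log n \cdot(\text{const})$ provided the "variance" term dominates the "range" term, i.e.\ provided $\epsilon^{2-d}\gtrsim \epsilon^{1-d}t$, which is $t\lesssim \epsilon$; this is exactly the assumption $\frac{\sqrt{\log n}}{n^{1/2}\epsilon^{d/2+1}}\to0$, which forces $t=o(\epsilon)$ for large $n$. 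With that choice the conditional failure probability is at most $2n^{-3}$, say, uniformly in $x_k$, so it also bounds the unconditional probability. Finally I would take a union bound over $k=1,\dots,n$, absorbing the extra factor $n$ into the exponent (replacing $-3$ by a slightly smaller constant or enlarging $C'$), to obtain the bound with probability at least $1-n^{-2}$ simultaneously for all $k$.

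The main obstacle — really the only subtlety — is bookkeeping the interplay between the "range" and "variance" terms in Bernstein's inequality so that the stated rate $O(\sqrt{\log n}/(n^{1/2}\epsilon^{d/2-1}))$ is the one that comes out, rather than a worse rate coming from the bounded-range term; this is precisely where the hypothesis $\frac{\sqrt{\log n}}{n^{1/2}\epsilon^{d/2+1}}\to0$ is used, and I would make sure to state explicitly that for $n$ large enough this hypothesis guarantees $t\le\epsilon$ so the variance term controls the exponent. A secondary minor point is that $N=N(x_k)$ is itself random; this causes no difficulty because I have already rewritten the sum over $j=1,\dots,N$ as a sum over all $i\neq k$ with the indicator $\chi_{B_\epsilon^{\mathbb{R}^p}(x_k)}$ absorbing the neighborhood constraint, so the summands are genuinely i.i.d.\ given $x_k$. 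The identification of $\mathbb{E}[Y_i\mid x_k]$ with the stated expectation is immediate once $X$ is independent of $x_k$, which holds after conditioning since the samples are i.i.d.
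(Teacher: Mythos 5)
Your proposal follows essentially the same route as the paper's proof: rewrite the sum over the $\epsilon$-neighborhood as an i.i.d.\ empirical average of $F_{1,j}:=\frac{1}{\epsilon^d}(f(x_j)-f(x_k))\chi_{B_\epsilon^{\mathbb{R}^p}(x_k)}(x_j)$ conditional on $x_k$, bound the variance by $O(\epsilon^{2-d})$, apply Bernstein with the deviation set so that the variance term dominates the range term in the exponent, and close with a union bound over $k$. The one variant in your treatment is actually a small improvement over what the paper does: you bound the range of $F_{1,j}$ by $C\epsilon^{1-d}$ via a Lipschitz estimate on $|f(y)-f(x_k)|$ over the $\epsilon$-ball, whereas the paper simply uses the cruder $c_1 = 2\|f\|_{L^\infty}\epsilon^{-d}$ and then compensates by invoking the hypothesis $\sqrt{\log n}/(n^{1/2}\epsilon^{d/2+1})\to 0$ to make $c_1\beta_1\ll\sigma_1^2$. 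Your tighter range only needs $t=o(\epsilon)$, which is weaker; both conditions hold by the theorem's hypothesis, so either works.

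The one genuine flaw is the sentence claiming that ``$f\in C(\iota(M))$ is bounded and Lipschitz on the compact set $\iota(M)$.'' Continuity on a compact set gives boundedness but not Lipschitz continuity, so this claim is false as stated. It does not affect the conclusion in context — Theorem \ref{Theorem:t0} is invoked for $f$ smooth enough to run the subsequent bias expansion (and the paper's own variance computation via Lemma \ref{Lemma:5} already implicitly requires at least $f\in C^2$, since it involves $\Delta((f(y)-f(x_k))^2P(y))$) — but you should either hypothesize Lipschitz (or $C^1$) regularity explicitly, or fall back on the paper's cruder $O(\epsilon^{-d})$ bound for the range and rely on $t=o(\epsilon^2)$, which is what the paper does. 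As written, your sup-norm and variance bounds on $Y_i$ are not justified from $C^0$ alone.
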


\begin{proof}
By denoting 
\begin{equation}
F_{1,j}=\frac{1}{\epsilon^d}(f(x_{j})-f(x_k))\chi_{{B}^{\mathbb{R}^p}_\epsilon(x_k)}(x_{j}), \nonumber
\end{equation}
we have
\begin{equation}
\frac{1}{n\epsilon^d}\sum_{j=1}^N(f(x_{k,j})-f(x_k))=\frac{1}{n}\sum_{j\neq k, j=1}^nF_{1,j}.\nonumber
\end{equation}
Define a random variable
\begin{equation}
F_{1}:=\frac{1}{\epsilon^d}(f(X)-f(x_k))\chi_{{B}^{\mathbb{R}^p}_\epsilon(x_k)}(X).\nonumber
\end{equation}
Clearly, when $j\neq k$, $F_{1,j}$ can be viewed as randomly sampled i.i.d. from $F_{1}$.
Note that we have
\begin{equation}
\frac{1}{n}\sum_{j\neq k, j=1}^nF_{1,j}=\frac{n-1}{n}\left[\frac{1}{n-1}\sum_{j\neq k, j=1}^nF_{1,j}\right]\,. \nonumber
\end{equation}
Since $\frac{n-1}{n}\to 1$ as $n\to\infty$, the error incurred by replacing $\frac{1}{n}$ by $\frac{1}{n-1}$ is of order $\frac{1}{n}$, which is negligible asymptotically. Thus, we can simply focus on analyzing $\frac{1}{n-1}\sum_{j=1,j\neq i}^n F_{1,j}$. 
We have by Lemma \ref{Lemma:5}
\begin{align}
\mathbb{E}[F_{1}]  =& \frac{|S^{d-1}|}{2d(d+2)}\big[\Delta ((f(y)-f(x_k))P(y))|_{y=x_k}\big] \epsilon^{2}+O( \epsilon^{3})\nonumber\\
\mathbb{E}[F_{1}^2]  =\,&  \frac{|S^{d-1}|}{2d(d+2)}\big[\Delta ((f(y)-f(x_k))^2P(y))|_{y=x_k}\big] \epsilon^{-d+2}+O( \epsilon^{-d+3}),\nonumber
\end{align}
where $\Delta$ acts on $y$ and we apply the Lemma by viewing $f(y)P(y)$ as a function and evaluate the integration over the uniform measure. 
Thus, we conclude that
\begin{align}
\sigma_1^2:=\text{Var}(F_{1})  =\,& \frac{|S^{d-1}|}{2d(d+2)}\big[\Delta ((f(y)-f(x_k))^2P(y))|_{y=x_k}\big] \epsilon^{-d+2}+O( \epsilon^{-d+3})\label{Appendix:Proof:Lemma:Variance:F}.
\end{align} 
To simplify the discussion, we assume that $\Delta ((f(y)-f(x_k))^2P(y))|_{y=x_k} \neq 0$ so that $\sigma_1^2  = O(\epsilon^{-d+2})$ when $\epsilon$ is small enough. In the case that $\Delta ((f(y)-f(x_k))^2P(y))|_{y=x_k}=0$, the variance is of higher order, and the proof is the same. 

With the above bounds, we could apply the large deviation theory. First, note that the random variable $F_{1}$ is uniformly bounded by 
\begin{equation}
c_1=2\|f\|_{L^\infty}\epsilon^{-d}\nonumber
\end{equation}
and 
\begin{equation}
\sigma_1^2/c_1\to 0\mbox{ as }\epsilon\to 0,\nonumber
\end{equation}
so we apply Bernstein's inequality to provide a large deviation bound. Recall Bernstein's inequality
\begin{equation}
\Pr \left\{\frac{1}{n-1}\sum_{j\neq k,j=1}^n (F_{1,j} - \mathbb{E}[F_{1}]) > \beta_1 \right\} \leq e^{-\frac{n\beta_1^2}{2\sigma_1^2 + \frac{2}{3}c_1\beta_1}},\nonumber
\end{equation}
where $\beta_1>0$.
Since our goal is to estimate a quantity of order $\epsilon^2$, which is the order that the Laplace-Beltrami operator lives, we need to take $\beta_1=\beta_1(\epsilon)$ much smaller than $\epsilon^2$ in the sense that $\beta_1/\epsilon^2\to 0$ as $\epsilon\to 0$. In this case, $c_1\beta_1$ is much smaller than $\sigma_1^2$, and hence $2\sigma_1^2 + \frac{2}{3}c_1\beta_1\leq 3\sigma_1^2$ when $\epsilon$ is smaller enough. Thus, when $\epsilon$ is smaller enough, the exponent in Bernstein's inequality is bounded from below by
\begin{equation}
\frac{n\beta_1^2}{2\sigma_1^2 + \frac{2}{3}c_1\beta_1} \geq \frac{n \beta_1^2}{3\sigma_1^2} \geq  \frac{n\beta_1^2\epsilon^{d-2}}{3\frac{|S^{d-1}|}{d(d+2)}\big[\Delta ((f(y)-f(x_k))^2P(y))|_{y=x_k}\big] }\,.\nonumber
\end{equation}
Suppose $n$ is chosen large enough so that
\begin{equation}
 \frac{n\beta_1^2\epsilon^{d-2}}{3\frac{|S^{d-1}|}{d(d+2)}\big[\Delta ((f(y)-f(x_k))^2P(y))|_{y=x_k}\big]}= 3\log (n)\,;\nonumber
\end{equation}
that is, the deviation from the mean is set to
\begin{align}\label{proof:alphaChoice}
\beta_1 = \frac{3\sqrt{\log (n)}\sqrt{ \frac{|S^{d-1}|}{d(d+2)}\big[\Delta ((f(y)-f(x_k))^2P(y))|_{y=x_k}\big] }}{n^{1/2}\epsilon^{d/2-1}}=O\Big(\frac{\sqrt{\log (n)}}{n^{1/2}\epsilon^{d/2-1}}\Big)\,,
\end{align}
where the implied constant in $O\Big(\frac{\sqrt{\log (n)}}{n^{1/2}\epsilon^{d/2-1}}\Big)$ is $\sqrt{ \frac{|S^{d-1}|}{d(d+2)}\big[\Delta ((f(y)-f(x_k))^2P(y))|_{y=x_k}\big] }$.
Note that by the assumption that $\epsilon=\epsilon(n)$ so that $\frac{\sqrt{\log (n)}}{n^{1/2}\epsilon^{d/2+1}}\to 0$ as $\epsilon\to 0$, we know that $\beta_1/\epsilon^2= \frac{\sqrt{\log (n)}}{n^{1/2}\epsilon^{d/2+1}}\to 0$.
It implies that the deviation greater than $\beta_1$ happens with probability less than 
\begin{align}
\exp\left(-\frac{n\beta_1^2}{2\sigma_1^2 + \frac{2}{3}c_1\beta_1}\right)&\leq \exp\left(-\frac{n\beta_1^2\epsilon^{d-2}}{3\frac{|S^{d-1}|}{d(d+2)}[\Delta ((f(y)-f(x_k))^2P(y))|_{y=x_k}]}\right)\nonumber\\
&= \exp(-3\log (n))= 1/n^3.\nonumber
\end{align} 
As a result, by a simple union bound, we have
\begin{equation}
\Pr \left\{\frac{1}{n-1}\sum_{j\neq k,\,j=1}^n (F_{1,j} - \mathbb{E}[F_1]) > \beta_1\Big|\,k=1,\ldots,n \right\} \leq ne^{-\frac{n\beta_1^2}{2\sigma_1^2 + \frac{2}{3}c_1\beta_1}}\leq 1/n^2.\nonumber
\end{equation}
\end{proof}

Denote $\Omega_1$ to be the event space that the deviation $\frac{1}{n-1}\sum_{j\neq k,\,j=1}^n (F_{1,j} - \mathbb{E}[F_1])\leq \beta_1$ for all $i=1,\ldots,n$, where $\beta_1$ is chosen in (\ref{proof:alphaChoice}) is satisfied. We now proceed to (\ref{Proof:Theorem1:FiniteSum2}).
In this case, we need to discuss different cases indicated by Condition \ref{Condition:1}.

\begin{lemma}\label{Proof:Theorem1:LemmaF3}
Suppose Case 0 in Condition \ref{Condition:1} holds. When $n$ is large enough, we have with probability greater than $1-n^{-2}$ that for all $k=1,\ldots,n$,
\begin{equation}
e_i^{\top}\left[\frac{1}{n\epsilon^d}\sum_{j=1}^N(x_{k,j}-x_k)    - \mathbb{E}\frac{1}{\epsilon^d}(X-x_k)\chi_{{B}^{\mathbb{R}^p}_\epsilon(x_k)}(X)\right] =  O\Big(\frac{\sqrt{\log (n)}}{n^{1/2}\epsilon^{d/2-1}}\Big)\,,\nonumber
\end{equation}
where $i=1,\ldots,d$.
 
Suppose Case 1 in Condition \ref{Condition:1} holds. When $n$ is large enough, we have with probability greater than $1-n^{-2}$ that for all $k=1,\ldots,n$,
\begin{equation}
e_i^{\top}\left[\frac{1}{n\epsilon^d}\sum_{j=1}^N(x_{k,j}-x_k)    - \mathbb{E}\frac{1}{\epsilon^d}(X-x_k)\chi_{{B}^{\mathbb{R}^p}_\epsilon(x_k)}(X)\right] =  O\Big(\frac{\sqrt{\log (n)}}{n^{1/2}\epsilon^{d/2-1}}\Big)\,,\nonumber
\end{equation}
where $i=1,\ldots,d$ and
\begin{equation}
e_i^{\top}\left[\frac{1}{n\epsilon^d}\sum_{j=1}^N(x_{k,j}-x_k)    - \mathbb{E}\frac{1}{\epsilon^d}(X-x_k)\chi_{{B}^{\mathbb{R}^p}_\epsilon(x_k)}(X)\right] =  O\Big(\frac{\sqrt{\log (n)}}{n^{1/2}\epsilon^{d/2-2}}\Big)\,,\nonumber
\end{equation}
where $i=d+1,\ldots,p$.

Suppose Case 2 in Condition \ref{Condition:1} holds. When $n$ is large enough, we have with probability greater than $1-n^{-2}$ that for all $k=1,\ldots,n$,
\begin{equation}
e_i^{\top}\left[\frac{1}{n\epsilon^d}\sum_{j=1}^N(x_{k,j}-x_k)    - \mathbb{E}\frac{1}{\epsilon^d}(X-x_k)\chi_{{B}^{\mathbb{R}^p}_\epsilon(x_k)}(X)\right] =  O\Big(\frac{\sqrt{\log (n)}}{n^{1/2}\epsilon^{d/2-1}}\Big)\,,\nonumber
\end{equation}
where $i=1,\ldots,d$,
\begin{equation}
e_i^{\top}\left[\frac{1}{n\epsilon^d}\sum_{j=1}^N(x_{k,j}-x_k)    - \mathbb{E}\frac{1}{\epsilon^d}(X-x_k)\chi_{{B}^{\mathbb{R}^p}_\epsilon(x_k)}(X)\right] =  O\Big(\frac{\sqrt{\log (n)}}{n^{1/2}\epsilon^{d/2-2}}\Big)\,,\nonumber
\end{equation}
where $i=d+1,\ldots,p-l$, and
\begin{equation}
e_i^{\top}\left[\frac{1}{n\epsilon^d}\sum_{j=1}^N(x_{k,j}-x_k)    - \mathbb{E}\frac{1}{\epsilon^d}(X-x_k)\chi_{{B}^{\mathbb{R}^p}_\epsilon(x_k)}(X)\right] =  O\Big(\frac{\sqrt{\log (n)}}{n^{1/2}\epsilon^{d/2-3}}\Big)\,,\nonumber
\end{equation}
where $i=p-l+1,\ldots,p$
\end{lemma}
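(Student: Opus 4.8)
The plan is to run the Bernstein-inequality argument of Lemma~\ref{Proof:Theorem1:LemmaF2} one coordinate at a time on the vector-valued quantity $\frac{1}{n\epsilon^d}\sum_{j=1}^N(x_{k,j}-x_k)$. Fix $k$ and a coordinate index $i$, and set
\begin{equation}
F^{(i)}_{2,j}:=\frac{1}{\epsilon^d}e_i^\top(x_j-x_k)\chi_{B^{\mathbb{R}^p}_\epsilon(x_k)}(x_j),\qquad F^{(i)}_2:=\frac{1}{\epsilon^d}e_i^\top(X-x_k)\chi_{B^{\mathbb{R}^p}_\epsilon(x_k)}(X),\nonumber
\end{equation}
so that $e_i^\top\big[\tfrac{1}{n\epsilon^d}\sum_{j=1}^N(x_{k,j}-x_k)\big]=\tfrac1n\sum_{j\neq k}F^{(i)}_{2,j}$, and, exactly as in Lemma~\ref{Proof:Theorem1:LemmaF2}, replacing $\tfrac1n\sum_{j\neq k}$ by $\tfrac1{n-1}\sum_{j\neq k}$ incurs an $O(1/n)$ error that is asymptotically negligible. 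For $j\neq k$ the $F^{(i)}_{2,j}$ are i.i.d.\ copies of $F^{(i)}_2$, so Bernstein's inequality applies once we know the uniform bound $c_i$ on $|F^{(i)}_2|$ and the variance $\sigma_i^2:=\operatorname{Var}(F^{(i)}_2)$.

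The only substantive computation is the $\epsilon$-order of $c_i$ and $\sigma_i^2$, and this is where the three coordinate regimes of the statement appear. Using the Taylor expansion $\iota\circ\exp_x(\theta t)-\iota(x)=\iota_*\theta\,t+\tfrac12\Second_x(\theta,\theta)t^2+\tfrac16\nabla_\theta\Second_x(\theta,\theta)t^3+\cdots$ from Lemma~\ref{Lemma:2} and the fact from Lemma~\ref{Lemma:3} that $t\lesssim\epsilon$ on $\tilde B_\epsilon(x)$, one has $|e_i^\top(\iota(y)-\iota(x))|\lesssim\epsilon$ for $i\le d$; $|e_i^\top(\iota(y)-\iota(x))|\lesssim\epsilon^2$ for a normal direction $e_i$ with $\langle\Second_x(\theta,\theta),e_i\rangle\not\equiv0$; and $|e_i^\top(\iota(y)-\iota(x))|\lesssim\epsilon^3$ when $\langle\Second_x(\theta,\theta),e_i\rangle\equiv0$, the cubic term then leading. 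By Assumptions~\ref{AssumptionTangent} and~\ref{AssumptionNormal} and by Proposition~\ref{Lemma:5.1}, Case~0 involves only the tangential regime, Case~1 places $e_{d+1},\dots,e_p$ in the $\epsilon^2$ regime, and Case~2 places $e_{d+1},\dots,e_{p-l}$ in the $\epsilon^2$ regime and $e_{p-l+1},\dots,e_p$ in the $\epsilon^3$ regime (since $\lambda^{(2)}_i=0$ there forces $\langle\Second_x(\theta,\theta),e_i\rangle\equiv0$). Hence $c_i\lesssim\epsilon^{1-d},\epsilon^{2-d},\epsilon^{3-d}$ respectively; and applying Lemma~\ref{Lemma:5} (with $f$ replaced by the relevant coordinate function and the integration taken over the uniform measure, as in Lemma~\ref{Proof:Theorem1:LemmaF2}) to $\mathbb E[(F^{(i)}_2)^2]=\tfrac1{\epsilon^{2d}}\int_{\tilde B_\epsilon(x)}(e_i^\top(\iota(y)-\iota(x)))^2P\,dV$, this second moment is of order $\epsilon^{2-d},\epsilon^{4-d},\epsilon^{6-d}$ respectively, which dominates $(\mathbb E F^{(i)}_2)^2$; thus $\sigma_i^2$ has the same order. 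If a leading coefficient of $\sigma_i^2$ happens to vanish, the variance is smaller and the bound only improves, exactly as remarked in the proof of Lemma~\ref{Proof:Theorem1:LemmaF2}.

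The remainder is mechanical. In each regime the side condition $\tfrac23 c_i\beta_i\le\sigma_i^2$ that lets one replace $2\sigma_i^2+\tfrac23 c_i\beta_i$ by $3\sigma_i^2$ reduces, once $\beta_i$ is chosen as below, to $\tfrac{\sqrt{\log n}}{n^{1/2}\epsilon^{d/2}}\to0$, which follows from the hypothesis $\tfrac{\sqrt{\log n}}{n^{1/2}\epsilon^{d/2+1}}\to0$ together with $\epsilon\to0$. Then the Bernstein exponent is at least $n\beta_i^2/(3\sigma_i^2)$; choosing $\beta_i$ so that $n\beta_i^2/(3\sigma_i^2)=3\log n$ gives $\beta_i=O(\sqrt{\log n}\,\sigma_i/n^{1/2})$, i.e.\ $\beta_i=O\big(\tfrac{\sqrt{\log n}}{n^{1/2}\epsilon^{d/2-1}}\big)$, $O\big(\tfrac{\sqrt{\log n}}{n^{1/2}\epsilon^{d/2-2}}\big)$, $O\big(\tfrac{\sqrt{\log n}}{n^{1/2}\epsilon^{d/2-3}}\big)$ in the three regimes. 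A deviation above $\beta_i$ then has probability at most $n^{-3}$, and a union bound over the $p$ coordinates and the $n$ choices of $k$ upgrades this to the claimed probability at least $1-n^{-2}$ (for $n$ large).

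I expect the one genuinely delicate point to be the Case~2 directions $e_{p-l+1},\dots,e_p$: there one must invoke Proposition~\ref{Lemma:5.1} to know that $\langle\Second_x(\theta,\theta),e_i\rangle$ vanishes identically, so that the cubic term of the embedding's Taylor expansion is the dominant one and both $c_i$ and $\sigma_i^2$ acquire the extra power of $\epsilon$ responsible for the $\tfrac{\sqrt{\log n}}{n^{1/2}\epsilon^{d/2-3}}$ rate; the rest of the argument is a direct transcription of the proof of Lemma~\ref{Proof:Theorem1:LemmaF2}.
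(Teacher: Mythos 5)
Your proposal is correct and follows essentially the same route as the paper's proof: coordinate-wise Bernstein inequality applied to $F_{2,\ell,j}:=\epsilon^{-d}e_\ell^\top(x_j-x_k)\chi_{B_\epsilon(x_k)}(x_j)$, with the variance and uniform bound tracked separately for the tangential coordinates ($\sigma^2\asymp\epsilon^{-d+2}$), the normal coordinates where the second fundamental form survives ($\sigma^2\asymp\epsilon^{-d+4}$), and the Case~2 normal coordinates where $\langle\Second_x(\theta,\theta),e_i\rangle\equiv 0$ forces the cubic term to lead ($\sigma^2\asymp\epsilon^{-d+6}$), the last of which you correctly tie to Proposition~\ref{Lemma:5.1}; the deviation $\beta\asymp\sqrt{\log n}\,\sigma/n^{1/2}$ and the union bound over $\ell$ and $k$ then deliver the stated rates with probability at least $1-n^{-2}$.
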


\begin{proof}
First, we prove Case 1. Case 0 is a special case of Case 1. Suppose Case 1 holds. Fix $x_k$. 
By denoting 
\begin{equation}
\frac{1}{n\epsilon^d}\sum_{j=1}^N(x_{k,j}-x_k)=\frac{1}{n}\sum_{j\neq k, j=1}^n\sum_{\ell=1}^pF_{2,\ell,j}e_{\ell}.\nonumber
\end{equation}
where
\begin{equation}
F_{2,\ell,j}:=\frac{1}{\epsilon^d}e_{\ell}^\top (x_{j}-x_k)\chi_{{B}^{\mathbb{R}^p}_\epsilon(x_k)}(x_{j}), \nonumber
\end{equation}
we know that when $j\neq k$, $F_{2,\ell,j}$ is randomly sampled i.i.d. from the random variable
\begin{equation}
F_{2,\ell}:=\frac{1}{\epsilon^d}e_\ell^\top (X-x_k)\chi_{{B}^{\mathbb{R}^p}_\epsilon(x_k)}(X).\nonumber
\end{equation}
Similarly, we can focus on analyzing $\frac{1}{n-1}\sum_{j=1,j\neq i}^n F_{2,\ell,j}$ since $\frac{n-1}{n}\to 1$ as $n\to \infty$. 
By plugging $f=1$ in (\ref{Lemma:4:Statement2:withf}), we have
\begin{align}
\mathbb{E}[F_{2,\ell}]=\frac{|S^{d-1}|\epsilon^{2}}{d+2} e_{\ell}^{\top}\big[\!\!\big[\frac{J_{p,d}^{\top}\iota_*\nabla {P}(x)}{d},\, \frac{{P}(x)\bar{J}_{p,p-d}^{\top}\mathfrak{N}_0(x)}{2}\big]\!\!\big]+O(\epsilon^{4})\nonumber
\end{align}
and by (\ref{Proof:LemmaD5:FinalResult}) we have
\begin{align}
\mathbb{E}[F_{2,\ell}^2]  =\,& 
\left\{
\begin{array}{ll}
\displaystyle \frac{|S^{d-1}|  {P}(x)\epsilon^{-d+2}}{d(d+2)} +O(\epsilon^{-d+4})&\mbox{ when }\ell=1,\ldots,d\\
\displaystyle \frac{P(x)\epsilon^{-d+4}}{4(d+4)}\int_{S^{d-1}} |\langle \Second_{x}(\theta,\theta),e_{\ell} \rangle|^2 d\theta +O(\epsilon^{-d+6}) &\mbox{ when }\ell=d+1,\ldots,p.
\end{array}
\right.\nonumber
\end{align}
Thus, we conclude that
\begin{align}
&\sigma_{2,\ell}^2:=\text{Var}(F_{2,\ell}) \nonumber\\
=\,&\left\{
\begin{array}{ll}
\displaystyle \frac{|S^{d-1}|  {P}(x)\epsilon^{-d+2}}{d(d+2)} +O(\epsilon^{-d+4})&\mbox{ when }\ell=1,\ldots,d\\
\displaystyle \frac{P(x)\epsilon^{-d+4}}{4(d+4)}\int_{S^{d-1}} |\langle \Second_{x}(\theta,\theta),e_{\ell} \rangle|^2 d\theta +O(\epsilon^{-d+6}) &\mbox{ when }\ell=d+1,\ldots,p.
\end{array}
\right.\nonumber
\end{align} 
Note that for $\ell=d+1,\ldots,p$, the variance is of higher order than that of $\ell=1,\ldots,d$. 
By the same argument, Case 0 satisfies $\mathbb{E}[F_{2,\ell}]=\sigma_{2,\ell}^2=0$ for $\ell=d+1,\ldots,p$.

With the above bounds, we could apply the large deviation theory. For $\ell=1,\ldots,d$, the random variable $F_{2,\ell}$ is uniformly bounded by 
$c_{2,\ell}=2\epsilon^{-d+1}$
and 
$\sigma_{2,\ell}^2/c_{2,\ell}\to 0\mbox{ as }\epsilon\to 0$,
so when $\epsilon$ is sufficiently smaller and $n$ is sufficiently large, the exponent in Bernstein's inequality, 
\begin{equation}
\Pr \left\{\frac{1}{n-1}\sum_{j\neq k,j=1}^n (F_{2,\ell,j} - \mathbb{E}[F_{2,\ell}]) > \beta_{2,\ell} \right\} \leq \exp\Big(-\frac{n\beta_{2,\ell}^2}{2\sigma_{2,\ell}^2 + \frac{2}{3}c_{2,\ell}\beta_{2,\ell}}\Big),\nonumber
\end{equation}
where $\beta_{2,\ell}>0$, satisfies
\begin{equation}
\frac{n\beta_{2,\ell}^2}{2\sigma_{2,\ell}^2 + \frac{2}{3}c_{2,\ell}\beta_{2,\ell}} \geq \frac{n \beta_{2,\ell}^2}{3\sigma_{2,\ell}^2} \geq  \frac{n\beta_{2,\ell}^2\epsilon^{d-2}}{ 3\frac{|S^{d-1}|  {P}(x)}{d(d+2)}  }= 3\log (n)\,;\nonumber
\end{equation}
that is, the deviation from the mean is set to
\begin{align}\label{proof:alphaChoiceF2}
\beta_{2,\ell} = \frac{3\sqrt{\log (n)}\sqrt{3\frac{|S^{d-1}|  {P}(x)}{d(d+2)}}}{n^{1/2}\epsilon^{d/2-1}}=O\Big(\frac{\sqrt{\log (n)}}{n^{1/2}\epsilon^{d/2-1}}\Big)\,.
\end{align}
For $\ell=d+1,\ldots,p$, since the variance is of higher order, by the same argument, we have
\begin{align}\label{proof:alphaChoiceF2Higher}
\beta_{2,\ell} = \frac{3\sqrt{\log (n)}\sqrt{3\frac{|S^{d-1}|  {P}(x)}{d(d+2)}}}{n^{1/2}\epsilon^{d/2-1}}=O\Big(\frac{\sqrt{\log (n)}}{n^{1/2}\epsilon^{d/2-2}}\Big)\,.
\end{align}

As a result, in both Case 0 and Case 1, 
by a simple union bound, for $\ell=1,\ldots,d$, we have
\begin{align}
\Pr \left\{\left|\frac{1}{n}\sum_{j\neq k,\,j=1}^n F_{2,\ell,j} - \mathbb{E}[F_{2,\ell}]\right| > \beta_{2,\ell}\Big|\,k=1,\ldots,n \right\}  \leq 1/n^2.\nonumber
\end{align}
where
\begin{align}\label{proof:alphaChoiceF2finald}
\beta_{2,\ell} = \frac{3\sqrt{\log (n)}\sqrt{3\frac{|S^{d-1}|  {P}(x)}{d(d+2)}}}{n^{1/2}\epsilon^{d/2-1}}=O\Big(\frac{\sqrt{\log (n)}}{n^{1/2}\epsilon^{d/2-1}}\Big)\,,
\end{align}
and in Case 1, for $\ell=d+1,\ldots,p$, we have
\begin{align}
\Pr \left\{\left|\frac{1}{n}\sum_{j\neq k,\,j=1}^n F_{2,\ell,j} - \mathbb{E}[F_{2,\ell}]\right| > \beta_{2,\ell}\Big|\,k=1,\ldots,n \right\}  \leq 1/n^2.\nonumber
\end{align}
where
\begin{align}\label{proof:alphaChoiceF2finalp}
\beta_{2,\ell} = \frac{3\sqrt{\log (n)}\sqrt{3\frac{P(x)}{4(d+4)}\int_{S^{d-1}} |\langle \Second_{x}(\theta,\theta),e_{\ell} \rangle|^2 d\theta}}{n^{1/2}\epsilon^{d/2-2}}=O\Big(\frac{\sqrt{\log (n)}}{n^{1/2}\epsilon^{d/2-2}}\Big)\,.
\end{align}

For Case 2, by plugging $f=1$ in (\ref{Lemma:4:Statement2:withf}), we have
\begin{align}
\mathbb{E}[F_{2,\ell}]=\left\{
\begin{array}{ll}
\displaystyle\epsilon^{2} e^\top_{\ell}\frac{|S^{d-1}|\iota_*\nabla {P}(x)}{d(d+2)}+O(\epsilon^4)&\mbox{ when }\ell=1,\ldots,d\\
\displaystyle\epsilon^{2} e^\top_{\ell} \frac{|S^{d-1}|{P}(x)\bar{J}_{p,p-d}^{\top}\mathfrak{N}_0(x)}{d(d+2)}+O(\epsilon^4)&\mbox{ when }\ell=d+1,\ldots,p-l \\
\displaystyle\epsilon^{4} e_{\ell}^\top \frac{\mathfrak{R}_1(x) \nabla P(x)  }{6(d+4)} +O(\epsilon^{5})&\mbox{ when }\ell=p-l+1,\ldots,p.
\end{array}
\right.\nonumber
\end{align}
and by (\ref{Proof:LemmaD5:FinalResult}) we have
\begin{align*}
&\mathbb{E}[F_{2,\ell}^2]  \\
=\,& 
\left\{
\begin{array}{ll}
\displaystyle \frac{|S^{d-1}|  {P}(x)\epsilon^{-d+2}}{d(d+2)} +O(\epsilon^{-d+4})&\mbox{ when }\ell=1,\ldots,d\\
\displaystyle \frac{P(x)\epsilon^{-d+4}}{4(d+4)}\int_{S^{d-1}} |\langle \Second_{x}(\theta,\theta),e_{\ell} \rangle|^2 d\theta +O(\epsilon^{-d+6}) &\mbox{ when }\ell=d+1,\ldots,p-l\\
\displaystyle \frac{P(x)\epsilon^{-d+6}}{36(d+6)} \int_{S^{d-1}} \langle \nabla_{\theta}\Second_{x}(\theta,\theta),e_m \rangle \langle \nabla_{\theta}\Second_{x}(\theta,\theta),e_n \rangle d\theta+O(\epsilon^{-d+8}) & \mbox{ when }\ell=p-l+1,\ldots,p\,,
\end{array}
\right.\nonumber
\end{align*}
Thus, we conclude that
\begin{align*}
&\sigma_{2,\ell}^2:=\text{Var}(F_{2,\ell}) \\
=\,&\left\{
\begin{array}{ll}
\displaystyle \frac{|S^{d-1}|  {P}(x)\epsilon^{-d+2}}{d(d+2)} +O(\epsilon^{-d+4})&\mbox{ when }\ell=1,\ldots,d\\
\displaystyle \frac{P(x)\epsilon^{-d+4}}{4(d+4)}\int_{S^{d-1}} |\langle \Second_{x}(\theta,\theta),e_{\ell} \rangle|^2 d\theta +O(\epsilon^{-d+6}) &\mbox{ when }\ell=d+1,\ldots,p-l\\
\displaystyle \frac{P(x)\epsilon^{-d+6}}{36(d+6)} \int_{S^{d-1}} \langle \nabla_{\theta}\Second_{x}(\theta,\theta),e_m \rangle \langle \nabla_{\theta}\Second_{x}(\theta,\theta),e_n \rangle d\theta +O(\epsilon^{-d+8}) &\mbox{ when }\ell=p-l+1,\ldots,p.
\end{array}
\right.\nonumber
\end{align*} 
By the same large deviation argument that we skip the details, we conclude the claim with
\begin{align}\label{proof:alphaChoiceF2finalpQ}
\beta_{2,\ell} = 
\left\{
\begin{array}{ll}
\displaystyle O\Big(\frac{\sqrt{\log (n)}}{n^{1/2}\epsilon^{d/2-1}}\Big)&\mbox{when }\ell=1,\ldots,d\\
\displaystyle O\Big(\frac{\sqrt{\log (n)}}{n^{1/2}\epsilon^{d/2-2}}\Big)&\mbox{when }\ell=d+1,\ldots,p-l\\
\displaystyle O\Big(\frac{\sqrt{\log (n)}}{n^{1/2}\epsilon^{d/2-3}}\Big)&\mbox{when }\ell=p-l+1,\ldots,p.
\end{array}
\right.
\end{align}
\end{proof}

Denote $\Omega_2$ to be the event space that the deviation $\left|\frac{1}{n}\sum_{j\neq k,\,j=1}^n F_{2,\ell,j} - \mathbb{E}[F_{2,\ell}]\right| \leq \beta_{2,\ell}$ for all $\ell=1,\ldots,p$ and $k=1,\ldots,n$, where $\beta_{2,\ell}$ are chosen in (\ref{proof:alphaChoiceF2finald}) under Case 0 in Condition \ref{Condition:1}, (\ref{proof:alphaChoiceF2finald}) and (\ref{proof:alphaChoiceF2finalp}) under Case 1, and (\ref{proof:alphaChoiceF2finalpQ}) under Case 2.

Denote the eigen-decomposition of $\frac{1}{n\epsilon^{d}}G_nG_n^{\top}$ as $U_n\bar\Lambda_n U_n^{\top}$, where $U_n\in O(p)$ and $\bar\Lambda_n\in\mathbb{R}^{p\times p}$ a diagonal matrix, and the eigen-decomposition of $\frac{1}{\epsilon^{d}}C_x$ as $U\bar\Lambda U^{\top}$, where $U\in O(p)$ and $\bar\Lambda\in\mathbb{R}^{p\times p}$ a diagonal matrix. Note that 
\[
n\epsilon^d \mathcal{I}_{n\epsilon^{d+\rho}}(G_nG_n^{\top})=\mathcal{I}_{\epsilon^{\rho}}(\frac{1}{n\epsilon^d}G_nG_n^\top)\,.
\] 

We first control $\mathcal{I}_{\epsilon^{\rho}}(\bar\Lambda_n)-\mathcal{I}_{\epsilon^{\rho}}(\bar\Lambda)=I_{p,r_n}(\bar\Lambda_n+\epsilon^{\rho})^{-1}I_{p,r_n}-I_{p,r}(\bar\Lambda+\epsilon^{\rho})^{-1}I_{p,r}$ based on the three cases listed in Condition \ref{Condition:1}. By Proposition \ref{Proposition:1}, the first $d$ eigenvalues of $\mathbb{E}F$ are of order $\epsilon^2$. In Case 0, all the remaining eigenvalues are 0; in Case 1, all the remaining eigenvalues are nonzero and of order $\epsilon^4$; in Case 2, there are $l$ nonzero eigenvalues of order $\epsilon^6$ and $p-d-l$ remaining eigenvalues of order $\epsilon^4$.

\begin{lemma}\label{Proof:Theorem1:LemmaF4}
When $n$ is large enough, with probability greater than $1-n^{-2}$, for Case 0 in Condition \ref{Condition:1}, we have
\begin{align}
\big|e_i^\top [\mathcal{I}_{\epsilon^{\rho}}(\bar\Lambda_n)-\mathcal{I}_{\epsilon^{\rho}}(\bar\Lambda)]e_i\big|=O\big(\frac{\sqrt{\log(n)}}{n^{1/2}\epsilon^{d/2-2+2(2\wedge \rho)}}\big)\nonumber
\end{align}
for $i=1,\ldots,d$; for Case 1 in Condition \ref{Condition:1}, we have
\begin{align}
&\big|e_i^\top [\mathcal{I}_{\epsilon^{\rho}}(\bar\Lambda_n)-\mathcal{I}_{\epsilon^{\rho}}(\bar\Lambda)]e_i\big| 
= \left\{
\begin{array}{ll}
\displaystyle O\big(\frac{\sqrt{\log(n)}}{n^{1/2}\epsilon^{d/2-2+2(2\wedge \rho)}}\big)&\mbox{for }i=1,\ldots,d\\
\displaystyle O\big(\frac{\sqrt{\log(n)}}{n^{1/2}\epsilon^{d/2-4+2(4\wedge \rho)}}\big)&\mbox{for }i=d+1,\ldots,p\,;
\end{array}
\right.\nonumber
\end{align} 
for Case 2 in Condition \ref{Condition:1}, we have
\begin{align}
&\big|e_i^\top [\mathcal{I}_{\epsilon^{\rho}}(\bar\Lambda_n)-\mathcal{I}_{\epsilon^{\rho}}(\bar\Lambda)]e_i\big| =\left\{
\begin{array}{ll}
\displaystyle O\big(\frac{\sqrt{\log(n)}}{n^{1/2}\epsilon^{d/2-2+2(2\wedge \rho)}}\big)&\mbox{for }i=1,\ldots,d\\
\displaystyle O\big(\frac{\sqrt{\log(n)}}{n^{1/2}\epsilon^{d/2-4+2(4\wedge \rho)}}\big)&\mbox{for }i=d+1,\ldots,p-l\\
\displaystyle O\big(\frac{\sqrt{\log(n)}}{n^{1/2}\epsilon^{d/2-6+2(6\wedge \rho)}}\big)&\mbox{for }i=p-l+1,\ldots,p\,.
\end{array}
\right.\nonumber
\end{align} 
Moreover, for each case in Condition \ref{Condition:1}, when $n$ is sufficiently large, with probability greater than $1-n^{-2}$, we have {$U_n=U\Theta+\frac{\sqrt{\log(n)}}{n^{1/2}\epsilon^{d/2-2}}U\Theta\mathsf{S}+O\big(\frac{\log(n)}{n\epsilon^{d-4}}\big)$}, where $\mathsf{S}\in\mathfrak{o}(p)$, {and $\Theta \in O(p)$. $\Theta$ commutes with $\mathcal{I}_{\epsilon^{\rho}}(\bar\Lambda)$.}
\end{lemma}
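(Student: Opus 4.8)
The plan is to reduce the whole statement to a single block-refined matrix concentration bound for $M_n:=\frac{1}{n\epsilon^d}G_nG_n^\top$, and then run this through the deterministic perturbation machinery of Appendix \ref{Section:Perturbation}. First I would prove the concentration. Write $M_n-\bar M=E$ with $\bar M:=\frac{1}{\epsilon^d}C_x=U\bar\Lambda U^\top$. By Lemma \ref{Lemma:2} the tangential components of $X-\iota(x_k)$ on $B^{\mathbb{R}^p}_\epsilon(\iota(x_k))$ are $O(\epsilon)$, the normal components in the nondegenerate directions are $O(\epsilon^2)$, and (by Proposition \ref{Lemma:5.1}, since $\Second_x$ vanishes along $e_{p-l+1},\dots,e_p$ in Case 2) the normal components in those $l$ degenerate directions are $O(\epsilon^3)$. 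Hence the summands $\frac{1}{\epsilon^d}(x_{k,j}-x_k)(x_{k,j}-x_k)^\top$ have block-structured variances, and applying the scalar Bernstein inequality entrywise exactly as in Lemmas \ref{Proof:Theorem1:LemmaF2}--\ref{Proof:Theorem1:LemmaF3}, followed by a union bound over the $O(p^2)$ entries and the $n$ base points (the $\log n$ of the union bound being already built into the rates), gives with probability $>1-n^{-2}$, uniformly in $k$, that the tangent--tangent block of $E$ is $O\big(\tfrac{\sqrt{\log n}}{n^{1/2}\epsilon^{d/2-2}}\big)$, the tangent--normal block $O\big(\tfrac{\sqrt{\log n}}{n^{1/2}\epsilon^{d/2-3}}\big)$, the normal--normal block on nondegenerate directions $O\big(\tfrac{\sqrt{\log n}}{n^{1/2}\epsilon^{d/2-4}}\big)$, and the normal--normal sub-block spanned by $e_{p-l+1},\dots,e_p$ $O\big(\tfrac{\sqrt{\log n}}{n^{1/2}\epsilon^{d/2-6}}\big)$. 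I would also record that, because $\tfrac{\sqrt{\log n}}{n^{1/2}\epsilon^{d/2+1}}\to 0$, each block fluctuation is of strictly smaller order than the corresponding block of $\bar M$.

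Next I would settle the rank. In Case 0 the points $x_{k,j}$ lie in the $d$-dimensional affine subspace the manifold locally occupies, so $r_n=d=r$ deterministically. In Cases 1 and 2, $\bar M$ has all $p$ eigenvalues bounded below (orders $\epsilon^2,\epsilon^4,\epsilon^6$), and by the previous step the perturbation is subdominant on each scale, so Weyl's inequality applied scale by scale keeps all $p$ eigenvalues of $M_n$ positive, whence $r_n=r$ and $I_{p,r_n}=I_{p,r}$. Consequently $\mathcal{I}_{\epsilon^\rho}(\bar\Lambda_n)-\mathcal{I}_{\epsilon^\rho}(\bar\Lambda)=g(\bar\Lambda_n)-g(\bar\Lambda)$ on the nonzero spectrum for $g(x)=(x+\epsilon^\rho)^{-1}$, and $e_i^\top[g(\bar\Lambda_n)-g(\bar\Lambda)]e_i=-\frac{\lambda_{n,i}-\lambda_i}{(\lambda_{n,i}+\epsilon^\rho)(\lambda_i+\epsilon^\rho)}$. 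Since $\lambda_{n,i}$ is comparable to $\lambda_i$ whp, the denominator is $\asymp(\lambda_i+\epsilon^\rho)^2\asymp\epsilon^{2(2k\wedge\rho)}$ when $\lambda_i\asymp\epsilon^{2k}$, $k=1,2,3$ for the three groups. It then remains to bound $|\lambda_{n,i}-\lambda_i|$ at the correct scale: for $i\le d$ this is $O\big(\tfrac{\sqrt{\log n}}{n^{1/2}\epsilon^{d/2-2}}\big)$ by Weyl (the top $d$ eigenvalues are separated from the rest by a gap of order $\epsilon^2$); for the order-$\epsilon^4$ eigenvalues one peels off the tangent block via the two-scale algorithm of Appendix \ref{Section:Perturbation}, controlling the Schur-complement correction so that the fluctuation is governed only by the normal--normal block, giving $O\big(\tfrac{\sqrt{\log n}}{n^{1/2}\epsilon^{d/2-4}}\big)$; for the order-$\epsilon^6$ eigenvalues of Case 2 one peels off two scales and checks, using $M^{(4)}_{22,22}-2M^{(2)}_{21,2}M^{(2)}_{12,2}$ from Proposition \ref{Lemma:5.1}, that the fluctuation reduces to that of the $e_{p-l+1},\dots,e_p$ sub-block, giving $O\big(\tfrac{\sqrt{\log n}}{n^{1/2}\epsilon^{d/2-6}}\big)$. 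Multiplying by the respective $\epsilon^{-2(2k\wedge\rho)}$ yields exactly the three claimed rates.

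For the eigenvector statement I would feed $M_n=\bar M+E$ into the same algorithm. Since $\bar M$ has repeated eigenvalues (the $p-d$ zeros in Case 0, and generically repetitions among the three groups), its eigenvector matrix is determined only up to an orthogonal rotation $\Theta$ acting within each eigenspace of $\bar\Lambda$; by construction such a $\Theta$ commutes with $\mathcal{I}_{\epsilon^\rho}(\bar\Lambda)$, which is a function of $\bar\Lambda$. Modulo this ambiguity, the first-order correction is, as in (\ref{AppendixB:Norepeat:ResultEigenVector}), of the form $U\Theta\mathsf{S}$ with $\mathsf{S}\in\mathfrak{o}(p)$ obtained by dividing each off-block entry of $\Theta^\top U^\top E U\Theta$ by its eigenvalue gap; bounding each such entry by the matching block fluctuation over its gap and taking the crudest resulting rate gives a first-order correction of size $O\big(\tfrac{\sqrt{\log n}}{n^{1/2}\epsilon^{d/2-2}}\big)$, with the quadratic remainder $O\big(\tfrac{\log n}{n\epsilon^{d-4}}\big)$ coming from the next term of the Neumann-type expansion. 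The main obstacle throughout is the two-scale (in Case 2, three-scale) structure: because $\|E\|$ is dominated by the tangent--tangent block and dwarfs the order-$\epsilon^4$ and order-$\epsilon^6$ eigenvalues themselves, their fluctuations cannot be read off from Weyl's inequality and one must carefully iterate the block reduction of Appendix \ref{Section:Perturbation}, at each stage verifying that the errors already committed at coarser scales do not contaminate the finer scale beyond its intrinsic fluctuation — which is precisely where the refined expansions of Proposition \ref{Proposition:1}, Proposition \ref{Proposition:2} and Proposition \ref{Lemma:5.1} are indispensable.
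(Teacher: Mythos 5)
Your proposal follows essentially the same route as the paper's proof: entrywise Bernstein concentration for the block-structured fluctuation $E$ of the local covariance matrix, rank stability ($r_n=r$) on the good event, the resolvent-difference identity $e_i^\top[\mathcal{I}_{\epsilon^{\rho}}(\bar\Lambda_n)-\mathcal{I}_{\epsilon^{\rho}}(\bar\Lambda)]e_i=-(\lambda_{n,i}-\lambda_i)/\big((\lambda_{n,i}+\epsilon^\rho)(\lambda_i+\epsilon^\rho)\big)$ combined with $\lambda_i+\epsilon^\rho\asymp\epsilon^{2k\wedge\rho}$ for the eigenvalue part, and the multi-scale block perturbation machinery of Appendix \ref{Section:Perturbation} for the finer eigenvalue scales and for the eigenvector expansion with $\Theta$ absorbing the degenerate-eigenspace rotation. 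You in fact make explicit two points the paper leaves implicit --- the refined $O\big(\tfrac{\sqrt{\log n}}{n^{1/2}\epsilon^{d/2-6}}\big)$ fluctuation of the $e_{p-l+1},\ldots,e_p$ sub-block via Proposition \ref{Lemma:5.1}, and the inadequacy of a naive Weyl bound at the $\epsilon^4$ and $\epsilon^6$ scales --- so the argument is sound.
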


Note that $\frac{\log(n)}{n\epsilon^{d-4}}$ is asymptotically bounded by $\epsilon^6$ due to the assumption that $\frac{\sqrt{\log(n)}}{n^{1/2}\epsilon^{d/2-1}}$ is asymptotically approaching zero as $n\to\infty$.

\begin{proof}
We start from analyzing $\frac{1}{n\epsilon^{d}}G_nG_n^{\top}$. The proof can be found in \cite[(6.12)-(6.19)]{Singer_Wu:2016}, and here we summarize the results with our notations. Denote
\begin{equation}
F_{3,a,b,i}:=\frac{1}{\epsilon^d}e_a^\top (x_{k,i}-x_k)(x_{k,i}-x_k)^{\top}e_b\nonumber
\end{equation}
so that
\begin{equation}
\frac{1}{n\epsilon^{d}}G_nG_n^{\top}=\frac{1}{n}\sum_{a,b=1}^p\sum_{i=1}^NF_{3,a,b,i}e_ae_b^{\top}.\nonumber
\end{equation}
Note that for each $a,b=1,\ldots,p$, $\{F_{3,a,b,i}\}_{i=1}^n$ are i.i.d. realizations of the random variable $F_{3,a,b}=\frac{1}{\epsilon^d}e_a^\top (X-x_k)(X-x_k)^{\top}e_b\chi_{{B}^{\mathbb{R}^p}_\epsilon(x_k)}(X)$. Denote $F_3\in\mathbb{R}^{p\times p}$ so that the $(a,b)$-th entry of $F_3$ is $F_{3,a,b}$. Note that $C_x=\epsilon^d\mathbb{E}F_{3}$.

The random variable $F_{3,a,b}$ is bounded by $c_{3,a,b}=2\epsilon^{-d+2}$ when $a,b=1,\ldots,d$, by $c_{3,a,b}=\mathfrak{c}_{a,b}\epsilon^{-d+4}$ when $a,b=d+1,\ldots,p$, and by $c_{3,a,b}=\mathfrak{c}_{a,b}\epsilon^{-d+3}$ for other pairs of $a,b$, where $\mathfrak{c}_{a,b}$, when $a>d$ or $b>d$, are constants depending on the second fundamental form \cite[(B.33)-(B.34)]{Singer_Wu:2012}. 

The variance of $F_{3,a,b}$, denoted as $\sigma^2_{3,a,b}$, is $\mathfrak{s}_{a,b}\epsilon^{-d+4}$ when $a,b=1,\ldots,d$, $\mathfrak{s}_{a,b}\epsilon^{-d+8}$ when $a,b=d+1,\ldots,p$, and $\mathfrak{s}_{a,b}\epsilon^{-d+6}$ for other pairs of $a,b$ (see \cite[(B.33)-(B.35)]{Singer_Wu:2012} or \cite{Singer_Wu:2016}), where $\mathfrak{s}_{a,b}$ are constants depending on the second fundamental form. Again, to simplify the discussion, we assume that $\mathfrak{c}_{a,b}$ and $\mathfrak{s}_{a,b}$ are not zero for all $a,b=1,\ldots,p$. When the variance is of higher order, the deviation could be evaluated similarly and we skip the details.\footnote{For example, when the manifold is flat around $x_k$ and $\epsilon$ is sufficiently small, $\mathfrak{c}_{a,b}=\mathfrak{s}_{a,b}=0$ when $a>d$ or $b>d$, and the proof of the bound is trivial.} 
Thus, for $\beta_{3,1},\beta_{3,2},\beta_{3,3}>0$, by Berstein's inequality, we have
\begin{align}
\mbox{Pr}\left\{\left|\frac{1}{n}\sum_{i\neq k,\,i=1}^n F_{3,a,b,i}-\mathbb{E} F_{3,a,b}\right|>\beta_{3,1}\right\} \leq\exp\left\{-\frac{(n-1)\beta_{3,1}^2}{\mathfrak{s}_{a,b}\epsilon^{-d+4}+\mathfrak{c}_{a,b}\epsilon^{-d+2}\beta_{3,1}}\right\}\label{Proof:Theorem1:CovConv:1}
\end{align}
when $a,b=1,\ldots,d$,
\begin{align}
\mbox{Pr}\left\{\left|\frac{1}{n}\sum_{i\neq k,\,i=1}^n F_{3,a,b,i}-\mathbb{E} F_{3,a,b}\right|>\beta_{3,2}\right\}\leq \exp\left\{-\frac{(n-1)\beta_{3,2}^2}{\mathfrak{s}_{a,b}\epsilon^{-d+8}+\mathfrak{c}_{a,b}\epsilon^{-d+4}\beta_{3,2}}\right\}\label{Proof:Theorem1:CovConv:2}
\end{align}
when $a,b=d+1,\ldots,p$, and 
\begin{align}
\mbox{Pr}\left\{\left|\frac{1}{n}\sum_{i\neq k,\,i=1}^n F_{3,a,b,i}-\mathbb{E} F_{3,a,b}\right|>\beta_{3,3}\right\}\leq \exp\left\{-\frac{(n-1)\beta_{3,3}^2}{\mathfrak{s}_{a,b}\epsilon^{-d+6}+\mathfrak{c}_{a,b}\epsilon^{-d+3}\beta_{3,3}}\right\}\label{Proof:Theorem1:CovConv:3}
\end{align}
for the other cases. 

Choose $\beta_{3,1}$, $\beta_{3,2}$ and $\beta_{3,3}$ so that $\beta_{3,1}/\epsilon^{2}\to 0$, $\beta_{3,2}/\epsilon^{4}\to 0$ and $\beta_{3,3}/\epsilon^{3}\to 0$ as $\epsilon\to 0$
so that when $\epsilon$ is sufficiently small,
\begin{align}
&\mathfrak{s}_{a,b}\epsilon^{-d+4}+\mathfrak{c}_{a,b}\epsilon^{-d+2}\beta_{3,1}\leq 2\mathfrak{s}_{a,b}\epsilon^{-d+4}\quad\mbox{for all }k,l=1,\ldots,d\nonumber\\
&\mathfrak{s}_{a,b}\epsilon^{-d+8}+\mathfrak{c}_{a,b}\epsilon^{-d+4}\beta_{3,2}\leq 2\mathfrak{s}_{a,b}\epsilon^{-d+8}\quad\mbox{for all }k,l=d+1,\ldots,p\nonumber\\
&\mathfrak{s}_{a,b}\epsilon^{-d+6}+\mathfrak{c}_{a,b}\epsilon^{-d+3}\beta_{3,3}\leq 2\mathfrak{s}_{a,b}\epsilon^{-d+6}\quad\mbox{for other }k,l\,.\nonumber
\end{align} 
To guarantee that the deviation of (\ref{Proof:Theorem1:CovConv:1}), (respectively (\ref{Proof:Theorem1:CovConv:2}) and (\ref{Proof:Theorem1:CovConv:3})) greater than $\beta_{3,1}$ (respectively $\beta_{3,2}$ and $\beta_{3,3}$) happens with probability less than $\frac{1}{n^3}$, $n$ should satisfy $\frac{n\beta_{3,1}^{2}}{\log(n)}\geq 6\mathfrak{s}_{a,b}\epsilon^{-d+4}$ (respectively $\frac{n\beta_{3,2}^{2}}{\log(n)}\geq 6\mathfrak{s}_{a,b}\epsilon^{-d+8}$ and $\frac{n\beta_{3,3}^{2}}{\log(n)}\geq 6\mathfrak{s}_{a,b}\epsilon^{-d+6}$). 
By setting $\beta_{3,1}=\sqrt{6\mathfrak{s}_{a,b}} \frac{\sqrt{\log(n)}}{n^{1/2}\epsilon^{d/2-2}}$, $\beta_{3,2}=\sqrt{6\mathfrak{s}_{a,b}} \frac{\sqrt{\log(n)}}{n^{1/2}\epsilon^{d/2-4}}$, and $\beta_{3,3}=\sqrt{6\mathfrak{s}_{a,b}} \frac{\sqrt{\log(n)}}{n^{1/2}\epsilon^{d/2-3}}$, the conditions $\beta_{3,1}/\epsilon^{3}\to 0$, $\beta_{3,2}/\epsilon^{5}\to 0$ and $\beta_{3,3}/\epsilon^{4}\to 0$ as $\epsilon\to 0$ hold by the assumed relationship between $n$ and $\epsilon$ and the deviations of (\ref{Proof:Theorem1:CovConv:1}), (\ref{Proof:Theorem1:CovConv:2}) and (\ref{Proof:Theorem1:CovConv:3}) are well controlled by $\beta_{3,1}$, $\beta_{3,2}$ and $\beta_{3,3}$ respectively, with probability greater than $1-n^{-3}$. 
Define the deviation of $\frac{1}{n\epsilon^{d}}G_nG_n^{\top}$ from $\mathbb{E}F_3$ as 
\begin{equation}
E:=\frac{1}{n\epsilon^{d}}G_nG_n^{\top}-\mathbb{E}F_3 \in \mathbb{R}^{p\times p}.\label{Proof:Variance:EandGGandEF3}
\end{equation}
As a result, again by a trivial union bound, with probability greater than $1-n^{-2}$, for all $x_k$, we have
\begin{align}\label{Proof:Theorem1:DeviationF3}
\left\{\begin{array}{ll}
\displaystyle |E_{a,b}|\leq \frac{\mathfrak{c}\sqrt{\log(n)}}{n^{1/2}\epsilon^{d/2-2}}&\quad\mbox{when }a,b=1,\ldots,d\\ 
&\\
\displaystyle |E_{a,b}|\leq \frac{\mathfrak{c}\sqrt{\log(n)}}{n^{1/2}\epsilon^{d/2-4}}&\quad\mbox{when }a,b=d+1,\ldots,p\\
&\\
\displaystyle |E_{a,b}|\leq \frac{\mathfrak{c}\sqrt{\log(n)}}{n^{1/2}\epsilon^{d/2-3}}&\quad\mbox{otherwise},
\end{array}\right.
\end{align}  
where 
\begin{equation}
\mathfrak{c}:=\max_{a,b=1,\ldots,p}\sqrt{6\mathfrak{s}_{a,b}}. \label{Proof:Theorem0:Lemma3:Def:c}
\end{equation}
Denote $\Omega_3$ to be the event space that the deviation (\ref{Proof:Theorem1:DeviationF3}) is satisfied. 
With the above preparation, we now start the proof of Lemma \ref{Proof:Theorem1:LemmaF4} case by case.

\medskip
\underline{Case 0 in Condition \ref{Condition:1}.} Note that both $\mathbb{E}F$ and $\frac{1}{n\epsilon^{d}}G_nG_n^{\top}$ are of rank $r=d$ due to the geometric constraints. By the calculation in Section \ref{Section:Perturbation}, when conditional on $\Omega_3$, (\ref{Proof:Variance:EandGGandEF3}) holds, and the nonzero eigenvalues of $\frac{1}{n\epsilon^{d}}G_nG_n^{\top}$ (there are only $d$ such eigenvalues) are deviated from the nonzero eigenvalues of $\mathbb{E}F_3$ by $O(\frac{\sqrt{\log(n)}}{n^{1/2}\epsilon^{d/2-2}})$, which is smaller than $\epsilon^3$ by the assumed relationship between $n$ and $\epsilon$. 
Thus, $r_n=d$ when $\epsilon$ is sufficiently small, and we have
\begin{equation}
I_{p,r_n}(\bar\Lambda_n+\epsilon^{\rho})^{-1}I_{p,r_n}-I_{p,d}(\bar\Lambda+\epsilon^{\rho})^{-1}I_{p,d}=I_{p,d}[(\bar\Lambda_n+\epsilon^{\rho})^{-1}-(\bar\Lambda+\epsilon^{\rho})^{-1}]I_{p,d}\,.\nonumber
\end{equation} 
Denote the $i$-th eigenvalue of $\mathbb{E}F_3=\frac{1}{\epsilon^d}C_x$ as $\bar{\lambda}_i$, where $i=1,\ldots,d$.
By a direct calculation, we have
\begin{align}
\big|e_i^\top [\mathcal{I}_{\epsilon^{\rho}}(\bar\Lambda_n)-\mathcal{I}_{\epsilon^{\rho}}(\bar\Lambda)]e_i\big|=O\big(\frac{\sqrt{\log(n)}}{n^{1/2}\epsilon^{d/2-2+2(2\wedge \rho)}}\big)\nonumber
\end{align}
for $i=1,\ldots,d$ 
when $\epsilon$ is sufficiently small since we have
\begin{align*}
&\frac{1}{\bar \lambda_i+O(\frac{\sqrt{\log n}}{n^{1/2}\epsilon^{d/2-2}})+\epsilon^\rho}-\frac{1}{\bar\lambda_i+\epsilon^\rho}=\frac{1}{\bar\lambda_i+\epsilon^\rho}\Big(\frac{1}{O(\frac{\sqrt{\log n}}{n^{1/2}\epsilon^{d/2-2}(\bar\lambda_i+\epsilon^\rho)})+1}-1\Big)\nonumber\\
=&\,O\Big(\frac{\sqrt{\log n}}{n^{1/2}\epsilon^{d/2-2}(\bar\lambda_i+\epsilon^\rho)^2}\Big)=O\Big(\frac{\sqrt{\log(n)}}{n^{1/2}\epsilon^{d/2-2+2(2\wedge \rho)}}\Big)
\end{align*}
due to the fact that $\bar{\lambda}_i$ is of order $\epsilon^2$ for $i=1,\ldots,d$, $\bar\lambda_i+\epsilon^\rho=O(\epsilon^{2\wedge \rho})$ and $n^{1/2}\epsilon^{d/2+1}\to \infty$ as $n\to \infty$.
{Suppose there are $1\leq l\leq d$ distinct eigenvalues, and the multiplicity of the $j$-th distinct eigenvalue is $p_j\in \mathbb{N}$. Clearly, $\sum_{i=1}^lp_i=p$.
By the calculation in Section \ref{Section:Perturbation} that we skip the details, when conditional on $\Omega_3$, $U_n=U\Theta+\frac{\sqrt{\log(n)}}{n^{1/2}\epsilon^{d/2-2}}U\Theta\mathsf{S}+O(\frac{\log(n)}{n\epsilon^{d-4}})$, where $\mathsf{S}\in\mathfrak{o}(p)$,
\begin{align} 
\Theta=\begin{bmatrix}
X^{(1)} & 0 & \cdots &0 \\
0 & X^{(2)} & \cdots & 0\\
0 & 0 &  \ddots &0\\
0 &0  & \cdots & X^{(l)}\\
\end{bmatrix}\, \in O(p),\label{Definition:XforEigenvectorsinLemmaProof}
\end{align} 
and $X^{(j)}\in O(p_j)$, $j=1,\ldots,l$, comes from the $j$-th distinct eigenvalue. Note that $\Theta$ commutes with $\bar{\Lambda}$ and $\mathcal{I}_{\epsilon^{\rho}}(\bar\Lambda)$.}

\medskip
\underline{Case 1 in Condition \ref{Condition:1}.} By the calculation in Section \ref{Section:Perturbation}, when conditional on $\Omega_3$, the first $d$ eigenvalues of $\frac{1}{n\epsilon^{d}}G_nG_n^{\top}$ are deviated from the first $d$ eigenvalues of $\mathbb{E}F$ by $O(\frac{\mathfrak{c}\sqrt{\log(n)}}{n^{1/2}\epsilon^{d/2-2}})$, which is smaller than $\epsilon^3$, and the left $p-d$ eigenvalues of $\frac{1}{n\epsilon^{d}}G_nG_n^{\top}$ are deviated from the left $p-d$ eigenvalues of $\mathbb{E}F$ by $O(\frac{\mathfrak{c}\sqrt{\log(n)}}{n^{1/2}\epsilon^{d/2-4}})$, which is smaller than $\epsilon^5$. 
Thus, again, when $\epsilon$ is sufficiently small, $r_n=r=p$, and $I_{p,r_n}(\bar\Lambda_n+\epsilon^{\rho})^{-1}I_{p,r_n}-I_{p,r}(\bar\Lambda+\epsilon^{\rho})^{-1}I_{p,r}=[(\bar\Lambda_n+\epsilon^{\rho})^{-1}-(\bar\Lambda+\epsilon^{\rho})^{-1}]$. Therefore, 
\begin{align}
&\big|e_i^\top [\mathcal{I}_{\epsilon^{\rho}}(\bar\Lambda_n)-\mathcal{I}_{\epsilon^{\rho}}(\bar\Lambda)]e_i\big| 
= \left\{
\begin{array}{ll}
\displaystyle O\big(\frac{\sqrt{\log(n)}}{n^{1/2}\epsilon^{d/2-2+2(2\wedge \rho)}}\big)&\mbox{ for }i=1,\ldots,d\\
\displaystyle O\big(\frac{\sqrt{\log(n)}}{n^{1/2}\epsilon^{d/2-4+2(4\wedge \rho)}}\big)&\mbox{ for }i=d+1,\ldots,p
\end{array}
\right.\nonumber
\end{align} 
when $\epsilon$ is sufficiently small. Again, by the calculation in Section \ref{Section:Perturbation}, when conditional on $\Omega_3$, we have {$U_n=U\Theta+\frac{\sqrt{\log(n)}}{n^{1/2}\epsilon^{d/2-2}}U\Theta\mathsf{S}+O(\frac{\log(n)}{n\epsilon^{d-4}})$, where $\mathsf{S}\in\mathfrak{o}(p)$ and $\Theta \in O(p)$ is defined in (\ref{Definition:XforEigenvectorsinLemmaProof}).}

\medskip
\underline{Case 2 in Condition \ref{Condition:1}.} A similar discussion holds. In this case, when conditional on $\Omega_3$, we have
\begin{align}
&\big|e_i^\top [\mathcal{I}_{\epsilon^{\rho}}(\bar\Lambda_n)-\mathcal{I}_{\epsilon^{\rho}}(\bar\Lambda)]e_i\big| =\left\{
\begin{array}{ll}
\displaystyle O\big(\frac{\sqrt{\log(n)}}{n^{1/2}\epsilon^{d/2-2+2(2\wedge \rho)}}\big)&\mbox{for }i=1,\ldots,d\\
\displaystyle O\big(\frac{\sqrt{\log(n)}}{n^{1/2}\epsilon^{d/2-4+2(4\wedge \rho)}}\big)&\mbox{for }i=d+1,\ldots,p-l\\
\displaystyle O\big(\frac{\sqrt{\log(n)}}{n^{1/2}\epsilon^{d/2-6+2(6\wedge \rho)}}\big)&\mbox{for }i=p-l+1,\ldots,p
\end{array}
\right.\nonumber
\end{align} 
when $\epsilon$ is sufficiently small. Similarly, when conditional on $\Omega_3$, {$U_n=U\Theta+\frac{\sqrt{\log(n)}}{n^{1/2}\epsilon^{d/2-2}}U\Theta\mathsf{S}+O(\frac{\log(n)}{n\epsilon^{d-4}})$, where $\mathsf{S}\in\mathfrak{o}(p)$ and $\Theta \in O(p)$ is defined in (\ref{Definition:XforEigenvectorsinLemmaProof}).}

\end{proof}

\medskip\textbf{Back to finish the proof of Theorem \ref{Theorem:t0}.}
Denote $\Omega:=\cap_{i=1,\ldots,4} \Omega_i$. It is clear that the probability of the event space $\Omega$ is great than $1-4n^{-2}$. Below, all arguments are conditional on $\Omega$. When $\epsilon$ is sufficiently small, based on Lemma (\ref{Proof:Theorem1:LemmaF4}), we have
\begin{align}
\mathcal{I}_{\epsilon^{\rho}}(\frac{1}{n\epsilon^{d}}G_nG_n^{\top})= &\,\mathcal{I}_{\epsilon^{\rho}}(\mathbb{E}F)+\bar{\mathcal{E}}_3\,,
\end{align}
where {
\begin{align*}
\bar{\mathcal{E}}_3:=&\, U_n \mathcal{I}_{\epsilon^{\rho}}(\bar\Lambda_n)U_n^\top-U\mathcal{I}_{\epsilon^\rho}(\bar\Lambda_n)U^\top \\
=&\, \Big(U\Theta+\frac{\sqrt{\log(n)}}{n^{1/2}\epsilon^{d/2-2}}U\Theta\mathsf{S}+O(\frac{\log(n)}{n\epsilon^{d-4}})\Big)[\mathcal{I}_{\epsilon^{\rho}}(\bar\Lambda)+\mathcal{E}_{3,1}] \nonumber\\
&\qquad\times\Big(U\Theta+\frac{\sqrt{\log(n)}}{n^{1/2}\epsilon^{d/2-2}}U\Theta\mathsf{S}+O(\frac{\log(n)}{n\epsilon^{d-4}})\Big)^\top - U\mathcal{I}_{\epsilon^{\rho}}(\bar\Lambda)U^\top. \nonumber \\
=&\, \frac{\sqrt{\log(n)}}{n^{1/2}\epsilon^{d/2-2}} U\Theta[S\mathcal{I}_{\epsilon^{\rho}}(\bar\Lambda)+\mathcal{I}_{\epsilon^{\rho}}(\bar\Lambda)S^\top]\Theta^\top U^\top +U\Theta\mathcal{E}_{3,1}\Theta^\top U^\top+\big[\mbox{higher order terms}\big]. \nonumber 
\end{align*}
and $\mathcal{E}_{3,1}:=\mathcal{I}_{\epsilon^{\rho}}(\bar\Lambda_n)-\mathcal{I}_{\epsilon^{\rho}}(\bar\Lambda)$, which bound is provided in Lemma (\ref{Proof:Theorem1:LemmaF4}).
Define 
\begin{equation}\label{Proof:E3Part}
\mathcal{E}_3:=\frac{\sqrt{\log(n)}}{n^{1/2}\epsilon^{d/2-2}} U\Theta[S\mathcal{I}_{\epsilon^{\rho}}(\bar\Lambda)+\mathcal{I}_{\epsilon^{\rho}}(\bar\Lambda)S^\top]\Theta^\top U^\top +U\Theta\mathcal{E}_{3,1}\Theta^\top U^\top.
\end{equation}}

By (\ref{Proof:Theorem1:LemmaF3}), we have
\begin{align}
\frac{1}{n\epsilon^{d}}\sum_{j=1}^N(x_{k,j}-x_k)=\mathbb{E}F_2+\mathcal{E}_2,
\end{align}
where the bound of $\mathcal{E}_2$ is provided in Lemma \ref{Proof:Theorem1:LemmaF3}. Similarly, we have
\begin{align}
\frac{1}{n\epsilon^{d}}\sum_{j=1}^N(x_{k,j}-x_k)(f(x_{k,j})-f(x_k))=\mathbb{E}F_4+\mathcal{E}_4\,,
\end{align}
where the bound of $\mathcal{E}_4$ is the same as that in Lemma \ref{Proof:Theorem1:LemmaF3}.

We could therefore recast $[\frac{1}{n\epsilon^{d}}\sum_{j=1}^N(x_{k,j}-x_k)]^{\top}\mathcal{I}_{\epsilon^\rho}(\frac{1}{n\epsilon^{d}}G_nG_n^{\top})[\frac{1}{n\epsilon^{d}}\sum_{j=1}^N(x_{k,j}-x_k)(f(x_{k,j})-f(x_k))]$ as 
\begin{align*}
&[\mathbb{E}F_2+\mathcal{E}_2]^\top[\mathcal{I}_{\epsilon^\rho}(\mathbb{E}F_3)+\bar{\mathcal{E}}_3][\mathbb{E}F_4+\mathcal{E}_4]\\
=\,&\mathbb{E}F_2^\top \mathcal{I}_{\epsilon^{\rho}}(\mathbb{E}F_3)\mathbb{E}F_4+\big[\mathcal{E}_2^\top\mathcal{I}_{\epsilon^\rho}(\mathbb{E}F_3)\mathbb{E}F_4+\mathbb{E}F_2^\top \mathcal{I}_{\epsilon^{\rho}}(\mathbb{E}F_3)\mathcal{E}_4+\mathbb{E}F_2^\top \mathcal{E}_3\mathbb{E}F_4\big]\nonumber\\
&+\big[\mbox{higher order terms}\big].\nonumber
\end{align*}
We now control the error term $\mathcal{E}_2^\top\mathcal{I}_{\epsilon^\rho}(\mathbb{E}F_3)\mathbb{E}F_4+\mathbb{E}F_2^\top \mathcal{I}_{\epsilon^{\rho}}(\mathbb{E}F_3)\mathcal{E}_4+\mathbb{E}F_2^\top \mathcal{E}_3\mathbb{E}F_4$, which depends on the tangential and normal components. Since the errors are of different orders in the tangential and normal directions, we should evaluate the total error separately.

To avoid tedious description of each Case, we summarize the main order of each term for different Cases in Table \ref{Table:Proof1}.
We mention that in Case 2, if the N1 part is zero; that is, the non-trivial eigenvalues corresponding to the normal bundle are all of order $\epsilon^{d+6}$, the final error rate is $\frac{\sqrt{\log(n)}}{n^{1/2}\epsilon^{d/2-1}}$, which is the same as Case 0.

We only carry out the calculation for Case 1, and skip the details for the other cases since the calculation is the same. By checking the error order in Table \ref{Table:Proof1}, the leading order error term of $\mathcal{E}_2^\top\mathcal{I}_{\epsilon^\rho}(\mathbb{E}F_3)\mathbb{E}F_4+\mathbb{E}F_2^\top \mathcal{I}_{\epsilon^\rho}(\mathbb{E}F_3)\mathcal{E}_4$ is controlled by $\frac{\sqrt{\log(n)}}{n^{1/2}\epsilon^{d/2+(2\wedge \rho)-3}}+\frac{\sqrt{\log(n)}}{n^{1/2}\epsilon^{d/2+(4\wedge \rho)-4}}$, where $\frac{\sqrt{\log(n)}}{n^{1/2}\epsilon^{d/2+(2\wedge \rho)-3}}$ comes from the tangential part, and $\frac{\sqrt{\log(n)}}{n^{1/2}\epsilon^{d/2+(4\wedge \rho)-4}}$ comes from the normal part. Note that the sizes of $(2\wedge \rho)-3$ and $(4\wedge \rho)-4$ depend on the chosen $\rho$, so we keep both. 
On the other hand, {by (\ref{Proof:E3Part}) and Table \ref{Table:Proof1}, the error 
$\mathbb{E}F_2^\top \mathcal{E}_3 \mathbb{E}F_4$ is controlled by 
$\frac{\sqrt{\log(n)}}{n^{1/2}\epsilon^{d/2+(4\wedge2 \rho)-6}}+\frac{\sqrt{\log(n)}}{n^{1/2}\epsilon^{d/2+(8 \wedge 2\rho)-8}}$.} By a direct comparison, it is clear that when $\epsilon$ is sufficiently small, no matter which $\rho$ is chosen, $\mathbb{E}F_2^\top \mathcal{E}_{3} \mathbb{E}F_4$ is dominated by $\mathcal{E}_2^\top\mathcal{I}_{\epsilon^\rho}(\mathbb{E}F_3)\mathbb{E}F_4+\mathbb{E}F_2^\top \mathcal{I}_{\epsilon^\rho}(\mathbb{E}F_3)\mathcal{E}_4$, and hence the total error term is controlled by $\frac{\sqrt{\log(n)}}{n^{1/2}\epsilon^{d/2+(2\wedge \rho)-3}}+\frac{\sqrt{\log(n)}}{n^{1/2}\epsilon^{d/2+(4\wedge \rho)-4}}$.

\begin{table}
\centering
\caption{The relevant items in each error term in $\mathcal{E}_2^\top\mathcal{I}_{\epsilon^\rho}(\mathbb{E}F_3)\mathbb{E}F_4+\mathbb{E}F_2^\top \mathcal{I}_{\epsilon^\rho}(\mathbb{E}F_3)\mathcal{E}_4+\mathbb{E}F_2^\top \mathcal{E}_3\mathbb{E}F_4$. The bounds are for entrywise errors. T means the tangential components in all Cases, N means the normal components in Case 1, and N1 means the first $p-d-l$ normal components of order $\epsilon^{d+4}$, and N2 means the last $l$ normal components of order $\epsilon^{d+6}$ in Case 2. ``Total'' means the overall bound of $\mathcal{E}_2^\top\mathcal{I}_{\epsilon^\rho}(\mathbb{E}F_3)\mathbb{E}F_4+\mathbb{E}F_2^\top \mathcal{I}_{\epsilon^\rho}(\mathbb{E}F_3)\mathcal{E}_4+\mathbb{E}F_2^\top \mathcal{E}_3\mathbb{E}F_4$, where only the major terms depending on $n$ and $\epsilon$ in the leading order terms are shown.}
\begin{tabular}{|c|c|c|c|c|c|c|}
\hline
& Case 0 & \multicolumn{2}{|c|}{Case 1}   & \multicolumn{3}{|c|}{Case 2} \\
\hline\hline
& T & T & N & T & N1 & N2\\
\hline 
& \\[\dimexpr-\normalbaselineskip+1pt]
$\mathbb{E}F_2$ & $\epsilon^2$ & $\epsilon^2 $& $\epsilon^2$  & $\epsilon^2$ & $\epsilon^2$  &  $\epsilon^4$  \\[1pt]
\hline
& \\[\dimexpr-\normalbaselineskip+1pt]
$\mathcal{I}_{\epsilon^\rho}(\mathbb{E}F_3)$ & $\epsilon^{-(2\wedge \rho)}$ & $\epsilon^{-(2\wedge \rho)}$ & $\epsilon^{-(4\wedge \rho)}$  & $\epsilon^{-(2\wedge \rho)}$ & $\epsilon^{-(4\wedge \rho)}$ &  $\epsilon^{-(6\wedge \rho)}$ \\[1pt]
\hline& \\[\dimexpr-\normalbaselineskip+1pt]
$\mathbb{E}F_4$ & $\epsilon^2$ & $\epsilon^2 $& $\epsilon^2$  & $\epsilon^2$ & $\epsilon^2$  &  $\epsilon^4$  \\[1pt]
\hline& \\[\dimexpr-\normalbaselineskip+1pt]
$\mathcal{E}_2$ & $\frac{\sqrt{\log(n)}}{n^{1/2}\epsilon^{d/2-1}}$ & $\frac{\sqrt{\log(n)}}{n^{1/2}\epsilon^{d/2-1}}$ & $\frac{\sqrt{\log(n)}}{n^{1/2}\epsilon^{d/2-2}}$ & $\frac{\sqrt{\log(n)}}{n^{1/2}\epsilon^{d/2-1}}$ & $\frac{\sqrt{\log(n)}}{n^{1/2}\epsilon^{d/2-2}}$ & $\frac{\sqrt{\log(n)}}{n^{1/2}\epsilon^{d/2-3}}$ \\[4pt]
\hline& \\[\dimexpr-\normalbaselineskip+1pt]
$\mathcal{E}_{3,1}$ & $\frac{\sqrt{\log(n)}}{n^{1/2}\epsilon^{d/2-2+2(2\wedge \rho)}}$ & $\frac{\sqrt{\log(n)}}{n^{1/2}\epsilon^{d/2-2+2(2\wedge \rho)}}$ & $\frac{\sqrt{\log(n)}}{n^{1/2}\epsilon^{d/2-4+2(4\wedge \rho)}}$ & $\frac{\sqrt{\log(n)}}{n^{1/2}\epsilon^{d/2-2+2(2\wedge \rho)}}$ & $\frac{\sqrt{\log(n)}}{n^{1/2}\epsilon^{d/2-4+2(4\wedge \rho)}}$ & $\frac{\sqrt{\log(n)}}{n^{1/2}\epsilon^{d/2-6+2(6\wedge \rho)}}$ \\[4pt]
\hline& \\[\dimexpr-\normalbaselineskip+1pt]
$\mathcal{E}_4$ & $\frac{\sqrt{\log(n)}}{n^{1/2}\epsilon^{d/2-1}}$ &$\frac{\sqrt{\log(n)}}{n^{1/2}\epsilon^{d/2-1}}$  & $\frac{\sqrt{\log(n)}}{n^{1/2}\epsilon^{d/2-2}}$ & $\frac{\sqrt{\log(n)}}{n^{1/2}\epsilon^{d/2-1}}$ & $\frac{\sqrt{\log(n)}}{n^{1/2}\epsilon^{d/2-2}}$ & $\frac{\sqrt{\log(n)}}{n^{1/2}\epsilon^{d/2-3}}$ \\[4pt]
\hline\hline& \\[\dimexpr-\normalbaselineskip+1pt]
Total  & $\frac{\sqrt{\log(n)}}{n^{1/2}\epsilon^{d/2+(2\wedge \rho)-3}}$ &  \multicolumn{2}{|c|}{$\frac{\sqrt{\log(n)}}{n^{1/2}\epsilon^{d/2+(2\wedge \rho)-3}}+\frac{\sqrt{\log(n)}}{n^{1/2}\epsilon^{d/2+(4\wedge \rho)-4}}$} &  \multicolumn{3}{|c|}{$\frac{\sqrt{\log(n)}}{n^{1/2}\epsilon^{d/2+(2\wedge \rho)-3}}+\frac{\sqrt{\log(n)}}{n^{1/2}\epsilon^{d/2+(4\wedge \rho)-4}}$}\\[4pt]
\hline
\end{tabular}
\label{Table:Proof1}
\end{table}

Therefore, when conditional on $\Omega$, for all $k=1,\ldots,n$, the deviation of the nominator of (\ref{Proof:Theorem0:FirstDirectExpansion}) from $\mathbb{E}[F_1] - \mathbb{E}F_2^\top \mathcal{I}_{\epsilon^\rho}(\mathbb{E}F_3)\mathbb{E}F_4$ depends on $\rho$ and different Cases in Condition \ref{Condition:1}; for Case 0, it is controlled by $O(\frac{\sqrt{\log (n)}}{n^{1/2}\epsilon^{d/2-1}})+O(\frac{\sqrt{\log(n)}}{n^{1/2}\epsilon^{d/2+(2\wedge \rho)-3}})=O(\frac{\sqrt{\log(n)}}{n^{1/2}\epsilon^{d/2-1}})$ since $(2\wedge \rho)-3=(-1)\wedge (\rho-3)\leq -1$; for Case 1 and Case 2, it is controlled by $O(\frac{\sqrt{\log (n)}}{n^{1/2}\epsilon^{d/2-1}})+O(\frac{\sqrt{\log(n)}}{n^{1/2}\epsilon^{d/2+(2\wedge \rho)-3}})+O(\frac{\sqrt{\log(n)}}{n^{1/2}\epsilon^{d/2+(4\wedge \rho)-4}})=O(\frac{\sqrt{\log(n)}}{n^{1/2}\epsilon^{d/2+[(-1)\vee (0\wedge (\rho-4)]}})$, which comes from the fact that $(2\wedge \rho)-3\leq -1$ and $(4\wedge \rho)-4=0\wedge(\rho-4)$. 
Similarly, the deviation of the denominator of (\ref{Proof:Theorem0:FirstDirectExpansion}) from $\mathbb{E}[F_0] - \mathbb{E}F_2^\top \mathcal{I}_{\epsilon^\rho}(\mathbb{E}F_3)\mathbb{E}F_2$ depends on $\rho$ and different Cases in Condition \ref{Condition:1}; for Case 0, it is controlled by $O(\frac{\sqrt{\log (n)}}{n^{1/2}\epsilon^{d/2}})+O(\frac{\sqrt{\log(n)}}{n^{1/2}\epsilon^{d/2+(2\wedge \rho)-3}})=O(\frac{\sqrt{\log(n)}}{n^{1/2}\epsilon^{d/2}})$ since $(2\wedge \rho)-3\leq -1<0$; for Case 1 and Case 2, it is controlled by $O(\frac{\sqrt{\log (n)}}{n^{1/2}\epsilon^{d/2}})+O(\frac{\sqrt{\log(n)}}{n^{1/2}\epsilon^{d/2+(2\wedge \rho)-3}})+O(\frac{\sqrt{\log(n)}}{n^{1/2}\epsilon^{d/2+(4\wedge \rho)-4}})=O(\frac{\sqrt{\log(n)}}{n^{1/2}\epsilon^{d/2}})$, which comes from the fact that $(4\wedge \rho)-4=0\wedge(\rho-4)\leq 0$.

As a result, when conditional on $\Omega$, for all $k=1,\ldots,n$, we have
\begin{align}
&\sum_{j=1}^N w_k(j)f(x_{k,j})-f(x_k)\\
=\,&
\left\{
\begin{array}{ll}
\displaystyle\frac{\mathbb{E}[F_1] - \mathbb{E}F_2^\top \mathcal{I}_{\epsilon^\rho}(\mathbb{E}F_3)\mathbb{E}F_4+O(\frac{\sqrt{\log (n)}}{n^{1/2}\epsilon^{d/2-1}})}
{\mathbb{E}[F_0] - \mathbb{E}F_2^\top \mathcal{I}_{\epsilon^\rho}(\mathbb{E}F_3)\mathbb{E}F_2+O(\frac{\sqrt{\log (n)}}{n^{1/2}\epsilon^{d/2}})} & \mbox{in Case 0} \\
\vspace{-4pt}&\\
\displaystyle\frac{\mathbb{E}[F_1] - \mathbb{E}F_2^\top \mathcal{I}_{\epsilon^\rho}(\mathbb{E}F_3)\mathbb{E}F_4+O(\frac{\sqrt{\log (n)}}{n^{1/2}\epsilon^{d/2+[(-1)\vee (0\wedge (\rho-4)]}})}
{\mathbb{E}[F_0] - \mathbb{E}F_2^\top \mathcal{I}_{\epsilon^\rho}(\mathbb{E}F_3)\mathbb{E}F_2+O(\frac{\sqrt{\log (n)}}{n^{1/2}\epsilon^{d/2}}) } & \mbox{in Case 1,2} \\
\vspace{-4pt}&
\end{array}
\right.\nonumber
\end{align}
which leads to
\begin{align}
&\sum_{j=1}^N w_k(j)f(x_{k,j})-f(x_k)\\
=\,&
\left\{
\begin{array}{ll}
Qf(x_k)-f(x_k)+O\Big(\frac{\sqrt{\log (n)}}{n^{1/2}\epsilon^{d/2-1}}\Big)&\mbox{in Case 0}\\
Qf(x_k)-f(x_k)+O\Big(\frac{\sqrt{\log (n)}}{n^{1/2}\epsilon^{d/2+[(-1)\vee (0\wedge (\rho-4)]}}\Big)&\mbox{in Case 1,2}
\end{array}
\right.\nonumber
\end{align}
where the equality comes from rewriting (\ref{Proof:Theorem1:FinalExpectationFormulation}) as
\begin{align}
Qf(x_k)-f(x_k)=\frac{\mathbb{E}F_1-\mathbb{E}F_2^\top \mathcal{I}_{\epsilon^\rho}(\mathbb{E}F_3) \mathbb{E}F_4}{\mathbb{E}F_0-\mathbb{E}F_2^\top \mathcal{I}_{\epsilon^\rho}(\mathbb{E}F_3) \mathbb{E}F_2},\nonumber
\end{align}
and the fact that $\mathbb{E}F_0$ is of order 1 and $\mathbb{E}F_1$ is of order $\epsilon^2$.
Hence, we finish the proof.

\section{Proofs of Technical Lemmas}\label{Section:LemmasProof}

To alleviate the notational load, we use the notation introduced in (\ref{Definition:Notation:SmallBallonTangentSpace}).

\subsection{Proof of Lemma \ref{Lemma:3}}

Let $\gamma(t)$ be the geodesic in $\iota(M)$ with $\gamma(0)=\iota(x)$. If $\gamma^{(i)}(0)$ denotes the $i$-th derivative of $\gamma(t)$ with respect to $t$ at $0$, then we have
\begin{align} 
\gamma(t)=&\gamma(0)+\gamma^{(1)}(0)t+\frac{1}{2}\gamma^{(2)}(0)t^2+\frac{1}{6}\gamma^{(3)}(0)t^3\label{expansiongamma}\\
&+\frac{1}{24}\gamma^{(4)}(0)t^4+\frac{1}{120}\gamma^{(5)}(0)t^5+O(t^6).\nonumber
\end{align}
Moreover, since $\gamma(t)$ is a geodesic, if we apply the product rule, we have 
\begin{align} \label{relation gamma}
& \gamma^{(1)}(0) \cdot  \gamma^{(1)} (0)=1, \\
& \gamma^{(2)}(0)  \cdot  \gamma^{(1)}(0)  =0 , \nonumber \\
& \gamma^{(2)}(0)  \cdot  \gamma^{(2)} (0) =-\gamma^{(3)} (0) \cdot  \gamma^{(1)}(0)  ,\nonumber \\
& 3\gamma^{(3)}(0)  \cdot  \gamma^{(2)} (0) =-\gamma^{(4)} (0) \cdot  \gamma^{(1)}(0)   ,\nonumber\\
& 4\gamma^{(4)}(0)  \cdot  \gamma^{(2)}(0) +3\gamma^{(3)} (0) \cdot  \gamma^{(3)} (0)  =-\gamma^{(5)}(0)  \cdot  \gamma^{(1)} (0) \nonumber\,,
\end{align}
where $\gamma^{(l)}$ is the $l$-th derivative of $\gamma$ and $l\in\mathbb{N}$.
From (\ref{expansiongamma}), we have
\begin{align} 
\|\gamma(t)-\gamma(0)\|_{\mathbb{R}^p}^2=&  \gamma^{(1)}(0) \cdot  \gamma^{(1)} (0) t^2+( \gamma^{(2)}(0)  \cdot  \gamma^{(1)}(0) )t^3\label{expansiongammasquare}\\
&+\big(\frac{1}{3}\gamma^{(3)} (0) \cdot  \gamma^{(1)}(0)+ \frac{1}{4}\gamma^{(2)}(0)  \cdot  \gamma^{(2)} (0)\big)t^4\nonumber\\
&+\big(\frac{1}{12}\gamma^{(4)} (0) \cdot  \gamma^{(1)}(0)+\frac{1}{6}\gamma^{(3)}(0)  \cdot  \gamma^{(2)} (0)\big)t^5 \nonumber \\
&\, +\big(\frac{1}{60} \gamma^{(5)}(0)  \cdot  \gamma^{(1)} (0)(0)+\frac{1}{24}\gamma^{(4)}(0)  \cdot  \gamma^{(2)}(0) +\frac{1}{36}\gamma^{(3)} (0) \cdot  \gamma^{(3)} (0) \big)t^6+O(t^7) .\nonumber 
\end{align}
If we substitute (\ref{relation gamma}) into (\ref{expansiongammasquare}), we have 
\begin{align} \label{expansiongammasquare2}
\|\gamma(t)-\gamma(0)\|_{\mathbb{R}^p}^2=& t^2 -\frac{1}{12} \gamma^{(2)}(0)  \cdot  \gamma^{(2)} (0) t^4 -\frac{1}{12} \gamma^{(3)}(0)  \cdot  \gamma^{(2)} (0) t^5 - \\
&\, \big(\frac{1}{40}\gamma^{(4)}(0)  \cdot  \gamma^{(2)}(0) +\frac{1}{45}\gamma^{(3)} (0) \cdot  \gamma^{(3)} \big)t^6+O(t^7).
\nonumber 
\end{align}
Therefore
\begin{align}
\tilde{t} & =\|\gamma(t)-\gamma(0)\|_{\mathbb{R}^p}= t-\frac{1}{24} \gamma^{(2)}(0)  \cdot  \gamma^{(2)} (0) t^3 -\frac{1}{24} \gamma^{(3)}(0)  \cdot  \gamma^{(2)} (0) t^4 \\
&\, -\big(\frac{1}{80}\gamma^{(4)}(0)  \cdot  \gamma^{(2)}(0) +\frac{1}{90}\gamma^{(3)} (0) \cdot  \gamma^{(3)} +\frac{1}{1152} (\gamma^{(2)}(0)  \cdot  \gamma^{(2)} (0))^2\big)t^5+O(t^6). \nonumber
\end{align}
By comparing the order, we have
\begin{align}
t = & \tilde{t} + \frac{1}{24} \gamma^{(2)}(0)  \cdot  \gamma^{(2)} (0) \tilde {t}^3 +\frac{1}{24} \gamma^{(3)}(0)  \cdot  \gamma^{(2)} (0) \tilde{t}^4 +(\frac{1}{80}\gamma^{(4)}(0)  \cdot  \gamma^{(2)}(0) \\
&\, +\frac{1}{90}\gamma^{(3)} (0) \cdot  \gamma^{(3)}+\frac{7}{1152} (\gamma^{(2)}(0)  \cdot  \gamma^{(2)} (0))^2) \tilde{t}^5+O(t^6). \nonumber
\end{align}
Finally, by applying Lemma \ref{Lemma:2} to  (\ref{expansiongamma})  with $\gamma(t)=\iota \circ \exp_x(\theta t)$ and substituting corresponding terms for $\gamma^{(l)}(0)$, the conclusion follows.

\subsection{Proof of Lemma \ref{Lemma:5}}

By Lemma \ref{Lemma:1}, Lemma \ref{Lemma:2} and Lemma \ref{Lemma:3}, 
\begin{align}
&\mathbb{E}[f(X)\chi_{B_{\epsilon}^{\mathbb{R}^p}(\iota(x))}(X)] \nonumber\\
=&\, \int_{\tilde{B}_{\epsilon}(x)} f(y)P(y) dV(y) \nonumber \\
=&\, \int_{S^{d-1}}\int_0^{\tilde{\epsilon}} (f(x) +\nabla_\theta f(x) t+\frac{1}{2} \nabla^2_{\theta,\theta} f(x) t^2+O(t^3)) \nonumber\\
& (P(x)+\nabla_\theta P(x) t+\nabla^2_{\theta,\theta} P(x) t^2 +O(t^3)) ( t^{d-1}- \frac{1}{6}\texttt{Ric}_{x}(\theta,\theta)t^{d+1}+O(t^{d+2})) dtd\theta  \nonumber\\
=&\,A_1+ B_1 +C_1+O(\epsilon^{d+4}),\nonumber
\end{align}
where 
\begin{align}
A_1&:=\int_{S^{d-1}}\int_0^{\tilde{\epsilon}} f(x)P(x) t^{d-1}dtd\theta\\
B_1&:=\int_{S^{d-1}}\int_0^{\tilde{\epsilon}} (\nabla_\theta f(x) P(x)+f(x)\nabla_\theta P(x)) t^d dtd\theta\nonumber\\
C_1&:=\int_{S^{d-1}}\int_0^{\tilde{\epsilon}} \Big[\frac{1}{6}f(x)P(x)\texttt{Ric}_x(\theta,\theta)+\nabla_\theta f(x)\nabla_\theta P(x) \nonumber\\
 &\qquad\qquad + \frac{1}{2}\nabla^2_{\theta,\theta} f(x) P(x)+ \frac{1}{2}\nabla^2_{\theta,\theta} P(x) f(x) \Big]  t^{d+1} dtd\theta\,,\nonumber
 \end{align}
the second equality holds by Lemma \ref{Lemma:3} and the last equality holds due to the symmetry of sphere. Indeed, the symmetry forces all terms of odd order contribute to the $\epsilon^{d+4}$ term; for example,  
\begin{align}
B_1=\,&\int_{S^{d-1}}\int_0^{\tilde{\epsilon}} (\nabla_\theta f(x) P(x)+f(x)\nabla_\theta P(x)) t^d dtd\theta\nonumber\\
=\,&\frac{1}{d+1}\int_{S^{d-1}} (\nabla_\theta f(x) P(x)+f(x)\nabla_\theta P(x))  \Big(\epsilon+\frac{1}{24}\|\Second_x(\theta,\theta)\|^2\epsilon^3+O(\epsilon^4)\Big)^{d+1} d\theta\nonumber\\
=\,&\frac{\epsilon^{d+1}}{d+1}\int_{S^{d-1}} (\nabla_\theta f(x) P(x)+f(x)\nabla_\theta P(x))  \Big(1+\frac{d+1}{24}\|\Second_x(\theta,\theta)\|^2\epsilon^2+O(\epsilon^3)\Big) d\theta\nonumber\\
=\,&O(\epsilon^{d+4})\nonumber
\end{align} 
since $\int_{S^{d-1}} (\nabla_\theta f(x) P(x)+f(x)\nabla_\theta P(x))  \big(1+\frac{d+1}{24}\|\Second_x(\theta,\theta)\|^2\epsilon^2\big)d\theta =0$.
The other even order terms could be expanded by a direct calculation. We have
\begin{align}
A_1=\,&\int_{S^{d-1}}\int_0^{\tilde{\epsilon}} f(x)P(x) t^{d-1}dtd\theta\nonumber\\
=\,&\frac{f(x)P(x)}{d}\int_{S^{d-1}}\big(\epsilon+\frac{1}{24}\|\Second_x(\theta,\theta)\|^2\epsilon^3+O(\epsilon^4)\big)^dd\theta\nonumber\\
=\,&\epsilon^d\frac{f(x)P(x)}{d}\int_{S^{d-1}}\big(1+\frac{d}{24}\|\Second_x(\theta,\theta)\|^2\epsilon^2+O(\epsilon^3)\big)d\theta\nonumber\\
=\,&\epsilon^d|S^{d-1}|f(x)P(x)\Big[\frac{1}{d}+\frac{\omega(x)}{24}\epsilon^{2}\Big]+O(\epsilon^{d+3}).\nonumber
\end{align}
A similar argument holds for $B_1$. By denoting $R_2(\theta):=\frac{1}{6}f(x)P(x)\texttt{Ric}_x(\theta,\theta)+\nabla_\theta f(x)\nabla_\theta P(x) + \frac{1}{2}\nabla^2_{\theta,\theta} f(x) P(x)+ \frac{1}{2}\nabla^2_{\theta,\theta} P(x) f(x)$, we have
\begin{align}
C_1=\,&\int_{S^{d-1}}\int_0^{\tilde{\epsilon}} R_2(\theta) t^{d+1} dtd\theta\nonumber\\
=\,&\frac{1}{d+2}\int_{S^{d-1}}\Big(\epsilon+\frac{1}{24}\|\Second_x(\theta,\theta)\|^2\epsilon^3+O(\epsilon^4)\Big)^{d+2} R_2(\theta) d\theta\nonumber\\
=\,&\frac{\epsilon^{d+2}}{d+2}\int_{S^{d-1}}\Big(1+\frac{d+2}{24}\|\Second_x(\theta,\theta)\|^2\epsilon^2+O(\epsilon^3)\Big) R_2(\theta) d\theta\nonumber\\
=\,&\frac{\epsilon^{d+2}}{d+2}\int_{S^{d-1}} R_2(\theta) d\theta + O(\epsilon^{d+4}).\nonumber
\end{align}
To proceed, note that by expressing $\theta$ in the local coordinate as $\theta^i\partial_i$, we have, for example, 
\begin{align}
&\int_{S^{d-1}}\nabla_\theta f(x)\nabla_\theta P(x)d\theta=\sum_{ij}\int_{S^{d-1}}\partial_if(x)\partial_jP(x)\theta^i\theta^jd\theta\nonumber\\
=\,&\sum_{i}\int_{S^{d-1}}\partial_if(x)\partial_iP(x)(\theta^i)^2d\theta=\frac{|S^{d-1}|}{d}\nabla f(x)\cdot \nabla P(x)\nonumber,
\end{align}
where the second equality holds since odd order terms disappear when integrated over the sphere, and the last equality holds since $\int_{S^{d-1}}(\theta^i)^2d\theta=\frac{1}{d}\int_{S^{d-1}}\sum_{i=1}^d(\theta^i)^2d\theta=\frac{|S^{d-1}|}{d}$ due to again the symmetry of the sphere. The same argument leads to
\begin{align}
&\int_{S^{d-1}}f(x)P(x)\texttt{Ric}_x(\theta,\theta) d\theta=\frac{|S^{d-1}|}{d}f(x)P(x)s(x)\nonumber\\
&\int_{S^{d-1}}\nabla^2_{\theta,\theta} f(x) P(x)d\theta=\frac{|S^{d-1}|}{d}f(x)\Delta P(x)\\
&\int_{S^{d-1}}\nabla^2_{\theta,\theta} P(x) f(x)d\theta=\frac{|S^{d-1}|}{d}P(x)\Delta f(x)\nonumber,
\end{align}
where $s(x)$ is the scalar curvature of $(M,g)$ at $x$.
As a result, we have
\begin{align}
C_1=&\frac{|S^{d-1}|}{d(d+2)}\Big[\frac{1}{2}P(x)\Delta f(x)+\frac{1}{2}f(x)\Delta P(x)\nonumber\\
&+\nabla f(x)\cdot \nabla P(x)+\frac{s(x)f(x)P(x)}{6}\Big]\epsilon^{d+2}+ O(\epsilon^{d+4})\nonumber.
\end{align}
By putting all the above together, we have
\begin{align}
\mathbb{E}&[f(X)\chi_{B_{\epsilon}^{\mathbb{R}^p}(\iota(x))}(X)]\,=\frac{|S^{d-1}|}{d}f(x)P(x)\epsilon^d+\frac{|S^{d-1}|}{d(d+2)}\Big[\frac{1}{2}P(x)\Delta f(x)+\frac{1}{2}f(x)\Delta P(x)\nonumber\\
&+\nabla f(x)\cdot \nabla P(x)+\frac{s(x)f(x)P(x)}{6}+\frac{d(d+2)\omega(x)f(x)P(x)}{24}\Big]\epsilon^{d+2}+O(\epsilon^{d+3}).\nonumber
\end{align}

Next, we evaluate $\mathbb{E}[(X-\iota(x))f(X)\chi_{B_{\epsilon}^{\mathbb{R}^p}(\iota(x))}(X)]$. Again, by Lemma \ref{Lemma:1}, Lemma \ref{Lemma:2} and Lemma \ref{Lemma:3}, we have 
\begin{align}
& \mathbb{E}[(X-\iota(x))f(X)\chi_{B_{\epsilon}^{\mathbb{R}^p}(\iota(x))}(X)] \label{Proof:Lemma4:EquationLabel1} \\
= &\, \int_{\tilde{B}_{\epsilon}(x)} (\iota(y)-\iota(x))f(y)P(y)dV(y)  \nonumber\\
=&\,\int_{S^{d-1}}\int_0^{\tilde{\epsilon}} (\iota_{*}\theta t +\frac{1}{2}\Second_{x}(\theta,\theta)t^2+\frac{1}{6}\nabla_{\theta} \Second_{x}(\theta,\theta) t^3++\frac{1}{24}\nabla_{\theta\theta} \Second_{x}(\theta,\theta) t^4+O(t^5))\nonumber\\
&\qquad\times(f(x) +\nabla_\theta f(x) t+\frac{1}{2} \nabla^2_{\theta,\theta} f(x) t^2+O(t^3)) \nonumber\\
&\qquad\times(P(x)+\nabla_\theta P(x) t+\frac{1}{2} \nabla^2_{\theta,\theta} P(x) t^2 +O(t^3)) \nonumber\\
&\qquad\times( t^{d-1}- \frac{1}{6}\texttt{Ric}_{x}(\theta,\theta)t^{d+1}+O(t^{d+2})) dtd\theta  \nonumber\\
=&\,A_2+B_2+C_2+D_2+O(\epsilon^{d+6}),\nonumber
\end{align}
where
\begin{align}
A_2&:=\int_{S^{d-1}}\int_0^{\tilde{\epsilon}} \iota_*\theta f(x)P(x)t^ddtd\theta\nonumber\\
B_2&:=\int_{S^{d-1}}\int_0^{\tilde{\epsilon}} \big[\iota_*\theta (\nabla_\theta f(x)P(x)+\nabla_\theta P(x)f(x))+ \frac{1}{2}\Second_x(\theta,\theta)f(x)P(x)\big]t^{d+1}dtd\theta,\nonumber\\
C_2&:=\int_{S^{d-1}}\int_0^{\tilde{\epsilon}} \big[\iota_*\theta (\nabla_\theta f(x)\nabla_\theta P(x)+\nabla^2_{\theta,\theta} P(x)f(x)+\nabla^2_{\theta,\theta} f(x)P(x))\nonumber\\
&\qquad\qquad+ \frac{1}{6}\nabla_\theta\Second_x(\theta,\theta)f(x)P(x)-\frac{1}{6}f(x)P(x)\texttt{Ric}_{x}(\theta,\theta)\big]t^{d+2}dtd\theta\nonumber
\end{align}
and
\begin{align}
D_2&:=\int_{S^{d-1}}\int_0^{\tilde{\epsilon}} 
\Big[\iota_*\theta \big(\frac{1}{6}\nabla^3_{\theta,\theta,\theta}f(x)P(x)+\frac{1}{6}\nabla^3_{\theta,\theta,\theta}P(x)f(x)+\frac{1}{2}\nabla^2_{\theta,\theta}f(x)\nabla_\theta P(x)\nonumber\\
&\qquad\qquad\qquad+\frac{1}{2}\nabla^2_{\theta,\theta}P(x)\nabla_\theta f(x)-\frac{1}{6}\texttt{Ric}_{x}(\theta,\theta)[f(x)\nabla_{\theta}P(x)+\nabla_{\theta}f(x)P(x)]\big)
\nonumber\\
&\qquad+\frac{1}{2}\Second_x(\theta,\theta)\big(\nabla_\theta f(x)\nabla_\theta P(x)+\frac{1}{2}[P(x)\nabla^2_{\theta,\theta} f(x)+f(x)\nabla^2_{\theta,\theta} P(x)]-\frac{1}{6}\texttt{Ric}_{x}(\theta,\theta)f(x)P(x)\big)\nonumber\\
&\qquad+\frac{1}{6}\nabla_\theta\Second_x(\theta,\theta)(P(x)\nabla_{\theta} f(x)+f(x)\nabla_{\theta} P(x)) +\frac{1}{24}f(x)P(x)\nabla_{\theta\theta} \Second_{x}(\theta,\theta)\Big]t^{d+3}dtd\theta\nonumber 
\end{align}
and the $O(\epsilon^{d+5})$ term disappears in the last equality due to the symmetry of the sphere.
The main difference between evaluating $\mathbb{E}[(X-\iota(x))f(X)\chi_{B_{\epsilon}^{\mathbb{R}^p}(\iota(x))}(X)]$ and $\mathbb{E}[f(X)\chi_{B_{\epsilon}^{\mathbb{R}^p}(\iota(x))}(X)]$ is the existence of $\iota(y)$ in the integrand in (\ref{Proof:Lemma4:EquationLabel1}). Clearly, $\mathbb{E}[(X-\iota(x))f(X)\chi_{B_{\epsilon}^{\mathbb{R}^p}(\iota(x))}(X)]$ is a vector while $\mathbb{E}[f(X)\chi_{B_{\epsilon}^{\mathbb{R}^p}(\iota(x))}(X)]$ is a scalar. Due to the curvature, $\iota(y)-\iota(x)$ does not always exist on $\iota_*T_xM$ for all $y\in \tilde{B}_\epsilon$, and we need to carefully trace the normal components.
By Lemma \ref{Lemma:4}, 
\begin{align}
A_2 =&\, \int_{S^{d-1}}\int_0^{\tilde{\epsilon}} f(x)P(x)\iota_*\theta t^ddtd\theta\nonumber\\
=\,&\ \frac{ f(x)P(x)}{d+1} \int_{S^{d-1}} \iota_*\theta (\epsilon+ \frac{1}{24} \|\Second_x(\theta,\theta)\|^2 \epsilon^3 +\frac{1}{24} \nabla_{\theta}\Second_x(\theta,\theta)  \cdot  \Second_x(\theta,\theta) \epsilon^4 +O(\epsilon^5))^{d+1}d\theta  \nonumber \\
=\,&\ \frac{ f(x)P(x)}{d+1} \int_{S^{d-1}}  \Big(\iota_*\theta \epsilon^{d+1}+\frac{d+1}{24}\iota_*\theta \|\Second_x(\theta,\theta)\|^2 \epsilon^{d+3}\nonumber\\
&\qquad\qquad\qquad+\frac{d+1}{24} \iota_*\theta \nabla_{\theta}\Second_x(\theta,\theta)  \cdot  \Second_x(\theta,\theta) \epsilon^{d+4}+O(\epsilon^{d+5})\Big) d\theta \nonumber \\
=\,&\ \frac{ f(x)P(x)}{24} \big[ \iota_* \int_{S^{d-1}} \theta \nabla_{\theta}\Second_x(\theta,\theta)  \cdot  \Second_x(\theta,\theta) d\theta \big] \epsilon^{d+4} +O(\epsilon^{d+5}) \nonumber \\
=\,&\ \frac{ f(x)P(x)|S^{d-1}|}{24} \iota_*\mathfrak{R}_0(x) \epsilon^{d+4} +O(\epsilon^{d+5})  \nonumber\,,
\end{align}
where the second last equality holds due to the symmetry of the sphere. We could see that $A_2=O(\epsilon^{d+4})$ and $A_2\in\iota_*T_xM$.  Similarly, we have $C_2=O(\epsilon^{d+6})$, but $C_2$ might not be on $\iota_*T_xM$ due to the term $\nabla_\theta\Second_x(\theta,\theta)f(x)P(x)$.

$B_2$ could be evaluated by a similar direct expansion. 
\begin{align}
B_2 =&\, \int_{S^{d-1}}\int_0^{\tilde{\epsilon}}\big[{P}(x)\iota_{*}\theta (\nabla{f}(x) \cdot \theta)+{f}(x)\iota_{*}\theta (\nabla{P}(x) \cdot \theta)\nonumber\\
&\qquad\qquad+ \frac{1}{2}\Second_x(\theta,\theta)f(x)P(x)\big] t^{d+1}  dtd\theta\nonumber  \\
=&\, \frac{\epsilon^{d+2}}{d+2}  \int_{S^{d-1}}\Big[P(x)\iota_{*}\theta \theta^\top \nabla {f}(x)+f(x)\iota_{*}\theta \theta^\top \nabla {P}(x) + \frac{P(x)}{2}\Second_x(\theta,\theta)f(x)\Big] d\theta  \nonumber \\
&+\frac{\epsilon^{d+4}}{24}  \int_{S^{d-1}}\|\Second_x(\theta,\theta)\|^2\Big[P(x) \iota_{*}\theta \theta^\top \nabla {f}(x)+f(x) \iota_{*}\theta \theta^\top \nabla {P}(x) + \frac{P(x)}{2}\Second_x(\theta,\theta)f(x)\Big] d\theta +O(\epsilon^{d+5} ) ,\nonumber 
\end{align}
which becomes
\begin{align}
&\, \frac{\epsilon^{d+2}}{d+2} \iota_*\int_{S^{d-1}} \theta\theta^\top  d\theta \big[P(x) \nabla {f}(x)+f(x) \nabla {P}(x)\big]+\frac{|S^{d-1}|}{2(d+2)}f(x)P(x)\mathfrak{N}_0(x)\epsilon^{d+2} \nonumber \\
&+\frac{\epsilon^{d+4}}{24} \Big( \iota_{*} \int_{S^{d-1}}\|\Second_x(\theta,\theta)\|^2\theta \theta^\top  d\theta \big[P(x) \nabla {f}(x)+f(x) \nabla {P}(x)\big]\nonumber\\
&\qquad\qquad\qquad+ \frac{f(x)P(x)}{2}\int_{S^{d-1}} \|\Second_x(\theta,\theta)\|^2\Second_x(\theta,\theta) d\theta \Big)+O(\epsilon^{d+5} ) \nonumber \\
=&\, \frac{|S^{d-1}|}{d+2} \Big[ \frac{\big[P(x)\iota_*\nabla {f}(x)+f(x)\iota_*\nabla {P}(x)\big]}{d}+\frac{f(x){P}(x)\mathfrak{N}_0(x)}{2}\Big]\epsilon^{d+2}\nonumber\\
&\,+\frac{|S^{d-1}|}{24} \Big[ \iota_{*} \mathfrak{M}_1(x)\big[P(x)\nabla {f}(x)+f(x)\nabla P(x)\big]+ \frac{f(x)P(x)\mathfrak{N}_1(x)}{2} \Big]\epsilon^{d+4}+O(\epsilon^{d+5}) \nonumber\,,
\end{align}
where the second equality holds by the same argument as that for $B_1$ and the fourth equality holds since $\int_{S^{d-1}} \theta\theta^\top  d\theta=\frac{|S^{d-1}|}{d}I_{d\times d}$.

For $D_2$, we only need to explicitly write down the $\epsilon^{d+4}$ term. 
By the same argument as that for $B_1$, we have
\begin{align}
D_2&=\int_{S^{d-1}}\int_0^{\tilde{\epsilon}} 
\Big[\iota_*\theta \big(\frac{1}{6}\nabla^3_{\theta,\theta,\theta}f(x)P(x)+\frac{1}{6}\nabla^3_{\theta,\theta,\theta}P(x)f(x)+\frac{1}{2}\nabla^2_{\theta,\theta}f(x)\nabla_\theta P(x)\nonumber\\
&\qquad\qquad\qquad+\frac{1}{2}\nabla^2_{\theta,\theta}P(x)\nabla_\theta f(x)-\frac{1}{6}\texttt{Ric}_{x}(\theta,\theta)[f(x)\nabla_{\theta}P(x)+\nabla_{\theta}f(x)P(x)]\big)
\nonumber\\
&\qquad+\frac{1}{2}\Second_x(\theta,\theta)\big(\nabla_\theta f(x)\nabla_\theta P(x)+\frac{1}{2}[P(x)\nabla^2_{\theta,\theta} f(x)+f(x)\nabla^2_{\theta,\theta} P(x)]-\frac{1}{6}\texttt{Ric}_{x}(\theta,\theta)f(x)P(x)\big)\nonumber\\
&\qquad+\frac{1}{6}\nabla_\theta\Second_x(\theta,\theta)(P(x)\nabla_{\theta} f(x)+f(x)\nabla_{\theta} P(x)) +\frac{1}{24}f(x)P(x)\nabla_{\theta\theta} \Second_{x}(\theta,\theta)\Big]t^{d+3}dtd\theta\,,\nonumber 
\end{align}
which becomes
\begin{align}
&\frac{\epsilon^{d+4}}{d+4}\int_{S^{d-1}}
\Big[\iota_*\theta \big(\frac{1}{6}\nabla^3_{\theta,\theta,\theta}f(x)P(x)+\frac{1}{6}\nabla^3_{\theta,\theta,\theta}P(x)f(x)+\frac{1}{2}\nabla^2_{\theta,\theta}f(x)\nabla_\theta P(x)\nonumber\\
&\qquad\qquad\qquad+\frac{1}{2}\nabla^2_{\theta,\theta}P(x)\nabla_\theta f(x)-\frac{1}{6}\texttt{Ric}_{x}(\theta,\theta)[f(x)\nabla_{\theta}P(x)+\nabla_{\theta}f(x)P(x)]\big)
\nonumber\\
&\qquad+\frac{1}{2}\Second_x(\theta,\theta)\big(\nabla_\theta f(x)\nabla_\theta P(x)+\frac{1}{2}[P(x)\nabla^2_{\theta,\theta} f(x)+f(x)\nabla^2_{\theta,\theta} P(x)]-\frac{1}{6}\texttt{Ric}_{x}(\theta,\theta)f(x)P(x)\big)\nonumber\\
&\qquad+\frac{1}{6}\nabla_\theta\Second_x(\theta,\theta)(P(x)\nabla_{\theta} f(x)+f(x)\nabla_{\theta} P(x)) +\frac{1}{24}f(x)P(x)\nabla_{\theta\theta} \Second_{x}(\theta,\theta)\Big]d\theta+O(\epsilon^{d+6})\nonumber\,.
\end{align}
We now simplify this complicated expression. The first term on the right hand side of $D_2$ becomes $|S^{d-1}|\mathfrak{J}_f(x)\in\iota_*T_xM$. For the second term on the right hand side of $D_2$, we rewrite it as
\begin{align}
&\int_{S^{d-1}}\Second_x(\theta,\theta)\Big[\nabla_\theta f(x)\nabla_\theta P(x)+\frac{1}{2}P(x)\nabla^2_{\theta,\theta} f(x)\nonumber\\
&\qquad\qquad+\frac{1}{2}f(x)\nabla^2_{\theta,\theta} P(x)-\frac{1}{6}\texttt{Ric}_{x}(\theta,\theta)f(x)P(x)\Big]d\theta\nonumber\\
=&\,\nabla f(x)^{\top}\int_{S^{d-1}}\Second_x(\theta,\theta)\theta \theta^{\top}d\theta \nabla P(x) +\frac{1}{2}P(x)\texttt{tr}\Big(\int_{S^{d-1}} \Second_x(\theta,\theta) \theta\theta^{\top} d\theta \nabla^2 f(x)\Big)\nonumber\\ 
&+\frac{1}{2}f(x)\texttt{tr}\Big(\int_{S^{d-1}} \Second_x(\theta,\theta) \theta\theta^{\top} d\theta \nabla^2 P(x)\Big)
-\frac{1}{6}f(x)P(x)\int_{S^{d-1}} \Second_x(\theta,\theta)\texttt{Ric}_{x}(\theta,\theta) d \theta \nonumber\\
=&\,|S^{d-1}|\Big[\nabla f(x)^{\top}\mathfrak{M}_2(x) \nabla P(x) +\frac{1}{2}\big(P(x)\texttt{tr}(\mathfrak{M}_2(x) \nabla^2 f(x))\nonumber\\
&\qquad\qquad+\frac{1}{2}f(x)\texttt{tr}(\mathfrak{M}_2(x) \nabla^2 P(x))\big)-\frac{1}{6}f(x)P(x)\mathfrak{N}_2(x)\Big]\nonumber,
\end{align}
which is in $(\iota_*T_xM)^\bot$, where we use the equality $u^{\top}Mv=\texttt{tr}(vu^{\top}M)$, where $M$ is a $d\times d$ matrix and $u,v\in\mathbb{R}^d$. For the third term on the right hand side of $D_2$, it simply becomes 
\begin{align}
&\int_{S^{d-1}}\nabla_\theta\Second_x(\theta,\theta)\big(P(x)\nabla_{\theta} f(x)+f(x)\nabla_{\theta} P(x)\big) d\theta\nonumber\\
=\,&P(x)\int_{S^{d-1}}\nabla_\theta\Second_x(\theta,\theta)\theta^{\top} d\theta\nabla f(x)+f(x)\int_{S^{d-1}}\nabla_\theta\Second_x(\theta,\theta)\theta^{\top} d\theta\nabla P(x)\nonumber\\
=\,&|S^{d-1}|\big[P(x)\mathfrak{R}_1(x)\nabla f(x)+f(x)\mathfrak{R}_1(x) \nabla P(x)\big],\nonumber
\end{align}
which might or might not in $\iota_*T_xM$.
Therefore, we have
\begin{align}
D_2=&\,\frac{|S^{d-1}|}{d+4}\Big(\mathfrak{J}_f(x)+\frac{1}{2}\nabla f(x)^{\top}\mathfrak{M}_2(x) \nabla P(x)-\frac{1}{12}f(x)P(x)\mathfrak{N}_2(x)\nonumber\\
&\quad +\frac{1}{4}\big[P(x)\texttt{tr}\big(\mathfrak{M}_2(x) \nabla^2 f(x)\big)+f(x)\texttt{tr}\big(\mathfrak{M}_2(x) \nabla^2 P(x)\big)\big]\nonumber\\
&\quad+\frac{1}{6}\big[P(x)\mathfrak{R}_1(x)\nabla f(x)+f(x)\mathfrak{R}_1(x) \nabla P(x)\big]+\frac{1}{24}f(x)P(x)\mathfrak{R}_2(x) \Big)\epsilon^{d+4}.\nonumber
\end{align}
As a result, by putting the above together, when expressing $\mathbb{E}[(X-\iota(x))f(X)\chi_{B_{\epsilon}^{\mathbb{R}^p}(\iota(x))}(X)]$ as $[\![v_1,v_2]\!]$, we have
\begin{align}
v_1=\,&J_{p,d}^{\top}\mathbb{E}[(X-\iota(x))f(X)\chi_{B_{\epsilon}^{\mathbb{R}^p}(\iota(x))}(X)]\label{Proof:LemmaD4:v1}\\
=\, &\frac{|S^{d-1}|}{d+2} \frac{J_{p,d}^{\top}\big[P(x)\iota_*\nabla {f}(x)+f(x)\iota_*\nabla {P}(x)\big]}{d}\epsilon^{d+2}\nonumber\\
&+\,\frac{|S^{d-1}|}{24} J_{p,d}^{\top}\iota_{*} \Big(\mathfrak{M}_1(x)\big[P(x)\nabla {f}(x)+f(x)\nabla P(x) \big]+f(x)P(x) \mathfrak{R}_0(x)\Big)\epsilon^{d+4}\nonumber\\
&+ \,\frac{|S^{d-1}|}{d+4}J_{p,d}^{\top}\Big(\mathfrak{J}_f(x) +\frac{1}{6}(P(x)\mathfrak{R}_1(x)\nabla f(x)+f(x)\mathfrak{R}_1(x) \nabla P(x)) \nonumber\\
&\qquad\qquad\qquad+\frac{1}{24}f(x)P(x)\mathfrak{R}_2(x)\Big)\epsilon^{d+4}+O(\epsilon^{d+5})\nonumber,
\end{align}
and
\begin{align}
v_2=\,&\bar J_{p,p-d}^{\top}\mathbb{E}[(X-\iota(x))f(X)\chi_{B_{\epsilon}^{\mathbb{R}^p}(\iota(x))}(X)]\label{Proof:LemmaD4:v2}\\
=\,& \frac{|S^{d-1}|}{d+2} \frac{f(x){P}(x)\bar J_{p,p-d}^{\top}\mathfrak{N}_0(x)}{2}\epsilon^{d+2}+\frac{|S^{d-1}|}{24}\frac{f(x)P(x)\bar J_{p,p-d}^{\top}\mathfrak{N}_1(x)}{2}\epsilon^{d+4}\nonumber\\
&+\,\frac{|S^{d-1}|}{d+4}\bar J_{p,p-d}^{\top}\Big(\frac{1}{2}\nabla f(x)^{\top}\mathfrak{M}_2(x) \nabla P(x) +\frac{1}{4}\big[P(x)\texttt{tr}(\mathfrak{M}_2(x) \nabla^2 f(x))+f(x)\texttt{tr}(\mathfrak{M}_2(x) \nabla^2 P(x))\big] \nonumber \\
&\qquad\qquad\qquad- \frac{1}{12}f(x)P(x)\mathfrak{N}_2(x)\Big)\epsilon^{d+4}\nonumber\\
&+\, \frac{|S^{d-1}|}{6(d+4)}\bar J_{p,p-d}^{\top}\big[P(x)\mathfrak{R}_1(x)\nabla f(x)+f(x)\mathfrak{R}_1(x) \nabla P(x)+\frac{1}{4}f(x)P(x)\mathfrak{R}_2(x) \big]\epsilon^{d+4}  +O(\epsilon^{d+5}).\nonumber
\end{align}

\subsection{Proof of Lemma \ref{Lemma:6}}

We show the lemma case by case, and we will recycle the equations shown in (\ref{Expansion:ProofTheorem2:X}). {Note that although the eigenvectors of $C_x$ might not be unique, we will see that the result is independent of the choice of the eigenvectors.}

\medskip
\underline{Case 0 in Condition \ref{Condition:1}.} In this case, by Proposition \ref{Proposition:2}, denote the $i$-th eigenvector of $C_x$ as $u_i=\begin{bmatrix}X_1J_{p,d}^\top e_i+O(\epsilon^2)\\  0_{(p-d)\times 1}\end{bmatrix}$, where $i=1,\ldots,d$ and $X_1\in O(d)$, and the corresponding eigenvalue $\lambda_i=\frac{|S^{d-1}|  {P}(x)}{d(d+2)}\epsilon^{d+2}+O(\epsilon^{d+4})$. By Lemma \ref{Lemma:5} and Lemma \ref{Proposition:2}, we have $1 \leq i \leq d$
\begin{align} 
& \mathbb{E}[(X-\iota(x))\chi_{B_{\epsilon}^{\mathbb{R}^p}(x_k)}(X)] \cdot u_i  \nonumber\\
=&\, \frac{|S^{d-1}|}{d+2}\Big[\!\!\!\Big[ \frac{J_{p,d}^\top \iota_*\nabla {P}(x)}{d}\epsilon^{d+2} +O(\epsilon^{d+4}),\,\frac{{P}(x)\bar{J}_{p,p-d}^\top \mathfrak{N}_0(x)}{2}\epsilon^{d+2}+O(\epsilon^{d+4})\Big]\!\!\!\Big] \cdot [\![X_1J_{p,d}^\top e_i+O(\epsilon^2), 0]\!] \nonumber \\
=&\, \frac{|S^{d-1}|}{d(d+2)}u_i^\top \iota_*\nabla P(x) \epsilon^{d+2}+O(\epsilon^{d+4}) \nonumber\,,
\end{align} 
where the last equality comes from the fact that $\langle J_{p,d}^\top \iota_*\nabla {P}(x), \,X_1J_{p,d}^\top e_i\rangle=e_i^\top J_{p,d}^\top X_1^\top \iota_*\nabla P(x)=u_i^\top \iota_*\nabla P(x)$.
Thus, 
\begin{align} 
\frac{\mathbb{E}[(X-\iota(x))\chi_{B_{\epsilon}^{\mathbb{R}^p}(x_k)}(X)] \cdot u_i}{\lambda_i+\epsilon^{d+\rho}} &= \frac{\frac{|S^{d-1}|}{d(d+2)}u_i^\top \iota_*\nabla {P}(x) \epsilon^{d+2}+O(\epsilon^{d+4})}{\frac{P(x)|S^{d-1}|}{d(d+2)} \epsilon^{d+2}+\epsilon^{d+\rho}+O(\epsilon^{d+4})}\nonumber\\
&=\frac{u_i^\top \iota_*\nabla {P}(x)}{{P}(x)+\frac{d(d+2)}{|S^{d-1}|}\epsilon^{\rho-2}}+O(\epsilon^2)\,,\nonumber
\end{align} 
where the last expansion holds for all chosen regularization order $\rho$. Specifically, when $\rho>2$, it is trivial; when $\rho\leq 2$, $\frac{u_i^\top \iota_*\nabla {P}(x) +O(\epsilon^{2})}{P(x)+\frac{d(d+2)}{|S^{d-1}|}\epsilon^{\rho-2}+O(\epsilon^{2})}-\frac{u_i^\top \iota_*\nabla {P}(x)}{{P}(x)+\frac{d(d+2)}{|S^{d-1}|}\epsilon^{\rho-2}}$ is of order smaller than $\epsilon^2$ since the denominator is dominated by $\epsilon^{\rho-2}$.
Hence, since $u_i$ for an orthonormal set, we have
\begin{align}
\mathbf{T}_{\iota(x)} &=\sum_{i=1}^d\frac{\mathbb{E}[(X-\iota(x))\chi_{B_{\epsilon}^{\mathbb{R}^p}(x_k)}(X)] \cdot u_i}{\lambda_i+\epsilon^{d+\rho}}u_i\nonumber \\
&=\sum_{i=1}^d\big(\frac{u_i^\top \iota_*\nabla {P}(x)}{{P}(x)+\frac{d(d+2)}{|S^{d-1}|}\epsilon^{\rho-2}}+O(\epsilon^2)\big)[\![X_1J_{p,d}^\top e_i+O(\epsilon^2),\, 0]\!]\nonumber\\
&=\Big[\!\!\!\Big[\frac{J_{p,d}^\top \iota_*\nabla {P}(x)}{{P}(x)+\frac{d(d+2)}{|S^{d-1}|}\epsilon^{\rho-2}},\,0\Big]\!\!\!\Big]+[\![O(\epsilon^2),\,0 ]\!].\nonumber
\end{align}

\medskip
\underline{Case 1 in Condition \ref{Condition:1}.}
The eigenvalues of $C_x$ are $\lambda_i=\frac{|S^{d-1}|  {P}(x)}{d(d+2)}(\epsilon^{d+2}+\lambda^{(2)}_i\epsilon^{d+4}+O(\epsilon^{d+6}))$ for $i=1,\ldots,d$ and $\lambda_i=\frac{|S^{d-1}|  {P}(x)}{d(d+2)}\lambda^{(2)}_i\epsilon^{d+4}+O(\epsilon^{d+6})$ for $i=d+1,\ldots,p$. The eigenvectors of $C_x$ are 
\[
u_i=\begin{bmatrix}X_1J_{p,d}^\top e_i\\  0_{(p-d)\times 1}\end{bmatrix}+\epsilon^2 U_x(0)\mathsf{S}e_i+O(\epsilon^4)=[\![ X_1J_{p,d}^\top e_i+O(\epsilon^2),\,O(\epsilon^2)]\!]
\] 
for $i=1,\ldots,d$, where $U_x(0)=\begin{bmatrix}X_1 & 0 \\ 0 & X_2 \end{bmatrix}\in O(p)$, and 
\[
u_i=\begin{bmatrix}0_{d\times 1} \\X_2 \bar{J}_{p,p-d}^\top e_i \end{bmatrix}+\epsilon^2 U_x(0)\mathsf{S}e_i+O(\epsilon^4)=[\![ J_{p,d}^\top U_x(0)\mathsf{S} e_i\epsilon^2+O(\epsilon^4),\,X_2\bar{J}_{p,p-d}^\top e_i+O(\epsilon^2)]\!]
\] 
for $i=d+1,\ldots,p$, $X_1 \in O(d)$ and $X_2 \in O(p-d)$.
For $1 \leq i \leq d$, we have
\begin{align} 
& \mathbb{E}[(X-\iota(x))\chi_{B_{\epsilon}^{\mathbb{R}^p}(x_k)}(X)] \cdot u_i  \nonumber\\
=&\, \frac{|S^{d-1}|}{d+2}\Big[\!\!\!\Big[ \frac{J_{p,d}^\top \iota_*\nabla {P}(x)}{d}\epsilon^{d+2} +O(\epsilon^{d+4}),\,\frac{{P}(x)\bar{J}_{p,p-d}^\top \mathfrak{N}_0(x)}{2}\epsilon^{d+2}+O(\epsilon^{d+4})\Big]\!\!\!\Big] \cdot [\![X_1J_{p,d}^\top e_i+O(\epsilon^2),\,O(\epsilon^2)]\!] \nonumber \\
=&\, \frac{|S^{d-1}|}{d(d+2)}\iota_*\nabla P(x)^\top J_{p,d}X_1J_{p,d}^\top e_i \epsilon^{d+2}+O(\epsilon^{d+4}) \nonumber,
\end{align} 
and hence
\begin{align} 
&\frac{\mathbb{E}[(X-\iota(x))\chi_{B_{\epsilon}^{\mathbb{R}^p}(x_k)}(X)] \cdot u_i}{\lambda_i+\epsilon^{d+\rho}}u_i\nonumber\\
=& \frac{\frac{|S^{d-1}|}{d(d+2)}(\iota_*\nabla P(x))^\top   J_{p,d}X_1J_{p,d}^\top e_i \epsilon^{d+2}+O(\epsilon^{d+4})}{\frac{|S^{d-1}|  {P}(x)}{d(d+2)}\epsilon^{d+2}+\epsilon^{d+\rho}+\lambda^{(2)}_i\epsilon^{d+4}+O(\epsilon^{d+6})}[\![ X_1J_{p,d}^\top e_i+O(\epsilon^2),\,O(\epsilon^2)]\!]\nonumber\\
=&\Big[\!\!\!\Big[\frac{(\iota_*\nabla P(x))^\top  J_{p,d}X_1J_{p,d}^\top e_i}{{P}(x)+\frac{d(d+2)}{|S^{d-1}|}\epsilon^{\rho-2}}  X_1J_{p,d}^\top e_i,\,0\Big]\!\!\!\Big]+[\![O(\epsilon^2),\,O(\epsilon^2)]\!].\nonumber
\end{align} 
Since columns of $J_{p,d}X_1$ form an orthonormal basis of $\iota_*T_xM$, we have
\begin{align} 
&\sum_{i=1}^d\frac{\mathbb{E}[(X-\iota(x))\chi_{B_{\epsilon}^{\mathbb{R}^p}(x_k)}(X)] \cdot u_i}{\lambda_i+\epsilon^{d+\rho}}u_i\nonumber\\
=&\sum_{i=1}^d \Big[\!\!\!\Big[\frac{(\iota_*\nabla P(x))^\top  J_{p,d}X_1J_{p,d}^\top e_i}{{P}(x)+\frac{d(d+2)}{|S^{d-1}|}\epsilon^{\rho-2}}  X_1J_{p,d}^\top e_i,\,0\Big]\!\!\!\Big]+[\![O(\epsilon^2),\,O(\epsilon^2)]\!] \nonumber \\
=& \Big[\!\!\!\Big[\frac{J_{p,d}^\top \iota_*\nabla P(x)}{{P}(x)+\frac{d(d+2)}{|S^{d-1}|}\epsilon^{\rho-2}}+O(\epsilon^2),\,O(\epsilon^2)  \Big]\!\!\!\Big]. \nonumber 
\end{align} 
For $d+1 \leq i \leq p$, similarly we have
\begin{align}
& \mathbb{E}[(X-\iota(x))\chi_{B_{\epsilon}^{\mathbb{R}^p}(x_k)}(X)] \cdot u_i \nonumber\\
=&\, \frac{|S^{d-1}|}{d+2}[\![ \frac{J_{p,d}^\top \iota_*\nabla {P}(x)}{d}\epsilon^{d+2} +O(\epsilon^{d+4}),\,\frac{{P}(x)\bar{J}_{p,p-d}^\top \mathfrak{N}_0(x)}{2}\epsilon^{d+2}+O(\epsilon^{d+4})]\!]\nonumber\\
&\qquad \cdot [\![J_{p,d}^\top U_x(0)\mathsf{S}e_i\epsilon^2+O(\epsilon^4),\,X_2\bar{J}_{p,p-d}^\top e_i +O(\epsilon^2)]\!]  \nonumber \\
=&\,\frac{|S^{d-1}|}{2(d+2)}{P}(x)\mathfrak{N}^\top _0(x)\bar{J}_{p,p-d}X_2\bar{J}_{p,p-d}^\top  e_i  \epsilon^{d+2}+ O(\epsilon^{d+4})  \nonumber\,,
\end{align}
and hence
\begin{align}
& \frac{\mathbb{E}[(X-\iota(x))\chi_{B_{\epsilon}^{\mathbb{R}^p}(x_k)}(X)] \cdot u_i}{\lambda_i+\epsilon^{d+\rho}}u_i \nonumber\\
=&\, \frac{ \mathfrak{N}^\top _0(x)\bar{J}_{p,p-d}X_2\bar{J}_{p,p-d}^\top  e_i  \epsilon^{d+2}+ O(\epsilon^{d+4})}{\frac{2}{d}\lambda^{(2)}_i\epsilon^{d+4}+\frac{2(d+2)}{P(x)|S^{d-1}|}\epsilon^{d+\rho}+O(\epsilon^{d+6})} [\![J_{p,d}^\top U_x(0)\mathsf{S}e_i\epsilon^2+O(\epsilon^4),\,X_2\bar{J}_{p,p-d}^\top e_i +O(\epsilon^2)]\!]\nonumber \\
=&\, 
\Big[\!\!\!\Big[\frac{\mathfrak{N}^\top _0(x)\bar{J}_{p,p-d}X_2\bar{J}_{p,p-d}^\top e_i}{\frac{2}{d}\lambda^{(2)}_i+\frac{2(d+2)}{P(x)|S^{d-1}|}\epsilon^{\rho-4}} J_{p,d}^\top U_x(0)\mathsf{S}e_i,\,\nonumber\\
&\qquad\qquad\frac{\mathfrak{N}^\top _0(x)\bar{J}_{p,p-d}X_2\bar{J}_{p,p-d}^\top e_{i}X_2\bar{J}_{p,p-d}^\top e_{i}}{\frac{2}{d}\lambda^{(2)}_i+\frac{2(d+2)}{P(x)|S^{d-1}|}\epsilon^{\rho-4}}\frac{1}{\epsilon^2}\Big]\!\!\!\Big]+[\![O(\epsilon^2),\,O(1)]\!].\nonumber
\end{align}
As a result, by the fact that $J_{p,d}^\top U_x(0)\mathsf{S}e_i=X_1\mathsf{S}_{12}\bar{J}_{p,p-d}^\top e_i $ when $i=d+1,\ldots,p$, in this case we have
\begin{align}
\mathbf{T}_{\iota(x)}
=&\,\Big[\!\!\!\Big[\frac{J_{p,d}^\top \iota_*\nabla P(x)}{{P}(x)+\frac{d(d+2)}{|S^{d-1}|}\epsilon^{\rho-2}}+\sum_{i=d+1}^p\frac{\mathfrak{N}^\top _0(x)\bar{J}_{p,p-d}X_2\bar{J}_{p,p-d}^\top e_i}{\frac{2}{d}\lambda^{(2)}_i+\frac{2(d+2)}{P(x)|S^{d-1}|}\epsilon^{\rho-4}} X_1\mathsf{S}_{12}\bar{J}_{p,p-d}^\top e_i,\nonumber\\
&\qquad\frac{1}{ \epsilon^2}\sum_{i=d+1}^p\frac{\mathfrak{N}^\top _0(x)\bar{J}_{p,p-d}X_2\bar{J}_{p,p-d}^\top e_{i}}{\frac{2}{d}\lambda^{(2)}_i+\frac{2(d+2)}{P(x)|S^{d-1}|}\epsilon^{\rho-4}}X_2\bar{J}_{p,p-d}^\top e_{i}\Big]\!\!\!\Big]+[\![O(\epsilon^2),\,O(1)]\!].\nonumber
\end{align}

\medskip
\underline{\textbf{Case 2 in Condition \ref{Condition:1}}.}
In this case, the eigenvalues of $C_x$ are 
\begin{align}
\lambda_i=\left\{
\begin{array}{ll}
\frac{|S^{d-1}|  {P}(x)}{d(d+2)}\epsilon^{d+2}+O(\epsilon^{d+4})&\mbox{ for }i=1,\ldots,d\\
\frac{|S^{d-1}|  {P}(x)}{d(d+2)}\lambda^{(2)}_i\epsilon^{d+4}+O(\epsilon^{d+6})&\mbox{ for }i=d+1,\ldots,p-l,\\ 
\frac{|S^{d-1}|  {P}(x)}{d(d+2)}\lambda^{(4)}_i\epsilon^{d+6}+O(\epsilon^{d+8})&\mbox{ for }i=p-l+1,\ldots,p.
\end{array}\right.\nonumber
\end{align}  
Adapt notations from Proposition \ref{Proposition:2} and use
\begin{align}
U_x(0)&=\begin{bmatrix}
X_1 & 0 & 0\\
0 & X_{2,1} & 0\\
0 & 0 & X_{2,2} \\
\end{bmatrix}\in O(p)\,,\quad
\mathsf{S}=
\begin{bmatrix}
\mathsf{S}_{11} & \mathsf{S}_{12,1} & \mathsf{S}_{12,2} \\ 
\mathsf{S}_{21,1} & \mathsf{S}_{22,11}  & \mathsf{S}_{22,12} \\
\mathsf{S}_{21,2} & \mathsf{S}_{22,21} & \mathsf{S}_{22,22}
\end{bmatrix}\in\mathfrak{o}(p),\nonumber
\end{align}
where $X_1 \in O(d)$, $X_{2,1} \in O(p-d-l)$ and $X_{2,2} \in O(l)$. The eigenvectors of $C_x$, on the other hand, are $u_i=\begin{bmatrix}X_1J_{p,d}^\top e_i\\  0_{(p-d)\times 1}\end{bmatrix}+\epsilon^2 U_x(0)\mathsf{S}e_i+O(\epsilon^4)$ 
 for $i=1,\ldots,d$, $u_i=\begin{bmatrix}0_{d\times 1} \\X_{2,1}\tilde{J}^\top e_i \end{bmatrix}+\epsilon^2U_x(0)\mathsf{S}e_i+O(\epsilon^4)$ for $i=d+1,\ldots,p-l$, and $u_i=\begin{bmatrix}0_{d\times 1} \\ X_{2,2}  \bar{J}_{p,l}^\top e_i \end{bmatrix}+\epsilon^2U_x(0)\mathsf{S}e_i+O(\epsilon^4)$ for $i=p-l+1,\ldots,p$.  

Similar to Case 1, we could evaluate $\frac{\mathbb{E}[(X-\iota(x))\chi_{B_{\epsilon}^{\mathbb{R}^p}(x_k)}(X)] \cdot u_i}{\lambda_i+\epsilon^{d+\rho}}u_i$ for $i=1,\ldots, p-l$, and have
\begin{align}
& \sum_{i=1}^{p-l} \frac{\mathbb{E}[(X-\iota(x))\chi_{B_{\epsilon}^{\mathbb{R}^p}(\iota(x_k))}(X)] \cdot u_i}{\lambda_i+\epsilon^{d+\rho}}u_i \nonumber\\
=&\,\Big[\!\!\!\Big[\frac{J_{p,d}^\top \iota_*\nabla P(x)}{{P}(x)+\frac{d(d+2)}{|S^{d-1}|}\epsilon^{\rho-2}}+\sum_{i=d+1}^{p-l}\frac{\mathfrak{N}^\top _0(x)\tilde{J}X_{2,1}\tilde{J}^\top e_i}{\frac{2}{d}\lambda^{(2)}_i+\frac{2(d+2)}{P(x)|S^{d-1}|}\epsilon^{\rho-4}}X_{2,1}\mathsf{S}_{12,1}\tilde{J}^\top e_i,\nonumber\\
&\qquad\frac{1}{\epsilon^2}\sum_{i=d+1}^{p-l}\frac{\mathfrak{N}^\top _0(x)\tilde{J}X_{2,1}\tilde{J}^\top e_{i}}{\frac{2}{d}\lambda^{(2)}_i+\frac{2(d+2)}{P(x)|S^{d-1}|}\epsilon^{\rho-4}}X_{2,1} \tilde{J}^\top e_i\Big]\!\!\!\Big]+[\![O(\epsilon^2),\,O(1)]\!]. \nonumber
\end{align}
For $p-l+1 \leq i \leq p$, base on Proposition \ref{Lemma:5.1}, we have
\begin{align}
\mathfrak{N}^\top _0(x) \bar{J}_{p,p-d}\begin{bmatrix}X_{2,1} & 0 \\ 0 & X_{2,2} \end{bmatrix} \bar{J}_{p,p-d}^\top e_i =\mathfrak{N}^\top _0(x)\bar J_{p,l}X_{2,2}\bar J_{p,l}^\top e_{i}=0\,,\label{Proof:Lemma6:Case2:Cancel}
\end{align}
and hence 
\begin{align}
& \mathbb{E}[(X-\iota(x))\chi_{B_{\epsilon}^{\mathbb{R}^p}(\iota(x_k))}(X)] \cdot u_i\label{Definition:Alphai:ProofC6}\\
=&\, \frac{|S^{d-1}|}{d+2}\Big[\!\!\!\Big[ \frac{J_{p,d}^\top \iota_*\nabla {P}(x)}{d}\epsilon^{d+2} +O(\epsilon^{d+4}),\,\frac{{P}(x)\bar{J}_{p,p-d}^\top \mathfrak{N}_0(x)}{2}\epsilon^{d+2}+O(\epsilon^{d+4})\Big]\!\!\!\Big]\nonumber\\
&\qquad \cdot [\![X_1\mathsf{S}_{12,2}\bar J_{p,l}^\top e_{i}\epsilon^2+O(\epsilon^4),\,X_{2,2}  \bar{J}_{p,l}^\top e_i +O(\epsilon^2)]\!]  \nonumber \\
=&\, \alpha_i \epsilon^{d+4}+ O(\epsilon^{d+6})  \nonumber ,
\end{align}
where we use the fact that $J_{p,d}^\top U_x(0)\mathsf{S}e_i=X_1\mathsf{S}_{12,2}\bar J_{p,l}^\top e_{i}$ when $i=p-l+1,\ldots,p$, and $\alpha_i\in\mathbb{R}$ is the coefficient of the order $\epsilon^{d+4}$ term. Note that the $\epsilon^{d+2}$ term disappears due to (\ref{Proof:Lemma6:Case2:Cancel}). We mention that since $\alpha_i$ will be canceled out in the main Theorem, we do not spell it out explicitly. 
Therefore,
\begin{align}
& \sum_{i=p-l+1}^{p} \frac{\mathbb{E}[(X-\iota(x))\chi_{B_{\epsilon}^{\mathbb{R}^p}(\iota(x_k))}(X)] \cdot u_i}{\lambda_i+\epsilon^{d+\rho}}u_i\nonumber \\
=&\, \sum_{i=p-l+1}^{p}\frac{\alpha_i \epsilon^{d+4}+ O(\epsilon^{d+6}) }{\frac{|S^{d-1}|  {P}(x)}{d(d+2)}\lambda^{(4)}_i\epsilon^{d+6}+\epsilon^{d+\rho}+O(\epsilon^{d+8})} \nonumber\\
&\qquad\qquad\times [\![X_1\mathsf{S}_{12,2}\bar J_{p,l}^\top e_{i}\epsilon^2+O(\epsilon^4),\, X_{2,2} \bar{J}_{p,l}^\top e_i +O(\epsilon^2)]\!]\nonumber \\
=&\, 
\sum_{i=p-l+1}^{p}\frac{\alpha_i }{\frac{|S^{d-1}|  {P}(x)}{d(d+2)}\lambda^{(4)}_i+\epsilon^{\rho-6}} \Big[\!\!\!\Big[X_1\mathsf{S}_{12,2}\bar J_{p,l}^\top e_{i},\,X_{2,2} \bar{J}_{p,l}^\top e_i\frac{1}{\epsilon^2}\Big]\!\!\!\Big] +[\![O(\epsilon^2),\,O(1)]\!].\nonumber
\end{align}
As a result, in this case we have
\begin{align}
\mathbf{T}_{\iota(x)}
=&\,[\![v_1,\,v_2]\!]+[\![O(\epsilon^2),\,O(1)]\!],\nonumber
\end{align}
where 
\begin{align}
v_1=\,&\frac{J_{p,d}^\top \iota_*\nabla P(x)}{{P}(x)+\frac{d(d+2)}{|S^{d-1}|}\epsilon^{\rho-2}}+\sum_{i=d+1}^{p-l}\frac{\mathfrak{N}^\top _0(x)\tilde{J}X_{2,1}\tilde{J}^\top e_i}{\frac{2}{d}\lambda^{(2)}_i+\frac{2(d+2)}{P(x)|S^{d-1}|}\epsilon^{\rho-4}}X_{2,1}\mathsf{S}_{12,1}\tilde{J}^\top e_i\nonumber\\
&\qquad+\sum_{i=p-l+1}^p\frac{\alpha_i }{\frac{|S^{d-1}|  {P}(x)}{d(d+2)}\lambda^{(4)}_i+\epsilon^{\rho-6}} X_1\mathsf{S}_{12,2}\bar J_{p,l}^\top e_{i}\nonumber
\end{align}
and
\begin{align}
v_2=\,&\frac{1}{\epsilon^2}\sum_{i=d+1}^{p-l}\frac{\mathfrak{N}^\top _0(x)\tilde{J}X_{2,1}\tilde{J}^\top e_{i}}{\frac{2}{d}\lambda^{(2)}_i+\frac{2(d+2)}{P(x)|S^{d-1}|}\epsilon^{\rho-4}}X_{2,1}\tilde{J}^\top e_i\nonumber\\
&\quad+\frac{1}{\epsilon^2}\sum_{i=p-l+1}^p\frac{\alpha_i}{\frac{|S^{d-1}|  {P}(x)}{d(d+2)}\lambda^{(4)}_i+\epsilon^{\rho-6}}X_{2,2} \bar{J}_{p,l}^\top e_i.\nonumber
\end{align}

\section{Calculation of Examples}\label{Section:Calculation}

\subsection{Sphere}
The calculation flow could serve as a simplified proof of Theorem \ref{Theorem:t1} under the special manifold setup, so we provide the details here. Consider the unit sphere $S^{p-1} \subset \mathbb{R}^p$. We assume that the center of the sphere is at $[0,\cdots,0, 1]$ the data set $\{x_i\}_{i=1}^n$ is uniformly sampled from $S^{p-1}$, and $x_k$ is at the origin. 
To simplify the calculation, for $v\in\mathbb{R}^p$, denote $v_1\in \mathbb{R}^{p-1}$ to be the first $p-1$ coordinates of $v$ and $v_2\in\mathbb{R}$ to be the last coordinate of $v$, and use the notation
$v=[\![v_1,\,v_2]\!]$.
We  parametrize $S^{p-1}\setminus [0,\cdots,0, 2] $ by the normal coordinates at $x_k$ via
\begin{equation}
\theta t \rightarrow [\![ \theta \sin(t), 1-\cos(t) ]\!]\in S^{p-1}\setminus [0,\cdots,0, 2]\,,\nonumber
\end{equation} 
where $\theta \in S^{p-2} \subset T_{x_k}S^{p-1}\approx \mathbb{R}^{p-1}$ and $t \in [0, \pi)$ is the geodesic distance. The volume form is 
\begin{align}
dV=\sin^{p-2}(t)dt d\theta\,. \nonumber
\end{align}

Denote $r:=r(\epsilon)$ to be the radius of the ball $\exp_{x_k}^{-1}(B_{\epsilon}^{\mathbb{R}^p}(x_k) \cap S^{p-1})$ in $T_{x_k}S^{p-1}$, where $\epsilon$ is assumed to be sufficiently small.
By a direct calculation, we have
\begin{align}
& \mathbf{E}[XX^\top\chi_{B_{\epsilon}^{\mathbb{R}^p}(x_k)}(X)]\nonumber  \\
=&\,  
\begin{bmatrix}
\displaystyle \int_{S^{p-2}}\int_{0}^{r} \theta \theta^\top \sin^2(t) \sin^{p-2}(t) dt d\theta  &  \displaystyle \int_{S^{p-2}}\int_{0}^{r} \theta^\top (\sin(t)-\sin(t)\cos(t)) \sin^{p-2}(t) dt d\theta  \\
\\
\displaystyle \int_{S^{p-2}}\int_{0}^{r} \theta (\sin(t)-\sin(t)\cos(t)) \sin^{p-2}(t) dt d\theta  &  \displaystyle \int_{S^{p-2}}\int_{0}^{r} (1-\cos(t))^2  \sin^{p-2}(t) dt d\theta 
\end{bmatrix} \nonumber\,.
\end{align}
Since
$\int_{S^{p-2}} \theta \theta^\top  d\theta  =\frac{|S^{p-2}|}{p-1} I_{(p-1) \times (p-1)}$  and $\int_{S^{p-2}} \theta d\theta  =0$,
we conclude that
\begin{align}
C_{x_k}=&\, \mathbf{E}[XX^\top\chi_{B_{\epsilon}^{\mathbb{R}^p}(x_k)}(X)] \nonumber \\
=&\,  
\begin{bmatrix}
\displaystyle\big(\frac{|S^{p-2}|}{p-1} \int_{0}^{r}  \sin^p(t)  dt \big)  I_{(p-1) \times (p-1)} & 0\\
\\
0 & \displaystyle |S^{p-2}| \int_{0}^{r} (1-\cos(t))^2  \sin^{p-2}(t) dt \\
\end{bmatrix} \nonumber,
\end{align}
which is a diagonal matrix containing the eigenvalues of $C_{x_k}$, and we can choose $\{e_i\}_{i=1}^p$ to be its orthonormal eigenvectors.
Next, we have
\begin{align}
 \mathbf{E}[X\chi_{B_{\epsilon}^{\mathbb{R}^p}(x_k)}(X)]  =&\,   \big[\!\!\big[ \int_{S^{p-2}}\int_{0}^{r} \theta \sin^{p-1}(t)  dt d\theta  , \,  \int_{S^{p-2}}\int_{0}^{r} (1-\cos(t))\sin^{p-2}(t) dt d\theta  \big]\!\!\big]  \nonumber \\
=&\,    [\![ 0  , \,  |S^{p-2}| \int_{0}^{r} (1-\cos(t))\sin^{p-2}(t) dt ]\!].  \nonumber
\end{align}
We now choose $\rho=8$. Therefore, by definition,
\begin{align}
\mathbf{T}_{x_k}&\,=\mathcal{I}_{\epsilon^{p+5}}(C_{x_k})\big[\mathbb{E}X\chi_{B_{\epsilon}^{\mathbb{R}^p}(x_k)}\big]=\big[\!\!\big[ 0, \,\frac{\int_{0}^{r} (1-\cos(t))\sin^{p-2}(t) dt}{\int_{0}^{r} (1-\cos(t))^2  \sin^{p-2}(t) dt+\epsilon^{p+7}} \big]\!\!\big]\nonumber\\
&\,=\big[\!\!\big[ 0, \,\frac{\int_{0}^{r} (1-\cos(t))\sin^{p-2}(t) dt}{\int_{0}^{r} (1-\cos(t))^2  \sin^{p-2}(t) dt+r^{p+7}+O(r^{p+9})} \big]\!\!\big],\nonumber
\end{align} 
where the last equality holds since $r=r(\epsilon)=\epsilon+O(\epsilon^3)$ and hence $\epsilon^{p+7}=r^{p+7}+O(r^{p+9})$. 
Thus, the kernel centered at $x_k=0$ and evaluated at $y=[\![ \theta \sin(t), 1-\cos(t) ]\!]\in \mathbb{R}^p$ satisfies
\begin{align} 
&\,K_{\texttt{LLE}}(x_k,y) =\, 1-[\![ \theta \sin(t), 1-\cos (t) ]\!] \cdot \mathbf{T}_{x_k}\nonumber\\
 =&\, 1- (1-\cos (t))\frac{\int_{0}^{r} (1-\cos(t))\sin^{p-2}(t) dt}{\int_{0}^{r} (1-\cos(t))^2  \sin^{p-2}(t) dt+r^{p+7}+O(r^{p+9})}  \nonumber \\
  =&\, 1-(1-\cos(t)) \bigg(\frac{2(p+3)}{(p+1)r^2}+(\frac{p^2+14p-3}{6(p+1)(p+5)}) +O(r^2)\bigg) \nonumber.
\end{align}
Suppose $f \in C^5 (S^{p-1})$, we are going to calculate $\frac{\int K_{\texttt{LLE}}(x_k,y)f(y) dV(y)}{\int K_{\texttt{LLE}}(x_k,y) dV(y)}-f(x_k)$.
The evaluation of $\int K_{\texttt{LLE}}(x_k,y) dV(y)$ is direct, and we have
\begin{align} 
 &\int K_{\texttt{LLE}}(x_k,y) dV(y)\nonumber \\
=&\,  \int_{S^{p-2}}\int_{0}^{r}  \bigg( 1-(1-\cos(t)) \bigg(\frac{2(p+3)}{(p+1)r^2}+(\frac{p^2+14p-3}{6(p+1)(p+5)}) +O(r^2)\bigg) \bigg) \sin^{p-2}(t) dt d\theta  \nonumber \\
=&\, \Big(\frac{4|S^{p-2}|}{(p+1)(p^2-1)}\Big)r^{p-1}+O(r^{p+1}) \nonumber\,.
\end{align} 
On the other hand, we have
\begin{align}
&\int K_{\texttt{LLE}}(x_k,y)(f(y)-f(x_k)) dV(y) \nonumber\\
=&\,  \int_{S^{p-2}}\int_{0}^{r} (\nabla_\theta f(x_k) t + \frac{1}{2}\nabla^2_{\theta\theta} f(x_k) t^2+\frac{1}{6}\nabla^3_{\theta\theta\theta} f(x_k) t^3+\frac{1}{24}\nabla^4_{\theta\theta\theta\theta}  f(x_k) t^4+O(t^5))  \nonumber \\
&\quad \times\bigg(1-(1-\cos(t)) \Big[\frac{2(p+3)}{(p+1)r^2}+(\frac{p^2+14p-3}{6(p+1)(p+5)}) +O(r^2)\Big] \bigg)\sin^{p-2}(t) dt d\theta  \nonumber .
\end{align}
We calculate each part in the above integration by using the symmetry of sphere $S^{p-2}$ in the tangent space. Specifically, we have
\begin{align}
& \int_{S^{p-2}}\int_{0}^{r} \Big(\nabla_\theta f(x_k) t + \frac{1}{2}\nabla^2_{\theta\theta} f(x_k) t^2+\frac{1}{6}\nabla^3_{\theta\theta\theta} f(x_k) t^3 \nonumber\\
&\qquad+\frac{1}{24}\nabla^4_{\theta\theta\theta\theta}  f(x_k) t^4+O(t^5)\Big) \sin^{p-2}(t) dt d\theta  \nonumber\\
= &\, \frac{\int_{S^{p-2}} \nabla^2_{\theta\theta} f(x_k)  d\theta }{2(p+1)}r^{p+1}+\big(\frac{\int_{S^{p-2}} \nabla^4_{\theta\theta\theta\theta}  f(x_k) d\theta }{24(p+3)}-\frac{(p-2)\int_{S^{p-2}} \nabla^2_{\theta\theta} f(x_k)  d\theta }{12(p+3)}\big)r^{p+3}+O(r^{p+5}) \nonumber 
\end{align}
and
\begin{align}
& \int_{S^{p-2}}\int_{0}^{r} \Big(\nabla_\theta f(x_k) t + \frac{1}{2}\nabla^2_{\theta\theta} f(x_k)  t^2+\frac{1}{6}\nabla^3_{\theta\theta\theta} f(x_k)  t^3  \nonumber\\
&\qquad+\frac{1}{24}\nabla^4_{\theta\theta\theta\theta}  f(x_k) t^4+O(t^5)\Big)(1-\cos(t)) \sin^{p-2}(t) dt d\theta \nonumber \\
= &\, \frac{\int_{S^{p-2}} \nabla^2_{\theta\theta} f(x_k)  d\theta }{4(p+3)}r^{p+3}+\frac{\int_{S^{p-2}} \nabla^4_{\theta\theta\theta\theta}  f(x_k) -(2p-3)\nabla^2_{\theta\theta} f(x_k) d\theta }{48(p+5)}r^{p+5}+O(r^{p+7}) \nonumber.
\end{align}
Due to $\frac{2(p+3)}{(p+1)r^2}$, the term of order $r^{p+1}$ is cancelled and we obtain
\begin{align}
&\int K_{\texttt{LLE}}(x_k,y)(f(y) -f(x_k)) dV(y) \nonumber  \\
=&\, \frac{-1}{6(p+1)(p+3)(p+5)}\bigg(\int_{S^{p-2}} \nabla^4_{\theta\theta\theta\theta}  f(x_k) d\theta +\int_{S^{p-2}}\nabla^2_{\theta\theta}  f(x_k) d\theta \bigg) r^{p+3}+O(r^{p+5}). \nonumber
\end{align}
We use the fact that $r=r(\epsilon)=\epsilon+O(\epsilon^3)$ and summarize the result as follows:
\begin{align}
& \frac{\int K_{\texttt{LLE}}(x_k,y)f(y) dV(y) }{\int K_{\texttt{LLE}}(x_k,y) dV(y)}-f(x_k)
= \frac{\int  K_{\texttt{LLE}}(x_k,y)(f(y)-f(x_k)) dV(y) }{\int K_{\texttt{LLE}}(x_k,y) dV(y) }  \nonumber  \\
=&\,\frac{-(p^2-1)}{24|S^{p-2}|(p+3)(p+5)}\bigg(\int_{S^{p-2}} \nabla^4_{\theta\theta\theta\theta}  f(x_k) d\theta +\int_{S^{p-2}} \nabla^2_{\theta\theta}  f(x_k) d\theta \bigg) \epsilon^4+O(\epsilon^6). \nonumber
\end{align}
Finally, if we use formulas
\begin{align}
\left\{
\begin{array}{ll}
\displaystyle\int_{S^{p-2}} x_i^4 d\theta= \frac{3}{p+1}|S^{p-2}| &\\
\displaystyle\int_{S^{p-2}} x_i^2x_j^2 d\theta= \frac{1}{p+1}|S^{d-2}| & i \not=j \\
\displaystyle\int_{S^{p-2}} x_k^2x_ix_j d\theta=0 & i \not=j\,,
\end{array}\right.\label{Proof:Expression:FourthOrderSpherical}
\end{align}
we obtain the expansion (\ref{Example:Sp:Expansion})

\subsection{Torus}\label{Section:Example:Torus}
Consider the torus $\mathbb{T}^2 \subset R^3$, which is the level set of
\begin{equation}
(1-\sqrt{z^2+y^2})^2+x^2=\frac{1}{4},\nonumber
\end{equation}
where $(x,y,z)\in \mathbb{R}^3$.
In other words, the distance from the center of the tube to the center of the torus is $1$, and the radius of the tube is $\frac{1}{2}$. Suppose the data set $\{x_i\}_{i=1}^n$ is sampled from $\mathbb{T}^2$ based on a p.d.f. $P$ and $\epsilon$ is sufficiently small. Let $\{e_1,e_2,e_3\}$ be the standard orthonormal basis of $\mathbb{R}^3$. We calculate the asymptotical result of the LLE at two different typical points on the torus by choosing $\rho=5$.

First, we consider $x_k=(0,0,-\frac{3}{2})$.  We identify $T_{x_k}\mathbb{T}^2$ as the subspace of $\mathbb{R}^3$ which is generated by $\{e_1, e_2\}$. Observe that $e_3$ is the unit normal vector at $x_k$. 
Note that $B_{\epsilon}^{\mathbb{R}^3}(x_k) \cap \mathbb{T}^2$ can be parametrized locally as
\begin{align}
\phi(x,y): D(0) \subset T_{x_k}\mathbb{T}^2 \rightarrow B_{\epsilon}^{\mathbb{R}^3}(x_k) \cap \mathbb{T}^2,\nonumber
\end{align}
where $D(0)$ is the unit disk centered at $0$, 
so that $\phi(0,0)=x_k$ and $\phi(x,y)=(x,y,z)$, where $z=z(x,y)=-\sqrt{\Big(1+\frac{1}{2}\sqrt{1-4x^2}\Big)^2-y^2}$.
Thus, locally we have
\begin{equation}
z=-\frac{3}{2}+x^2+\frac{1}{3}y^2+x^4+\frac{2}{9}x^2y^2+\frac{1}{27}y^4+\mbox{[higher order terms]}.\nonumber
\end{equation}
Let $\theta=(x,y) \in S^1 \subset T_{x_k}\mathbb{T}^2$. Since $T_{x_k}\mathbb{T}^2$ is identified as the subspace of $\mathbb{R}^3$, which is generated by $\{e_1, e_2\}$, we have
\begin{align}
& \Second_{x_k}(\theta,\theta)=(2x^2+\frac{2}{3}y^2)e_3\,,\quad \langle \nabla_{\theta}\Second_{x_k}(\theta,\theta),e_3 \rangle =0\,,\nonumber \\
& \mathfrak{N}_0(x_k)=\frac{1}{2\pi}\int_{S^1}\Second_{x_k}(\theta,\theta) d\theta=\frac{4}{3}e_3\,, \quad \int_{S^1} \|\Second_{x_k}(\theta,\theta)\|^2 d\theta= 4\pi.\nonumber
\end{align}
These quantities, when plugged in to Proposition \ref{Proposition:1}, lead to 
\begin{equation}
\mathbf{E}[(X-x_k)(X-x_k)^\top\chi_{B_{\epsilon}^{\mathbb{R}^3}(x_k)}(X-x_k)] =  \frac{\pi P(x_k)}{4} \epsilon^{4} \Big({
\left[ \begin{array}{cc}
I_{d \times d} & 0  \\
0& 0  \\
\end{array}
\right ]}+\epsilon^{2}
{\left[ \begin{array}{cc}
M^{11} & 0  \\
0 &  \frac{2}{3} \\
\end{array}
\right ]}
+O(\epsilon^{4})\Big).\nonumber
\end {equation}
Take $f \in C^3(\mathbb{T}^2)$. We parametrize $T_{x_k}\mathbb{T}^2$ by Euclidean coordinate and $\mathbb{T}^2$ is locally parametrized by the exponential map, then from Theorem \ref{Theorem:t1}, we have
 
\begin{align}
\mathfrak{H}_f(x)=&\, \frac{1}{2\pi}\int_{S^1}  (\partial^2_x f (x_k)x^2+\partial^2_{xy} f (x_k) xy+\partial^2_y (f (x_k)y^2 )(2x^2+\frac{2}{3}y^2)  d\theta   e^3 \nonumber\\
=&\, (\frac{5}{6}\partial^2_x f (x_k)+\frac{1}{2}\partial^2_y f(x_k))e_3 ,\nonumber 
\end{align}
and hence
\begin{equation}
\lim_{n \rightarrow \infty} \sum_{j=1}^n w_k(j)f(x_j)=f(x_k)-\bigg(\frac{1}{24}\partial^2_x f (x_k)-\frac{1}{8}\partial^2_y f(x_k)\bigg)\epsilon^2+O(\epsilon^4)\,.\nonumber
\end{equation}

Second, we consider $x_k=(0,0,-\frac{1}{2})$. We identify $T_{x_k}\mathbb{T}^2$ as the subspace of $\mathbb{R}^3$ which is generated by $\{e_1, e_2\}$. Let $\theta=(x,y) \in S^1 \subset T_{x_k}\mathbb{T}^2$, we use the similar method as before to show that 
\begin{align}
& \Second_{x_k}(\theta,\theta)=(-2x^2+2y^2)e_3 \,.\nonumber
\end{align}
Hence, we have $\mathfrak{N}_0(x)=0$. From Theorem \ref{Theorem:t1}, we conclude
\begin{equation}
\lim_{n \rightarrow \infty} \sum_{j=1}^n w_k(j)f(x_j)=f(x_k)+\frac{1}{8}\Delta f(x_k)\epsilon^2+O(\epsilon^4).\nonumber
\end{equation}
The evaluation of the LLE at these two typical points again shows that when the regularization term is chosen too small, the asymptotical behavior of the LLE is controlled by the curvature.

\end{document}